\documentclass[a4paper,11pt,reqno]{amsart}


\usepackage[utf8]{inputenc}
\usepackage[T1]{fontenc}
\usepackage{hyperref}
\usepackage{xcolor}
\hypersetup{
    colorlinks,
    linkcolor={red!50!black},
    citecolor={blue!50!black},
    urlcolor={blue!80!black}
}

\usepackage{microtype}
\usepackage{MnSymbol}
\usepackage{textcomp}

\usepackage[mathcal]{euscript}
\let\mathcaltmp\mathcal
\usepackage{mathrsfs}
\let\mathcal\mathscr
\let\mathscr\mathcaltmp

\newtheoremstyle{plain}
  {.5\baselineskip\@plus.2\baselineskip\@minus.2\baselineskip}
  {.5\baselineskip\@plus.2\baselineskip\@minus.2\baselineskip\@plus.5em}
  {\slshape}
  {}
  {\bfseries}
  {.}
  { }
  {}

\newtheoremstyle{definition}
  {.5\baselineskip\@plus.2\baselineskip\@minus.2\baselineskip}
  {0.8\baselineskip\@plus.2\baselineskip\@minus.2\baselineskip\@plus.5em}
  {}
  {}
  {\bfseries}
  {.}
  { }
  {}

\makeatletter
\newcommand{\eqnum}{\refstepcounter{equation}\textup{\tagform@{\theequation}}}
\makeatother

\numberwithin{equation}{section}

\newtheorem{thm}{Theorem}[section]
\newtheorem{thmX}{Theorem}
\newtheorem{corX}[thmX]{Corollary}
\newtheorem{conjX}[thmX]{Conjecture}

\newtheorem*{thm*}{Theorem}
\newtheorem{lem}[thm]{Lemma}
\newtheorem{cor}[thm]{Corollary}
\newtheorem{prop}[thm]{Proposition}
\newtheorem*{defthm*}{Definition/Theorem}

\theoremstyle{definition}
\newtheorem{defn}[thm]{Definition}
\newtheorem{rem}[thm]{Remark}
\newtheorem{exam}[thm]{Example}
\newtheorem{constr}[thm]{Construction}

\newtheorem*{exam*}{Example}

\newcommand\arXiv[1]{\href{http://arxiv.org/abs/#1}{arXiv:#1}}
\newcommand\mathAG[1]{\href{http://arxiv.org/abs/math/#1}{math.AG/#1}}



\usepackage{xspace}

\newcommand{\nc}{\newcommand}
\nc{\renc}{\renewcommand}
\nc{\ssec}{\subsection}
\nc{\sssec}{\subsubsection}
\nc{\on}{\operatorname}
\nc{\term}[1]{#1\xspace}


\newcommand{\changelocaltocdepth}[1]{%
  \addtocontents{toc}{\protect\setcounter{tocdepth}{#1}}}

\usepackage{xpatch}
\makeatletter   
\xpatchcmd{\@tocline}
  {\hfil\hbox to\@pnumwidth{\@tocpagenum{#7}}\par}
  {\ifnum#1<0\hfill\else\dotfill\fi\hbox to\@pnumwidth{\@tocpagenum{#7}}\par}
  {}{}
\def\l@subsection{\@tocline{2}{0pt}{4pc}{6pc}{}}
\def\l@subsubsection{\@tocline{3}{0pt}{8pc}{8pc}{}}
\makeatother

\makeatletter
\def\paragraph{\@startsection{paragraph}{4}%
  \z@\z@{-\fontdimen2\font}%
  {\normalfont\bfseries}}
\makeatother

\usepackage{tikz}
\usetikzlibrary{matrix}
\usepackage{tikz-cd}

\tikzset{
  commutative diagrams/.cd,
  arrow style=tikz,
  diagrams={>=latex}}
\makeatletter
\tikzset{
  column sep/.code=\def\pgfmatrixcolumnsep{\pgf@matrix@xscale*(#1)},
  row sep/.code   =\def\pgfmatrixrowsep{\pgf@matrix@yscale*(#1)},
  matrix xscale/.code=%
    \pgfmathsetmacro\pgf@matrix@xscale{\pgf@matrix@xscale*(#1)},
  matrix yscale/.code=%
    \pgfmathsetmacro\pgf@matrix@yscale{\pgf@matrix@yscale*(#1)},
  matrix scale/.style={/tikz/matrix xscale={#1},/tikz/matrix yscale={#1}}}
\def\pgf@matrix@xscale{1}
\def\pgf@matrix@yscale{1}
\makeatother

\usepackage[inline]{enumitem}
\setlist[enumerate,1]{label={(\alph*)},itemsep=\parskip}

\newlist{enumcompress}{enumerate}{1}
\setlist[enumcompress,1]{label={(\roman*)},itemsep=0.2em}

\newlist{thmlist}{enumerate}{1}
\setlist[thmlist,1]{
  label={\em(\roman*)}, ref={(\roman*)},
  itemsep=0.5em,
  leftmargin=*,
  align=right,widest=vi)}

\newlist{thmlistbis}{enumerate}{1}
\setlist[thmlistbis,1]{
  label={\em(\roman*~\textit{bis})},
  ref={(\roman*}~\textit{bis}\upshape{)},
  itemsep=0.5em,
  leftmargin=0pt, align=right, widest=vi)}

\newlist{defnlist}{enumerate}{2}
\setlist[defnlist,1]{
  label={(\roman*)}, ref={(\roman*)},
  itemsep=0.5em,
  leftmargin=*,
  align=right, widest=vi)}

\setlist[defnlist,2]{
  label={(\alph*)}, ref={(\alph*)},
  itemsep=0.75em,
  labelsep=0em,labelindent=0em,leftmargin=*,align=left,widest=vi),
  topsep=0.75em}

\newlist{inlinelist}{enumerate*}{1}
\setlist[inlinelist,1]{label={(\alph*)}}

\newlist{inlinedefnlist}{enumerate*}{1}
\definecolor{green}{HTML}{38550C}
\setlist[inlinedefnlist,1]{label={\color{green}(\roman*)}}

\nc{\cA}{\ensuremath{\mathcal{A}}\xspace}
\nc{\cB}{\ensuremath{\mathcal{B}}\xspace}
\nc{\cC}{\ensuremath{\mathcal{C}}\xspace}
\nc{\cD}{\ensuremath{\mathcal{D}}\xspace}
\nc{\cE}{\ensuremath{\mathcal{E}}\xspace}
\nc{\cF}{\ensuremath{\mathcal{F}}\xspace}
\nc{\cG}{\ensuremath{\mathcal{G}}\xspace}
\nc{\cH}{\ensuremath{\mathcal{H}}\xspace}
\nc{\cI}{\ensuremath{\mathcal{I}}\xspace}
\nc{\cJ}{\ensuremath{\mathcal{J}}\xspace}
\nc{\cK}{\ensuremath{\mathcal{K}}\xspace}
\nc{\cL}{\ensuremath{\mathcal{L}}\xspace}
\nc{\cM}{\ensuremath{\mathcal{M}}\xspace}
\nc{\cN}{\ensuremath{\mathcal{N}}\xspace}
\nc{\cO}{\ensuremath{\mathcal{O}}\xspace}
\nc{\cP}{\ensuremath{\mathcal{P}}\xspace}
\nc{\cQ}{\ensuremath{\mathcal{Q}}\xspace}
\nc{\cR}{\ensuremath{\mathcal{R}}\xspace}
\nc{\cS}{\ensuremath{\mathcal{S}}\xspace}
\nc{\cT}{\ensuremath{\mathcal{T}}\xspace}
\nc{\cU}{\ensuremath{\mathcal{U}}\xspace}
\nc{\cV}{\ensuremath{\mathcal{V}}\xspace}
\nc{\cW}{\ensuremath{\mathcal{W}}\xspace}
\nc{\cX}{\ensuremath{\mathcal{X}}\xspace}
\nc{\cY}{\ensuremath{\mathcal{Y}}\xspace}
\nc{\cZ}{\ensuremath{\mathcal{Z}}\xspace}

\nc{\sA}{\ensuremath{\mathscr{A}}\xspace}
\nc{\sB}{\ensuremath{\mathscr{B}}\xspace}
\nc{\sC}{\ensuremath{\mathscr{C}}\xspace}
\nc{\sD}{\ensuremath{\mathscr{D}}\xspace}
\nc{\sE}{\ensuremath{\mathscr{E}}\xspace}
\nc{\sF}{\ensuremath{\mathscr{F}}\xspace}
\nc{\sG}{\ensuremath{\mathscr{G}}\xspace}
\nc{\sH}{\ensuremath{\mathscr{H}}\xspace}
\nc{\sI}{\ensuremath{\mathscr{I}}\xspace}
\nc{\sJ}{\ensuremath{\mathscr{J}}\xspace}
\nc{\sK}{\ensuremath{\mathscr{K}}\xspace}
\nc{\sL}{\ensuremath{\mathscr{L}}\xspace}
\nc{\sM}{\ensuremath{\mathscr{M}}\xspace}
\nc{\sN}{\ensuremath{\mathscr{N}}\xspace}
\nc{\sO}{\ensuremath{\mathscr{O}}\xspace}
\nc{\sP}{\ensuremath{\mathscr{P}}\xspace}
\nc{\sQ}{\ensuremath{\mathscr{Q}}\xspace}
\nc{\sR}{\ensuremath{\mathscr{R}}\xspace}
\nc{\sS}{\ensuremath{\mathscr{S}}\xspace}
\nc{\sT}{\ensuremath{\mathscr{T}}\xspace}
\nc{\sU}{\ensuremath{\mathscr{U}}\xspace}
\nc{\sV}{\ensuremath{\mathscr{V}}\xspace}
\nc{\sW}{\ensuremath{\mathscr{W}}\xspace}
\nc{\sX}{\ensuremath{\mathscr{X}}\xspace}
\nc{\sY}{\ensuremath{\mathscr{Y}}\xspace}
\nc{\sZ}{\ensuremath{\mathscr{Z}}\xspace}

\nc{\bA}{\ensuremath{\mathbf{A}}\xspace}
\nc{\bB}{\ensuremath{\mathbf{B}}\xspace}
\nc{\bC}{\ensuremath{\mathbf{C}}\xspace}
\nc{\bD}{\ensuremath{\mathbf{D}}\xspace}
\nc{\bE}{\ensuremath{\mathbf{E}}\xspace}
\nc{\bF}{\ensuremath{\mathbf{F}}\xspace}
\nc{\bG}{\ensuremath{\mathbf{G}}\xspace}
\nc{\bH}{\ensuremath{\mathbf{H}}\xspace}
\nc{\bI}{\ensuremath{\mathbf{I}}\xspace}
\nc{\bJ}{\ensuremath{\mathbf{J}}\xspace}
\nc{\bK}{\ensuremath{\mathbf{K}}\xspace}
\nc{\bL}{\ensuremath{\mathbf{L}}\xspace}
\nc{\bM}{\ensuremath{\mathbf{M}}\xspace}
\nc{\bN}{\ensuremath{\mathbf{N}}\xspace}
\nc{\bO}{\ensuremath{\mathbf{O}}\xspace}
\nc{\bP}{\ensuremath{\mathbf{P}}\xspace}
\nc{\bQ}{\ensuremath{\mathbf{Q}}\xspace}
\nc{\bR}{\ensuremath{\mathbf{R}}\xspace}
\nc{\bS}{\ensuremath{\mathbf{S}}\xspace}
\nc{\bT}{\ensuremath{\mathbf{T}}\xspace}
\nc{\bU}{\ensuremath{\mathbf{U}}\xspace}
\nc{\bV}{\ensuremath{\mathbf{V}}\xspace}
\nc{\bW}{\ensuremath{\mathbf{W}}\xspace}
\nc{\bX}{\ensuremath{\mathbf{X}}\xspace}
\nc{\bY}{\ensuremath{\mathbf{Y}}\xspace}
\nc{\bZ}{\ensuremath{\mathbf{Z}}\xspace}

\nc{\bbA}{\ensuremath{\mathbb{A}}\xspace}
\nc{\bbB}{\ensuremath{\mathbb{B}}\xspace}
\nc{\bbC}{\ensuremath{\mathbb{C}}\xspace}
\nc{\bbD}{\ensuremath{\mathbb{D}}\xspace}
\nc{\bbE}{\ensuremath{\mathbb{E}}\xspace}
\nc{\bbF}{\ensuremath{\mathbb{F}}\xspace}
\nc{\bbG}{\ensuremath{\mathbb{G}}\xspace}
\nc{\bbH}{\ensuremath{\mathbb{H}}\xspace}
\nc{\bbI}{\ensuremath{\mathbb{I}}\xspace}
\nc{\bbJ}{\ensuremath{\mathbb{J}}\xspace}
\nc{\bbK}{\ensuremath{\mathbb{K}}\xspace}
\nc{\bbL}{\ensuremath{\mathbb{L}}\xspace}
\nc{\bbM}{\ensuremath{\mathbb{M}}\xspace}
\nc{\bbN}{\ensuremath{\mathbb{N}}\xspace}
\nc{\bbO}{\ensuremath{\mathbb{O}}\xspace}
\nc{\bbP}{\ensuremath{\mathbb{P}}\xspace}
\nc{\bbQ}{\ensuremath{\mathbb{Q}}\xspace}
\nc{\bbR}{\ensuremath{\mathbb{R}}\xspace}
\nc{\bbS}{\ensuremath{\mathbb{S}}\xspace}
\nc{\bbT}{\ensuremath{\mathbb{T}}\xspace}
\nc{\bbU}{\ensuremath{\mathbb{U}}\xspace}
\nc{\bbV}{\ensuremath{\mathbb{V}}\xspace}
\nc{\bbW}{\ensuremath{\mathbb{W}}\xspace}
\nc{\bbX}{\ensuremath{\mathbb{X}}\xspace}
\nc{\bbY}{\ensuremath{\mathbb{Y}}\xspace}
\nc{\bbZ}{\ensuremath{\mathbb{Z}}\xspace}

\nc{\sfC}{\ensuremath{\mathsf{C}}\xspace}
\nc{\sfD}{\ensuremath{\mathsf{D}}\xspace}
\nc{\sfE}{\ensuremath{\mathsf{E}}\xspace}
\nc{\sfX}{\ensuremath{\mathsf{X}}\xspace}
\nc{\sfY}{\ensuremath{\mathsf{Y}}\xspace}


\nc{\mrm}[1]{\ensuremath{\mathrm{#1}}\xspace}
\nc{\mfr}[1]{\ensuremath{\mathfrak{#1}}\xspace}
\nc{\mit}[1]{\ensuremath{\mathit{#1}}\xspace}
\nc{\mbf}[1]{\ensuremath{\mathbf{#1}}\xspace}
\nc{\mcal}[1]{\ensuremath{\mathcal{#1}}\xspace}
\nc{\msc}[1]{\ensuremath{\mathscr{#1}}\xspace}
\nc{\msf}[1]{\ensuremath{\mathsf{#1}}\xspace}

\nc{\sub}{\subseteq}
\nc{\too}{\longrightarrow}
\nc{\hook}{\hookrightarrow}
\nc{\hooklongrightarrow}{\lhook\joinrel\longrightarrow}
\nc{\hooklong}{\hooklongrightarrow}
\nc{\hooklongleftarrow}{\longleftarrow\joinrel\rhook}
\nc{\twoheadlongrightarrow}{\relbar\joinrel\twoheadrightarrow}
\nc{\longrightleftarrows}{\ \raisebox{0.3ex}{\(\mathrel{\substack{\xrightarrow{\rule{1em}{0em}} \\[-1ex] \xleftarrow{\rule{1em}{0em}}}}\)}\ }

\renc{\ge}{\geqslant}
\renc{\geq}{\geqslant}
\renc{\le}{\leqslant}
\renc{\leq}{\leqslant}

\nc{\id}{\mathrm{id}}

\DeclareMathOperator{\rk}{\mathrm{rk}}
\DeclareMathOperator{\Hom}{\on{Hom}}
\nc{\uHom}{\underline{\smash{\Hom}}}
\DeclareMathOperator{\Maps}{\on{Maps}}
\DeclareMathOperator{\Aut}{\on{Aut}}
\DeclareMathOperator{\End}{\on{End}}
\DeclareMathOperator{\Sym}{\on{Sym}}
\nc{\uEnd}{\underline{\smash{\End}}}

\nc{\colim}{\varinjlim}
\renc{\lim}{\varprojlim}
\nc{\Cofib}{\on{Cofib}}
\nc{\Fib}{\on{Fib}}
\nc{\initial}{\varnothing}
\nc{\op}{\mathrm{op}}

\DeclareMathOperator*{\fibprod}{\times}


\renc{\setminus}{\smallsetminus}

\usepackage{mathtools}
\DeclarePairedDelimiter\abs{\lvert}{\rvert}%


\newcommand{\thmref}[1]{Theorem~\ref{#1}}

\newcommand{\secref}[1]{Sect.~\ref{#1}}
\newcommand{\ssecref}[1]{Subsect. ~\ref{#1}}
\newcommand{\sssecref}[1]{(\ref{#1})}
\newcommand{\lemref}[1]{Lemma~\ref{#1}}
\newcommand{\propref}[1]{Proposition~\ref{#1}}
\newcommand{\corref}[1]{Corollary~\ref{#1}}
\newcommand{\conjref}[1]{Conjecture~\ref{#1}}
\newcommand{\remref}[1]{Remark~\ref{#1}}
\newcommand{\defnref}[1]{Definition~\ref{#1}}

\renewcommand{\eqref}[1]{(\ref{#1})}

\newcommand{\examref}[1]{Example~\ref{#1}}

\newcommand{\itemref}[1]{\ref{#1}}


\nc{\A}{\bA}
\renc{\P}{\bP}
\nc{\Spec}{\on{Spec}}
\nc{\uSpec}{\underline{\smash{\Spec}}}
\nc{\Proj}{\on{Proj}}
\nc{\uProj}{\underline{\smash{\Proj}}}
\nc{\bDelta}{\mathbf{\Delta}}
\nc{\Cech}{\textnormal{\v{C}}}
\nc{\QCoh}{\on{QCoh}}
\nc{\Perf}{\on{Perf}}
\nc{\cl}{{\mrm{cl}}}
\nc{\Bl}{\on{Bl}}
\nc{\Nl}{\on{N}}
\nc{\Dl}{\on{D}}
\nc{\El}{\on{E}}
\nc{\Tl}{\on{T}}
\nc{\et}{\mrm{\acute{e}t}}
\nc{\rightrightrightarrows}
  {\mathrel{\substack{\textstyle\rightarrow\\[-0.6ex]
   \textstyle\rightarrow \\[-0.6ex]
   \textstyle\rightarrow}}}
\nc{\rightrightrightrightarrows}
  {\mathrel{\substack{\textstyle\rightarrow\\[-0.6ex]
   \textstyle\rightarrow \\[-0.6ex]
   \textstyle\rightarrow \\[-0.6ex]
   \textstyle\rightarrow}}}
\nc{\Einfty}{{\sE_\infty}}
\renc{\sp}{\mrm{sp}}
\nc{\cosp}{\mrm{cosp}}
\nc{\Td}{\on{Td}}
\nc{\ch}{\on{ch}}
\nc{\RGamma}{R\Gamma}
\nc{\red}{\mrm{red}}
\nc{\der}{{\mrm{der}}}
\nc{\Mod}{{\mrm{Mod}}}
\nc{\Gr}{{\on{Gr}}}
\nc{\Ind}{\on{Ind}}
\nc{\form}{\widehat}
\renc{\L}{\bL}
\nc{\otimesL}{\mathchoice{\overset{\bL}{\otimes}}{\otimes^\bL}{\otimes^\bL}{\otimes^\bL}}
\nc{\fibprodR}{\fibprod^R}
\nc{\uRHom}{\bR\uHom}
\nc{\uMaps}{\underline{\smash{\Maps}}}
\nc{\GL}{\mrm{GL}}
\nc{\Vect}{\on{Vect}}
\nc{\Fun}{\on{Fun}}
\nc{\vb}[1]{\langle #1\rangle}
\nc{\V}{\bV}
\nc{\Gm}{{\bG_m}}
\nc{\pt}{\mrm{pt}}
\nc{\pr}{\mrm{pr}}
\nc{\uAut}{\underline{\Aut}}
\nc{\Pic}{{\on{Pic}}}
\nc{\uPic}{{\underline{\on{Pic}}}}
\nc{\dash}{\textnormal{-}}
\nc{\Spt}{\mrm{Spt}}
\nc{\Stk}{\mrm{Stk}}
\nc{\dStk}{\mrm{dStk}}
\nc{\dSch}{\mrm{dSch}}
\nc{\Spc}{\mrm{Spc}}
\nc{\dSpc}{\mrm{dSpc}}
\nc{\Catoo}{\mrm{Cat}_\infty}
\nc{\Grpdoo}{\mrm{Grpd}_\infty}
\nc{\counit}{\mrm{counit}}
\nc{\tr}{\mrm{tr}}
\nc{\cotr}{\mrm{cotr}}
\nc{\eul}{\mrm{eul}}
\nc{\gys}{\mrm{gys}}
\nc{\Shv}{\on{Shv}}
\nc{\h}{\on{h}}
\nc{\modmod}{/\!\!/}
\nc{\VCart}{\on{VCart}}
\nc{\BGm}{B\bG_m}
\nc{\AGm}{\Theta}
\nc{\quo}[1]{{[{#1}]}}
\nc{\C}{\mrm{C}}
\nc{\Arr}{\on{Arr}}
\nc{\R}{\sR}
\nc{\Rext}{\sR^{\mrm{ext}}}
\nc{\rmR}{\mrm{R}}
\nc{\rmRext}{\mrm{R}^{\mrm{ext}}}
\nc{\Ban}{\mrm{Ban}}
\nc{\Aff}{\mrm{Aff}}
\nc{\Supp}{\on{Supp}}
\nc{\Fil}{\on{Fil}}
\nc{\Z}{\mathbf{Z}}
\nc{\N}{\mathbf{N}}
\nc{\HBM}{\on{H}^{\mrm{BM}}}
\nc{\vd}{\mrm{vd}}
\nc{\vir}{\mrm{vir}}
\nc{\sNv}{\sN}


\nc{\scr}{\term{derived commutative ring}}
\nc{\scrs}{\term{derived commutative rings}}

\nc{\inftyCat}{\term{$\infty$-category}}
\nc{\inftyCats}{\term{$\infty$-categories}}

\nc{\inftyGrpd}{\term{$\infty$-groupoid}}
\nc{\inftyGrpds}{\term{$\infty$-groupoids}}

\nc{\dA}{\term{derived Artin}}

\nc{\spref}[1]{\href{http://stacks.math.columbia.edu/tag/#1}{#1}}

\title[Derived Weil restrictions]{Deformation to the normal bundle and blow-ups via derived Weil restrictions\vspace{-2mm}}
\author[J. Hekking]{Jeroen Hekking}
\address{Fakult\"at f\"ur Mathematik, Universit\"at Regensburg, 93051 Regensburg, Germany}
\email{jeroen.hekking@ur.de}

\author[A.\,A. Khan]{Adeel A. Khan}
\address{Institute of Mathematics, Academia Sinica, 10617 Taipei, Taiwan}
\address{National Center for Theoretical Sciences, National Taiwan University, 106 Taipei, Taiwan}
\email{adeelkhan@as.edu.tw}

\author[D. Rydh]{David Rydh}
\address{KTH Royal Institute of Technology, Department of Mathematics, SE-100 44 Stockholm, Sweden}
\email{dary@math.kth.se}
\date{2025-11-24}

\subjclass[2020]{Primary 14A30; Secondary 14A20, 14D23, 14N35}

%

\keywords{Weil restriction, derived blow-up, deformation to the normal cone, intrinsic normal cone, normal bundle, mapping stacks}

\begin{document}

\begin{abstract}
  We develop an analogue of the deformation to the normal cone in the context of derived algebraic geometry. This provides \emph{any} given morphism of derived stacks with a degeneration to the zero section of its normal bundle (i.e., its $1$-shifted relative tangent bundle).
  The construction is realized via the derived Weil restriction along the zero section of the affine line.
  We prove a general algebraicity theorem for derived Weil restrictions along finite but possibly non-flat morphisms.
  As an application of the theory, we study derived blow-ups along arbitrary closed centres, generalizing previous works of the authors in the quasi-smooth case.
  \vspace{-5mm}
\end{abstract}

\maketitle

\renewcommand\contentsname{\vspace{-1cm}}
\tableofcontents

\parskip 0.75em

\thispagestyle{empty}


\changelocaltocdepth{1}

\section*{Introduction}

A central motivation for derived algebraic geometry arises from the study of singular moduli spaces and stacks appearing in modern enumerative geometry, geometric representation theory, and arithmetic geometry.
The \emph{hidden smoothness} principle expressed by M.~Kontsevich in \cite{KontsevichEnumeration} predicted that such moduli problems should carry natural derived enhancements that are smooth in some derived sense.
With the subsequent development of derived algebraic geometry \cite{CFKQuot,HAG2,LurieDAG}, this principle was promoted to rigorous theorems: see for instance \cite{CFKQuot}, \cite{CFKHilb}, \cite[\S 2]{SchurgToenVezzosi}, and \cite[Thm.~0.0.12]{KhanNCTS}.

This work is guided by the idea that hidden smoothness should moreover provide a conceptual explanation for the various \emph{virtual structures} that are ubiquitous in modern geometry.
These include the virtual fundamental classes \cite{BehrendFantechi,LiTian} underlying Gromov--Witten theory, the Behrend function in Donaldson--Thomas theory and its motivic and cohomological refinements \cite{BehrendDT,KontsevichSoibelmanCoHA,BBBJ,BBDJS}, the analogues of virtual classes underlying the Donaldson--Thomas theory of Calabi--Yau fourfolds \cite{BorisovJoyce}, and the virtual cycles underlying invariants of gauged linear sigma models \cite{FaveroKim,PolishchukVaintrob}.
Although these constructions originate from seemingly disparate considerations, the second-named author has proposed that they are all manifestations of a single geometric mechanism: a derived analogue of \emph{deformation to the normal cone}.

This paper provides the foundations for this program by developing the technique of deformation to the normal cone in the context of derived algebraic geometry.
Applications to the above-mentioned virtual structures will be surveyed in \ssecref{ssec:appl} below.

\subsection{Deformations to the normal cone}

For a closed immersion of schemes $f : X \hook Y$, the \emph{deformation to the normal cone} is a family of closed immersions degenerating $f$ to the zero section of its normal cone.
In this paper we introduce the following derived version:

\begin{thmX}\label{thm:intro/def}
  Let $f : X \to Y$ be a morphism of derived stacks.
  Then we have:
  \begin{thmlist}
    \item
    There exists a canonical diagram of derived stacks
    \begin{equation*}
      \begin{tikzcd}
        X \ar{r}{0}\ar{d}{0}
        & X \times \A^1 \ar[leftarrow]{r}\ar{d}{Df}
        & X \times \Gm \ar{d}{f\times \id}
        \\
        \Nl_{X/Y} \ar{r}\ar{d}
        & \Dl_{X/Y} \ar[leftarrow]{r}\ar{d}
        & Y \times \Gm \ar[equals]{d}
        \\
        Y \ar{r}{0}
        & Y \times \A^1 \ar[leftarrow]{r}
        & Y \times \Gm
      \end{tikzcd}
    \end{equation*}
    where each square is cartesian.
    The morphism $0 : X \to \Nl_{X/Y}$ is the zero section of the derived normal bundle, and $\Nl_{X/Y} \to Y$ factors through the projection $\Nl_{X/Y} \to X$.

    \item
    There is a canonical $\Gm$-action on $\Dl_{X/Y}$.
    Every morphism in the diagram is $\Gm$-equivariant, with respect to the scaling by weight $-1$ actions on $\A^1$ and $\Gm$, the scaling by weight $1$ action on $\Nl_{X/Y}$, and the trivial actions on $X$ and $Y$.

    \item
    Let $\cN_{X/Y} := [\Nl_{X/Y}/\Gm]$ and $\cD_{X/Y} := [\Dl_{X/Y}/\Gm]$ denote the quotient stacks.
    The commutative square
    \begin{equation*}
      \begin{tikzcd}
        \cN_{X/Y} \ar{r}\ar{d}
        & \cD_{X/Y} \ar{d}
        \\
        X \ar{r}{f}
        & Y
      \end{tikzcd}
    \end{equation*}
    is the universal virtual Cartier divisor over $f : X \to Y$.

    \item\label{item:intro/def/alg}
    If $X$ and $Y$ are Artin and $f$ is locally of finite type, then $\Dl_{X/Y}$ is Artin.
  \end{thmlist}
\end{thmX}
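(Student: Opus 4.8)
The plan is to read off a presentation of $\Dl_{X/Y}$ as a derived Weil restriction along a finite, non-flat morphism, and then to invoke the general algebraicity theorem for such Weil restrictions. Concretely, the construction realizes $\Dl_{X/Y}$ as the Weil restriction along the zero section of the affine line: writing $\iota \colon Y \times \{0\} \hook Y \times \A^1$ for the inclusion of the Cartier divisor cut out by the coordinate $t$, and regarding $f \colon X \to Y \cong Y \times \{0\}$ as a stack over the source of $\iota$, one has
\begin{equation*}
  \Dl_{X/Y} \;=\; \on{Res}_{\iota}(X) \;=\; \on{Res}_{(Y \times \{0\})/(Y \times \A^1)}(X),
\end{equation*}
an object over $Y \times \A^1$ (recovering the morphism $\Dl_{X/Y} \to Y \times \A^1$). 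That this is the correct object is confirmed by base change of the Weil restriction: pulling back along $\Gm \hook \A^1$ kills the source of $\iota$ and returns $Y \times \Gm$, while pulling back along $\{0\} \hook \A^1$ replaces $Y \times \{0\}$ by the derived self-intersection $(\{0\} \times_{\A^1} \{0\}) \times Y$, whose shifted structure produces the $1$-shifted relative tangent bundle $\Nl_{X/Y}$ in the special fibre.

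Granting this presentation, the proof of algebraicity is a direct application of the general algebraicity theorem, and I would simply verify its hypotheses. The morphism $\iota$ is a closed immersion, hence finite; moreover it is a quasi-smooth closed immersion of virtual codimension one (cut out by the single nonzerodivisor $t$), so it is of finite presentation and of finite Tor-amplitude. Crucially it is not flat, which is exactly the generality the theorem is designed to permit. On the other hand, since $X$ and $Y$ are Artin and $f$ is locally of finite type, the object $X \to Y \times \{0\}$ being restricted is Artin and locally of finite type over the source of $\iota$. The general algebraicity theorem then yields that $\on{Res}_{\iota}(X) \to Y \times \A^1$ is Artin and locally of finite type; since $Y \times \A^1$ is itself Artin, it follows that $\Dl_{X/Y} = \on{Res}_{\iota}(X)$ is Artin, as claimed.

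The genuine mathematical content sits in the general algebraicity theorem, which is established separately; for the present statement it is used as a black box, so the remaining work is bookkeeping. The two points that require care are (a) matching the precise hypotheses of that theorem against $\iota$ and $f$ — in particular ensuring that only finiteness of $\iota$, and not flatness, is invoked — and (b) the finite-type and representability checks needed to pass from algebraicity relative to $Y \times \A^1$ to absolute algebraicity of $\Dl_{X/Y}$. The non-flatness of $\iota$ is precisely the obstacle the general theorem overcomes, and it is no accident: the derived self-intersection $\{0\} \times_{\A^1} \{0\}$ responsible for that non-flatness is the same mechanism that forces the special fibre to be the $1$-shifted normal bundle rather than a flat limit.
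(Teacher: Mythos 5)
Your overall strategy is the paper's: identify $\Dl_{X/Y}$ with the derived Weil restriction $0_*(X \to Y)$ along the zero section $0 : Y \hook Y \times \A^1$ (\propref{prop:defweil}), read off the generic and special fibres by base change, and outsource algebraicity to a general theorem about Weil restrictions. However, there is a genuine gap in the way you invoke that theorem: you have mixed the hypotheses of the two distinct algebraicity statements, and the combination you end up checking is not a theorem the paper (or your black box) provides. The version for $h$ finite, afp, and of finite Tor-amplitude (\thmref{thm:alg}) requires the restricted object to be locally \emph{homotopically} of finite presentation over the source of $h$ --- a condition on the derived structure (perfect cotangent complex, commutation with all filtered colimits of affines) that is strictly stronger than being locally of finite type, which only constrains classical truncations. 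Theorem~\ref{thm:intro/def}\itemref{item:intro/def/alg} assumes only that $f$ is locally of finite type, so this version does not apply. The version that does apply is \thmref{thm:algvcd}, whose hypothesis on $h$ is not merely ``finite, fp, finite Tor-amplitude'' but the much more specific condition that $h$ be a virtual Cartier divisor; in exchange it tolerates $f$ merely locally of finite type, at the cost of one extra level of representability. You verify the $h$-side hypotheses of the former and the $f$-side hypothesis of the latter, so as written the deduction does not go through. This is not cosmetic: the two theorems have genuinely different proofs (the hfp case rests on Lurie's factorization of hfp closed immersions into successive derived zero loci, while the virtual-Cartier-divisor case rests on the affineness of $\Dl_{Z/T} \to T\times\A^1$ for closed immersions, i.e.\ the Rees-algebra description), and neither proof covers the mixed hypothesis set.

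Two smaller points. First, parts (i)--(iii) of the statement are not addressed: the identification of the special fibre $0^*0_*(X\to Y)$ with the derived normal bundle $\V_X(\sL_{X/Y}[-1])$ is not automatic from ``the derived self-intersection $\{0\}\fibprod_{\A^1}\{0\}$'' alone --- it requires the adjunction computation against trivial square-zero extensions $S[\sO_S[1]]$ carried out in \propref{prop:normal} --- and the universal property in (iii) is what pins down the functor of points in the first place (in the paper it is the \emph{definition} of $\cD_{X/Y}$, with the Weil-restriction description a proposition). Second, the passage from relative representability over $Y\times\A^1$ to the absolute statement is fine, but note that the conclusion one gets is $(n+1)$-representability when $f$ is $n$-representable, not $n$-representability; the extra $+1$ is forced by the Tor-amplitude of the zero section and is visible already for closed immersions of schemes, where $\Dl_{X/Y}$ is a scheme, versus general morphisms of schemes, where it is a $1$-Artin stack.
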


We call $\Dl_{X/Y}$ the \emph{deformation to the (derived) normal bundle}, or simply the \emph{normal deformation} for short.
We will see that $\Dl_{X/Y}$ is nothing but the \emph{derived Weil restriction} of the morphism $X \to Y$ along $0 : Y \to Y \times \A^1$.
By definition, the functor of derived Weil restriction
\begin{equation*}
  0_* : \dStk_{/Y} \to \dStk_{/Y \times \A^1}
\end{equation*}
is right adjoint to the functor
\begin{equation*}
  0^* : \dStk_{/Y\times\A^1} \to \dStk_{/Y},
  \qquad (S \to \A^1) \mapsto S \fibprod_{\A^1} \{0\}.
\end{equation*}

In order to be able to use this construction in practice, one needs the \emph{algebraicity} asserted in \thmref{thm:intro/def}\itemref{item:intro/def/alg}.
The proof of algebraicity constitutes the technical heart of this paper; see \thmref{thm:Dart}.\footnote{%
  We will see more precisely that if $X$ and $Y$ are $n$-Artin, then the normal deformation $\Dl_{X/Y}$ is $(n+1)$-Artin.
  For example, even if $X$ and $Y$ are schemes, $\Dl_{X/Y}$ is typically a $1$-Artin stack (unless $f : X \to Y$ is a closed immersion).
}
Let us also note that in \emph{formal} derived algebraic geometry over characteristic zero, a variant of the normal deformation was constructed by Gaitsgory and Rozenblyum.
While their construction is comparatively rather involved, one can show that it is just the formal neighbourhood of $Df : X \times \A^1 \to \Dl_{X/Y}$ \sssecref{sssec:GR}; in particular, by \thmref{thm:intro/def}\itemref{item:intro/def/alg} it is a formal Artin stack (i.e., an ind-Artin stack).

When $f : X \hook Y$ is a closed immersion, the deformation $\Dl_{X/Y}$ can alternatively be described using the derived blow-ups of \cite{blowups,Hekking}: we have
\[\Dl_{X/Y} = \Bl_{X\times\{0\}/Y\times\A^1} \setminus \Bl_{X\times \{0\}/Y\times \{0\}}\]
in that case (see \cite[Thm.~4.1.13]{blowups}, \cite[\S 7.6]{Hekking}).
It follows that for a regular closed immersion of classical schemes, $\Dl_{X/Y}$ is the deformation to the normal cone of Verdier \cite[\S 2]{Verdier}.
So, seen through the lens of derived algebraic geometry, Verdier's construction is just a right adjoint to the derived zero-fibre functor.

For a non-regular closed immersion $X \hook Y$, the classical deformation to the normal cone can be recovered from (the classical truncation of) $\Dl_{X/Y}$ as follows: it is the schematic closure of the open subscheme $Y \times \bG_m$.
In fact, for any morphism of Artin stacks $X \to Y$, locally of finite type, the same construction gives a degeneration to the (relative) \emph{intrinsic normal cone} of Behrend--Fantechi \cite{BehrendFantechi}; see \ssecref{ssec:defcl}.

\subsection{Blow-ups}

Let $X \hook Y$ be a closed immersion.
While the normal deformation $\Dl_{X/Y}$ in that case can be described using derived blow-ups, we will see conversely that the derived blow-up $\Bl_{X/Y}$ sits inside $\cD_{X/Y} = [\Dl_{X/Y}/\Gm]$ as an open substack.
This leads to the following functor-of-points description of $\Bl_{X/Y}$, generalizing \cite[\S 4.1]{blowups} to the non-quasi-smooth case:

\begin{corX}\label{cor:intro/bl}
  Let $i : X \hook Y$ be a closed immersion of derived stacks.
  Then the commutative square
  \begin{equation*}
    \begin{tikzcd}
      \P_X(\Nl_{X/Y}) \ar{r}\ar{d}
      & \Bl_{X/Y} \ar{d}
      \\
      X \ar{r}{i}
      & Y,
    \end{tikzcd}
  \end{equation*}
  where $\P_X(\Nl_{X/Y})$ is the projectivized derived normal bundle, is the universal \emph{excessive} virtual Cartier divisor over $i$
  (\ssecref{ssec:blups}).
\end{corX}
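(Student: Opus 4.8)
The plan is to deduce \corref{cor:intro/bl} from \thmref{thm:intro/def}, part (iii), by restriction along the open immersion $\Bl_{X/Y} \hook \cD_{X/Y}$ provided in \ssecref{ssec:blups}. That part of the theorem identifies the square with $\cN_{X/Y} \to \cD_{X/Y}$ over $i$ as the \emph{universal} virtual Cartier divisor over $i$; the corollary amounts to saying that its restriction to the open substack $\Bl_{X/Y}$ is the universal \emph{excessive} virtual Cartier divisor, with total space the projectivised normal bundle. So there are two points to establish: that excessiveness cuts out exactly the open $\Bl_{X/Y}$, and that the universal divisor restricts to $\P_X(\Nl_{X/Y})$.

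First I would unwind the notion of excessive virtual Cartier divisor from \ssecref{ssec:blups}. Over a test stack $T$, a virtual Cartier divisor $D \hook T$ over $i$ carries a canonical comparison map from the pulled-back conormal complex of $X$ in $Y$ to the conormal line bundle of $D$ in $T$, and excessiveness is the surjectivity on $\pi_0$ of this map. Since the non-surjective locus is the support of a coherent cokernel, hence closed, the excessive virtual Cartier divisors form an open subfunctor $\sU \sub \cD_{X/Y}$, and the universal excessive virtual Cartier divisor is simply the restriction of $\cN_{X/Y} \to \cD_{X/Y}$ to $\sU$.

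It remains to identify $\sU$ with $\Bl_{X/Y}$ and to compute the restricted divisor. For the divisor I would use that, away from the image of the zero section $0 : X \to \Nl_{X/Y}$, the special fibre $\cN_{X/Y} = \quo{\Nl_{X/Y}/\Gm}$ is nothing but the projectivised normal bundle, $\quo{(\Nl_{X/Y} \setminus 0_X)/\Gm} = \P_X(\Nl_{X/Y})$; since the excess condition is exactly what discards this zero-section (vertex) locus, the restriction $\cN_{X/Y}|_{\sU}$ is $\P_X(\Nl_{X/Y})$. For the identification $\sU = \Bl_{X/Y}$, both are open substacks of $\cD_{X/Y}$, so it suffices to match their complements: under the description $\Dl_{X/Y} = \Bl_{X\times\{0\}/Y\times\A^1} \setminus \Bl_{X\times\{0\}/Y\times\{0\}}$ recalled above, the non-excessive vertex locus coincides with the closed substack that $\Bl_{X/Y} \hook \cD_{X/Y}$ omits.

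I expect the main obstacle to be this final matching, and in particular making ``removing the zero section'' precise in the derived, non-quasi-smooth setting: here $\Nl_{X/Y}$ is a genuine $1$-shifted tangent bundle rather than a vector bundle, so one must check that the excess condition is honestly open and that $\sU$ agrees with $\Bl_{X/Y}$ on the nose rather than merely after classical truncation. Once this is secured, the universality of the restricted divisor is inherited formally from \thmref{thm:intro/def}, part (iii), together with the $\Gm$-equivariance recorded in part (ii).
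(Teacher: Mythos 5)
Your overall shape agrees with the paper's: in the body of the paper $\Bl_{X/Y}$ is \emph{defined} as the stack classifying excessive virtual Cartier divisors over $i$, so the universality asserted in the corollary is essentially definitional, and the actual content is (a) that this substack of $\cD_{X/Y}$ is open and (b) that the restricted universal divisor is $\P_X(\Nl_{X/Y})$; both are extracted from \lemref{lem:orange} via the diagram \eqref{eq:Oznufci}. However, your sketch leaves the decisive step unproved. First, you record only half of the excessiveness condition: besides surjectivity on $\pi_0$ of $g^*\sNv_{X/Y}\to\sNv_{D/S}$, excessiveness also requires the square to be cartesian on classical truncations, and the openness of that second condition needs its own argument (the paper uses that the target of $\pi_0(i'_*e^\sharp)$ is of finite presentation and the source of finite type, so the kernel is of finite type and has open vanishing locus; even for the conormal condition one must first check that $\pi_0(\sNv_e)$ is of finite type, \remref{rem:Nft}). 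Second, and more seriously, your claim that ``the excess condition is exactly what discards the zero-section locus'' --- i.e.\ that the non-excessive locus is precisely the image of the closed immersion $\cD i : X\times\AGm\to\cD_{X/Y}$ --- is the heart of the matter and is asserted rather than proved. The paper's proof of \lemref{lem:orange} establishes this by a pointwise analysis over fields $\kappa$: a virtual Cartier divisor on $\Spec(\kappa)$ is either empty or $\Spec(\kappa\oplus\kappa[1])$, and in the latter case one shows that a lift through $X\times\AGm$ exists if and only if the associated derivation $h^*\sNv_{X/Y}\to\sO$ is null-homotopic, which is exactly where non-excessiveness enters. Nothing in your proposal substitutes for this argument.

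Your route for identifying $\sU$ with $\Bl_{X/Y}$ via $\Dl_{X/Y}=\Bl_{X\times\{0\}/Y\times\A^1}\setminus\Bl_{X\times\{0\}/Y\times\{0\}}$ also inverts the paper's logic: there, that description (\propref{prop:farmyardy}) and the Proj description (\thmref{thm:blproj}) are \emph{consequences} of the functor-of-points definition, obtained by comparing the vertex locus of the extended Rees algebra with the image of $\cD i$ (\lemref{lem:town}, \lemref{lem:ice}). If you instead take a pre-existing blow-up as given and try to match open complements inside $\cD_{X/Y}$, you must carry out that Rees-algebra comparison yourself; the ``final matching'' you flag as the main obstacle is precisely the missing content, not a routine verification.
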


That is, for every derived scheme $S$ over $Y$, the \inftyGrpd of morphisms $S \to \Bl_{X/Y}$ over $Y$ is equivalent to the \inftyGrpd of \emph{excessive} virtual Cartier divisors over $i$, i.e., commutative squares
\begin{equation*}
  \begin{tikzcd}
    D \ar{r}\ar{d}
    & S \ar{d}
    \\
    X \ar{r}{i}
    & Y
  \end{tikzcd}
\end{equation*}
where
\begin{enumerate}
  \item $D \to S$ is a virtual Cartier divisor.
  \item The square is cartesian on classical truncations.
  \item The induced morphism of normal bundles $\Nl_{D/S} \hook \Nl_{X/Y} \fibprod_X D$ is a closed immersion.
\end{enumerate}

\subsection{Algebraicity of Weil restrictions}

The most nontrivial part of \thmref{thm:intro/def} is the algebraicity result for $\Dl_{X/Y}$, i.e., that it is Artin when $f : X \to Y$ is locally of finite type.
We deduce this from the following general algebraicity result for derived Weil restrictions:

\begin{thmX}\label{thm:intro/weil}
  Suppose given a diagram
  \begin{equation*}
    \begin{tikzcd}
      X \ar{d}
      &
      \\
      S \ar{r}{h}
      & T
    \end{tikzcd}
  \end{equation*}
  of derived Artin stacks.
  Then the derived Weil restriction $h_*(X \to S)$ is Artin in either of the following cases:
  \begin{thmlist}
    \item\label{item:intro/weil/hfp}
    The morphism $h$ is finite, of finite Tor-amplitude, and almost of finite presentation\footnote{%
      Under the assumption that $h$ is finite, it is almost of finite presentation if and only if $h_*\sO_S$ is a pseudo-coherent complex (see \cite[Cor.~5.2.2.2]{LurieSAG}).
      This condition is superfluous when $S$ and $T$ are locally noetherian (see e.g. \cite[Rem.~4.2.0.4]{LurieSAG}).
    }
    and $X$ is locally homotopically of finite presentation over $S$ (\thmref{thm:alg}).

    \item\label{item:intro/weil/vcd}
    The morphism $h$ is a virtual Cartier divisor, and $X$ is locally of finite type over $S$ (\thmref{thm:algvcd}).
  \end{thmlist}
\end{thmX}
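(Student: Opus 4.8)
The plan is to realize $\cM := h_*(X \to S)$ as a stack over $T$ through its functor of points $\Maps_T(U, \cM) \simeq \Maps_S(U \fibprod_T S, X)$ and to verify the hypotheses of Lurie's representability theorem \cite{LurieSAG}. Since being Artin is smooth-local on the base and the formation of $h_*$ is stable under base change along $T' \to T$, I would first reduce to the case $T = \Spec A$ affine. The functor $h^* = (-)\fibprod_T S$ then sits in an adjoint triple $h_! \dashv h^* \dashv h_*$ (with $h_!$ post-composition with $h$ and $h_*$ the Weil restriction), so $h^*$ preserves both limits and colimits; this formal fact is the engine that transfers structural properties from $X$ to $\cM$.

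The conditions of descent and deformation theory---that $\cM$ is an \'etale sheaf, is nilcomplete, infinitesimally cohesive, and integrable---I expect to follow from the corresponding properties of the Artin stack $X$. \'Etale descent is immediate since $h_*$ is a right adjoint. For nilcompleteness and infinitesimal cohesiveness one feeds the Postnikov towers and square-zero pullback diagrams on $U$ through $h^*$ and then through $\Maps_S(-, X)$; here the bounded Tor-amplitude of $h$ (finite Tor-amplitude in case \itemref{item:intro/weil/hfp}, amplitude $[0,1]$ in case \itemref{item:intro/weil/vcd}) is used to ensure that $h^*$ interacts cofinally with truncations, so that the limit diagrams computed from $X$ survive. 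Integrability, i.e.\ the effectivity of formal deformations, is the place where properness of $h$ enters: as $h$ is finite, $U \fibprod_T S$ is proper over $U$, and a Grothendieck-existence argument reduces effectivity for $\cM$ to that for $X$.

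The technical heart, and where I expect the main obstacle, is the relative cotangent complex. At a point $\phi : U \fibprod_T S \to X$ over $S$, writing $h_U : U \fibprod_T S \to U$ for the base change of $h$, the relative tangent complex of $\cM/T$ is $h_{U,*}\,\phi^* L_{X/S}^\vee$, so that $L_{\cM/T}$ is governed by applying the finite pushforward $h_{U,*}$ to the pulled-back (co)tangent complex of $X/S$. This is exactly where the two cases diverge. In case \itemref{item:intro/weil/hfp}, local homotopical finite presentation of $X/S$ makes $L_{X/S}$ perfect; since $h$ is finite of finite Tor-amplitude and almost of finite presentation, $h_*\sO_S$ is a perfect $\sO_T$-complex, whence $h_{U,*}$ preserves perfectness with a uniform amplitude bound. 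In case \itemref{item:intro/weil/vcd}, local finite type of $X/S$ makes $L_{X/S}$ almost perfect, while a virtual Cartier divisor has $h_*\sO_S$ of Tor-amplitude $[0,1]$, so $h_{U,*}$ preserves almost perfectness and shifts connectivity by at most one. In both cases $L_{\cM/T}$ is almost perfect and connective after the appropriate shift, and the amplitude of $h_{U,*}$ is precisely what raises the geometric level from $n$-Artin to $(n+1)$-Artin.

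It remains to check that $\cM$ is locally almost of finite presentation and that its values are suitably truncated. The finite-presentation statement follows by combining the local (homotopical) finite presentation of $X/S$ with the finiteness of $h$: in case \itemref{item:intro/weil/hfp} the almost finite presentation of $h$ amounts to pseudo-coherence of $h_*\sO_S$, which is exactly what controls the commutation of $\Maps_S(U \fibprod_T S, X)$ with filtered colimits in $U$. With the cotangent complex, the deformation-theoretic conditions, and this finiteness in hand, Lurie's theorem delivers that $\cM$ is Artin and locally almost of finite presentation over $T$, proving both cases simultaneously. The principal difficulty throughout is the non-flatness of $h$: every step that would be routine for a flat finite morphism---preservation of connectivity, compatibility with truncations, perfectness of the pushforward---must instead be extracted from the Tor-amplitude bounds provided by the two hypotheses, and it is the failure of flatness that forces the geometric level to increase by one.
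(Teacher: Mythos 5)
Your proposal is essentially the Artin--Lurie representability route, which is the approach of Halpern-Leistner--Preygel that the paper explicitly contrasts itself with (see the discussion of \cite{HalpernLeistnerPreygel} in the introduction): the paper instead builds explicit smooth atlases by hand, using the local structure of hfp closed immersions as towers of zero loci of shifted cosections (\thmref{thm:hfp}), the identification $h_*(\V_S(\sE)) \simeq \V_T(h_\sharp\sE)$ together with the algebraicity of $\V_S(\sE)$ for $0$-pseudo-coherent $\sE$ (\thmref{thm:V}), proper base change for nonabelian étale sheaves (\thmref{thm:flocculency}), preservation of smooth surjections (\corref{cor:ontogenic}, \propref{prop:unconversableness}), and induction on the representability level via \lemref{lem:acturience}. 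Your formula for the (co)tangent complex is consistent with \propref{prop:cotan} and \corref{cor:ontogenic}, and the observation that the Tor-amplitude of $h$ is what raises the geometric level is exactly right. But as written the proposal has genuine gaps that prevent it from proving the stated theorem.

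First, the representability theorem you invoke requires the base to be locally noetherian with $\pi_0$ a G-ring admitting a dualizing complex (and, in the animated setting, one must also reconcile the algebraic versus topological cotangent complex outside characteristic zero); the theorem here assumes none of this. Second, integrability is not a routine ``Grothendieck-existence argument'': effectivity of formal deformations of maps into an arbitrary higher Artin stack $X$ is precisely the formal-properness issue that forces Halpern-Leistner--Preygel to restrict to $1$-Artin targets, whereas the statement here allows $X$ of arbitrary representability level. Third, and most concretely, in case \itemref{item:intro/weil/vcd} your claim that ``local finite type of $X/S$ makes $\sL_{X/S}$ almost perfect'' is false: locally of finite type is only finite presentation to order $0$, so $\sL_{X/S}$ is merely $0$-pseudo-coherent, and correspondingly the mapping functor is not locally almost of finite presentation --- so the hypotheses of the representability criterion simply fail in that case. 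The paper's proof of \thmref{thm:algvcd} is designed around exactly this weaker finiteness, via the affineness of $h_*$ on closed immersions (\thmref{thm:vapidism}, ultimately \thmref{thm:Daff}) and the $0$-pseudo-coherent case of \thmref{thm:V}, and this is not recoverable from the Artin--Lurie criterion without substantial additional work.
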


Recall that the classical Weil restriction along a morphism of schemes $h : S \to T$ preserves algebraicity when $h$ is finite and flat.
In that case, the flatness implies that the classical Weil restriction agrees with the derived version.
\thmref{thm:intro/weil} shows that the \emph{derived} Weil restriction is still algebraic when we relax the flatness assumption.
In fact, we will show that if $h$ is of Tor-amplitude $\le d$ and $X$ is $n$-representable over $S$, then $h_*(X)$ is $(n+d)$-representable over $T$.

As in classical algebraic geometry, there is a close relationship between Weil restrictions and mapping stacks (\ssecref{ssec:umaps}).
\thmref{thm:intro/weil} implies (and is equivalent to) the following algebraicity result for derived mapping stacks:

\begin{corX}\label{cor:intro/map}
  Let $X \to S$ and $Y \to S$ be morphisms of derived Artin stacks.
  Then the derived mapping stack $\uMaps_S(X,Y)$ is Artin in either of the following cases:
  \begin{thmlist}
    \item\label{item:intro/map/hfp}
    The morphism $X \to S$ is almost of finite presentation, finite, and of finite Tor-amplitude, and $Y \to S$ is locally homotopically of finite presentation.

    \item\label{item:intro/map/vcd}
    The morphism $X \to S$ is a virtual Cartier divisor, and $Y \to S$ is locally of finite type.
  \end{thmlist}
\end{corX}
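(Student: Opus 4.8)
The plan is to reduce both cases directly to the algebraicity of derived Weil restrictions established in \thmref{thm:intro/weil}, via the standard identification of a mapping stack with a Weil restriction of a base change. Write $p : X \to S$ for the structure morphism. I claim there is a canonical equivalence of stacks over $S$
\begin{equation*}
  \uMaps_S(X,Y) \;\simeq\; p_*\bigl(Y \times_S X \to X\bigr),
\end{equation*}
where $p_*$ denotes derived Weil restriction along $p$ and $Y \times_S X \to X$ is the second projection. To verify this, I would unwind both sides on $T$-points for an arbitrary $T \to S$: by the adjunction characterizing $p_*$ together with the universal property of the derived fibre product,
\begin{equation*}
  \Maps_S(T, p_*(Y \times_S X)) \simeq \Maps_X(T \times_S X,\, Y \times_S X) \simeq \Maps_S(T \times_S X, Y),
\end{equation*}
the last equivalence holding because a morphism over $X$ into $Y \times_S X$ is the same datum as a morphism to $Y$ over $S$ (the component to $X$ being pinned down as the projection). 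The right-hand side is by definition the groupoid of $T$-points of $\uMaps_S(X,Y)$, which proves the claim.

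With this identification in hand, I would apply \thmref{thm:intro/weil} with $h := p : X \to S$, so that the roles of ``$S$'' and ``$T$'' there are played by $X$ and $S$ respectively, to the stack $Y \times_S X$ over $X$; note $Y \times_S X$ is Artin as a fibre product of Artin stacks. It then remains only to transport the hypotheses. In case \itemref{item:intro/map/hfp}, the morphism $p$ is finite, of finite Tor-amplitude, and almost of finite presentation by assumption, so it meets the requirements on ``$h$'' in \thmref{thm:intro/weil}\itemref{item:intro/weil/hfp}; moreover $Y \times_S X \to X$ is locally homotopically of finite presentation, being the base change of $Y \to S$ along $p$, as this property is stable under base change. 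Hence \thmref{thm:intro/weil}\itemref{item:intro/weil/hfp} applies and $p_*(Y \times_S X)$ is Artin. In case \itemref{item:intro/map/vcd}, the morphism $p$ is a virtual Cartier divisor by assumption, and $Y \times_S X \to X$ is locally of finite type as the base change of $Y \to S$; so \thmref{thm:intro/weil}\itemref{item:intro/weil/vcd} applies and again $p_*(Y \times_S X)$ is Artin. In either case the displayed equivalence shows $\uMaps_S(X,Y)$ is Artin.

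The only genuine points to check are therefore (a) that the functorial identification above is compatible with the derived structures, which is formal once one has the adjunction defining derived Weil restriction and the universal property of derived fibre products, and (b) that the relevant properties --- locally homotopically of finite presentation, locally of finite type, finiteness, finite Tor-amplitude, and being a virtual Cartier divisor --- are preserved under base change, which is standard. I expect no substantial obstacle, since the entire content of the corollary has been absorbed into \thmref{thm:intro/weil}; the converse implication asserted in the statement (that the corollary conversely recovers \thmref{thm:intro/weil}) would run in the opposite direction, specializing $X$ and $Y$ so as to extract an arbitrary derived Weil restriction from a mapping stack.
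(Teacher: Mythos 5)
Your proposal is correct and matches the paper's own route: the identification $\uMaps_S(X,Y)\simeq p_*(Y\times_S X\to X)$ is exactly \propref{prop:map=weil} (which the paper proves by an equivalent diagram chase rather than your direct functor-of-points unwinding), and the deduction from \thmref{thm:intro/weil} via base-change stability of the hypotheses is precisely how Corollaries~\ref{cor:algmap} and \ref{cor:algmapvcd} are obtained. No gaps.
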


See Corollaries~\ref{cor:algmap} and \ref{cor:algmapvcd}.
It would be interesting to relax the condition that $X \to S$ is finite.
We expect that properness, or even formal properness in the sense of Halpern-Leistner--Preygel, is sufficient; see \conjref{conj:map} below.

\subsection{Applications}
\label{ssec:appl}

We now return to our motivating discussion on virtual structures of derived stacks.
As we will see, a general approach to extracting virtual structures is by considering incarnations of the normal deformation in the form of \emph{specialization maps} in various contexts.
We will also see some applications of derived blow-ups.

\subsubsection{Virtual fundamental classes and virtual pull-backs}
\label{sssec:vfc}

In \cite{virtual}, the normal deformation is used to give a conceptual construction of the virtual fundamental class of Kontsevich \cite{KontsevichEnumeration,BehrendFantechi,LiTian}.
Unlike the classical approaches, this construction is robust enough to apply in a variety of contexts, yielding Gromov--Witten-type invariants e.g. in complex analytic geometry \cite{PardonMNOP}, non-archimedean analytic geometry \cite{PortaYu} and symplectic geometry \cite{PardonRepr,Steffens}.

Let $f : X \to Y$ be a morphism between derived Artin stacks, locally of finite type over some base.
The morphism $Df : X \times \A^1 \to \Dl_{X/Y}$, which degenerates $f$ to the zero section $0 : X \to \Nl_{X/Y}$, manifests at the level of Borel--Moore homology\footnote{%
  For simplicity, assume $X$ and $Y$ are locally of finite type over $\bC$ and work in singular Borel--Moore homology with rational coefficients.
  If $X$ and $Y$ are locally of finite type over a field $k$ of characteristic coprime to $\ell$, we may alternatively work in $\ell$-adic étale cohomology.
  If $X$ and $Y$ are locally of finite type over an arbitrary field $k$, we may also work with motivic Borel--Moore homology ($\approx$ higher Chow groups).
} as a \emph{specialization} map
\begin{equation}\label{eq:sp}
  \sp_{X/Y} : \HBM_*(Y; \bQ) \to \HBM_*(\Nl_{X/Y}; \bQ).
\end{equation}
See \cite[Constr.~3.1]{virtual} and \cite[\S 2.1]{virloc}.
When $f$ is \emph{quasi-smooth}, i.e., the relative cotangent complex $\sL_{X/Y}$ is perfect of Tor-amplitude $\le 1$\footnote{
  If $X$ and $Y$ are Deligne--Mumford, or more generally $f$ is representable by DM stacks, then $\sL_{X/Y}$ is necessarily connective and quasi-smoothness is equivalent to $\sL_{X/Y}$ being of Tor-amplitude $[0,1]$.
  We refer to \cite[\S 2]{blowups} for some background on quasi-smoothness.
}, the normal bundle $\pi : \Nl_{X/Y} \to X$ is a vector bundle stack and we have the Thom isomorphism
\begin{equation}
  \pi^! : \HBM_{*}(X; \bQ) \to \HBM_{*-2\vd}(\Nl_{X/Y}; \bQ(-\vd)),
\end{equation}
where $\vd = \rk(\sL_{X/Y})$ is the relative virtual dimension of $f$.
The \emph{virtual pull-back} along $f$ is the canonical map
\begin{equation}
	f_\vir^! : \HBM_*(Y; \bQ)
	\xrightarrow{\sp_{X/Y}} \HBM_*(\Nl_{X/Y}; \bQ)
	\simeq \HBM_{*+2\vd}(X; \bQ(\vd)),
\end{equation}
see \cite[Constr.~3.3]{virtual}.
If $X$ is a quasi-smooth derived Artin stack over $Y=\pt$, the \emph{virtual fundamental class} of $X$ is the class\footnote{%
  Since Borel--Moore homology is insensitive to derived structures, this may be regarded as a class on the classical truncation $X_\cl$.
}
\begin{equation}
  [X]^\vir := f_\vir^!(1) \in \HBM_{2\vd}(X; \bQ(-\vd)).
\end{equation}

Our algebraicity result for the normal deformation is required here, since it is only for Artin stacks that one has the full six operations formalism.
In particular, this work fills in a gap in \cite{virtual}, where only a sketch of the proof of algebraicity (in the quasi-smooth case) was given.

\subsubsection{Virtual localization formula}

The normal deformation also allows a conceptual proof (and generalization) of the virtual localization formula of \cite{GraberPandharipande}, explained in \cite{virloc}.
Let $X$ be a quasi-smooth derived Deligne--Mumford stack with an action of a split torus $T$.
The fixed locus $X^T$ is then also quasi-smooth, but the inclusion $i : X^T \hook X$ is typically not; in particular, there is no virtual pull-back $i_\vir^!$.
Nevertheless, one can still use the derived specialization map $\sp_{X^T/X}$ \eqref{eq:sp} to construct a virtual pull-back $i_\vir^!$ in localized $T$-equivariant cohomology.
This turns out to be inverse to the push-forward $i_*$ up to multiplication by the Euler class of the normal bundle, and this immediately gives rise to the virtual localization formula computing integrals on $X$ in terms of integrals on $X^T$ (when $X$ is proper).

\subsubsection{Coherent duality}

Let $f : X \to Y$ be a quasi-smooth morphism of derived Artin stacks.
In \cite{cohdual} the normal deformation is used to construct a canonical isomorphism of quasi-coherent complexes
\begin{equation}
  f^!(\sO_Y) \simeq \det(\sL_{X/Y})[\vd]
\end{equation}
between the coherent dualizing complex of $f$ and the graded determinant of the relative cotangent complex.
This is a well-known folklore conjecture.
In the case where $f$ is a quasi-smooth closed immersion or a smooth separated representable morphism, it can be proven using the \emph{formal} counterpart to the normal deformation constructed by Gaitsgory and Rozenblyum (see \cite[Chap.~9, 7.3.2]{GaitsgoryRozenblyumII}, \cite[App.~B]{HalpernLeistnerDEquiv}).

Our construction is essentially a coherent analogue of \sssecref{sssec:vfc}.
We use the $\Gm$-equivariance of the degeneration $Df : X\times\A^1 \to \Dl_{X/Y}$, which yields a $\Gm$-equivariant invertible sheaf $(Df)^!(\sO_{\Dl_{X/Y}})$ on $X\times\A^1$ degenerating $f^!(\sO_Y)$ to $0^!(\sO_{\Nl_{X/Y}})$, and we then compute $0^!(\sO_{\Nl_{X/Y}}) \simeq \det(\sL_{X/Y})[\vd]$.

\subsubsection{Derived microlocalization}

Let $f : X \to Y$ be a morphism of locally of finite type derived Artin stacks over $\bC$.
On \inftyCats of sheaves of $\bQ$-vector spaces, the normal deformation manifests as a \emph{specialization} functor
\begin{equation}\label{eq:sp2}
  \sp_{X/Y} : \Shv(Y) \to \Shv^{\Gm}(\Nl_{X/Y}),
\end{equation}
categorifying \eqref{eq:sp}.
This is defined by nearby cycles along the $\Gm$-equivariant morphism $t : \Dl_{X/Y} \to \A^1$ (see \cite{dimredcoha,Schefers}) and lands in $\Gm$-equivariant sheaves on the normal bundle.
Applying a derived Fourier--Sato transform yields the \emph{microlocalization} functor of \cite{dimredcoha},
\begin{equation}\label{eq:mu}
  \mu_{X/Y} : \Shv(Y) \to \Shv^{\Gm}(\Nl^*_{X/Y}),
\end{equation}
landing in $\Gm$-equivariant sheaves on the conormal bundle $\Nl^*_{X/Y}$.
In the case of closed immersions of smooth schemes, these constructions were developed by Kashiwara--Schapira in \cite{KashiwaraSchapira}.

Microlocalization is shown in \cite{dimredcoha} to recover the cohomological Donaldson--Thomas theory of local surfaces (discussed in \sssecref{sssec:intro/CohDT} below), and moreover to upgrade 2d cohomological Hall algebras of surfaces \cite{KapranovVasserot} to 3d cohomological Hall algebras of local surfaces.

\subsubsection{\texorpdfstring{$(-1)$}{(-1)}--shifted microlocal sheaf theory}
\label{sssec:intro/CohDT}

Let $X$ be a derived Artin stack locally of finite type over $\bC$.
Suppose that $X$ admits a \emph{$(-1)$-shifted symplectic structure} in the sense of \cite{PTVV}, inducing in particular an isomorphism $\Tl_X \simeq \Tl^*_X[-1]$ of derived vector bundles over $X$.
One introduces in this situation the perverse sheaf of vanishing cycles $\phi_X$ on $X$, assuming the existence of certain orientation data for $X$: by the $(-1)$-shifted Darboux theorem, $X$ admits a local presentation as the derived critical locus of a regular function $f$ on some ambient smooth stack, and locally $\phi_X$ is defined as (a twist of) the vanishing cycles sheaf $\phi_f$ (see \cite{BehrendDT,KontsevichSoibelmanCoHA,BBBJ, BBDJS,KiemLiPerverse}).
For example, if $X$ is the derived moduli stack of compactly supported coherent sheaves on a Calabi--Yau threefold $Y$, then $\phi_X$ is a categorification of the Donaldson--Thomas invariants of $Y$.
We refer to \cite{Kinjo} for an introduction to some aspects of this subject.

A fundamental example is the $(-1)$-shifted cotangent bundle $X = \Tl^*_M[-1]$ for a locally hfp derived Artin stack $M$, equipped with its canonical $(-1)$-shifted symplectic structure and orientation.
In \cite{pervpull2}, normal deformation is used to construct a \emph{dimensional reduction} isomorphism
\begin{equation}\label{eq:dimred}
  \pi_!(\phi_{\Tl^*_M[-1]}) \simeq \bQ_M[\vd(M)],
\end{equation}
through which $\phi_M$ may be regarded as a $(-1)$-shifted microlocalization of the constant sheaf $\bQ_M$.

In forthcoming work of the second-named author with T.~Kinjo, H.~Park, and P.~Safronov, the normal deformation is also used to construct \emph{Lagrangian virtual classes} in cohomological DT theory, conjectured by D.~Joyce.
Let $i : L \to X$ be a $(-1)$-shifted Lagrangian in the sense of \cite{PTVV}, inducing in particular an isomorphism $\Nl_{L/X} \simeq \Tl^*_L[-1]$ of derived vector bundles over $L$.
When $i$ is appropriately oriented, there exists a canonical morphism of the form
\begin{equation}\label{eq:Joyce}
  i_!(\bQ_L[\vd(L)]) \to \phi_X
\end{equation}
in $\Shv(X)$.
This construction generalizes Konstevich's virtual classes \sssecref{sssec:vfc}, the virtual classes of \cite{BorisovJoyce}, the virtual cycles underlying invariants of gauged linear sigma models as in \cite{FaveroKim,PolishchukVaintrob}, and the virtual pull-backs used in the construction of 3d cohomological Hall algebras in \cite{dimredcoha,KinjoParkSafronov}.
Conceptually, one may regard \eqref{eq:Joyce} as a $(-1)$-shifted microlocalization of the virtual pull-back for quasi-smooth morphisms \sssecref{sssec:vfc}; the latter can be regarded as a morphism $f_!(\bQ_X[2\vd(X/Y)]) \to \bQ_Y$ in $\Shv(Y)$ for $f : X \to Y$ quasi-smooth.

The construction of \eqref{eq:Joyce} is formally analogous to that of \sssecref{sssec:vfc}.
According to \cite{CalaqueSafronov}, the normal deformation $\Dl_{L/X}\to\A^1$ inherits a relative exact $(-1)$-shifted symplectic structure, and $Di : L\times\A^1 \to \Dl_{L/X}$ is a relative Lagrangian which is a degeneration of the Lagrangian $i : L \to X$ to the Lagrangian $0 : L \to \Nl_{L/X} \simeq \Tl^*_L[-1]$.
With some work, building on \cite{pervpull,pervpull2}, one extracts from this geometry a cospecialization morphism $\phi_{X} \to i_!\pi_!(\phi_{\Tl^*_L[-1]})$, where the target is identified with $\bQ_L[\vd(L)]$ by dimensional reduction \eqref{eq:dimred}.

The second-named author plans to give a more conceptual description of the vanishing cycles sheaf $\phi_X$ by using the normal deformation to define an analogue of specialization \eqref{eq:sp}, \eqref{eq:sp2} at the level of perverse microsheaves on $0$-shifted symplectic stacks (see \cite[Conj.~B]{pervpull}).

\subsubsection{Reduction of stabilizers for derived Artin stacks}

Given an Artin stack $\msf{X}$ satisfying mild technical hypotheses, there are two natural approaches to constructing a canonical \emph{stabilizer reduction} for $\msf{X}$, i.e., a birational morphism $\widetilde{\msf{X}} \to \msf{X}$ such that the maximal stabilizer dimension of the points of $\widetilde{\msf{X}}$ is strictly less than those of $\msf{X}$.
\begin{enumerate}
  \item\label{item:xmaxER}
  The construction of \cite{EdidinRydh} proceeds by blowing up along the locus of points of maximal stabilizer dimension, discarding the unstable locus, and iterating repeatedly.

  \item\label{item:xmaxKLS}
  The construction of \cite{KiemLiSavvas,SavvasDT} proceeds by taking a so-called \emph{intrinsic blow-up} along the locus of points of maximal stabilizer dimension, discarding the unstable locus, and iterating repeatedly.
\end{enumerate}

The constructions agree when $\msf{X}$ is smooth.
In general, it is shown in \cite{HekkingRydhSavvas} that the discrepancy can be understood as follows.
If $X$ is a \emph{derived} Artin stack, one may imitate construction~\itemref{item:xmaxER} using \emph{derived} blow-ups.
This yields a derived stack $\widetilde{X}$ whose classical truncation $\widetilde{X}_\cl$ is identified with construction~\itemref{item:xmaxKLS}.
The relation between \itemref{item:xmaxER} and \itemref{item:xmaxKLS} is then entirely explained by \propref{prop:compbl}, which explicitly identifies the classical blow-up as a closed substack of the derived one.

\sssec{Further applications of derived blow-ups}

In the quasi-smooth case, derived blow-ups have been applied to great effect in the context of algebraic K-theory \cite{KerzStrunkTamme,kblow,milnor} and algebraic bordism theory \cite{AnnalaChern,AnnalaSpivak}, for example.
The non-quasi-smooth case we consider here has already seen surprising applications in recent work of Yu~Zhao \cite{ZhaoNest,ZhaoVan}.

\subsection{Related works}

\subsubsection{Algebraicity results of Halpern-Leistner--Preygel}

Our algebraicity results \thmref{thm:intro/weil} and \corref{cor:intro/map} have some overlap with work of D.~Halpern-Leistner and A.~Preygel \cite{HalpernLeistnerPreygel}.
Namely, they prove in \cite[Thm.~5.1.1]{HalpernLeistnerPreygel} a result similar to \corref{cor:intro/map} but under different assumptions.
Instead of imposing finiteness on $X \to S$, they impose the much more general notion of formal properness (which includes non-representable morphisms such as $B\bG_{m,S} \to S$).
However, in exchange, their proof only applies to $1$-Artin stacks (whereas ours works for \emph{higher} Artin stacks).
They also need to impose certain technical hypotheses: $Y$ has affine diagonal, and $S$ is locally noetherian with a smooth cover by an affine $\Spec(R)$ where $\pi_0(R)$ is a G-ring admitting a dualizing complex (see \cite[Tags~\spref{07GH}, \spref{0A7A}]{Stacks}).

The proofs are also quite different.
Halpern-Leistner and Preygel appeal to the Artin--Lurie representability criteria, while our argument is much more low-tech, using local structural results to construct explicit atlases.

We expect the following statement, generalizing \corref{cor:intro/map} in one direction and \cite[Thm.~5.1.1]{HalpernLeistnerPreygel} in another:

\begin{conjX}\label{conj:map}
  Let $S$ be as above and let $X$ and $Y$ be derived Artin stacks over $S$.
  If $X \to S$ is formally proper and of finite Tor-amplitude, and $Y \to S$ is locally homotopically of finite presentation, then $\uMaps_S(X, Y)$ is derived Artin.
\end{conjX}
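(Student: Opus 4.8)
The natural strategy is to invoke the Artin--Lurie representability criterion \cite{LurieSAG} rather than the explicit-atlas method behind \thmref{thm:intro/weil}, for the simple reason that formal properness is a global cohomological finiteness condition and does \emph{not} localize on $X$ the way finiteness does; there is no finite atlas of $X$ to pull back and bootstrap from. Writing $p \colon X \to S$ for the structure morphism, one regards $\uMaps_S(X,Y)$ as the functor sending an $S$-scheme $T$ to the \inftyGrpd of morphisms $X \fibprod_S T \to Y$ over $S$, and the plan is to check the hypotheses of the representability theorem in turn: fppf descent, nilcompleteness, infinitesimal cohesiveness, integrability, existence of a (connective) almost perfect cotangent complex, and local finite presentation of the classical truncation.

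The deformation theory is the heart of the matter. At a point classified by $g \colon X_T \to Y$ (with $X_T := X \fibprod_S T$ and structure map $p_T \colon X_T \to T$), the relative tangent complex of $\uMaps_S(X,Y)$ over $S$ is the pushforward $p_{T,*}\, g^* \Tl_{Y/S}$. Here three hypotheses conspire: since $Y \to S$ is locally hfp, $\Tl_{Y/S}$ and hence $g^*\Tl_{Y/S}$ are perfect; since $p$ is formally proper, the defining coherence-preservation property of formal properness \cite{HalpernLeistnerPreygel} guarantees that $p_{T,*}$ carries this to an almost perfect complex; and since $p$ is of finite Tor-amplitude, say Tor-amplitude $\le d$, the amplitude of $p_{T,*}$ is bounded, so the cotangent complex is in fact perfect of amplitude controlled by $d$. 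This last point is exactly what should deliver the quantitative refinement that $\uMaps_S(X,Y)$ is $(n+d)$-Artin when $Y \to S$ is $n$-representable, paralleling the amplitude bookkeeping recorded after \thmref{thm:intro/weil}.

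Descent, nilcompleteness, and infinitesimal cohesiveness are formal, being inherited from the corresponding properties of $\QCoh$ and of the target $Y$. The two genuinely hard inputs are \emph{integrability} and the \emph{classical base case}. Integrability is a derived formal-GAGA / Grothendieck-existence statement: given an adically complete ring $R = \lim R/I^n$, one must show that a compatible system of maps out of the thickenings $X_{R/I^n}$ to $Y$ algebraizes to a single map $X_R \to Y$. This is precisely where formal properness of $p$ is meant to do the work, and precisely where the argument of \cite[Thm.~5.1.1]{HalpernLeistnerPreygel} must be upgraded from their $1$-Artin setting to higher Artin targets $Y$. For the classical truncation one needs the ordinary Hom-stack attached to $X_\cl \to S_\cl$ and $Y_\cl$ to be a classical Artin stack locally of finite presentation; since formal properness specializes to honest properness on classical truncations, one can appeal here to the established classical algebraicity results for Hom-stacks along proper morphisms.

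I expect the main obstacle to be reconciling formal properness with \emph{higher} Artin-ness of the target $Y$. The atlas construction of this paper is simply unavailable, forcing one through the representability criterion; but the only known execution of that criterion in this context, \cite[Thm.~5.1.1]{HalpernLeistnerPreygel}, is confined to $1$-Artin targets, and the crux is to extend the integrability step — a derived formal-GAGA theorem for formally proper morphisms with higher-stacky targets — to arbitrary $n$. A secondary difficulty is verifying that the amplitude bound coming from finite Tor-amplitude interacts correctly with the higher-stacky structure, so that the obstruction theory is perfect of the expected bounded amplitude and one obtains genuine $(n+d)$-Artin-ness rather than merely an almost perfect cotangent complex of uncontrolled level. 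Overcoming these would simultaneously generalize \corref{cor:intro/map} beyond finiteness and \cite[Thm.~5.1.1]{HalpernLeistnerPreygel} beyond $1$-Artin stacks, as the conjecture predicts.
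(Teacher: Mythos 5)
This statement is \conjref{conj:map}: the paper records it as an open conjecture and offers no proof, so there is no argument of the authors to compare yours against. Your text is a strategy outline rather than a proof, and by your own admission the decisive steps are left open. Concretely: (1) the integrability clause of the Artin--Lurie criterion --- a derived Grothendieck existence theorem for formally proper $X \to S$ with a \emph{higher} Artin target $Y$ --- is exactly the content that is missing from the literature and exactly why the statement is a conjecture; asserting that it ``must be upgraded'' from the $1$-Artin case of Halpern-Leistner--Preygel is naming the gap, not closing it. (2) Your deformation-theoretic bookkeeping is also not quite right as stated: finite Tor-amplitude of $p$ controls $p^*$ and hence the \emph{connectivity} of the left adjoint $p_{T,\sharp}$ applied to $g^*\sL_{Y/S}$ (cf.\ \lemref{lem:fortuitist}, where the coconnective bound comes from cohomological dimension, not Tor-amplitude), so what you get is an almost perfect, eventually connective cotangent complex --- which suffices for eventual representability --- rather than a perfect one ``of amplitude controlled by $d$''. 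For non-representable formally proper morphisms there is no finite-atlas bootstrap and no a priori bound on cohomological dimension, so the claimed $(n+d)$-representability does not follow from the ingredients you list.

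That said, your diagnosis of the landscape is accurate and matches the authors' own remarks: the explicit-atlas method of Sections~\ref{sec:alg1}--\ref{sec:alg2} genuinely cannot work here because formal properness does not localize on $X$, the representability criterion is the only plausible route, and the two obstacles you single out (higher-stacky integrability, and the interaction of the obstruction theory with the higher Artin structure) are the right ones. So the proposal is a reasonable research plan for attacking the conjecture, but it does not constitute a proof, and the paper contains none either.
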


\subsubsection{Normal deformations}
\label{sssec:GR}

The first incarnation of deformation to the normal cone in derived algebraic geometry was in work of Ciocan-Fontantine--Kapranov (see \cite[\S 1]{CFKdg}), in the more restrictive context of dg-schemes\footnote{%
  The notion of dg-scheme is defined over a field $k$ of characteristic zero.
  Any dg-scheme over $k$ gives rise to a derived $k$-scheme equipped with a closed embedding into an ambient smooth $k$-scheme.
  Morphisms between dg-schemes are more rigid than morphisms between their associated derived schemes.
  In other words, there is a functor from dg-schemes over $k$ to derived $k$-schemes, which is neither essentially surjective nor fully faithful.
}.
They applied their deformation to the normal cone in the quasi-smooth case to construct virtual fundamental classes and prove virtual Grothendieck--Riemann--Roch theorems, in particular comparing the virtual cycles of \cite{KontsevichEnumeration} and \cite{BehrendFantechi}.

For quasi-smooth closed immersions of derived stacks, the second- and third-named authors defined normal deformations using derived blow-ups (see \cite[Thm.~4.1.13]{blowups}).
An extension to the case of general quasi-smooth morphisms was sketched in \cite[\S 1.4]{virtual}, using derived Weil restrictions, and applied to develop a theory of virtual fundamental classes for derived Artin stacks and extend the Kontsevich formula to that generality (see \cite[(0.4)]{virtual}).
Our work here in particular provides the necessary details on normal deformations and their algebraicity.

In the first-named author's thesis \cite{Hekking}, he extended derived blow-ups to the non-quasi-smooth case and used this to define normal deformations for arbitrary closed immersions, also using derived Weil restrictions.
We will generalize this here to the case of arbitrary morphisms of derived stacks.
We will also see that, even though concepts like ideals and adic filtrations are \emph{a priori} subtle in derived algebraic geometry, one can now reverse the logic and define adic filtrations and infinitesimal neighbourhoods using the normal deformation (see \ssecref{ssec:inf}).
In forthcoming work of the first-named author with Z.~Gardner, this approach will be linked back to the theory of Smith ideals. It will be shown that all known approaches to adic filtrations in derived geometry agree, and likewise for all known notions of completeness under suitable finiteness assumptions.

Another type of deformation to the normal cone in derived algebraic geometry was constructed previously by D.~Gaitsgory and N.~Rozenblyum in \cite{GaitsgoryRozenblyumII}.
We adopt the terminology of \emph{op.\ cit}.\ and in particular work in characteristic zero.\footnote{%
  In the case of schemes, T.~Moulinos has extended the Gaitsgory--Rozenblyum construction to general base fields in \cite[\S 5]{Moulinos} (cf. \sssecref{sssec:general}).
}
Given a nil-isomorphism $\mfr{f} : \mfr{X} \to \mfr{Y}$ of derived prestacks which are laft and admit deformation theories, they construct in \cite[Chap.~9, \S 2]{GaitsgoryRozenblyumII} a derived stack $\mfr{Y}_{\mrm{scaled}}$.
This may be regarded as a formal analogue of our normal deformation, in the following sense: when $\mfr{X} = X$ is a derived Artin stack and $\mfr{f} : \mfr{X} \to \mfr{Y}$ is the formal completion $X \to Y^\wedge_X$ of a derived Artin stack $Y$ along a morphism $f : X \to Y$, then $\mfr{Y}_{\mrm{scaled}}$ is the formal completion of $\Dl_{X/Y}$ along the canonical morphism $D f : X \times \A^1 \to \Dl_{X/Y}$.
This is proven in forthcoming work of Christopher Brav and Nick Rozenblyum, who also give an alternative construction of $\mfr{Y}_{\mrm{scaled}}$ via Weil restriction.
See also \cite[Rem.~5.9]{CalaqueSafronov} for the relationship between our construction and the groupoid used to define $\mfr{Y}_{\mrm{scaled}}$ in \cite[Chap.~9, Eq.~(2.2)]{GaitsgoryRozenblyumII}.
For $Y=\pt$ it is well-known that the latter agrees with Simpson's construction $X_{\mrm{Hod}}$ (see \cite[\S\S 4--5]{Simpson}, \cite[\S 2.3]{BhattLect}).
See also \remref{rem:GadR} for some discussion of the ring stack $\bG_a^{\mrm{dR},+}$ of \cite[Constr.~2.3.4]{BhattLect}.
We also note that our construction of infinitesimal neighbourhoods using normal deformations in \ssecref{ssec:inf} is similar to the construction by Gaitsgory and Rozenblyum in \cite[Chap.~9, Sec.~5]{GaitsgoryRozenblyumII}).

Recently, H.~Park and Calaque--Safronov have studied normal deformations in the context of shifted symplectic geometry \cite{PTVV}:
\begin{enumerate}
  \item 
  If $\pi : M \to B$ is an $n$-shifted symplectic fibration equipped with a \emph{locked} structure, then the morphism $D\pi : M\times\A^1 \to \Dl_{M/B}$ is again a locked $n$-shifted symplectic fibration.
  See \cite[Cor.~5.1.2]{ParkLocked}.
  
  \item\label{item:Noseren}
  If $M$ is an $n$-shifted symplectic derived stack and $i : L \to M$ is a morphism, then the morphism $\Dl_{L/M} \to \A^1$ is an $n$-shifted symplectic fibration.
  If $i$ is furthermore Lagrangian, then also the morphism $Di : L\times\A^1 \to \Dl_{L/M}$ is Lagrangian relative to $\A^1$.
  See \cite[Cor.~5.13]{CalaqueSafronov}.
\end{enumerate}
The notion of \emph{locked form}, defined by Park using regular functions on the normal deformation, is a stronger version of the notion of \emph{closed form} introduced in \cite{PTVV} which is required in the formulation of the relative shifted Darboux theorem (see \cite[Thm.~B]{ParkLocked}).

\subsubsection{The HKR filtration}
\label{sssec:HKR}

Let $A$ be an animated commutative ring and $B$ an animated $A$-algebra.
Write $S = \Spec(A)$, $X = \Spec(B)$, and denote by
\[ L_S(X) := \uMaps_S(S^1,X) \simeq X \fibprod_{X \fibprod_S X} X \]
the relative derived loop space.
The normal deformation along the ``constant loops'' map $X \to L_S(X)$ is a derived stack $\cD := \cD_{X/L_S(X)}$ over $[\A^1/\bG_m]$ which degenerates $L_S(X)$ to the $(-1)$-shifted tangent bundle $\Tl_{X/S}[-1]$.
Pushing forward the structure sheaf $\sO_\cD$ along $v : \cD \to S \times [\A^1/\bG_m]$ yields a filtered quasi-coherent complex on $S$, i.e., a filtered animated $A$-algebra, whose underlying animated $A$-algebra is the Hochschild homology $\on{HH}(B/A)$ (= global sections of $L_S(X)$) and whose associated graded is $\Sym^*_B(\sL_{B/A}[1]) \simeq \bigoplus_{n\ge 0} \Lambda_B^n \sL_{B/A}[n]$ (= global sections of $\Tl_{X/S}[-1]$).
In particular, the normal deformation $\cD$ may be regarded as a geometric incarnation of the HKR filtration on Hochschild homology, which makes sense even for an arbitrary morphism $X \to S$ of derived schemes or Artin stacks.
The precise relationship with the HKR filtration will be investigated in forthcoming work of the first-named author.
See also \cite{Moulinos}.

\subsubsection{Generalizations}
\label{sssec:general}

The fact that the normal deformation can be realized via Weil restriction along the zero section in $\A^1$ implies that it makes sense in any reasonable geometry.
Following this observation, Porta--Yu and P.~Steffens have studied analogues of the normal deformation in the context of derived $k$-analytic geometry (where $k$ is the field of complex numbers or a non-archimedean field) and derived $C^\infty$-geometry (see \cite[Thm.~4.3]{PortaYu} and \cite[Ex.~4.2.4.3]{Steffens}).
P.~Scholze has also studied this construction in the context of liquid mathematics (with an intended application to virtual fundamental classes of moduli spaces of pseudo-holomorphic curves in symplectic geometry), see \cite{ScholzeLect24}.
As another example, let us note that the normal deformation also makes sense in the context of spectral algebraic geometry, which is built out of \emph{connective $\Einfty$-ring spectra} rather than animated commutative rings (see \cite{LurieSAG}), \footnote{%
  In the context of spectral algebraic geometry, there are two versions of $\A^1$ (and $\Gm$): the intrinsic or ``smooth'' affine line and the ``flat'' affine line, respectively (see e.g. \cite[\S 1.1]{localization}).
  The normal deformation formed using the smooth version defines a degeneration whose special fibre is the $1$-shifted relative tangent bundle, while it is the flat version for which quasi-coherent sheaves on $\A^1/\bG_m$ can be interpreted in terms of filtrations (see \cite[Thm.~4.8]{Moulinos}).
  The normal deformation is thus most powerful in the setting of derived algebraic geometry, where the smooth affine line has been collapsed into the flat (= classical) one.
}
The first-named author has studied normal deformations and derived blow-ups for stacks internal to arbitrary derived algebraic contexts, in joint work with O.~Ben-Bassat (see \cite{BenBassatHekking}).
He intends to pursue a generalization of the construction of virtual fundamental classes described in \sssecref{sssec:vfc} to the abstract setup; this requires an algebraicity result for the normal deformation as in \thmref{thm:Dart}.
In forthcoming joint work with O.~Ben-Bassat and J.~Kelly, the ideas discussed in \sssecref{sssec:HKR} will be used to give a uniform treatment of HKR results in generalized geometries.

\subsection{Acknowledgments}

AAK would like to thank Tasuki Kinjo, Hyeonjun Park, and Pavel Safronov for comments, questions, and suggestions related to this paper.
He also thanks Bhargav Bhatt for suggesting the description via ring stacks in \ssecref{ssec:ringstk}, Chris Brav and Nick Rozenblyum for discussions about their forthcoming work mentioned in \sssecref{sssec:GR}, and Tasos Moulinos for a discussion about his related work on formal deformation to the normal cone and the HKR filtration.

AAK acknowledges support from the grants AS-CDA-112-M01 (Academia Sinica), NSTC 110-2115-M-001-016-MY3, and NSTC 112-2628-M-001-0062030.
JH acknowledges support from the grant 2021.0287 (Knut and Alice Wallenberg Foundation) and 
the SFB 1085: Higher Invariants (project number 224262486).
JH and DR acknowledge support from the G\"oran Gustafsson Foundation (KTH/UU) and
the G\"oran Gustafsson Foundation for Research in Natural Sciences and Medicine.


\section{Conventions and notation}

\subsection{Derived stacks}
\label{ssec:convent/dstk}

We refer to \cite[\S 5.1]{LurieDAG}, \cite[\S 5]{ToenSimp}, \cite[Chap.~2]{GaitsgoryRozenblyumI}, or \cite[\S 8]{KhanNCTS} for background on derived algebraic geometry.

A \emph{derived stack} is an étale sheaf of \inftyGrpds on the \inftyCat of derived schemes.
Any derived stack $X$ is determined by its restriction to affines, i.e., by the functor $R \mapsto X(R) := X(\Spec(R))$ on animated commutative rings.
Unless otherwise specified, colimits and limits (in particular fibred products) will always be taken in the \inftyCat $\dStk$ of derived stacks.
We write $\pt := \Spec(\Z)$ for the terminal object.
We write $\dStk_{/X}$ for the \inftyCat of derived stacks over a fixed derived stack $X$.

Our specific conventions on algebraicity follow \cite[\S 8]{KhanNCTS}.
We give a brief summary for the reader's convenience:

A derived stack $X$ is \emph{0-Artin}, or a \emph{derived algebraic space}, if it has schematic $(-1)$-truncated diagonal and there exists a derived scheme $U$ and a morphism $U \twoheadrightarrow X$ which is étale and surjective (i.e., for any derived scheme $V$ over $X$, the fibre $U \fibprod_X V \twoheadrightarrow V$ is étale and surjective).

For $n>0$, a morphism $f : X \to Y$ is \emph{$(n-1)$-representable} if for every derived scheme $V$ and every morphism $V \to Y$, the fibred product $X \fibprod_Y V$ is $(n-1)$-Artin.
An $(n-1)$-representable morphism $f : X \to Y$ is \emph{smooth} or \emph{smooth surjective}, if for every derived scheme $V$ and morphism $V \to Y$, and every derived scheme $U$ and smooth surjection $U \to X \fibprod_Y V$, the composite $U \to X \fibprod_Y V \to V$ has the respective property.\footnote{For $n>1$ this works because $U \to X \fibprod_Y V$ is automatically $(n-2)$-representable. For $n=1$ one defines the notion separately in the expected way.}

A derived stack $X$ is \emph{$n$-Artin} if it has $(n-1)$-representable diagonal and there exists a derived scheme $U$ and a smooth surjective (automatically $(n-1)$-representable) morphism $U \twoheadrightarrow X$.
A derived stack is \emph{Artin} if it is $n$-Artin for some $n$, and a morphism is \emph{eventually representable} if it is $n$-representable for some $n$.

\subsection{Quasi-coherent complexes}
\label{ssec:convent/qcoh}

For background on quasi-coherent, pseudo-coherent, and perfect complexes on derived stacks, see \cite[\S 5.2]{LurieDAG}, \cite[Chap.~3]{GaitsgoryRozenblyumI}, \cite[\S 8]{KhanNCTS}, or \cite[\S 1]{kstack}.

We recall that for $X = \Spec(R)$ affine, $\QCoh(X)$ is equivalent to the derived \inftyCat of $R$-modules.
An $R$-module $M$ is \emph{perfect} if it can be built out of $R$ under finite co/limits and direct summands.
It is \emph{$n$-pseudo-coherent} if there exists a perfect $R$-module $M'$ and a morphism $M' \to M$ with $n$-connective fibre; moreover, $M'$ can be taken to be of Tor-amplitude $\le n$ (see \cite[Cor.~2.7.2.2]{LurieSAG}).
We say that $M$ is \emph{pseudo-coherent} if it is $n$-pseudo-coherent for every integer $n$.\footnote{%
	In \cite{LurieHA}, pseudo-coherent is called \emph{almost perfect}.
	See \cite[Rem.~2.7.0.2, Cor.~2.7.2.2]{LurieSAG} for the comparison.
}
A pseudo-coherent $R$-module is perfect if and only if it is of finite Tor-amplitude (see \cite[Prop.~7.2.4.23]{LurieHA}).

For a derived stack $X$, the stable \inftyCat $\QCoh(X)$ of quasi-coherent complexes on $X$ is the limit
\begin{equation*}
	\QCoh(X) \simeq \lim_{(T,t)} \on{D}(\Gamma(T,\sO_T))
\end{equation*}
over pairs $(T,t : T \to X)$ where $T$ is an affine derived scheme and $\on{D}(\Gamma(T,\sO_T))$ is the unbounded derived $\infty$-category of the animated ring $\Gamma(T,\sO_T)$.
A quasi-coherent complex $\sF$ on $X$ is perfect, resp. pseudo-coherent, if each restriction $\sF|_T = t^*(\sF)$ corresponds to a perfect, resp. pseudo-coherent $\sO_T$-module.
We write $\Perf(X) \sub \QCoh(X)$ for the full subcategory of perfect complexes.

We say $\sF \in \QCoh(X)$ is \emph{$n$-connective}, resp. \emph{locally eventually connective} if each $\sF|_T$ is $n$-connective (resp. eventually connective) as an $\sO_T$-module for every $(T,t)$ as above.
We say it is \emph{eventually connective} if it is $n$-connective for some integer $n$.
We write $\QCoh(X)_{\ge 0} \sub \QCoh(X)$ for the full subcategory of connective (= $0$-connective) quasi-coherent complexes.

The \emph{rank} of a perfect complex is its Euler characteristic.
An \emph{invertible sheaf} on $X$ is a locally free sheaf of rank one, i.e., a perfect complex of Tor-amplitude $[0,0]$ and of rank one.
These are the $\otimes$-invertible objects of $\QCoh(X)_{\ge 0}$ as in \cite[Prop.~2.9.4.2]{LurieSAG}.
(Contrast with \cite[Def.~2.9.5.1]{LurieSAG}.)

\subsection{Finiteness conditions}

We say that a morphism of derived stacks $f : X \to Y$ is \emph{locally of finite type} if the induced morphism on classical truncations $f_\cl : X_\cl \to Y_\cl$ is locally of finite type.
It is \emph{locally homotopically of finite presentation} if for any cofiltered system of affines $\{T_\alpha\}$ over $Y$ with limit $T$, the canonical map
\begin{equation}\label{eq:Ovawuoh}
	\colim_\alpha \Maps_Y(T_\alpha, X) \to \Maps_Y(T, X)
\end{equation}
is invertible.\footnote{%
	In \cite{LurieSAG}, the term \emph{locally of finite presentation} is used instead.
	We prefer not to use this terminology because, when restricted to classical schemes or stacks, this condition is much stronger than the classical notion of finite presentation.
	See \cite[Rem.~8.7.5, Thm.~8.7.6]{KhanNCTS}.
}
Equivalently, for every affine $V$ over $Y$, the base change $X \fibprod_Y V$ preserves filtered colimits when regarded as a functor from animated commutative $\sO_V$-algebras to \inftyGrpds.
For $m\geq 0$, it is \emph{locally of finite presentation to order $m$} if \eqref{eq:Ovawuoh} is invertible for every cofiltered system of affines $\{T_\alpha\}$ over $Y$ where each $\sO_{T_\alpha}$ is $m$-truncated and the transition maps induce monomorphisms on $\pi_m$. In particular,
locally of finite type is equivalent to locally of finite presentation to order $0$.
It is \emph{locally almost of finite presentation} if it is locally of finite presentation to order $m$ for all $m \geq 0$,
or equivalently when every base change $X \fibprod_Y V$ to an affine $V$ preserves filtered colimits when regarded as a functor from $m$-truncated animated commutative rings for every $m$ \cite[\S 17.4.1]{LurieSAG}.
We will usually use the abbreviations \emph{locally hfp}, \emph{locally fp to order $m$}, and \emph{locally afp} for the last three conditions, respectively.

\subsection{Coconnective and almost truncated stacks}
\label{ssec:convent/trunc}

Following \cite[Chap.~2, \S 1.3]{GaitsgoryRozenblyumI}, we define coconnectivity for derived stacks as follows.
An affine derived scheme $X = \Spec(R)$ is \emph{$k$-coconnective} if the animated commutative ring $R$ is $k$-truncated.
A derived stack $X$ is \emph{$k$-coconnective} if it belongs to the full subcategory generated under colimits by $k$-coconnective affines.
It is \emph{eventually coconnective} if it is $k$-coconnective for some $k\ge 0$.

We say that a derived stack $X$ is \emph{almost $n$-truncated}\footnote{%
	We warn the reader that derived stacks are rarely $n$-truncated as objects of the \inftyCat $\dStk$ (in the sense of \cite[\S 5.5.6]{LurieHTT}), even when they are almost $n$-truncated.
} if it sends $k$-truncated animated rings $R$ to $(n+k)$-truncated \inftyGrpds $X(R)$.\footnote{%
	This implies more generally that the \inftyGrpd $\Maps(Y, X)$ is $(n+k)$-truncated for any $k$-coconnective derived stack $Y$.
	Indeed, write $Y$ as a colimit of $k$-coconnective affines and observe that $(n+k)$-truncatedness of \inftyGrpds is stable under limits.
}
We say that a morphism $f : X \to Y$ is \emph{almost $n$-truncated} if for every derived scheme $V$ over $Y$, the fibre $X\fibprod_Y V$ is almost $n$-truncated.

Any $n$-representable morphism is almost $n$-truncated; in particular, any $n$-Artin $X$ is almost $n$-truncated (see \cite[Cor.~4.3.4]{GaitsgoryRozenblyumI}).
Conversely, if $X$ is $m$-Artin for some $m\ge n$, then it is $n$-Artin if and only if it is almost $n$-truncated.\footnote{%
	One reduces inductively to the fact that a derived $1$-Artin stack is $0$-Artin (a derived algebraic space) if and only if its classical truncation is, hence if and only if it is almost $0$-truncated (see e.g. \cite[Tag~\spref{04SZ}]{Stacks}).
}

\subsection{Cotangent complexes}
\label{ssec:convent/cot}

A \emph{cotangent complex} for a morphism $f : X \to Y$ of derived stacks is a locally eventually connective quasi-coherent complex $\sL_{X/Y} \in \QCoh(X)$, determined uniquely by the following universal property.

For any morphism $u : U \to X$ and connective quasi-coherent complex $\sF \in \QCoh(U)_{\ge 0}$, let $\on{Der}_U(X/Y; \sF)$ denote the \inftyGrpd of derivations of $f$ at $U$ with values in $\sF$, i.e., dashed arrows making the solid arrow diagram commute:
\begin{equation*}
	\begin{tikzcd}
		U \ar[d] \ar[r, "u"] & X \ar[d, "f"] \\
		U[\sF] \ar[r] \ar[ru, dashed] & Y
	\end{tikzcd}
\end{equation*}
where $U \hook U[\sF] := \uSpec_U(\sO_U \oplus \sF)$ denotes the trivial square-zero extension.
Then there are isomorphisms
\begin{equation}\label{eq:derivations}
	\Maps_{\QCoh(U)}(u^*\sL_{X/Y}, \sF)
	\simeq \on{Der}_U(X/Y; \sF),
\end{equation}
functorial in $u : U \to X$ and $\sF \in \QCoh(U)_{\ge 0}$.

Equivalently, consider the cartesian fibration
\[ \QCoh_{/Y} \to \dStk_{/Y} \]
corresponding to the presheaf $\QCoh(-) : \dStk_{/Y}^\op \to \Catoo$.
Objects of $\QCoh_{/Y}$ are pairs $(U, \sF)$ where $U \in \dStk_{/Y}$ and $\sF \in \QCoh(U)$, and morphisms $(U',\sF') \to (U,\sF)$ are pairs
\[
\big(u : U' \to U~\text{in}~\dStk,
\quad \phi : u^*\sF \to \sF' ~\text{in}~\QCoh(U') \big).
\]
Then we have isomorphisms
\begin{equation}
	\Maps_{\QCoh_{/Y}}\big((U, \sF), (X, \sL_{X/Y})\big)
	\simeq \Maps_{Y}(U[\sF], X),
\end{equation}
functorial in pairs $(U,\sF)$ with $\sF$ connective.

An $n$-representable morphism of derived stacks admits a $(-n)$-connective and
$(-1)$-pseudo-coherent cotangent complex.
A morphism of derived Artin stacks $f : X \to Y$ is locally hfp if and only if
\begin{inlinelist}
	\item the induced morphism on classical truncations $f_\cl : X_\cl \to Y_\cl$ is locally of finite presentation, and
	\item $\sL_{X/Y}$ is perfect.
\end{inlinelist}

\subsection{Properties of morphisms}

A morphism of derived stacks $f : X \to Y$ is \emph{étale}, \emph{smooth}, or \emph{quasi-smooth} if it is locally hfp and admits a relative cotangent complex $\sL_{X/Y}$ which vanishes, is perfect of Tor-amplitude $\le 0$, or perfect of Tor-amplitude $\le 1$, respectively.
This notion of smoothness agrees with the one for eventually representable morphisms used in \ssecref{ssec:convent/dstk}.

An eventually representable morphism $f : X \to Y$ is \emph{of Tor-amplitude $\le n$} if for every derived scheme $V$ over $Y$ and every derived scheme $U$ and smooth morphism $U \to X \fibprod_Y V$, the composite $f_0 : U \to V$ is of Tor-amplitude $\le n$ as a morphism of derived schemes; that is, $f_0^* : \QCoh(V) \to \QCoh(U)$ sends $k$-coconnective complexes to $(k+n)$-coconnective complexes.
We say that $f$ is \emph{of finite Tor-amplitude} if it is of Tor-amplitude $\le n$ for some $n$, and \emph{flat} if it is of Tor-amplitude $\le 0$.

A $1$-representable morphism is \emph{proper} if the induced morphism on classical truncations is proper.
Similarly, a representable morphism is \emph{finite} or a \emph{closed immersion} if it is so on classical truncations.

\subsection{\texorpdfstring{$\Gm$-actions}{Gm-actions}}

We explain our conventions for $\Gm$-actions on $\A^1$ and derived vector bundles.

Consider the derived stack classifying invertible sheaves
\[ (\sL \in \uPic), \]
i.e., the functor sending a derived scheme $S$ to the \inftyGrpd $\uPic(S)$ of invertible sheaves on $S$.
This is the classifying stack $\BGm = [\pt/\Gm]$ of the multiplicative group $\Gm$.
We write $\sO(1) \in \QCoh(\BGm)$ for the tautological (universal) invertible sheaf.
Under the equivalence between quasi-coherent complexes on $\BGm$ and $\Z$-graded quasi-coherent complexes on $\pt$, this corresponds (by convention) to $\sO \in \QCoh(\pt)$ in degree $-1$.
See e.g. \cite[2.2.1]{BhattLect}.

Next consider the derived stack classifying pairs
\[ (\sL \in \uPic, ~\sigma : \sL \to \sO) \]
where $\sL$ is an invertible sheaf and $\sigma : \sL \to \sO$ is an $\sO$-linear homomorphism.
We denote it by $\AGm$; it can be thought of as the total space of the tautological line bundle on $\BGm$, i.e.,
\begin{equation}\label{eq:Theta}
	\AGm \simeq \V_{\BGm}(\sO(1)) \simeq \Spec_{\BGm}(\Sym^*_{\sO_{\BGm}}(\sO(1))).
\end{equation}
We often write $\quo{0} : \BGm \hook \AGm$ for the zero section, which is the universal virtual Cartier divisor (see \ssecref{ssec:VCD}).
It is identified with the zero locus of the tautological cosection
\begin{equation}\label{eq:sigma}
	\sigma : \sO_\AGm(1) \to \sO_\AGm
\end{equation}
on $\AGm$, which is the universal cosection classified by the identity on $\AGm$.
Equivalently, $\AGm$ is the stack quotient of $\A^1$ by the action of $\Gm$ by scaling with weight $-1$.
We always write $t^{-1}$ for the coordinate of $\A^1 = \Spec(\Z[t^{-1}])$, which is in homogeneous degree $-1$.
Under the equivalence between quasi-coherent complexes on $\AGm$ and filtered quasi-coherent complexes on $\pt$, $\sigma$ corresponds to the inclusion $(t^{-1})\Z[t^{-1}] \sub \Z[t^{-1}]$.
See e.g. \cite[2.2.5]{BhattLect}.

Note that there is a canonical isomorphism of cotangent complexes
\begin{equation}\label{eq:LAGm}
	\sL_{\AGm} \simeq \quo{0}_*(\sL_{\BGm}),
\end{equation}
where $\quo{0} : \BGm \to \AGm$ is the zero section.
Indeed, under the standard formula for the cotangent complex of a quotient stack (see e.g. \cite[Ex.~8.6.5]{KhanNCTS}), \eqref{eq:LAGm} becomes the isomorphism
\[ \Fib(\sO_{\AGm}(1) \xrightarrow{\sigma} \sO_{\AGm}) \simeq [0]_*\sO_{\BGm}[-1]\]
coming from the exact triangle $\sO_{\AGm}(1) \to \sO_{\AGm} \to \quo{0}_*(\sO_{\BGm})$.

Finally, given a quasi-coherent complex $\sE \in \QCoh(X)$ on a derived stack $X$, consider the derived stack over $X$ classifying pairs
\[ (\sL \in \uPic, ~\phi : \sE \to \sL) \]
where $\sL$ is an invertible sheaf and $\phi : \sE \to \sL$ is an $\sO$-linear homomorphism.
In terms of the derived vector bundle $E := \V_X(\sE)$ classifying cosections $\sE \to \sO$ (see App.~\ref{sec:vb}), this is the quotient stack
\begin{equation}
	[E/\Gm] = [\V_X(\sE)/\Gm] \simeq \V_{X\times\BGm}(\sE(-1)).
\end{equation}
where $\Gm$ acts by scaling with weight $1$.
Here we have written $\sE(n) := \sE \boxtimes \sO(n) \in \QCoh(X\times\BGm)$ for $n\in\Z$; under the equivalence between $\QCoh(X\times\BGm)$ and $\Z$-graded objects of $\QCoh(X)$, this corresponds to viewing $\sE$ in homogeneous degree $-n$.
Unless otherwise specified, any derived vector bundle $E$ over a derived stack $X$ is regarded with the weight $1$ scaling action.\footnote{%
	The exception is the vector bundle $\A^1$ over $\pt$ discussed above, which we always regard with weight $-1$ scaling action.
	We always write the quotient stack as $\Theta$ (following Halpern--Leistner) rather than $[\A^1/\Gm]$ though, to avoid confusion.
}

\changelocaltocdepth{2}

\section{Weil restrictions}

In this section, we introduce the functor of (derived) Weil restriction along a morphism of derived stacks $h : S \to T$.

\subsection{Definitions}

The base change functor
\begin{equation}
  h^* : \dStk_{/T} \to \dStk_{/S},
  \quad (Y \to T) \mapsto (Y \fibprod_T S \to S)
\end{equation}
admits both left and right adjoints.
The left adjoint $h_\sharp$ is given by
\begin{equation}\label{eq:hsharp}
  h_\sharp(X \to S) = (X \to S \xrightarrow{h} T).
\end{equation}
The right adjoint $h_*$, called \emph{Weil restriction}, sends a derived stack $X$ over $S$ to the derived stack $h_*(X\to S)$ over $T$ given by the functor of points
\begin{equation}
  h_*(X\to S) : (U \to T) \mapsto X(U \fibprod_T S \to S).
\end{equation}
We will usually write $h_*(X) := h_*(X\to S)$ when there is no risk of confusion.
Note that $h_*(S) \simeq T$ and that, more generally, $h_*$ preserves limits by virtue of being a right adjoint.

The counit of the adjunction is a canonical $S$-morphism
\begin{equation}
  \varepsilon : h_*(X) \fibprod_T S \to X.
\end{equation}

When $h : S \to T$ is a \emph{flat} morphism of classical stacks, and $X$ is a classical stack over $S$, $h_*(X)$ is the classical Weil restriction.
Otherwise, $h_*(X)$ will typically not be classical, nor will its classical truncation be the classical Weil restriction. This is because, in general,
$U\times_T S \neq U\times_T^{\cl} S$ even if $U$ is classical.

\subsection{Mapping stacks}
\label{ssec:umaps}
Given derived stacks $X$ and $Y$ over a derived stack $S$, the derived mapping stack $\uMaps_S(X,Y)$ over $S$ is determined by the isomorphisms
\begin{equation}\label{eq:rebellike}
  \Maps_S(U, \uMaps_S(X, Y))
  \simeq \Maps_S(U \fibprod_S X, Y),
\end{equation}
functorial in $U\in\dStk_{/S}$.
We have the following tautological description of the Weil restriction:

\begin{prop}\label{prop:wame}
  Let $h : S \to T$ be a morphism of derived stacks.
  Then for every $X \in \dStk_{/S}$, there is a canonical cartesian square
  \begin{equation*}
    \begin{tikzcd}
      h_*(X) \ar{r}\ar{d}
      & \uMaps_T(S, X) \ar{d}
      \\
      h_*(S) \simeq T \ar{r}{\id_S}
      & \uMaps_T(S, S),
    \end{tikzcd}
  \end{equation*}
  where the lower and upper horizontal arrows correspond under \eqref{eq:rebellike} to the identity $\id_S : S \to S$ and the counit $\varepsilon : h_*(X) \fibprod_T S \to X$, respectively.
\end{prop}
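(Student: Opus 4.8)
The plan is to verify cartesianness against the functor of points, using the two defining adjunctions together with a standard identification of mapping spaces in an iterated slice \inftyCat. Since fibred products in $\dStk$ are computed pointwise, it suffices to show that for every $U \in \dStk_{/T}$ the square obtained by applying $\Maps_T(U, -)$ is cartesian (a homotopy pullback), naturally in $U$. Write $W := U \fibprod_T S$, equipped with its projection $\pr_S : W \to S$; the two projections of $W$ induce the same structure map to $T$, and $X$ is regarded over $T$ via $X \to S \xrightarrow{h} T$. The adjunction $h^* \dashv h_*$, with $h^*U = W$, gives $\Maps_T(U, h_*(X)) \simeq \Maps_S(W, X)$; the defining property \eqref{eq:rebellike} of the mapping stack gives $\Maps_T(U, \uMaps_T(S, X)) \simeq \Maps_T(W, X)$ and $\Maps_T(U, \uMaps_T(S, S)) \simeq \Maps_T(W, S)$; and $\Maps_T(U, T) \simeq \pt$ since $T$ is terminal in $\dStk_{/T}$. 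Thus the square to be shown cartesian becomes
\[
\begin{tikzcd}
  \Maps_S(W, X) \ar{r}\ar{d} & \Maps_T(W, X) \ar{d} \\
  \pt \ar{r}{\pr_S} & \Maps_T(W, S),
\end{tikzcd}
\]
where the right-hand vertical map is post-composition with $X \to S$, the top map forgets the $S$-structure, and the bottom map selects $\pr_S$.

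Because the lower-left corner is contractible, cartesianness amounts to identifying $\Maps_S(W, X)$ with the fibre over $\pr_S$ of the map $\Maps_T(W, X) \to \Maps_T(W, S)$ induced by $X \to S$. This is exactly the description of mapping \inftyGrpds in an iterated slice: slicing $\dStk_{/T}$ over the object $(h : S \to T)$ recovers $\dStk_{/S}$, i.e.\ $(\dStk_{/T})_{/S} \simeq \dStk_{/S}$, and under this equivalence $W$ with its chosen structure map $\pr_S$ corresponds to the object $W \to S$. By the definition of the mapping \inftyGrpd in a slice one level up (inside $\dStk_{/T}$), $\Maps_{\dStk_{/S}}(W, X)$ is precisely the fibre in question. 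This proves the displayed square is cartesian; all the identifications are natural in $U$, so the original square of derived stacks is cartesian.

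Finally I would match the horizontal arrows with their asserted descriptions by Yoneda. The top arrow $h_*(X) \to \uMaps_T(S, X)$ is classified under \eqref{eq:rebellike} by a $T$-morphism $h_*(X) \fibprod_T S \to X$, which, upon unwinding the adjunction $h^* \dashv h_*$, is the counit $\varepsilon$; taking $X = S$ shows the bottom arrow $T \simeq h_*(S) \to \uMaps_T(S, S)$ is classified by $\id_S$, consistently with the counit for $X = S$. The only genuinely delicate point is the iterated-slice identification and the attendant bookkeeping of $S$- versus $T$-structures on $W = U \fibprod_T S$; once these are in place the argument is entirely formal.
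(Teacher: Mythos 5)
Your proof is correct; the paper states \propref{prop:wame} without proof, calling it a ``tautological description,'' and your functor-of-points verification --- reducing via the adjunction $h^* \dashv h_*$ and \eqref{eq:rebellike} to the identification of $\Maps_S(W,X)$ as the fibre of $\Maps_T(W,X) \to \Maps_T(W,S)$ over $\pr_S$ --- is exactly the argument being left implicit. Your closing check that the horizontal arrows are classified by the counit $\varepsilon$ and by $\id_S$ is also the right bookkeeping and matches the statement.
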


Conversely, mapping stacks can also be described as Weil restrictions:

\begin{prop}\label{prop:map=weil}
  Let $X$ and $Y$ be derived stacks over a derived stack $S$.
  Then there is a canonical isomorphism
  \begin{equation*}
    \uMaps_S(X, Y)
    \simeq h_*(X \fibprod_S Y \to X)
  \end{equation*}
  of derived stacks over $S$, where $h : X \to S$ is the structural morphism.
\end{prop}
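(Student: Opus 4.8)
The plan is to prove the asserted equivalence by comparing functors of points and invoking Yoneda, relying on nothing more than the adjoint triple $h_\sharp \dashv h^* \dashv h_*$ and the defining property \eqref{eq:rebellike} of mapping stacks. The crucial observation is that the projection $X \fibprod_S Y \to X$ is nothing but the base change $h^*(Y)$ of $Y \to S$ along the structural morphism $h : X \to S$. Thus the right-hand side is $h_*\bigl(h^*(Y)\bigr)$, and the statement reduces to the assertion that $h_* h^*(Y) \simeq \uMaps_S(X, Y)$ as derived stacks over $S$.

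To verify this I would evaluate both sides on an arbitrary test object $U \in \dStk_{/S}$ and chase the adjunctions. On one side, \eqref{eq:rebellike} gives $\Maps_S(U, \uMaps_S(X,Y)) \simeq \Maps_S(U \fibprod_S X, Y)$. On the other side, the adjunction $h^* \dashv h_*$ yields $\Maps_S(U, h_* h^* Y) \simeq \Maps_X(h^* U, h^* Y)$, and then the adjunction $h_\sharp \dashv h^*$ gives $\Maps_X(h^* U, h^* Y) \simeq \Maps_S(h_\sharp h^* U, Y)$. Finally, from the explicit formula \eqref{eq:hsharp} for $h_\sharp$ together with the fibre-product description of $h^*$, one reads off $h_\sharp h^* U \simeq U \fibprod_S X$ as an object of $\dStk_{/S}$, namely via the composite $U \fibprod_S X \to X \xrightarrow{h} S$. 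Concatenating these equivalences, both functors of points are identified with $U \mapsto \Maps_S(U \fibprod_S X, Y)$, so Yoneda produces the desired isomorphism; since every equivalence in the chain lives over $S$, it is an isomorphism in $\dStk_{/S}$.

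The one point requiring care — and the only real obstacle — is to ensure that each step is natural in $U$, so that Yoneda genuinely applies, and that the whole chain is compatible with the structural morphisms to $S$. In particular one must check that the identification $h_\sharp h^* U \simeq U \fibprod_S X$ over $S$ is the expected one and that it matches the fibre-product pairing underlying \eqref{eq:rebellike}. Since $h^*$ and $h_\sharp$ are both given by explicit formulas (the fibre product and composition with $h$, respectively), this compatibility is routine to record, and no genuinely derived input beyond the existence of the adjoint triple is needed. Alternatively, one could deduce the statement from \propref{prop:wame}, but the adjunction argument above is cleaner and entirely self-contained.
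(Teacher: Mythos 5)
Your argument is correct, and it is a slightly different route from the paper's. The paper deduces the statement from \propref{prop:wame}: it pastes the cartesian square of that proposition (applied to $h : X \to S$ and the object $X \fibprod_S Y \in \dStk_{/X}$) next to the cartesian square obtained by applying the limit-preserving functor $\uMaps_S(X,-)$ to $X \fibprod_S Y \simeq X \fibprod_S Y$ over $Y \to S$, and then observes that the bottom composite $S \to \uMaps_S(X,X) \to \uMaps_S(X,S) \simeq S$ is an isomorphism, so the top composite $h_*(X \fibprod_S Y) \to \uMaps_S(X,Y)$ is too. You instead unwind everything on functors of points via the adjoint triple: $\Maps_S(U, h_*h^*Y) \simeq \Maps_X(h^*U, h^*Y) \simeq \Maps_S(h_\sharp h^*U, Y) \simeq \Maps_S(U \fibprod_S X, Y)$, matching \eqref{eq:rebellike}. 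The two arguments are essentially the same adjunction chase packaged differently: the paper's version reuses the already-recorded \propref{prop:wame} and keeps the diagrammatic bookkeeping explicit, while yours is self-contained and makes visible that the only input is $h_\sharp \dashv h^* \dashv h_*$ together with the identification $h_\sharp h^* U \simeq U \fibprod_S X$ from \eqref{eq:hsharp}. The naturality in $U$ and the compatibility with the structural maps to $S$ that you flag are indeed the only points to record, and they hold for the formal reasons you give; there is no gap.
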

\begin{proof}
  Consider the commutative diagram
  \begin{equation*}
    \begin{tikzcd}
      h_*(X \fibprod_S Y) \ar{r}\ar{d}
      & \uMaps_S(X, X \fibprod_S Y) \ar{r}\ar{d}
      & \uMaps_S(X, Y) \ar{d}
      \\
      S \ar{r}
      & \uMaps_S(X, X) \ar{r}{h}
      & \uMaps_S(X, S),
    \end{tikzcd}
  \end{equation*}
  where the left-hand square is as in \propref{prop:wame} and the right-hand square is cartesian because $\uMaps_S(X, -)$ preserves limits.
  Since the lower horizontal composite is an isomorphism, it follows that the upper horizontal composite is also an isomorphism.
\end{proof}

\subsection{Base change}

We record some base change properties of the Weil restriction along $h : S \to T$.
The first is immediate from the definition:

\begin{lem}\label{lem:mouthiness}
  Denote by $h' : S' \to T'$ the base change of $h$ along a morphism $T' \to T$.
  For every derived stack $X$ over $S$, there is a canonical isomorphism
  \[
    h_*(X) \fibprod_T T' \simeq h'_*(X \fibprod_S S')
  \]
  of derived stacks over $T'$.
\end{lem}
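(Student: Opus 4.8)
The plan is to verify the asserted isomorphism at the level of functors of points and then invoke Yoneda. Since both sides are derived stacks over $T'$, it suffices to produce an equivalence of the \inftyGrpds $\Maps_{T'}(U, h_*(X)\fibprod_T T')$ and $\Maps_{T'}(U, h'_*(X\fibprod_S S'))$, natural in a test object $U \in \dStk_{/T'}$.

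First I would unwind the left-hand side. A map $U \to h_*(X)\fibprod_T T'$ over $T'$ is, by the universal property of the fibre product together with the fact that $U$ already lies over $T'$, the same datum as a map $U \to h_*(X)$ over $T$. By the defining functor of points of the Weil restriction, this is in turn a map $U \fibprod_T S \to X$ over $S$, where the structure map is the projection $U\fibprod_T S \to S$. Thus the left-hand side computes $\Maps_S(U\fibprod_T S, X)$.

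Next I would unwind the right-hand side. By definition of $h'_*$, a map $U \to h'_*(X\fibprod_S S')$ over $T'$ is a map $U \fibprod_{T'} S' \to X \fibprod_S S'$ over $S'$. The two basic simplifications I need are instances of the pasting lemma for cartesian squares. Using $S' \simeq S \fibprod_T T'$ and that $U$ lies over $T'$ gives
\[ U \fibprod_{T'} S' \simeq U \fibprod_T S, \]
while using that $X$ lies over $S$ gives
\[ X \fibprod_S S' \simeq X \fibprod_T T'. \]
Under the second identification, a map over $S'$ into $X\fibprod_S S'$ is the same as a map over $S$ into $X$, so the right-hand side also reduces to $\Maps_S(U\fibprod_T S, X)$.

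Comparing, both sides represent the single functor $U \mapsto \Maps_S(U\fibprod_T S, X)$ on $\dStk_{/T'}$, and every identification used is an instance of a universal property, hence automatically natural in $U$; Yoneda then yields the equivalence over $T'$. The only point requiring care — and this is bookkeeping rather than a genuine obstacle — is to confirm that the structure maps agree under the two pasting identifications, i.e.\ that the projection $U\fibprod_T S \to S$ arising on the left coincides with the map obtained by composing $U\fibprod_{T'}S' \to S' \to S$ on the right. This follows directly from the universal properties of the fibre products involved, which is why the statement is immediate from the definitions.
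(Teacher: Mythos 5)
Your proof is correct and is exactly the argument the paper has in mind: the paper simply declares the lemma ``immediate from the definition,'' and your functor-of-points computation showing that both sides represent $U \mapsto \Maps_S(U\fibprod_T S, X)$ is the standard way to make that precise.
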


From this we deduce:

\begin{cor}\label{cor:thinkably}
  Denote by $h' : S' \to T'$ the base change of $h$ along a morphism $T' \to T$.
  Given a morphism $X_1 \to X_2$ over $S$ and a morphism $f : T' \to h_*(X_2)$ over $T$, corresponding to a morphism $f^\flat : S' \to X_2$ over $S$, the square
  \[\begin{tikzcd}
    h'_*(X_1 \fibprod_{X_2,f^\flat} S')\ar{r}\ar{d}
    & h'_*(S') \simeq T' \ar{d}{f}
    \\
    h_*(X_1) \ar{r}
    & h_*(X_2)
  \end{tikzcd}\]
  is cartesian.
\end{cor}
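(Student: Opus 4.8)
The plan is to deduce this from \lemref{lem:mouthiness} by base changing the whole situation along $T' \to T$ and exploiting that $h'_*$, being a right adjoint, preserves limits. The first move is to get rid of the fact that the fiber product is taken over $h_*(X_2)$ rather than over $T$. Since $f : T' \to h_*(X_2)$ is a morphism over $T$, it factors as $T' \xrightarrow{(f,\,\id)} h_*(X_2) \fibprod_T T' \xrightarrow{\pr_1} h_*(X_2)$, so the standard cancellation identity for fiber products gives
\[
  h_*(X_1) \fibprod_{h_*(X_2)} T'
  \simeq \big(h_*(X_1) \fibprod_T T'\big)
  \fibprod_{h_*(X_2) \fibprod_T T'} T',
\]
where the right-hand map $T' \to h_*(X_2) \fibprod_T T'$ is the graph of $f$.

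Next I would apply \lemref{lem:mouthiness} to identify $h_*(X_i) \fibprod_T T' \simeq h'_*(X_i \fibprod_S S')$ for $i = 1,2$, together with $h'_*(S') \simeq T'$ (the Weil restriction of the identity). Under these identifications the functoriality map $h_*(X_1) \fibprod_T T' \to h_*(X_2) \fibprod_T T'$ becomes $h'_*$ of the base change $X_1 \fibprod_S S' \to X_2 \fibprod_S S'$, and the graph map $T' \simeq h'_*(S') \to h'_*(X_2 \fibprod_S S')$ becomes $h'_*$ of the section $S' \to X_2 \fibprod_S S'$ determined by $f^\flat$. Since $h'_*$ preserves limits, the fiber product is then computed as $h'_*$ of the fiber product taken over $S'$, and a second cancellation
\[
  (X_1 \fibprod_S S') \fibprod_{X_2 \fibprod_S S'} S'
  \simeq X_1 \fibprod_{X_2, f^\flat} S'
\]
(using $X_1 \fibprod_S S' \simeq X_1 \fibprod_{X_2} (X_2 \fibprod_S S')$, where the right-hand section $S' \to X_2$ is $\pr_1 \circ (f^\flat, \id_{S'}) = f^\flat$) identifies the argument with $X_1 \fibprod_{X_2, f^\flat} S'$, yielding the claim. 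The identifications respect the projections to $T'$ and to $h_*(X_1)$ by construction, so the resulting square is the one in the statement.

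The step that requires care — and the main obstacle — is the compatibility asserted in the previous paragraph: that the equivalence of \lemref{lem:mouthiness} intertwines the functoriality and graph morphisms with $h'_*$ applied to the corresponding morphisms over $S'$. Concretely, this amounts to checking that the base-change equivalence of \lemref{lem:mouthiness} is natural in $X$ and compatible with the adjunction counit, so that the graph $(f,\id)$ corresponds to the section $(f^\flat, \id_{S'})$. Rather than tracking this coherence by hand, I would confirm the whole square directly by the Yoneda lemma. For $U \to T$, a $T$-point of the left-hand fiber product is a pair $(b,a)$ with $b : U \to T'$ a $T$-morphism and $a : U \to h_*(X_1)$ over $T$ whose image in $h_*(X_2)$ agrees with $f \circ b$; transposing $a$ to $a^\flat : U \fibprod_T S \to X_1$ and $f$ to $f^\flat$, the agreement becomes an equality of two morphisms $U \fibprod_T S \to X_2$, namely the composite with $X_1 \to X_2$ and $f^\flat$ precomposed with $U \fibprod_T S \to S'$. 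Using the adjunction for $h'$ together with the identification $U \fibprod_{T'} S' \simeq U \fibprod_T S$, this datum is exactly a $T$-point of $h'_*(X_1 \fibprod_{X_2, f^\flat} S')$, naturally in $U$, which both proves the corner identification and pins down the maps of the square.
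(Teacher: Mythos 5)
Your proposal is correct and is essentially the paper's own argument run in reverse: the paper likewise combines the limit-preservation of $h'_*$ with the base-change identification of \lemref{lem:mouthiness} and fibre-product cancellation, writing $h'_*(X_1 \fibprod_{X_2} S') \simeq h'_*(X_1\fibprod_S S') \fibprod_{h'_*(X_2\fibprod_S S')} h'_*(S') \simeq h_*(X_1)\fibprod_T T' \fibprod_{h_*(X_2)\fibprod_T T'} T' \simeq h_*(X_1) \fibprod_{h_*(X_2)} T'$. Your additional Yoneda check of the compatibility of the identifications with the specific maps is a reasonable extra precaution but does not change the route.
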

\begin{proof}
  Since $h'_*$ preserves limits, it preserves fibre products:
  \[
  h'_*(X_1 \times_{X_2} S') \simeq h'_*(X_1\times_S S') \fibprod_{h'_*(X_2\times_S S')}
  h'_*(S')
  \]
  which by \lemref{lem:mouthiness} equals:
  \[
  h_*(X_1)\times_T T' \fibprod_{h_*(X_2)\times_T T'} T' \simeq
  h_*(X_1) \fibprod_{h_*(X_2)} T'.\qedhere
  \]
\end{proof}

\subsection{Morphisms of finite presentation}

\begin{prop}\label{prop:whalefin}
  Let $h : S \to T$ be a morphism of derived stacks and $f : X_1 \to X_2$ a morphism over $S$.
  Then we have:
  \begin{thmlist}
    \item
    If $h$ is quasi-compact quasi-separated and representable and $f$ is locally hfp, then $h_*(f) : h_*(X_1) \to h_*(X_2)$ is locally hfp.
    
    \item
    If $h$ is quasi-compact quasi-separated, representable, and of finite Tor-amplitude, and $f$ is locally afp, then $h_*(f) : h_*(X_1) \to h_*(X_2)$ is locally afp.
  \end{thmlist}
\end{prop}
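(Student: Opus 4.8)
The plan is to reduce both statements to the limit-preservation behaviour of the locally hfp (resp. afp) morphism $f$, using the base-change formula of \corref{cor:thinkably}. Recall that both conditions are tested on affine base changes of the target. So to prove that $h_*(f)$ is locally hfp (resp. afp), it suffices to fix an affine $V \to h_*(X_2)$ (necessarily lying over $T$) and to show that $h_*(X_1) \fibprod_{h_*(X_2)} V \to V$ preserves filtered colimits of $\sO_V$-algebras (resp.\ of $m$-truncated ones, for every $m$). By adjunction the chosen map corresponds to a morphism $f^\flat : S_V \to X_2$ over $S$, where $S_V := S \fibprod_T V$, and \corref{cor:thinkably} identifies the base change in question with $h'_*(X_1 \fibprod_{X_2} S_V) \to V$, where $h' : S_V \to V$ is the base change of $h$. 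Here $h'$ is again qcqs, representable, and (in case (ii)) of finite Tor-amplitude, while $X_1 \fibprod_{X_2} S_V \to S_V$ is a base change of $f$ and so inherits its finiteness. Renaming, I am reduced to the absolute statement over an affine base: if $h : S \to T = \Spec A$ is qcqs representable and $Z \to S$ is locally hfp (resp.\ afp), then the functor $B \mapsto h_*(Z)(\Spec B) = \Maps_S(S_B, Z)$ preserves filtered colimits of $A$-algebras (resp.\ of $m$-truncated $A$-algebras), where $S_B := S \fibprod_T \Spec B$.

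For a filtered system $B = \colim_\beta B_\beta$ one has $\Spec B = \lim_\beta \Spec B_\beta$, and since fibre products commute with cofiltered limits, $S_B = S \fibprod_T \lim_\beta \Spec B_\beta \simeq \lim_\beta S_{B_\beta}$ is a cofiltered limit of qcqs derived algebraic spaces over $S$ with affine transition maps. The statement to prove becomes
\begin{equation*}
  \Maps_S\Bigl(\lim_\beta S_{B_\beta},\, Z\Bigr) \simeq \colim_\beta \Maps_S(S_{B_\beta}, Z),
\end{equation*}
i.e.\ that the locally hfp morphism $Z \to S$ is limit-preserving on the cofiltered system $\{S_{B_\beta}\}$ of qcqs $S$-algebraic spaces. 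When $S$ is affine over $T$ each $S_{B_\beta}$ is affine and this is precisely the defining property of locally hfp. For general qcqs $S$ I would deduce it from the affine case by étale descent combined with Noetherian approximation: choosing an affine étale cover of $S$ with Čech nerve $U_\bullet$, one writes $\Maps_S(S_B, Z) \simeq \lim_{[n]\in\bDelta} Z((U_n)_B)$ with each cosimplicial level handled by the affine case, and the content is to commute the totalization $\lim_{[n]\in\bDelta}$ past the filtered colimit in $\beta$. This interchange is the main obstacle; it rests on the quasi-compactness and quasi-separatedness of $h$ (which let one spread out the cover and all relevant data from a Noetherian model), and I would invoke the standard derived approximation/limit-preservation machinery (\cite{LurieSAG}) rather than redo it by hand.

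For part (ii) the only extra ingredient is a coconnectivity bound. If $h$ is of Tor-amplitude $\le d$ and $B$ is $m$-truncated, then pulling back along $h$ raises coconnectivity by at most $d$, so each affine chart of $S_B$ is $(m+d)$-truncated and $S_B$ is $(m+d)$-coconnective; likewise every $S_{B_\beta}$. Since $Z \to S$ is locally afp, it is in particular locally fp to order $m+d$, and the limit-preservation statement above (in its order-$(m+d)$ form) applies to the cofiltered system $\{S_{B_\beta}\}$ of $(m+d)$-coconnective qcqs $S$-algebraic spaces. This yields preservation of filtered colimits of $m$-truncated $A$-algebras; as $m$ was arbitrary, $h_*(Z) \to T$ is locally afp, and undoing the reduction gives the claim. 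Thus the finite Tor-amplitude hypothesis enters exactly to keep the base changes $S_B$ uniformly coconnective, so that the afp property of $Z$ at a single finite order suffices.
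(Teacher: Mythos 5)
Your reduction to the case $T$ affine, $X_2=S$ via \corref{cor:thinkably}, and your treatment of the affine case (including the coconnectivity bound $S_B$ being $(m+d)$-coconnective when $h$ has Tor-amplitude $\le d$, which is exactly why the finite Tor-amplitude hypothesis appears) match the paper's proof. The gap is in the globalization step, which is where the real content of the proposition lies. You propose to write $\Maps_S(S_B,Z)\simeq\lim_{[n]\in\bDelta} Z((U_n)_B)$ for a \v{C}ech nerve of an affine \'etale cover and then commute the totalization past the filtered colimit in $\beta$. But the totalization over $\bDelta$ is an \emph{infinite} limit, and filtered colimits of \inftyGrpds commute only with \emph{finite} limits; since $f$ carries no representability or truncatedness hypothesis, the mapping spaces $Z((U_n)_B)$ need not be truncated and there is no coskeletality to rescue you. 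The appeal to ``standard approximation/limit-preservation machinery'' does not supply this interchange, and Noetherian approximation is not the relevant tool here. (A secondary issue: the levels $U_n = U\fibprod_S\cdots\fibprod_S U$ are affine only if $S$ has affine diagonal, which a general qcqs algebraic space need not.)

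The paper avoids this exactly by never forming an infinite limit: it uses Nisnevich excision, i.e.\ the observation that for an \'etale $V\to S$ which is an isomorphism away from an open $U\sub S$, one has
\[
\Maps_S(Y\fibprod_T S, X)\simeq \Maps_S(Y\fibprod_T U,X)\fibprod_{\Maps_S(Y\fibprod_T (U\fibprod_S V),X)}\Maps_S(Y\fibprod_T V,X),
\]
a \emph{finite} limit, which does commute with the filtered colimit in $\alpha$. One then inducts along a scallop decomposition of the qcqs algebraic space $S$ by affines (in the separated case) and by separated schemes (in general), reducing everything to the affine case you already handled. If you replace your \v{C}ech-nerve descent by this excision-plus-scallop-decomposition induction, the rest of your argument goes through as written.
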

\begin{proof}
  It will suffice to show that for every affine $T'$ and every morphism $T' \to h_*(X_2)$, the base change $h_*(X_1) \fibprod_{h_*(X_2)} T' \to T'$ has the claimed property.
  By \corref{cor:thinkably} we may replace $T$ by $T'$, $h$ by $h' : S' := S \fibprod_T T' \to T'$, and $f$ by $f' : X_1\fibprod_{X_2} S' \to S'$, and thereby assume that $T$ is affine and $X_2=S$.

  Let $\{Y_\alpha\}_\alpha$ be a cofiltered system of affines over $h_*(S)\simeq T$ with limit $Y$.
  Assume either that
  \begin{inlinelist}
    \item $X$ is locally hfp over $S$, or
    \item $X$ is locally afp over $S$, $h : S \to T$ is of Tor-amplitude $\le d$, and there exists an integer $n$ such that each $\sO_{Y_\alpha}$ is $n$-truncated.
  \end{inlinelist}
  The map
  \[
    \colim_\alpha \Maps_T(Y_\alpha, h_*(X))
    \to \Maps_T(Y, h_*(X))
  \]
  is identified by adjunction with the map
  \begin{equation}\label{eq:ybo1bl}
    \colim_\alpha \Maps_S(Y_\alpha\fibprod_T S, X)
    \to \Maps_S(Y\fibprod_T S, X).
  \end{equation}
  We will say that $S$ is \emph{good} (for $\{Y_\alpha\}_\alpha$) if \eqref{eq:ybo1bl} is invertible.
  More generally, $S'$ over $S$ is \emph{good} if
  \begin{equation}\label{eq:ybo1bl2}
    \colim_\alpha \Maps_{S}(Y_\alpha\fibprod_T S', X)
    \to \Maps_{S}(Y\fibprod_T S', X)
  \end{equation}
  is invertible.
  The claim will follow if we show that $S$ is good.
  
  The proof is a standard Mayer--Vietoris argument using the following observation:
  \begin{enumerate}
    \item[($\ast$)] Suppose given an étale morphism $V \to S$ which is an isomorphism away from an open $U \sub S$.
    If $U$, $V$, and $U\fibprod_S V$ are good, then so is $S$.
  \end{enumerate}
  By Nisnevich descent\footnote{
    To be precise, we are appealing to Nisnevich excision in the sense of \cite[2.2.5]{localization}; by \cite[Thm.~2.2.6]{localization}, Nisnevich excision is equivalent to Nisnevich descent, and $S \mapsto \dStk_{/S}$ satisfies even fpqc descent.
  } for the assignment $S \mapsto \dStk_{/S}$, we have
  \[
      \Maps_S( Y \fibprod_T S, X ) \simeq
      \Maps_S(Y \fibprod_T U, X) \fibprod_{\Maps_{S}(Y \fibprod_T U\times_S V, X)} \Maps_S(Y \fibprod_T V, X)
  \]
  and similarly for each $Y_\alpha$ in place of $Y$.
  Since filtered colimits commute with finite limits, ($\ast$) follows.

  We begin with the affine case.
  If $S$ is affine, $\{Y_\alpha\fibprod_T S\}_\alpha$ is a cofiltered system of affines over $S$ with limit $Y\fibprod_T S$.
  Thus if $X$ is locally hfp over $S$, then \eqref{eq:ybo1bl} is invertible so $S$ is good.
  In the locally afp case, the assumptions imply that each $Y_\alpha \fibprod_T S$ has $(n+d)$-truncated structure sheaf.
  Thus the assumption that $X$ is locally afp over $S$ implies that \eqref{eq:ybo1bl} is invertible, hence $S$ is good.

  Next suppose that $S$ is a quasi-compact and \emph{separated} algebraic space.
  By \cite[Thm.~3.2.3.1]{LurieSAG} there exists a filtration by opens $\initial = U_0 \sub U_1 \sub \cdots \sub U_n = S$, together with affine schemes $V_i$ and étale morphisms $v_i : V_i \to U_i$ invertible away from $U_{i-1}$ (for $1\le i\le n$).
  We refer to the data $(U_i, V_i, v_i)_{0\le i\le n}$ as a scallop decomposition by affines, and argue by induction on the length $n$.
  For $n\leq 1$ this is the affine case proven above.
  Assume $n>1$ and that any $S$ admitting a length $n-1$ scallop decomposition by affines is good.
  The scallop decomposition $(U_i, V_i, v_i)_{0\le i\le n-1}$ for $U_{n-1}$ base changes along $V_n \to U_n = S$ to a scallop decomposition
  \[
    (U_i \fibprod_S V_n,
    V_i \fibprod_S V_n,
    v_i \fibprod_S V_n)_{0\le i\le n-1}
  \]
  for $U_{n-1} \fibprod_S V_n \simeq U_{n-1} \fibprod_{U_n} V_n$.
  Since $S$ is separated, the canonical morphism $V_i \fibprod_S V_n \hook V_i \times V_n$ is a closed immersion, hence $V_i \fibprod_S V_n$ is affine.
  Thus by the induction hypothesis, $V_n$, $U_{n-1}$, and $U_{n-1} \fibprod_{U_n} V_n$ are each good, so by claim ($\ast$) we conclude that $U_n = S$ is good.
  
  Finally, consider the case of a general qcqs derived algebraic space $S$.
  We argue by induction as in the separated case, replacing the role of affines by separated schemes.
  We thus consider scallop decompositions by separated schemes, i.e., $(U_i, V_i, v_i)_{0\le i\le n}$ where $V_i$ are only assumed to be separated schemes.
  Such exist by \cite[Thm.~3.4.2.1]{LurieSAG}, and we choose one.\footnote{In fact, $V_i$ can still be taken affine, but we need to allow scallop decompositions with $V_i$ separated in the induction step.}
  Now $V_n$ is separated, $U_{n-1}$ admits a length $n-1$ scallop decomposition by separated schemes, and (arguing as above) $U_{n-1} \fibprod_{U_n} V_n$ also admits a length $n-1$ scallop decomposition by separated schemes.
  Indeed, since $S$ has separated diagonal, $V_i \fibprod_S V_n$ is a separated scheme.
  We thus conclude by claim ($\ast$) again.
\end{proof}

\subsection{Morphisms of finite Tor-amplitude}

\begin{prop}\label{prop:weiltrunc}
  Let $h : S \to T$ be an eventually representable morphism of derived stacks and $f : X_1 \to X_2$ an almost $n$-truncated\footnote{%
    See \ssecref{ssec:convent/trunc}; for example, any $n$-representable morphism is almost $n$-truncated.
  } morphism over $S$.
  If $h$ is of Tor-amplitude $\le d$, then $h_*(X_1) \to h_*(X_2)$ is almost $(n+d)$-truncated.
\end{prop}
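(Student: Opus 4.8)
The plan is to reduce to the case where the base $T$ is affine and $X_2 = S$, compute the $R$-points of the Weil restriction explicitly via its defining adjunction, and then control their truncatedness by showing that base change along $h$ raises coconnectivity by at most $d$. Since ``almost $(n+d)$-truncated'' for a morphism is tested on fibres over affine derived schemes, I would fix an affine $V \to h_*(X_2)$ and use \corref{cor:thinkably} to identify the fibre $h_*(X_1)\fibprod_{h_*(X_2)}V$ with $h'_*(X_1\fibprod_{X_2}S')$, where $h' : S' := S\fibprod_T V \to V$ is the base change of $h$ and $X_1\fibprod_{X_2}S' \to S'$ is the base change of $f$. Base change preserves eventual representability, Tor-amplitude $\le d$, and almost $n$-truncatedness of a morphism, so we may assume $T = V$ is affine and $X_2 = S$, with $X := X_1$ now almost $n$-truncated over $S$; it suffices to prove that $h_*(X)$ is almost $(n+d)$-truncated. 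For a $k$-truncated animated ring $R$, the Weil restriction formula gives $h_*(X)(\Spec R) \simeq \Maps_S(W, X)$ with $W := S\fibprod_T\Spec R$, so the goal becomes showing $\Maps_S(W,X)$ is $(n+k+d)$-truncated.

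\textbf{Step A ($W$ is $(k+d)$-coconnective).} Because $h$ is eventually representable and $T$ is affine, $S$ is Artin and admits a smooth surjection $U \to S$ from a disjoint union of affines. For such an affine $U = \Spec C$, the composite $U \to S \xrightarrow{h} T$ is the composition of a smooth (hence Tor-amplitude $\le 0$) morphism with $h$, so it is of Tor-amplitude $\le d$; thus $C$ has Tor-amplitude $\le d$ over $A := \Gamma(T,\sO_T)$. Hence $U\fibprod_S W \simeq U\fibprod_T\Spec R \simeq \Spec(C\otimesL_A R)$, and since $R$ is $k$-truncated the ring $C\otimesL_A R$ is $(k+d)$-truncated. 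Thus $W$ admits a smooth atlas by $(k+d)$-coconnective affines; as smooth morphisms preserve coconnectivity, the whole Čech nerve of this atlas is levelwise $(k+d)$-coconnective, and realizing $W$ as its colimit shows $W$ is $(k+d)$-coconnective. This is precisely where the Tor-amplitude hypothesis enters and produces the shift by $d$.

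\textbf{Step B (the relative mapping bound).} Writing $W \simeq \colim_\alpha W_\alpha$ as a colimit of $(k+d)$-coconnective affines $W_\alpha = \Spec R_\alpha$ over $S$, we get $\Maps_S(W,X) \simeq \lim_\alpha \Maps_S(W_\alpha, X)$, and since $(n+k+d)$-truncatedness is stable under limits it suffices to bound each factor. Now $\Maps_S(\Spec R_\alpha, X)$ is the space of sections of $Z_\alpha := X\fibprod_S\Spec R_\alpha \to \Spec R_\alpha$, i.e.\ the fibre of $Z_\alpha(R_\alpha) \to (\Spec R_\alpha)(R_\alpha)$ over the identity. Here $Z_\alpha$ is almost $n$-truncated as a stack, being the fibre of the almost $n$-truncated morphism $f$ over the derived scheme $\Spec R_\alpha$, so $Z_\alpha(R_\alpha)$ is $(n+k+d)$-truncated, while $(\Spec R_\alpha)(R_\alpha)$ is $(k+d)$-truncated since affines are almost $0$-truncated. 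A long-exact-sequence argument then shows that the fibre of a map from an $(n+k+d)$-truncated space to a $(k+d)$-truncated space is again $(n+k+d)$-truncated, which gives the claim.

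\textbf{Main obstacle.} The technical heart is Step~A: extracting from the Tor-amplitude $\le d$ hypothesis the statement that base change along $h$ sends the $k$-coconnective $\Spec R$ to a $(k+d)$-coconnective stack $W$. This requires passing through a smooth atlas of the (possibly higher) Artin stack $S$ and checking that smoothness preserves coconnectivity, so that the entire Čech nerve—not just the atlas—stays $(k+d)$-coconnective. By contrast, the relative-to-absolute passage in Step~B is routine once $T$ is affine, since it is exactly there that the target $(\Spec R_\alpha)(R_\alpha)$ of the section fibration is itself truncated, allowing the fibre estimate to close.
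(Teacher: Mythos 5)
Your proof is correct and follows essentially the same route as the paper: reduce via \corref{cor:thinkably} to affine $T$ and $X_2 = S$, show that base change along $h$ raises coconnectivity by at most $d$ (your Step~A is precisely the paper's \lemref{lem:coconnfta}), and then bound the truncatedness of $\Maps_S(W,X)$ by writing $W$ as a colimit of $(k+d)$-coconnective affines and using stability of truncatedness under limits. The one place you compress the argument is the claim that ``smooth morphisms preserve coconnectivity'' makes the whole \v{C}ech nerve of $U' \to W$ levelwise $(k+d)$-coconnective: the nerve terms $U' \fibprod_W U' \fibprod_W \cdots$ are not affine in general, and showing that they are coconnective is exactly the induction on the Artin level of $W$ (and on the representability of its diagonal) carried out in the proof of \lemref{lem:coconnfta} — the fact you invoke is true, but it is the content of that lemma rather than an input to it.
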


Note that \propref{prop:weiltrunc} is obvious when $h$ is affine.
For the general case we will use the following lemma.

\begin{lem}\label{lem:coconnfta}
  Let $h : S \to T$ be an eventually representable morphism of derived stacks.
  If $h$ is of Tor-amplitude $\le d$, then for any $k$-coconnective derived stack $V$ over $T$, the fibre $V \fibprod_T S$ is $(k+d)$-coconnective.
\end{lem}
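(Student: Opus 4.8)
The plan is to first reduce to the case where $V$ is affine, and then to induct on the Artin level of the fibre. For the reduction I would use that the base change functor $h^* = (-)\fibprod_T S \colon \dStk_{/T} \to \dStk_{/S}$ is left adjoint to the Weil restriction $h_*$, hence preserves colimits. Since a $k$-coconnective derived stack is by definition a colimit of $k$-coconnective affines, I would write $V \simeq \colim_i V_i$ with each $V_i$ a $k$-coconnective affine over $T$, whence $V\fibprod_T S \simeq \colim_i(V_i \fibprod_T S)$. As the $(k+d)$-coconnective derived stacks are closed under colimits (again by definition), it suffices to treat each $V_i$, i.e.\ to assume $V = \Spec(R)$ with $R$ being $k$-truncated.

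In that affine case, set $S' := V \fibprod_T S$ and let $g \colon S' \to V$ be the projection. Then $g$ is the base change of $h$, so it is eventually representable and of Tor-amplitude $\le d$ (both properties being stable under base change, directly from the atlas definition). Say $S'$ is $n$-Artin. I would prove $S'$ is $(k+d)$-coconnective by induction on $n$, with the affine case of $S'$ as the base. If $S' = \Spec(A)$, then $g^* = (-)\otimes_R A$ carries $k$-coconnective $R$-modules to $(k+d)$-coconnective ones; applying this to $R$ itself (which is $k$-coconnective since $R$ is $k$-truncated) shows that $A \simeq R\otimes_R A$ is $(k+d)$-truncated, so $S'$ is $(k+d)$-coconnective. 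A general derived scheme $S'$ is a colimit of its affine opens, each of which is of Tor-amplitude $\le d$ over $V$ (open immersions being flat) and hence $(k+d)$-coconnective by the affine case; so $S'$ is $(k+d)$-coconnective as well.

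For the inductive step ($n \ge 1$) I would choose a smooth surjection $p \colon U \twoheadrightarrow S'$ from a derived scheme $U$ and form its \v{C}ech nerve $U_\bullet$, whose term in degree $m$ is the $(m+1)$-fold fibre product $U_m = U \fibprod_{S'} \cdots \fibprod_{S'} U$. The crucial point is that $p$, being smooth, is flat, so each projection $U_m \to S'$ is smooth; combined with subadditivity of Tor-amplitude under composition, the composite $U_m \to S' \xrightarrow{g} V$ is again of Tor-amplitude $\le d$. Since the diagonal of the $n$-Artin stack $S'$ is $(n-1)$-representable, each $U_m$ is $(n-1)$-Artin, so by the induction hypothesis each $U_m$ is $(k+d)$-coconnective. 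Finally, smooth surjections are effective epimorphisms of derived stacks, so $S' \simeq \colim_{[m]\in\bDelta^{\op}} U_m$ is the geometric realisation of $U_\bullet$, a colimit of $(k+d)$-coconnective stacks, and therefore $(k+d)$-coconnective.

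The main obstacle is precisely the control of Tor-amplitude along the \v{C}ech nerve: a priori one might fear that passing to the self-products $U_m$ over $S'$ degrades the Tor-amplitude, which would break the induction by forcing $d$ to grow at each stage. The resolution is that smoothness gives flatness of the face maps, keeping every $U_m$ of Tor-amplitude $\le d$ over $V$ so that the same $d$ persists at every level. The remaining ingredients---stability of eventual representability and of Tor-amplitude under base change and composition, the fact that smooth surjections are effective epimorphisms, and closure of $(k+d)$-coconnective stacks under colimits---are either definitional here or routine consequences of the atlas-based definitions in \ssecref{ssec:convent/dstk}.
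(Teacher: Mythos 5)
Your proposal is correct and follows essentially the same route as the paper's proof: reduce to $V$ affine using that $h^*$ preserves colimits, verify the affine case directly from the definition of Tor-amplitude, and then induct on the Artin level via the \v{C}ech nerve of a smooth atlas, the key point in both arguments being that the smooth face maps keep every term of the nerve of Tor-amplitude $\le d$ over $V$. The only differences are cosmetic: the paper additionally stratifies the $1$-Artin case by diagonal type (affine, separated, arbitrary) to keep the nerve terms as disjoint unions of affines, whereas you pass through general derived schemes by Zariski descent; just make sure your induction explicitly includes the $0$-Artin rung (handled by the identical \v{C}ech argument, with scheme terms since algebraic spaces have schematic diagonal), since your stated base case covers schemes but the $n=1$ step quotes the hypothesis for $0$-Artin stacks.
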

\begin{proof}
  Since pulling back along $h$ preserves colimits, we may assume that $V=\Spec R$ where $R$ is
  $k$-truncated.  By replacing $h$ with the base change along $V\to T$, we may also assume that
  $V=T$ is affine so that $S$ is $m$-Artin.

  Let $S_0 \twoheadrightarrow S$ be a smooth surjection where $S_0$ is a disjoint union of affine schemes. Then $S_0$ is $(k+d)$-coconnective by definition.
  Let $S_n := S_0\fibprod_S S_0 \fibprod_S \cdots \fibprod_S S_0$.
  If all the $S_n$, for $n\ge 0$, are $(k+d)$-coconnective, then so is their colimit $S$.  

  If $S$ is $1$-Artin with affine diagonal, then each $S_n$ will be a disjoint union of affine schemes and hence $(k+d)$-coconnective, so the argument above shows that $S$ is $(k+d)$-coconnective.
  If $S$ is $1$-Artin with separated diagonal, then each $S_n$ will have affine diagonal (since separated algebraic spaces have affine diagonal) and hence be $(k+d)$-coconnective by the previous case.
  If $S$ is $1$-Artin with arbitrary diagonal, then each $S_n$ will have separated diagonal (since algebraic spaces have separated diagonal) and hence be $(k+d)$-coconnective by the previous case.

  We now argue by induction for the case of $m$-Artin $S$.
  Since $S$ has $(m-1)$-representable diagonal, each $S_n$ will be $(m-1)$-Artin and hence
  $(k+d)$-connective by the inductive hypothesis. We conclude that $S$ is $(k+d)$-connective.
\end{proof}

\begin{proof}[Proof of \propref{prop:weiltrunc}]
  By base change it is enough to show that if $T$ is affine, $S$ is a derived Artin stack over $T$ such that $h : S \to T$ is of Tor-amplitude $\le d$, and $X$ is an almost $n$-truncated derived stack over $S$, then $h_*(X)$ is almost $(n+d)$-truncated.
  Note that $h_*(X)$ is given by the functor of points
  \begin{equation}\label{eq:Hivuwi}
    V \mapsto \Maps_T(V, h_*(X)) \simeq \Maps_S(V \fibprod_T S, X).
  \end{equation}
  If $V$ is $k$-coconnective, then $V \fibprod_T S$ is $(k+d)$-coconnective by \lemref{lem:coconnfta}.
  Hence the assumption that $X$ is almost $n$-truncated yields that \eqref{eq:Hivuwi} is $(k+d+n)$-truncated.
  As $V$ varies among $k$-coconnectives, this shows that $h_*(X)$ is almost $(n+d)$-truncated.
\end{proof}

\subsection{Representable étale morphisms}

In this paragraph we will study Weil restrictions in the special case of representable étale morphisms.

\begin{thm}\label{thm:flocculency}
	Let $h : S \to T$ be a proper representable morphism of derived stacks.
	For any representable étale morphism $f : X_1 \to X_2$ over $S$, the induced morphism $h_*(f) : h_*(X_1) \to h_*(X_2)$ is representable and étale.
  Moreover, if $f$ is an open immersion, then $h_*(f)$ is an open immersion.
\end{thm}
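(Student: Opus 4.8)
\emph{Strategy and reductions.} The plan is to prove the open-immersion assertion first, in full generality, and then to feed it into the étale case in order to control the diagonal; the remaining inputs will be \propref{prop:whalefin} and topological invariance of the small étale site. Since both claims are local on $T$ and stable under base change, I would first use \corref{cor:thinkably} to base change along a morphism $T'\to h_*(X_2)$ from an affine $T'$, reducing to the case where $T$ is affine, $X_2 = S$, and $h : S\to T$ is proper and representable (all preserved under base change); the morphism $X := X_1\to S$ is then representable and étale (resp.\ an open immersion), and the goal becomes that $h_*(X)\to T$ is representable étale (resp.\ an open immersion). For the open-immersion case I would keep $X_2$ arbitrary, since it will be applied to a diagonal below.

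\emph{Open immersions.} Let $f : U\hook X$ be an open immersion over $S$. To see that $h_*(f)$ is an open immersion I would base change along an arbitrary $V\to h_*(X)$ with $V$ affine; by \corref{cor:thinkably} the result is $h'_*(W)\to V$, where $h' : S' = S\fibprod_T V\to V$ is proper and $W := U\fibprod_{X}S'\hook S'$ is the open subspace obtained by pulling back $U$ along the section $S'\to X$ classified by $V\to h_*(X)$. Writing $Z := S'\setminus W$ for the closed complement, a $V'$-point of $h'_*(W)$ is a section of $W\fibprod_{S'}(V'\fibprod_V S')\to V'\fibprod_V S'$; the only candidate is the structural one, which exists precisely when $Z\fibprod_V V' = \initial$, i.e.\ when $V'\to V$ factors through the open complement of the image $h'(Z)$. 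As $h'$ is proper, $h'(Z)$ is closed, so $h'_*(W) = V\setminus h'(Z)$ is an open subscheme of $V$. Hence $h_*(f)$ is an open immersion. This step uses properness exactly through the closedness of images.

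\emph{Étaleness.} Now suppose $X\to S$ is representable étale. Then $h_*(X)\to T$ is locally hfp by \propref{prop:whalefin}, since $h$ is qcqs representable and $X\to S$ is locally hfp. Since $X\to S$ is étale, its diagonal $\Delta : X\hook X\fibprod_S X$ is an open immersion; applying the open-immersion case to $\Delta$ (a morphism over $S$), and using that $h_*$ preserves limits so that $h_*(X\fibprod_S X)\simeq h_*(X)\fibprod_T h_*(X)$, I would conclude that the diagonal $h_*(\Delta)$ of $h_*(X)$ over $T$ is an open immersion, in particular schematic and $(-1)$-truncated. For the cotangent complex I would verify formal étaleness directly: given a square-zero (more generally nilpotent) extension $V\hook\tilde V$ over $T$, its base change $V\fibprod_T S\hook\tilde V\fibprod_T S$ is again a nilpotent closed immersion, because the reduction of a fibre product depends only on the reductions of its factors and $V\hook\tilde V$ is an isomorphism on reductions. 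By topological invariance of the small étale site for $X\to S$, the map $\Maps_S(\tilde V\fibprod_T S, X)\to\Maps_S(V\fibprod_T S, X)$ is an equivalence, which by the Weil-restriction adjunction says exactly that $\Maps_T(\tilde V, h_*(X))\to\Maps_T(V, h_*(X))$ is an equivalence. Hence $h_*(X)\to T$ is formally étale, so $\sL_{h_*(X)/T}\simeq 0$, and together with local hfp this gives étaleness.

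\emph{Representability, and the main obstacle.} It then remains to show that $h_*(X)$ is a derived algebraic space; as the diagonal is already schematic and $(-1)$-truncated, only an étale atlas is missing, and this is where I expect the real difficulty to lie. The route I would take is through the classical truncation: for a classical ring $A$, topological invariance of the étale site for $X\to S$ identifies the relative sections $\Maps_S(\Spec A\fibprod_T S, X)$ with $\Maps_{S_\cl}\bigl((\Spec A\fibprod_T S)_\cl, X_\cl\bigr)$, and $(\Spec A\fibprod_T S)_\cl = (\Spec A)\fibprod^\cl_{T_\cl} S_\cl$; thus $(h_*(X))_\cl$ is the \emph{classical} Weil restriction of the étale morphism $X_\cl\to S_\cl$ along the proper morphism $h_\cl : S_\cl\to T_\cl$. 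The crux is then the classical fact that the sections functor of an étale morphism over a proper base is representable by an algebraic space étale over $T_\cl$; note this is precisely the point where the possible non-flatness of $h$ is absorbed, since étale morphisms see only reductions. Granting it, topological invariance of the small étale topos over $T$ lifts this classical étale algebraic space uniquely to a derived algebraic space $Z$ étale over $T$, and the canonical comparison $Z\to h_*(X)$ over $T$ is an equivalence, being a map of formally étale, locally hfp $T$-stacks that is an isomorphism on classical truncations. Hence $h_*(X)\simeq Z$ is a derived algebraic space, étale over $T$, completing the argument.
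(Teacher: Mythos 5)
Your proposal is correct in outline but is organized quite differently from the paper's proof. After the same reduction to $T$ affine and $X_2=S$ via \corref{cor:thinkably}, the paper proves everything in one stroke: the nonabelian proper base change theorem (\thmref{thm:trifasciated}) identifies $h_*(X)$ with $u^*h_{\et,*}(\h_S(X))$, and Artin's theorem (\thmref{thm:kappland}) that discrete sheaves on the small étale site are exactly the representable ones then exhibits $h_*(X)$ as the espace étalé of $h_{\et,*}(\h_S(X))$, which is representable and étale over $T$ at once; the open-immersion case is deduced afterwards from the fact that $h_*$ preserves monomorphisms. You go the other way: you first prove the open-immersion case directly (the unique candidate section exists exactly away from the closed image $h'(Z)$ of the closed complement, using only that proper maps are closed), feed it into the diagonal, get étaleness by hand from \propref{prop:whalefin} together with formal étaleness via topological invariance, and only then face representability, which you reduce to classical truncations and to ``the classical fact'' that pushing forward a representable étale sheaf along a proper morphism yields a representable étale sheaf. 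Be aware that this black-boxed fact is precisely the conjunction of the two theorems the paper establishes for this purpose --- Artin's representability of discrete étale sheaves plus nonabelian proper base change, the latter being needed to identify the big-site pushforward (evaluated on arbitrary, non-étale $T'\to T$) with the espace étalé of the small-site pushforward --- so it is not quite an off-the-shelf citation, although your reduction to classical truncations does correctly absorb the non-flatness of $h$ and your lifting argument back to the derived statement is sound. What your route buys is a self-contained, elementary treatment of the open-immersion case and of étaleness; what the paper's route buys is that representability and étaleness fall out simultaneously from a single structural identification, with the genuinely nontrivial classical inputs cleanly isolated as separate theorems.
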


The proof of \thmref{thm:flocculency} will involve a brief detour on the theory of nonabelian étale sheaves on derived algebraic spaces.
Given a derived algebraic space $S$, let $\dSpc_{/S}$ denote the \inftyCat of derived algebraic spaces over $S$ and $S_\et$ the full subcategory spanned by $U\in\dSpc_{/S}$ with $U \to S$ étale. 
The inclusion $u : S_\et \hook \dSpc_{/S}$ is continuous with respect to the étale topology and, under the identification $\Shv(\dSpc_{/S}) \simeq \dStk_{/S}$, gives rise to a pair of adjoint functors
\[
u^* : \Shv(S_\et) \to \dStk_{/S},
\qquad u_* : \dStk_{/S} \to \Shv(S_\et)
\]
where $u_*$ is restriction along $u$ and $u^*$ is the unique colimit-preserving functor sending the sheaf $\h_S(U)$ represented by $U \in S_\et$ to the sheaf on $\dSpc_{/S}$ represented by $U$.
Moreover, $u^*$ is fully faithful, so in particular
\begin{equation}\label{eq:Wadpawnuw}
  \Gamma(U, u^*\sF)
  \simeq \Maps(u^*\h_S(U), u^*\sF)
  \simeq \Gamma(U, \sF)
\end{equation}
for every $\sF \in \Shv(S_\et)$ and $U\in S_\et$.

\begin{thm}\label{thm:kappland}
	Let $S$ be a derived algebraic space.
	Then a sheaf on $S_\et$ is representable if and only if it is discrete (i.e., takes values in sets).
\end{thm}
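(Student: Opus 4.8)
The plan is to prove the two implications separately, reducing the substantive content to the classical theory of the small étale topos. Throughout I write $\sF \in \Shv(S_\et)$ for the sheaf in question and $\h_S(U)(V) = \Maps_{S_\et}(V,U)$ for the representable sheaf attached to $U \in S_\et$. The implication \emph{representable $\Rightarrow$ discrete} is soft; the content is in \emph{discrete $\Rightarrow$ representable}.

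For \emph{representable $\Rightarrow$ discrete}, first I would observe that any étale morphism $U \to S$ is $0$-truncated. Indeed, its diagonal $U \to U \fibprod_S U$ is a section of the étale projection $U \fibprod_S U \to U$, hence étale, and a section of an étale (more generally unramified) morphism is an open immersion, in particular a monomorphism. Thus $U \to S$ has $(-1)$-truncated diagonal and is therefore $0$-truncated, so for every $V \in S_\et$ the fibre $\Maps_S(V,U)$ of $\Maps(V,U) \to \Maps(V,S)$ is $0$-truncated, i.e. a set. Hence $\h_S(U)$ is discrete.

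For the converse the plan is to transport the problem to the classical truncation $S_\cl$ and invoke the classical description of the small étale topos. By topological invariance of the small étale site — étale morphisms lift uniquely along the nil-immersion $S_\cl \to S$, since an étale $U$ has vanishing $\sL_{U/S}$ and is thus unobstructed — base change $U \mapsto U \fibprod_S S_\cl$ is an equivalence of sites $S_\et \xrightarrow{\sim} (S_\cl)_\et$, inducing an equivalence of $\infty$-topoi $\Shv(S_\et) \simeq \Shv((S_\cl)_\et)$. This equivalence preserves truncation levels and carries $\h_S(U)$ to $\h_{S_\cl}(U_\cl)$, with inverse given by the unique étale lift; hence a discrete $\sF$ is representable over $S$ if and only if its image is representable over $S_\cl$. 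We are thereby reduced to the case where $S = S_\cl$ is a classical algebraic space, in which the discrete (set-valued) sheaves on $S_\et$ form exactly the small étale topos of $S$, and the standard theory identifies this $1$-topos with the category of algebraic spaces étale over $S$. So $\sF$ is represented by some classical étale $U_0 \to S_\cl$, which lifts to the desired $U \in S_\et$.

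The main obstacle is the classical representability input invoked at the end: that every sheaf of sets on the small étale site of an algebraic space is representable by an étale algebraic space. In a self-contained treatment this is where the real work lies — one chooses an étale cover $\coprod_i U_i \to S$ together with sections realizing generators of $\sF$, so that $\coprod_i \h_S(U_i) \to \sF$ is an effective epimorphism, forms the étale equivalence relation $R = (\coprod_i U_i) \fibprod_{\sF} (\coprod_i U_i)$, checks that $R$ is again representable by an étale algebraic space (the delicate point, amounting to representability of the relevant equalizer inside $\coprod_{i,j} U_i \fibprod_S U_j$), and exhibits $\sF$ as the quotient of $\coprod_i U_i$ by $R$, which is an algebraic space étale over $S$. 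By contrast, the only genuinely derived ingredient — invariance of the étale site along $S_\cl \to S$ — is comparatively formal once one knows $\sL_{U/S} = 0$ for étale $U$, and may be cited from the foundations.
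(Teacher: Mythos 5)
Your proof is correct and follows essentially the same route as the paper: reduce to the classical case via derived invariance of the étale site, then invoke the classical theorem that sheaves of sets on the small étale site of an algebraic space are representable by étale algebraic spaces — the paper simply cites \cite[Ch.~VII, \S 1]{ArtinAlgSpc} for this input (noting in a footnote that the noetherian hypothesis there is only needed for quasi-separatedness) rather than sketching its proof as you do. One cosmetic remark: in the easy direction, deducing that the diagonal of an étale morphism is an open immersion from ``a section of an étale morphism is an open immersion'' is mildly circular as stated, since the latter is normally proved from the former; in the derived setting the cleanest justification is that an étale morphism is determined by its classical truncation, whose diagonal is a classical open immersion.
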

\begin{proof}
  By derived invariance of the étale site, we may assume that $S$ is classical.
  In this case, $S_\et$ is the classical étale site (consisting of underived algebraic spaces étale over $S$), and the claim is proven in \cite[Ch.~VII, \S 1]{ArtinAlgSpc}\footnote{Under a noetherian assumption which only is used to ensure that the sheaf is represented by a \emph{quasi-separated} algebraic space. Our algebraic spaces are not required to be quasi-separated.}.
\end{proof}

\begin{defn}\label{defn:keratoplastic}
	Given a sheaf of \emph{sets} $\sF \in \Shv(S_\et)$, it follows from \thmref{thm:kappland} that there exists a unique derived algebraic space $X \in S_\et$ representing $\sF$ on $S_\et$.
	By \eqref{eq:Wadpawnuw}, the derived stack $u^*(\sF) \in \dStk_{/S}$ is then also represented by $X$.
	We call $X$ the \emph{espace \'etal\'e} of $\sF$ (compare \cite[Ch.~VII, Def.~(1.3)]{ArtinAlgSpc}).
\end{defn}

\begin{constr}
	Given a morphism of derived algebraic spaces $h : S \to T$, there is an adjoint pair of functors
	\[
	h_\et^* : \Shv(T_\et) \to \Shv(S_\et),
	\qquad h_{\et,*} : \Shv(S_\et) \to \Shv(T_\et).
	\]
	The right adjoint $h_{\et,*}$ is restriction along $h^{-1} : T_\et \to S_\et$ and the left adjoint $h_\et^*$ is the unique colimit-preserving functor sending the sheaf $\h_T(V)$ represented by $V \in T_\et$ to the sheaf represented by $V\fibprod_T S\in S_\et$.
	There is a canonical isomorphism $h^* u^* \simeq u^* h_\et^*$, since both are the unique colimit-preserving functor $\Shv(T_\et) \to \dStk_{/S}$ sending $\h_T(V)$ to the derived stack $V\fibprod_T S$.
    By adjunction, we also have $u_* h_* \simeq h_{\et,*} u_*$.
\end{constr}

\begin{thm}[Proper base change]\label{thm:trifasciated}
	Let $h : S \to T$ be a proper morphism of derived algebraic spaces.
	Then the canonical natural transformation
	\[
	u^* h_{\et,*}
	\xrightarrow{\mrm{unit}} h_* h^* u^* h_{\et,*}
	\simeq h_* u^* h_\et^* h_{\et,*}
	\xrightarrow{\mrm{counit}} h_* u^*
	\]
	is invertible on discrete sheaves.
\end{thm}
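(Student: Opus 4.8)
The plan is to reduce the statement, by means of \thmref{thm:flocculency}, to a routine comparison of sections on the small étale sites. Throughout, fix a discrete sheaf $\sF \in \Shv(S_\et)$. By \thmref{thm:kappland} and \defnref{defn:keratoplastic}, $\sF$ is represented by its espace étalé $X \in S_\et$; in particular $X \to S$ is representable and étale, and by \eqref{eq:Wadpawnuw} the derived stack $u^*\sF \in \dStk_{/S}$ is represented by $X$ as well.

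First I would extract the geometric input. Since $S$ and $T$ are derived algebraic spaces, $h$ is representable, and it is proper by hypothesis; thus \thmref{thm:flocculency} applies to the representable étale morphism $X \to S$ and shows that $h_*u^*\sF = h_*(X) \to T$ is again representable and étale. Hence $h_*u^*\sF$ lies in the essential image of $u^*$: writing $\sG := u_*h_*u^*\sF \in \Shv(T_\et)$, which is discrete by \thmref{thm:kappland}, we obtain $h_*u^*\sF \simeq u^*\sG$. The source $u^*h_{\et,*}\sF$ is manifestly in the image of $u^*$ as well (and $h_{\et,*}\sF$ is discrete, being a restriction of a set-valued sheaf). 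Since $u^*$ is fully faithful, the transformation evaluated at $\sF$, namely $\alpha_\sF : u^*h_{\et,*}\sF \to h_*u^*\sF \simeq u^*\sG$, is of the form $u^*(\beta)$ for a unique morphism $\beta : h_{\et,*}\sF \to \sG$ in $\Shv(T_\et)$, and $\alpha_\sF$ is invertible if and only if $\beta$ is.

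It then remains to prove that $\beta$ is an equivalence. As both $h_{\et,*}\sF$ and $\sG$ are discrete, and equivalences of étale sheaves are detected on sections over the objects of the site, it suffices to show that $\beta$ induces a bijection on $\Gamma(V,-)$ for every $V \in T_\et$. Here $V \fibprod_T S$ lies in $S_\et$, and unwinding the definitions yields natural identifications on both sides: on the source, the definition of $h_{\et,*}$ as restriction along $V \mapsto V\fibprod_T S$ together with \eqref{eq:Wadpawnuw} and representability gives $\Gamma(V, h_{\et,*}\sF) \simeq \Gamma(V\fibprod_T S, \sF) \simeq \Maps_S(V\fibprod_T S, X)$; on the target, the definition of the Weil restriction gives $\Gamma(V, \sG) \simeq \Maps_T(V, h_*(X)) \simeq \Maps_S(V\fibprod_T S, X)$. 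I would then check that $\beta$ realizes the identity under these two identifications.

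The main obstacle is precisely this last compatibility: one must verify that the composite defining the transformation---the unit of $(h^*, h_*)$, the isomorphism $h^*u^* \simeq u^*h_\et^*$, and the counit of $(h_\et^*, h_{\et,*})$---recovers the tautological bijection $\Maps_S(V\fibprod_T S, X) = \Maps_S(V\fibprod_T S, X)$ after the identifications above. Conceptually this is the assertion that $\alpha$ is the Beck--Chevalley mate of $h^*u^* \simeq u^*h_\et^*$, and is therefore compatible with the mate $u_*h_* \simeq h_{\et,*}u_*$ recorded in the construction; concretely it is a diagram chase through the two adjunctions. The genuine mathematical content has already been absorbed into \thmref{thm:flocculency}, whose properness hypothesis is exactly what makes $h_*(X)$ étale over $T$ and hence forces both sides into the fully faithful image of $u^*$, where the comparison can be tested on the small site.
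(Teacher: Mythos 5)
Your argument is circular: in this paper, \thmref{thm:flocculency} is \emph{deduced from} \thmref{thm:trifasciated}. The proof of \thmref{thm:flocculency} (which appears in the text after the proper base change theorem) reduces to the case $X_2 = S$ with $X := X_1$ a derived algebraic space étale over $S$, and then writes $h_*(X) \simeq h_*(u^*\h_S(X)) \simeq u^*h_{\et,*}(\h_S(X))$ --- the second isomorphism being precisely an instance of \thmref{thm:trifasciated} --- before invoking \thmref{thm:kappland} to conclude that $h_*(X)$ is representable and étale. So you cannot take \thmref{thm:flocculency} as your ``geometric input''. And the circularity is not a mere citation problem: since by \thmref{thm:kappland} every discrete sheaf on $S_\et$ is representable, the assertion that $h_*(X)$ is representable étale over $T$ is essentially equivalent to the theorem being proved (for $\sF = \h_S(X)$ it says exactly that $h_*u^*\sF$ lies in the essential image of $u^*$, after which your remaining steps are formal adjunction bookkeeping). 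Note that in your write-up the properness of $h$ enters \emph{only} through the appeal to \thmref{thm:flocculency}; a proof in which the hypothesis is consumed entirely by a corollary of the statement cannot be repaired by a diagram chase.

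The paper's proof runs in the opposite direction. It tests the transformation on sections over an \emph{arbitrary} $a : T' \to T$ in $\dSpc_{/T}$ --- not just over $T' \in T_\et$, precisely because one does not yet know that the target $h_*u^*\sF$ lies in the image of $u^*$ --- identifies $\Gamma(T', u^*h_{\et,*}\sF)$ with $\Gamma(T', a_\et^*h_{\et,*}(\sF))$ and $\Gamma(T', h_*u^*\sF)$ with $\Gamma(T', h'_{\et,*}b_\et^*(\sF))$ using \eqref{eq:Wadpawnuw} and the base changes $b : S' \to S$, $h' : S' \to T'$, and thereby reduces to the invertibility of $a_\et^*h_{\et,*}(\sF) \to h'_{\et,*}b_\et^*(\sF)$. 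After passing to classical truncations by derived invariance of the étale site, this is the classical nonabelian proper base change theorem of SGA4 (extended to algebraic spaces via Chow's lemma). That classical theorem is the irreducible input your proposal never reaches.
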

\begin{proof}
	It will suffice to show that for every discrete $\sF \in \Shv(S_\et)$ and every $a : T' \to T \in \dSpc_{/T}$ the induced map
	\[
	\Gamma(T', u^* h_{\et,*}(\sF)) \to \Gamma(T', h_* u^*(\sF))
	\]
	is invertible. Let $b : S' \to S$ and $h': S' \to T'$ denote the base
    changes of $a$ and $h$. Using \eqref{eq:Wadpawnuw}, the left hand side is
    identified with
    \[
    \Gamma(T',a^*u^*h_{\et,*}(\sF)) \simeq \Gamma(T',a_{\et}^*h_{\et,*}(\sF))
    \]
    and the right hand side with
    \[
    \Gamma(S', b^*u^*(\sF)) \simeq \Gamma(S',b_{\et}^*(\sF))=\Gamma(T', h'_{\et,*}b_\et^*(\sF)).
    \]
    It is thus enough to prove that the canonical map
    $a_{\et}^*h_{\et,*}(\sF)\to h'_{\et,*}b_\et^*(\sF)$ is invertible.
	By derived invariance of the étale site, we may replace all derived algebraic spaces in sight by their classical truncations.
	Now the statement follows from the classical nonabelian proper base change theorem of \cite[Exp.~XII, Thm.~5.1(i)]{SGA4} (which generalizes to algebraic spaces by Chow's lemma for algebraic spaces, see \cite[Tag~\spref{0DG0}]{Stacks}).
\end{proof}

\begin{proof}[Proof of \thmref{thm:flocculency}]
  Let $f : X_1 \to X_2$ be a representable étale morphism over $S$.
  It will suffice to show that for every affine $T'$ and every morphism $T' \to h_*(X_2)$, the base change $h_*(X_1) \fibprod_{h_*(X_2)} T' \to T'$ has the claimed property.
  By \corref{cor:thinkably} we may replace $T$ by $T'$, $h$ by $h' : S' := S \fibprod_T T' \to T'$, and $f$ by $f' : X_1\fibprod_{X_2} S' \to S'$, and thereby assume that $T$ is affine and $X_2=S$.
  Then $S$ is a derived algebraic space and $X := X_1$ is a derived algebraic space, étale over $S$, so by \thmref{thm:trifasciated} we have
	\[
    h_* (X)
    \simeq h_* (u^* \h_S(X))
    \simeq u^* h_{\et,*} (\h_S(X)).
	\]
	It follows from \thmref{thm:kappland} that $h_*(X)$ is represented by the espace \'etal\'e of $h_{\et,*} (\h_S(X)) \in \Shv(T_\et)$.

  Since $h_*$ preserves monomorphisms (as it preserves fibred products), the case of open immersions follows from that of representable étale morphisms.
\end{proof}

\subsection{Effective epimorphisms}

The following results are the only ingredients in the proof of \thmref{thm:intro/weil} which require the assumption that $h$ is \emph{finite} and not just sharp (see App.~\ref{sec:sharp}).
We recall that finite morphisms are proper representable by assumption.

\begin{prop}\label{prop:unconversableness}
  Let $h : S \to T$ be a finite morphism.
  Let $p : U \to X$ be a locally hfp morphism of derived stacks over $S$.
  If $p$ is an effective epimorphism in the étale topology, then so is the induced morphism $h_*(p): h_*(U) \to h_*(X)$.
  In particular, it implies that $h_*(p)$ is surjective.
\end{prop}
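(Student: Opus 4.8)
The plan is to reduce, using the base-change results already established, to the universal situation over an affine base and then to leverage the representable étale case treated in \thmref{thm:flocculency}. First I would reduce to the case where $T$ is affine and $X = S$. By \corref{cor:thinkably}, for any affine $T'$ with a morphism $T' \to h_*(X)$ corresponding to $f^\flat : S' := S\fibprod_T T' \to X$, the base change $h_*(U) \fibprod_{h_*(X)} T'$ is identified with $h'_*(U\fibprod_X S')$, where $h' : S' \to T'$ is the (still finite) base change of $h$ and $U\fibprod_X S' \to S'$ is the pullback of $p$, hence again an effective epimorphism (effective epimorphisms being stable under base change in the \inftyCat of étale sheaves). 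Relabelling, it suffices to show: if $T$ is affine, $h : S \to T$ is finite, and $p' : U' \to S$ is an effective epimorphism, then $h_*(U') \to h_*(S) \simeq T$ is an effective epimorphism. Since effective epimorphisms onto an affine may be tested by producing a section over a single representable étale cover—and since \lemref{lem:mouthiness} reduces the test object to $T$ itself—it is enough to find a representable étale cover $T'' \to T$ admitting a section of $h_*(p')$. Note also that, as $h$ is finite and hence affine, $S$ is an affine derived scheme.

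Because $p'$ is an effective epimorphism and $S$ is affine, there is a representable étale cover $q : S_0 \to S$ (with $S_0$ a disjoint union of affines) together with a section $\sigma_0 : S_0 \to U'$ of $p'$, i.e. $p'\circ\sigma_0 = q$. The key point is that $T'' := h_*(S_0) \to h_*(S) \simeq T$ is then a representable étale cover of $T$. It is representable and étale by \thmref{thm:flocculency} (using that $h$ is finite, hence proper and representable), and I claim it is surjective. Surjectivity may be checked on geometric points $\overline{t} : \Spec k \to T$. By \lemref{lem:mouthiness}, $h_*(S_0)\fibprod_T \overline{t} \simeq (h_{\overline t})_*(S_0\fibprod_S S_{\overline t})$ for the finite morphism $h_{\overline t} : S_{\overline t} := S\fibprod_T \overline t \to \overline t$, so a $k$-point amounts to a section of the étale cover $S_0\fibprod_S S_{\overline t} \to S_{\overline t}$. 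Here is where finiteness is essential: $S_{\overline t}$ is finite over the separably closed field $k$, so by derived invariance of the étale site the existence of such a section may be tested on its classical truncation, a finite disjoint union of Artinian local $k$-algebras with separably closed residue field, i.e. strictly henselian local rings. Over these every étale cover splits, so the desired section exists and $T'' \to T$ is surjective.

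To finish, the identity of $T''$ corresponds under the Weil restriction adjunction to a morphism $r : S\fibprod_T T'' \to S_0$ over $S$ with $q\circ r$ the projection; then $\sigma_0\circ r$ is a section of $p'$ over $S\fibprod_T T''$, i.e. a section of $h_*(p')$ over the étale cover $T'' \to T$. This exhibits $h_*(p')$, and therefore $h_*(p)$, as an effective epimorphism, and surjectivity follows since effective epimorphisms are surjective. Incidentally, the local hfp hypothesis on $p$ plays no role in this argument: only the effective-epimorphism structure of $p$, the finiteness of $h$, and \thmref{thm:flocculency} are used.

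The main obstacle I anticipate is exactly the surjectivity of $h_*(S_0) \to T$ in the second paragraph, which is the one step that genuinely uses finiteness of $h$ rather than mere properness. It is only because the fibres $S_{\overline t}$ are zero-dimensional—so that their local rings are strictly henselian—that an étale cover is guaranteed to split over them and the fibrewise section exists. For a general proper $h$ this fails (an étale cover of $\mathbf{P}^1$ need not admit a section), which is precisely why the finiteness hypothesis cannot be relaxed to properness in this proposition, matching the remark preceding the statement that finiteness, and not just sharpness, is needed here.
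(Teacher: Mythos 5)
Your proof is correct, but it takes a genuinely different route from the paper's. The paper first invokes \propref{prop:whalefin} to conclude that $h_*(p)$ is locally hfp, which licenses testing effective epimorphy on strictly henselian local rings $\Spec(R) \to h_*(X)$; it then base changes to $S' = S\fibprod_T\Spec(R)$, observes that finiteness of $h$ makes each connected component of $S'$ strictly henselian, and splits $p$ over each component. You instead build the witnessing étale cover of the base explicitly: split $p$ over a representable étale cover $S_0 \to S$, push forward to get $h_*(S_0) \to T$, which is representable étale by \thmref{thm:flocculency}, and verify surjectivity on geometric points --- which is exactly where finiteness enters for you, since the fibres $S_{\bar t}$ are finite over a separably closed field and hence disjoint unions of strictly henselian local schemes over which étale covers split. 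Both arguments use finiteness in the same essential way (zero-dimensional fibres give strict henselizations), but your version buys something real: as you note, the locally hfp hypothesis on $p$ is never used, so you prove a slightly stronger statement, whereas the paper needs hfp precisely to pass from sections over strict henselizations to sections over actual étale neighbourhoods. The cost is that your $S_0$ may be an infinite disjoint union, so $h_*(S_0)$ is a rather large étale algebraic space over $T$; this causes no harm since \thmref{thm:flocculency} imposes no quasi-compactness, and your fibrewise surjectivity check silently absorbs the content of \lemref{lem:metapsychology} (a finite fibre only meets finitely many components of $S_0$). The final adjunction step, transporting the section $\sigma_0$ along the counit $S\fibprod_T h_*(S_0) \to S_0$, is correct and is the natural way to conclude.
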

\begin{proof}
  Since $p$ is locally hfp, so is $h_*(p) : h_*(U)\to h_*(X)$ by \propref{prop:whalefin}.
  To show that $h_*(p)$ is an effective epimorphism, it will thus suffice to show that for every local ring $R$ which is strictly henselian and every morphism $a : \Spec(R) \to h_*(X)$, the base change
  \begin{equation*}
    h_*(U)\fibprod_{h_*(X)} \Spec(R) \to \Spec(R)
  \end{equation*}
  admits a section.
  By \corref{cor:thinkably}, this is identified with the morphism
  \[
    h'_*(U\fibprod_X S')
    \to h'_*(S') \simeq \Spec(R)
    \]
  where $S' := S\fibprod_T \Spec(R)$, $h' : S' \to \Spec(R)$ denotes the base change of $h$, and $a^\flat : S' \to X$ is the $S$-morphism corresponding to $a$.
  
  Since $h'$ is finite, each connected component $S'_i$ of $S'$ is strictly henselian.
  Since $p$ is an effective epimorphism, each $U\fibprod_X S'_i \to S'_i$ admits a section.
  These give rise to a section of $U\fibprod_X S' \to S'$, whence by functoriality a section of $h'_*(U\fibprod_X S') \to h'_*(S')$ as required.
\end{proof}

\begin{lem}\label{lem:metapsychology}
  Let $h : S \to T$ be a finite morphism.
  Let $(U_\alpha)_{\alpha\in I}$ be a small collection of derived stacks over $S$.
  Let $U_J=\coprod_{\alpha\in J} U_\alpha$ for every $J\subseteq I$. Then
  \[
  \{ h_*(U_J)\to h_*(U_I) \}_{J\subseteq I, \;J\;\text{finite}}
  \]
  is a Zariski covering family.
\end{lem}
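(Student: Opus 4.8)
The plan is to verify two things separately: that each $h_*(U_J) \to h_*(U_I)$ is an open immersion, and that the resulting family of opens covers $h_*(U_I)$.

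For the first point, I would observe that the inclusion $U_J \hook U_I$ of a subfamily into a coproduct is an open (in fact open and closed) immersion, hence representable and étale. Since a finite morphism is proper and representable, \thmref{thm:flocculency} then applies and shows that $h_*(U_J) \to h_*(U_I)$ is again an open immersion.

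For the covering property, since every member of the family is an open immersion it suffices to check surjectivity on points: for every field $k$ and every $x : \Spec(k) \to h_*(U_I)$, I would show that $x$ factors through $h_*(U_J)$ for some finite $J \subseteq I$. By the Weil-restriction adjunction, $x$ corresponds to an $S$-morphism $x^\flat : S' \to U_I$, where $S' := S \fibprod_T \Spec(k)$ and the base-changed morphism $h' : S' \to \Spec(k)$ is again finite. The crucial point is that $S'$ is then finite over the field $k$, so its classical truncation is Artinian and in particular has only finitely many connected components. Pulling back the decomposition $U_I = \coprod_\alpha U_\alpha$ along $x^\flat$ yields a decomposition of $S'$ into open-and-closed pieces $(x^\flat)^{-1}(U_\alpha)$, only finitely many of which are nonempty; taking $J$ to be the finite set of surviving indices, $x^\flat$ factors through the open $U_J \hook U_I$, and hence $x$ factors through $h_*(U_J)$.

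I expect the only delicate points to be the reduction of the covering condition to field-valued points --- legitimate precisely because the family consists of open immersions into $h_*(U_I)$ --- and the adjunction bookkeeping, where one may instead invoke \corref{cor:thinkably} to identify $h_*(U_J) \fibprod_{h_*(U_I)} \Spec(k)$ with $h'_*\big((x^\flat)^{-1}(U_J)\big)$ and observe that this equals $\Spec(k)$ exactly when the whole finite fibre $S'$ lands in $U_J$. The genuine geometric content is merely the soft fact that a scheme finite over a field meets only finitely many summands of a disjoint union.
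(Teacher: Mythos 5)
Your proposal is correct and follows essentially the same route as the paper: open immersivity of each $h_*(U_J) \to h_*(U_I)$ via \thmref{thm:flocculency}, then reduction to field-valued points, where the adjunction (via \corref{cor:thinkably}) converts the question to the finite scheme $S' = S \fibprod_T \Spec(k)$, whose finite underlying space forces the corresponding $S$-morphism to $U_I$ to land in finitely many summands. The paper phrases the last step as $|S'|$ being a finite set rather than via Artinian-ness of the classical truncation, but this is the same observation.
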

\begin{proof}
  For every $J$ the morphism $h_*(U_J)\to h_*(U_I)$ is an open immersion (\thmref{thm:flocculency}), so it remains to show that
  \[ \coprod_{J} h_*(U_J) \to h_*(U_I) \]
  is surjective.
  It will suffice to show that for every field $\kappa$ and morphism
  $a : T'=\Spec(\kappa)\to h_*(U_I)$, the base change (\corref{cor:thinkably})
  \[
    \coprod_J h'_*(U_J\fibprod_{U_I} S')
    \to h'_*(S') \simeq \Spec(\kappa)
  \]
  admits a section, where $h' : S' \to T'$ denotes the base change of $h$.
  Since $S'$ is finite over $T'=\Spec(\kappa)$, the set $\abs{S'}$ is finite.
  Thus $a^\flat(\abs{S'}) \sub \abs{U_I}$ lies in $\abs{U_J}$ for some finite subset $J$, i.e., $U_J\fibprod_{U_I} S' \simeq S'$ and in particular $h'_*(U_J \fibprod_{U_I} S') \simeq h'_*(S')$.
\end{proof}

\begin{cor}\label{cor:diffusedly}
  Let $h : S \to T$ be a finite morphism.
  Let $X$ be a derived stack over $S$ and $\{U_\alpha \to X\}_{\alpha\in I}$ a collection of locally hfp morphisms such that $\coprod_{\alpha\in I} U_\alpha \to X$ is an effective epimorphism in the étale topology.
  Then the locally hfp morphism
  \[
    p:\coprod_{J\;\text{finite}} h_*(U_J)
    \to h_*(X)
  \]
  is an effective epimorphism in the étale topology, where $U_J := \coprod_{\alpha\in J} U_J$.
\end{cor}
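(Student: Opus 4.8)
The plan is to factor $p$ through the single Weil restriction $h_*(U_I)$ of the total cover $U_I = \coprod_{\alpha \in I} U_\alpha$, and to recognize both resulting factors as effective epimorphisms by quoting the two preceding results. Write $U := U_I$. Since each $U_\alpha \to X$ is locally hfp and locally-hfp-ness is local on the source, the coproduct $U \to X$ is again locally hfp; and by hypothesis $U \to X$ is an effective epimorphism in the étale topology. Thus \propref{prop:unconversableness} applies verbatim to $U \to X$ and yields that $h_*(U_I) \to h_*(X)$ is an effective epimorphism in the étale topology.

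Next I would invoke \lemref{lem:metapsychology} for the collection $(U_\alpha)_{\alpha\in I}$, which asserts that $\{\, h_*(U_J) \to h_*(U_I)\,\}_{J\subseteq I\ \text{finite}}$ is a Zariski covering family. In particular each $h_*(U_J) \to h_*(U_I)$ is an open immersion and the family is jointly surjective, so the induced morphism $\coprod_{J\ \text{finite}} h_*(U_J) \to h_*(U_I)$ is an effective epimorphism already in the Zariski, a fortiori in the étale, topology. It then remains to check, by functoriality of $h_*$, that for each finite $J$ the structure map $h_*(U_J) \to h_*(X)$ coincides with the composite of $h_*(U_J) \to h_*(U_I)$ with $h_*(U_I) \to h_*(X)$, which holds because $U_J \hook U_I \to X$ is precisely the given morphism $U_J \to X$. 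Hence $p$ factors as
\[
  \coprod_{J\ \text{finite}} h_*(U_J) \too h_*(U_I) \too h_*(X),
\]
a composite of two effective epimorphisms in the étale topology.

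Since effective epimorphisms in an $\infty$-topos are stable under composition, this exhibits $p$ as an effective epimorphism in the étale topology. The locally-hfp assertion follows separately: each $U_J \to X$ is a finite coproduct of locally hfp morphisms, hence locally hfp, and $h$ finite is in particular qcqs and representable, so \propref{prop:whalefin} gives that each $h_*(U_J) \to h_*(X)$—and therefore their coproduct $p$—is locally hfp.

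I do not expect a genuine obstacle in this argument, as all the substantive work is already carried out in \propref{prop:unconversableness} and \lemref{lem:metapsychology}. The only points that require a moment's care are the purely formal verification that the two-step composite recovers $p$ (functoriality of the Weil restriction) and the recollection that a Zariski covering family induces an effective epimorphism on the coproduct and that such morphisms are closed under composition.
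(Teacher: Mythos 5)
Your argument is exactly the one the paper intends: the paper's proof is the single line ``Combine \propref{prop:unconversableness} and \lemref{lem:metapsychology}'', and your write-up simply spells out that combination (factoring $p$ through $h_*(U_I)$, applying the proposition to the total cover and the lemma to the finite subcoproducts, then composing effective epimorphisms). The proposal is correct and matches the paper's approach.
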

\begin{proof}
  Combine \propref{prop:unconversableness} and \lemref{lem:metapsychology}.
\end{proof}

\subsection{Cotangent complexes; étale and smooth morphisms}
\label{ssec:cot}

In this paragraph we discuss cotangent complexes of derived Weil restrictions.
In the context of spectral algebraic geometry, a special case of this computation is obtained using a slightly different argument in \cite[\S 19.1.3]{LurieSAG}.\footnote{
  This is not exactly true (except in characteristic zero), since \emph{loc.\ cit.}\ works with the ``topological'' cotangent complex rather than the ``algebraic'' one; see \cite[\S 25.3.5]{LurieSAG} for a discussion of the difference.
  Thus \propref{prop:cotan} combined with \examref{exam:stayship} yields an analogue of \cite[Prop.~19.1.4.3]{LurieSAG} for the \emph{algebraic} cotangent complex.
}

The computation of the cotangent complex of a Weil restriction will make use of the notion of \emph{sharpness} introduced in Appendix~\ref{sec:sharp}.
We recall in particular that for a sharp morphism $f$, the pull-back of quasi-coherent sheaves $f^*$
admits a left adjoint $f_\sharp$ satisfying base change (\remref{rem:hsharp}).

\begin{prop}\label{prop:cotan}
  Let $h : S \to T$ be a sharp morphism of derived stacks and $f : X_1 \to X_2$ a morphism of derived stacks over $S$.
  Consider the following commutative diagram:
  \begin{equation}\label{eq:Egizaes}
    \begin{tikzcd}
      & h_*(X_1) \fibprod_T S \ar{r}{h'}\ar{d}{p}\ar[swap]{ld}{\varepsilon}
      & h_*(X_1) \ar{d}
      \\
      X_1 \ar{r}
      & S \ar{r}{h}
      & T,
    \end{tikzcd}
  \end{equation}
  where $\varepsilon$ is the counit of the adjunction.
  If $f$ admits a cotangent complex $\sL_{X_1/X_2}$, and
  $h'_\sharp \varepsilon^*(\sL_{X_1/X_2})$ is locally eventually connective, then
  \begin{equation*}
    \sL_{h_*(X_1)/h_*(X_2)} = h'_\sharp \varepsilon^*(\sL_{X_1/X_2})
  \end{equation*}
  is a cotangent complex for $h_*(f) : h_*(X_1) \to h_*(X_2)$.
\end{prop}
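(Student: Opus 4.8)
The plan is to verify that the candidate complex $\sM := h'_\sharp \varepsilon^*(\sL_{X_1/X_2})$ satisfies the universal property \eqref{eq:derivations} characterizing the relative cotangent complex of $h_*(f)$. Concretely, for every $u : U \to h_*(X_1)$ and every connective $\sF \in \QCoh(U)_{\ge 0}$, I would produce a natural isomorphism
\[
\Maps_{\QCoh(U)}(u^*\sM, \sF) \simeq \on{Der}_U\big(h_*(X_1)/h_*(X_2); \sF\big);
\]
the hypothesis that $\sM$ is locally eventually connective is then exactly what promotes such a complex to a genuine cotangent complex. Throughout write $U_S := U \fibprod_T S$ with projection $q : U_S \to U$, and $W := h_*(X_1)\fibprod_T S$ as in \eqref{eq:Egizaes}. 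Since $U_S \simeq U \fibprod_{h_*(X_1)} W$, the map $q$ is a base change of $h$, and the adjoint $u^\flat := \varepsilon \circ u_W : U_S \to X_1$ of $u$ factors through the base change $u_W : U_S \to W$ of $u$.

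The first and decisive step is to identify derivations of $h_*(X_1)/h_*(X_2)$ with derivations of $X_1/X_2$. The geometric input is that trivial square-zero extensions commute with base change: since $U[\sF] = \uSpec_U(\sO_U \oplus \sF)$ and relative $\uSpec$ is stable under base change, one has $U[\sF]\fibprod_T S \simeq U_S[q^*\sF]$, compatibly with zero sections and projections. Feeding this into the Weil adjunction $\Maps_T(-, h_*(X_i)) \simeq \Maps_S(-\fibprod_T S, X_i)$, which is natural in the target and hence intertwines post-composition by $h_*(f)$ with post-composition by $f$, and then taking fibres over the relevant basepoints, I obtain a natural isomorphism
\[
\on{Der}_U\big(h_*(X_1)/h_*(X_2); \sF\big) \simeq \on{Der}_{U_S}\big(X_1/X_2; q^*\sF\big).
\]
Here $q^*\sF$ is again connective, so the right-hand derivation space is defined.

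The remaining steps are formal. Applying the universal property of $\sL_{X_1/X_2}$ at $u^\flat$ gives $\on{Der}_{U_S}(X_1/X_2; q^*\sF) \simeq \Maps_{\QCoh(U_S)}\big((u^\flat)^*\sL_{X_1/X_2},\, q^*\sF\big)$, and the sharp–pullback adjunction $q_\sharp \dashv q^*$ (valid since $q$, being a base change of $h$, is sharp) rewrites this as $\Maps_{\QCoh(U)}\big(q_\sharp (u^\flat)^*\sL_{X_1/X_2},\, \sF\big)$. Finally, the base-change property of $h'_\sharp$ (\remref{rem:hsharp}) applied to the cartesian square with vertical maps $h'$ and $q$ yields
\[
u^*\sM = u^* h'_\sharp \varepsilon^*\sL_{X_1/X_2} \simeq q_\sharp u_W^* \varepsilon^*\sL_{X_1/X_2} = q_\sharp (u^\flat)^*\sL_{X_1/X_2}.
\]
Chaining these isomorphisms produces the desired identification, naturally in $(U,\sF)$.

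I expect the genuine obstacle to lie entirely in the first step: carefully tracking the square-zero extension, its zero section and its projection through base change along $h$, and confirming that the Weil adjunction identifies the relative-derivation basepoint on the $h_*(X_1)/h_*(X_2)$ side with the \emph{trivial} derivation on the $X_1/X_2$ side (so that the fibre computation really lands in $\on{Der}_{U_S}(X_1/X_2; q^*\sF)$, with the correct restriction-along-$U_S$ condition). Everything after that is a composition of adjunctions, with sharpness (for $q_\sharp \dashv q^*$ and its base change) and the local eventual connectivity of $\sM$ being the two hypotheses that make the argument go through.
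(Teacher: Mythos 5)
Your proposal is correct and follows essentially the same route as the paper's proof: both reduce the universal property of the cotangent complex to the Weil adjunction (identifying derivations of $h_*(X_1)/h_*(X_2)$ valued in $\sF$ with derivations of $X_1/X_2$ valued in the pullback of $\sF$ along the base change of $h$), then conclude via the base-change formula \eqref{eq:sharpbc} and the adjunction $(-)_\sharp \dashv (-)^*$. The only differences are cosmetic: the paper tests the universal property on affine $V$ only and leaves the compatibility of trivial square-zero extensions with base change implicit, whereas you spell that step out.
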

\begin{proof}
  For every morphism $V \to h_*(X_1)$ with $V$ affine and every $\sF \in \QCoh(V)_{\ge 0}$, consider the commutative diagram
  \begin{equation*}
    \begin{tikzcd}
      & U \ar{r}{h_V}\ar{d}\ar[swap,bend right]{ldd}{u}
      & V \ar{d}{v}
      \\
      & h_*(X_1) \fibprod_T S \ar{r}{h'}\ar{d}\ar{ld}{\varepsilon}
      & h_*(X_1) \ar{d}
      \\
      X_1 \ar{r}
      & S \ar{r}{h}
      & T
    \end{tikzcd}
  \end{equation*}
  where both squares are cartesian.
  It will suffice to demonstrate functorial isomorphisms
  \begin{equation*}
    \Maps_{\QCoh(V)}(v^* h'_\sharp \varepsilon^* \sL_{X_1/X_2}, \sF)
    \simeq \on{Der}_V(h_*(X_1)/h_*(X_2); \sF).
  \end{equation*}
  By adjunction, the right-hand side is identified with
  \begin{equation*}
    \on{Der}_U(X_1/X_2; h_V^*(\sF)).
  \end{equation*}
  Similarly, the left-hand side is identified with
  \begin{equation*}
    \Maps_{\QCoh(V)}(h_{V,\sharp} u^* \sL_{X_1/X_2}, \sF)
    \simeq \Maps_{\QCoh(U)}(u^* \sL_{X_1/X_2}, h_V^*\sF)
  \end{equation*}
  by the base change formula \eqref{eq:sharpbc} and adjunction.
  Now the claim follows by the universal property of $\sL_{X_1/X_2}$.
\end{proof}

\begin{cor}\label{cor:ontogenic}
  Let $h : S \to T$ be an afp proper representable morphism which is of Tor-amplitude $\le d$ and universally of cohomological dimension $\le e$.
  If $f : X_1 \to X_2$ is a morphism of derived stacks over $S$ admitting a perfect cotangent complex of Tor-amplitude $[a,b]$, then $h_*(X_1) \to h_*(X_2)$ admits the cotangent complex
  \begin{equation*}
    \sL_{h_*(X_1)/h_*(X_2)}
    \simeq h'_* (\varepsilon^*(\sL_{X_1/X_2}) \otimes p^*\omega_{S/T}),
  \end{equation*}
  where the notation is as in \eqref{eq:Egizaes} and $\omega_{S/T} = h^!(\sO_T)$ is the dualizing complex.
  Moreover, $\sL_{h_*(X_1)/h_*(X_2)}$ is perfect of Tor-amplitude $[a-d,b+e]$.
  In particular, if $f$ is étale, then so is $h_*(f) : h_*(X_1) \to h_*(X_2)$.
  Similarly, if $e=0$ and $f$ is smooth, resp.~quasi-smooth, then so is $h_*(f)$.
\end{cor}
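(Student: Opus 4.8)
The plan is to apply \propref{prop:cotan} to the base change $h'$ and then make the abstract object $h'_\sharp\varepsilon^*(\sL_{X_1/X_2})$ explicit. Write $\sF := \varepsilon^*(\sL_{X_1/X_2})$, with notation as in \eqref{eq:Egizaes}, and observe that $h'$ inherits all the hypotheses on $h$: it is afp, proper, representable, of Tor-amplitude $\le d$, and—by the \emph{universal} bound—of cohomological dimension $\le e$. Being proper of finite Tor-amplitude, $h'$ is a \emph{perfect} morphism, so $h'_*$ preserves perfect complexes, the projection formula holds, and relative Grothendieck duality $h'_*\dashv h'^!$ is available with the perfect-morphism identity $h'^!(\sN)\simeq h'^*(\sN)\otimes\omega_{h'}$, where $\omega_{h'}=h'^!(\sO)$. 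Since $\sF$ is perfect of Tor-amplitude $[a,b]$ (pullback preserves perfectness and Tor-amplitude), the output will be perfect, hence locally eventually connective, so the hypothesis of \propref{prop:cotan} is met and $\sL_{h_*(X_1)/h_*(X_2)}\simeq h'_\sharp\sF$.

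The key step is to identify $h'_\sharp$ on perfect complexes. Using dualizability of $\sF$, the projection formula, and that $h'_*$ preserves perfectness, I would compute
\begin{equation*}
  \Maps(h'_\sharp\sF,\sN)\simeq\Maps(\sF, h'^*\sN)\simeq\Gamma\bigl(h'_*(\sF^\vee\otimes h'^*\sN)\bigr)\simeq\Gamma\bigl(h'_*(\sF^\vee)\otimes\sN\bigr)\simeq\Maps\bigl((h'_*(\sF^\vee))^\vee,\sN\bigr),
\end{equation*}
so that $h'_\sharp\sF\simeq(h'_*(\sF^\vee))^\vee$. Relative Serre duality—Grothendieck duality evaluated at $\sO$—then rewrites this as
\begin{equation*}
  (h'_*(\sF^\vee))^\vee\simeq h'_*\bigl(\sF^{\vee\vee}\otimes\omega_{h'}\bigr)\simeq h'_*(\sF\otimes\omega_{h'}),
\end{equation*}
and base change for the relative dualizing complex along the cartesian square \eqref{eq:Egizaes} gives $\omega_{h'}\simeq p^*\omega_{S/T}$ (valid since $h$ is proper and perfect), yielding the asserted formula. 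I expect this identification—routing the wrong-way functor $h'_\sharp$ through $(h'_*(-)^\vee)^\vee$ rather than directly through the possibly non-invertible $\omega_{h'}$—to be the conceptual heart of the argument.

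Perfectness of $\sL_{h_*(X_1)/h_*(X_2)}$ is then immediate, since $\sF^\vee$ is perfect, $h'_*$ preserves perfect complexes, and the dual of a perfect complex is perfect (alternatively via \propref{prop:whalefin} when $f$ is locally hfp). For the Tor-amplitude I would \emph{not} estimate $\sF\otimes\omega_{h'}$ directly, as $\omega_{S/T}$ fails to be perfect when $h$ is non-Gorenstein; instead I track amplitudes through $h'_\sharp\sF\simeq(h'_*(\sF^\vee))^\vee$. Dualizing sends $[a,b]$ to $[-b,-a]$; the pushforward $h'_*$, of cohomological dimension $\le e$ and Tor-amplitude $\le d$, sends a perfect complex of Tor-amplitude $[\alpha,\beta]$ to one of Tor-amplitude $[\alpha-e,\beta+d]$; and dualizing back gives $[a-d,b+e]$. (As sanity checks: the finite flat case $d=e=0$ leaves $[a,b]$ unchanged, while for $h$ smooth proper of relative dimension $n$ one has $d=0$, $e=n$, and $\omega_{S/T}\simeq\omega^{\mathrm{top}}_{S/T}[n]$, giving $[a,b+n]$.) The ``in particular'' assertions follow: if $f$ is étale then $\sF=0$, so $h'_\sharp\sF=0$; and if $e=0$ the upper bound is unchanged, so $b\le 0$ (smooth) resp.\ $b\le 1$ (quasi-smooth) is preserved. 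The main obstacle is precisely this amplitude bookkeeping combined with the clean identification of $h'_\sharp$: once the wrong-way functor is expressed via $h'_*$ and duality, perfectness and the bounds drop out, but the non-perfectness of $\omega_{S/T}$ makes the naive tensor estimate fail and forces the dualize–push–dualize route.
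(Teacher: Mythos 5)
Your proposal is correct and follows essentially the same route as the paper: the paper's proof simply cites \examref{exam:stayship} for the sharpness of $h$ and the formula $h'_\sharp(-)\simeq h'_*(-\otimes p^*\omega_{S/T})$, and \lemref{lem:fortuitist} for the Tor-amplitude bound, and both of those are proved in the appendix by exactly the dualize--push--dualize identification $h'_\sharp(\sF)\simeq (h'_*(\sF^\vee))^\vee$ that you single out as the conceptual heart (your lower-bound bookkeeping via a Tor-amplitude estimate on $h'_*$ differs cosmetically from the paper's adjunction argument in \lemref{lem:fortuitist}, but both give $a-d$). The only omission: for the étale/smooth/quasi-smooth conclusions you verify the cotangent-complex condition but not that $h_*(f)$ is locally hfp, which is part of the definition of these properties; the paper supplies this via \propref{prop:whalefin}. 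You should also note that invoking \propref{prop:cotan} presupposes sharpness, i.e.\ that the left adjoint you construct on $\Perf$ extends to all of $\QCoh$ with base change --- this is the Ind-completion step of \lemref{lem:neurology}, using that the fibres of $h$ over affines are perfect stacks.
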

\begin{proof}
  By \examref{exam:stayship}, the morphism $h$ is sharp and $h'_\sharp(-)\simeq
  h'_*(- \otimes p^*\omega_{S/T})$. By \lemref{lem:fortuitist},
  the complex $h'_\sharp(\varepsilon^*(\sL_{X_1/X_2}))$ is perfect of Tor-amplitude $[a-d,b+e]$ and by \propref{prop:cotan}
  this is a cotangent complex for $h_*(X_1) \to h_*(X_2)$.
  For the last claims it suffices to note that $h_*$ preserves locally hfp morphisms by \propref{prop:whalefin}.
\end{proof}

\begin{cor}\label{cor:middlemost}
	Let $h : S \to T$ be a finite morphism of finite Tor-amplitude.
	For any étale morphism $X_1 \to X_2$ over $S$ which is representable by $n$-Deligne--Mumford stacks, the induced morphism $h_*(X_1) \to h_*(X_2)$ is representable by $n$-Deligne--Mumford stacks.
\end{cor}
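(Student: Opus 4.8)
The plan is to prove, by induction on $n$, the following strengthening: for $h$ finite of finite Tor-amplitude and $f : X_1 \to X_2$ étale over $S$ and representable by $n$-Deligne--Mumford (DM) stacks, the morphism $h_*(f)$ is \emph{étale} and representable by $n$-DM stacks. Étale-ness is not asserted in \corref{cor:middlemost}, but it is exactly what makes the induction close. Both conclusions may be checked after base change along an arbitrary affine $V \to h_*(X_2)$; by \corref{cor:thinkably} such a base change is again a Weil restriction $h'_*(f')$, where $h' : S' := S \fibprod_T V \to V$ is the base change of $h$ (still finite of finite Tor-amplitude) and $f'$ is the base change of $f$ (still étale and representable by $n$-DM stacks). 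We may thus assume $T$ is affine and $X_2 = S$; as $S$ is finite over the affine $T$, it is then itself affine, and $X := X_1$ is an $n$-DM stack, étale over $S$. The goal becomes to show $h_*(X)$ is $n$-DM and $h_*(X) \to T$ is étale. The base case $n=0$ is immediate from \thmref{thm:flocculency}: representable by $0$-DM stacks means representable étale, so $h_*(X) \to T$ is representable étale, hence an étale derived algebraic space over the affine $T$.

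For the inductive step I first treat the diagonal. Since $h_*$ preserves limits, $h_*(X)\fibprod_T h_*(X) \simeq h_*(X\fibprod_S X)$, and under this identification the relative diagonal of $h_*(X)/T$ is $h_*(\Delta)$ for $\Delta : X \to X\fibprod_S X$ the diagonal of $X/S$. Both projections $X\fibprod_S X \to X$ are étale, so the transitivity triangle gives $\sL_\Delta = 0$; as $\Delta$ is $(n-1)$-representable with classical truncation locally of finite presentation, it is étale and representable by $(n-1)$-DM stacks. By the inductive hypothesis $h_*(\Delta)$ is étale and representable by $(n-1)$-DM stacks, and composing with the closed immersion $h_*(X)\fibprod_T h_*(X) \hook h_*(X)\times h_*(X)$ (a base change of $T \hook T\times T$) shows the diagonal of $h_*(X)$ is representable by $(n-1)$-DM stacks.

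It remains to produce an étale atlas. Pick an étale atlas $\coprod_\alpha U_\alpha \to X$ with $U_\alpha$ affine and set $U_J := \coprod_{\alpha\in J} U_\alpha$ for finite $J$. Each composite $U_J \to X \to S$ is an étale morphism of affine derived schemes, hence representable, so by \thmref{thm:flocculency} the morphism $h_*(U_J) \to T$ is representable and étale; in particular $h_*(U_J)$ is a derived algebraic space. By \corref{cor:diffusedly} the induced morphism $g : P := \coprod_{J\text{ finite}} h_*(U_J) \to h_*(X)$ is a locally hfp effective epimorphism. To see $g$ is étale I base change along an affine $V \to h_*(X)$: by \corref{cor:thinkably} this base change is $h'_*(U_J\fibprod_X S') \to V$, and $U_J\fibprod_X S' \to S'$ is the base change of the atlas map $U_J \to X$, which is étale and representable by $(n-1)$-DM stacks, so the inductive hypothesis gives that it is étale. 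Thus $g$ is an étale effective epimorphism, and replacing each $h_*(U_J)$ by a scheme atlas yields an étale surjection onto $h_*(X)$ from a derived scheme; with the diagonal computation this shows $h_*(X)$ is $n$-DM. Finally $\sL_{P/T} = 0$ (the source is étale over $T$) and $\sL_{P/h_*(X)} = 0$ (as $g$ is étale), so the transitivity triangle forces $g^*\sL_{h_*(X)/T} = 0$; faithful flatness of $g$ then gives $\sL_{h_*(X)/T} = 0$, and since $h_*(X)\to T$ is locally hfp by \propref{prop:whalefin}, it is étale, completing the induction.

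The main obstacle is that, because $h$ is finite but not flat, Weil restriction does not commute with forming atlases: applying $h_*$ to an atlas $\coprod U_\alpha \to X$ need not cover $h_*(X)$, since $h_*$ fails to preserve the relevant colimits. \corref{cor:diffusedly} repairs this, producing the effective epimorphism $P \to h_*(X)$ out of finite disjoint unions. The genuinely delicate point is upgrading this to an \emph{étale} morphism: for $n\ge 2$ the atlas map $U_J \to X$ is only $(n-1)$-representable, so \thmref{thm:flocculency} does not apply to it, while the cotangent-complex route through \corref{cor:ontogenic} would require $h$ to be almost of finite presentation, which is not assumed. This is precisely why the inductive statement must carry étale-ness: it lets one check étale-ness of the atlas map after base change to schemes, where \corref{cor:thinkably} exhibits it as a Weil restriction of an étale morphism representable by $(n-1)$-DM stacks, to which the inductive hypothesis applies.
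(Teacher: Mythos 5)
Your proof is correct, and while it shares the paper's overall skeleton (reduce to $T$ affine and $X_2=S$ via \corref{cor:thinkably}, settle $n=0$ by \thmref{thm:flocculency}, then induct using an atlas and the diagonal), it diverges at the one genuinely delicate step, and for good reason. The paper's proof obtains \'etaleness of the induced atlas map $h_*(U) \to h_*(X)$ by citing \corref{cor:ontogenic}, whose hypotheses include that $h$ is \emph{almost of finite presentation} (needed for sharpness via \examref{exam:stayship}); that hypothesis does not appear in the statement of \corref{cor:middlemost}, and finite of finite Tor-amplitude does not imply it outside the noetherian setting (cf.\ the paper's own footnote to Theorem~B). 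You correctly identified this and routed around it by strengthening the inductive statement to include \'etaleness of $h_*(f)$, which you then verify for the atlas map by base-changing to affines over $h_*(X)$ (\corref{cor:thinkably}) and invoking the inductive hypothesis one level down, and for $h_*(X) \to T$ itself by descent of the cotangent complex along the \'etale effective epimorphism. This makes your argument valid under the stated hypotheses, whereas the paper's proof as written implicitly requires $h$ afp. Two minor remarks: the detour through finite disjoint unions and \corref{cor:diffusedly} is not strictly needed, since the whole atlas $U=\coprod_\alpha U_\alpha$ is already a scheme, so $U \to S$ is representable \'etale, \thmref{thm:flocculency} applies to it directly, and \propref{prop:unconversableness} gives that $h_*(U)\to h_*(X)$ is an effective epimorphism (your \'etaleness check by base change then applies verbatim to $U$ in place of $U_J$); and your argument in fact proves the stronger statement that $h_*(f)$ is \'etale, which is worth recording.
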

\begin{proof}
  By \corref{cor:thinkably} we may assume that $T$ is affine and $X_2=S$.
	If $n=0$, then $X := X_1$ is an étale algebraic space over $S$ and $h_*(X)$ is an étale algebraic space over $T$ by \thmref{thm:flocculency}.
	Take $n>0$ and assume the statement known for $n-1$.
	Given an $n$-Deligne--Mumford stack $X$ étale over $S$, choose an étale surjection $p : U \twoheadrightarrow X$ where $U$ is an algebraic space.
	By the $n=0$ case, $h_*(U)$ is an algebraic space.
	By \corref{cor:ontogenic} and \propref{prop:unconversableness}, $h_*(U) \to h_*(X)$ is an étale surjection.
	Since $X$ has $(n-1)$-representable étale diagonal, the diagonal $h_*(X) \to h_*(X \fibprod_S X) \simeq h_*(X) \fibprod_T h_*(X)$ is $(n-1)$-representable by the induction hypothesis.
	Thus $h_*(X)$ is $n$-Deligne--Mumford.
\end{proof}

\subsection{Derived vector bundles and zero loci}

Let $\sE \in \QCoh(S)$ be a quasi-coherent complex on a derived stack $S$.
We denote by $\V_S(\sE) \to S$ the derived stack of cosections $\sE \to \sO$, see Appendix~\ref{sec:vb}.
See Appendix~\ref{sec:sharp} for the notion of sharp morphisms and the notation $h_\sharp$.

\begin{prop}\label{prop:unsaddled}
  Let $h : S \to T$ be a sharp morphism.
  Then for every quasi-coherent complex $\sE \in \QCoh(S)$, there is a canonical isomorphism $h_*(\V_S(\sE)) \simeq \V_T(h_\sharp(\sE))$ over $T$.
\end{prop}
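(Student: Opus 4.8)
The plan is to prove the isomorphism by comparing functors of points and invoking Yoneda, reducing everything to the defining $(h_\sharp, h^*)$-adjunction together with the base change property that sharpness provides. First I would record the functor of points of a derived vector bundle: for any $u : U \to S$, a morphism $U \to \V_S(\sE)$ over $S$ is the same datum as a cosection $u^*\sE \to \sO_U$, so
\[
  \Maps_S(U, \V_S(\sE)) \simeq \Maps_{\QCoh(U)}(u^*\sE, \sO_U),
\]
naturally in $U$ (Appendix~\ref{sec:vb}).

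Next, I would fix a test object $v : V \to T$ and set $U := V \fibprod_T S$, with the two projections $h_V : U \to V$ (the base change of $h$) and $b : U \to S$, fitting into the cartesian square
\[
  \begin{tikzcd}
    U \ar{r}{b}\ar{d}{h_V} & S \ar{d}{h} \\
    V \ar{r}{v} & T.
  \end{tikzcd}
\]
Unwinding the definition of the Weil restriction and then applying the description above gives
\[
  \Maps_T(V, h_*(\V_S(\sE)))
  \simeq \Maps_S(U, \V_S(\sE))
  \simeq \Maps_{\QCoh(U)}(b^*\sE, \sO_U),
\]
while the same description applied over $T$ gives
\[
  \Maps_T(V, \V_T(h_\sharp\sE))
  \simeq \Maps_{\QCoh(V)}(v^*h_\sharp\sE, \sO_V).
\]

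The key step is to identify these two mapping spaces. Since $h$ is sharp, the base change formula \eqref{eq:sharpbc} yields $v^*h_\sharp(\sE) \simeq h_{V,\sharp}(b^*\sE)$, after which the $(h_{V,\sharp}, h_V^*)$-adjunction together with $h_V^*\sO_V \simeq \sO_U$ gives
\[
  \Maps_{\QCoh(V)}(v^*h_\sharp\sE, \sO_V)
  \simeq \Maps_{\QCoh(V)}(h_{V,\sharp}b^*\sE, \sO_V)
  \simeq \Maps_{\QCoh(U)}(b^*\sE, h_V^*\sO_V)
  \simeq \Maps_{\QCoh(U)}(b^*\sE, \sO_U).
\]
Comparing with the previous display produces the desired equivalence; I would then check that every identification is natural in $v : V \to T$ (using that $U = V\fibprod_T S$ and all the adjunctions involved are natural), so that by Yoneda they assemble into a canonical isomorphism $h_*(\V_S(\sE)) \simeq \V_T(h_\sharp\sE)$ over $T$. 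This is the same style of argument as the cotangent-complex computation in \propref{prop:cotan}, and I do not expect a serious obstacle: the only non-formal input is the base change compatibility of $h_\sharp$, which is exactly what sharpness guarantees (\remref{rem:hsharp}), and the remaining work is bookkeeping of functoriality in $V$.
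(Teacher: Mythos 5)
Your proof is correct and is essentially identical to the paper's: both identify the functors of points of $h_*(\V_S(\sE))$ and $\V_T(h_\sharp(\sE))$ on a test object over $T$ via the universal property of $\V(-)$, the base change formula \eqref{eq:sharpbc} supplied by sharpness, and the $(h_\sharp, h^*)$-adjunction. The only cosmetic difference is notation and the direction in which the chain of equivalences is written.
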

\begin{proof}
  For every $T'\in\dStk_{/T}$ we have a canonical isomorphism
  \begin{align*}
    \Maps_T\bigl(T',\V_T(h_\sharp(\sE))\bigr)
    &\simeq \Maps_{\QCoh(T')}(h_\sharp(\sE)|_{T'}, \sO_{T'}) \\
    &\simeq \Maps_{\QCoh(T')}(h'_\sharp(\sE|_{S'}), \sO_{T'}) \qquad \text{(by \eqref{eq:sharpbc})}\\
    &\simeq \Maps_{\QCoh(S')}(\sE|_{S'}, \sO_{S'}) \\
    &\simeq \Maps_S\bigl(S',\V_S(\sE)\bigr)
       \simeq \Maps_T\bigl(T',h_*(\V_S(\sE))\bigr)
  \end{align*}
  functorial in $T'$ where $h' : S'\to T'$ is the base change of $h$.
\end{proof}

\begin{cor}\label{cor:coxcombical}
  Let $h : S \to T$ be a qcqs 1-representable morphism which is universally of finite cohomological dimension, and sharp of Tor-amplitude $\le d$.
  Let $X$ be a derived stack over $S$ and $\sE \in \Perf(X)$ a perfect complex of Tor-amplitude $[a,b]$.
  Then the morphism $h_*(\V_X(\sE)) \to h_*(X)$ is:
  \begin{thmlist}
    \item affine if $a \ge d$.
    \item $(d-a)$-representable if $a < d$.
  \end{thmlist}
\end{cor}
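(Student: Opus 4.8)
The plan is to reduce, by base change, to the case where the target is an affine derived scheme $T'$ and the total space $\V_X(\sE)$ becomes an ordinary derived vector bundle over the base $S'$; then \propref{prop:unsaddled} turns its Weil restriction into a single derived vector bundle $\V_{T'}(\sF)$, and the whole statement collapses to the standard dictionary between the connectivity of $\sF$ and the representability of $\V_{T'}(\sF) \to T'$.

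Concretely, affineness and $(d-a)$-representability are properties of a morphism that may be checked after base change along an arbitrary affine $T' \to h_*(X)$. By adjunction such a map corresponds to an $S$-morphism $S' := S \fibprod_T T' \to X$, and \corref{cor:thinkably} identifies the base change of $h_*(\V_X(\sE)) \to h_*(X)$ along $T'$ with
\[ h'_*\bigl(\V_X(\sE) \fibprod_X S'\bigr) \longrightarrow h'_*(S') \simeq T', \]
where $h' : S' \to T'$ is the base change of $h$. Since the formation of $\V$ commutes with base change (Appendix~\ref{sec:vb}), the fibre product on the left is $\V_{S'}(\sE|_{S'})$ with $\sE|_{S'}$ perfect of Tor-amplitude $[a,b]$. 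The morphism $h'$ is again qcqs, $1$-representable, universally of finite cohomological dimension, and sharp of Tor-amplitude $\le d$, so \propref{prop:unsaddled} applies and yields an isomorphism over $T'$
\[ h'_*\bigl(\V_{S'}(\sE|_{S'})\bigr) \simeq \V_{T'}\bigl(h'_\sharp(\sE|_{S'})\bigr). \]
Thus it suffices to analyse the derived vector bundle $\V_{T'}(\sF) \to T'$ for $\sF := h'_\sharp(\sE|_{S'})$.

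Next I would pin down the Tor-amplitude of $\sF$. By \examref{exam:stayship} the functor $h'_\sharp$ is computed as a twisted push-forward, and \lemref{lem:fortuitist}—the same input used in the proof of \corref{cor:ontogenic}—shows that applying $h'_\sharp$ to a perfect complex of Tor-amplitude $[a,b]$ produces a perfect complex of Tor-amplitude $[a-d,\,b+e]$, where $e$ bounds the universal cohomological dimension of $h$. Here the Tor-amplitude hypothesis $\le d$ is what controls the bottom degree $a-d$, while finite cohomological dimension guarantees the upper bound $b+e$ and hence perfectness of $\sF$. In particular $\sF$ is perfect with bottom Tor-amplitude $a-d$.

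Finally I would invoke the standard relationship between connectivity and representability for derived vector bundles (Appendix~\ref{sec:vb}): for a perfect complex $\sF$ with bottom Tor-amplitude $a'$, the projection $\V_{T'}(\sF) = \Spec_{T'}(\Sym_{\sO_{T'}}\sF) \to T'$ is affine when $a' \ge 0$ (so that $\Sym_{\sO_{T'}}\sF$ is connective), and is $(-a')$-representable when $a' < 0$ (the prototype being $\V_{T'}(\sO[-1]) \simeq T'\times B\bG_a$, which is $1$-representable). Substituting $a' = a-d$ gives affineness when $a \ge d$ and $(d-a)$-representability when $a < d$, exactly the two cases of the corollary. The only genuinely nonformal ingredients are the two cited appendix results, and the one point requiring care is that it is the \emph{bottom} Tor-amplitude $a-d$—not the full interval—that governs the representability level; the hypothesis of universally finite cohomological dimension therefore enters only to secure perfectness of $\sF$ and plays no role in the numerical answer, whereas the reductions via \corref{cor:thinkably} and \propref{prop:unsaddled} are purely formal.
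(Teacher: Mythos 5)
Your proposal is correct and follows essentially the same route as the paper: reduce via \corref{cor:thinkably} to the case $X=S$ with $T$ affine, convert the Weil restriction of the derived vector bundle into $\V_{T'}(h'_\sharp(\sE|_{S'}))$ by \propref{prop:unsaddled}, control the bottom Tor-amplitude (hence connectivity) of $h'_\sharp(\sE|_{S'})$ by \lemref{lem:fortuitist}, and conclude with \thmref{thm:V}. Your closing remarks—that only the lower bound $a-d$ matters for the representability level and that the cohomological-dimension hypothesis serves only to secure perfectness (in fact, $0$-pseudo-coherence) of $h'_\sharp(\sE|_{S'})$—are accurate glosses on the same argument.
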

\begin{proof}
  It will suffice to show that for every affine $T'$ and every morphism $T' \to h_*(X)$, the base change $h_*(\V_X(\sE)) \fibprod_{h_*(X)} T' \to T'$ has the claimed property.
  By \corref{cor:thinkably} we may thus assume that $X=S$.
  By \lemref{lem:fortuitist}, the complex $h_\sharp(\sE)$ is perfect and $(a-d)$-connective. The
  result follows by \propref{prop:unsaddled} and \thmref{thm:V}.  
\end{proof}

\begin{cor}\label{cor:floriated}
  Let $h : S \to T$ be an afp proper representable morphism of finite Tor-amplitude.
  Then for every perfect complex $\sE \in \Perf(S)$, there is a canonical isomorphism $h_*(\V_S(\sE)) \simeq \V_T(h_*(\sE^\vee)^\vee)$ over $T$.
\end{cor}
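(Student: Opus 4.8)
The plan is to reduce the entire statement to \propref{prop:unsaddled} by identifying $h_\sharp(\sE)$ with $h_*(\sE^\vee)^\vee$. By \examref{exam:stayship}, an afp proper representable morphism of finite Tor-amplitude is sharp, so \propref{prop:unsaddled} applies and furnishes a canonical isomorphism $h_*(\V_S(\sE)) \simeq \V_T(h_\sharp(\sE))$ over $T$. It therefore suffices to produce a canonical isomorphism $h_\sharp(\sE) \simeq h_*(\sE^\vee)^\vee$ in $\QCoh(T)$.

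For the latter I would argue by Yoneda, testing against an arbitrary $\sG \in \QCoh(T)$. Using the defining adjunction $h_\sharp \dashv h^*$ and then the dualizability of the perfect complex $\sE$, one rewrites
\begin{align*}
  \Maps_T(h_\sharp(\sE), \sG)
  &\simeq \Maps_S(\sE, h^*\sG)
  \simeq \Maps_S(\sO_S, \sE^\vee \otimes h^*\sG) \\
  &\simeq \Maps_T(\sO_T, h_*(\sE^\vee \otimes h^*\sG)),
\end{align*}
where the last step is the $(h^*,h_*)$-adjunction (equivalently $\Gamma(S,-) \simeq \Gamma(T, h_*(-))$). Now apply the projection formula $h_*(\sE^\vee \otimes h^*\sG) \simeq h_*(\sE^\vee) \otimes \sG$, which is valid since $\sE^\vee$ is perfect, and then the dualizability of $h_*(\sE^\vee)$ to obtain
\[
  \Maps_T(\sO_T, h_*(\sE^\vee) \otimes \sG)
  \simeq \Maps_T(h_*(\sE^\vee)^\vee, \sG).
\]
As this chain is functorial in $\sG$, the Yoneda lemma yields $h_\sharp(\sE) \simeq h_*(\sE^\vee)^\vee$, and combining with the isomorphism above completes the proof.

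The one genuine input — and the step where the hypotheses on $h$ are really used — is that $h_*(\sE^\vee)$ is again perfect, hence dualizable, so that the final tensor-hom adjunction is legitimate. This is precisely the preservation of perfect complexes under pushforward along a proper morphism that is almost of finite presentation and of finite Tor-amplitude (see \cite{LurieSAG}); the very same hypotheses are what render $h$ sharp through \examref{exam:stayship}. Everything else (dualizability of $\sE$, the $(h^*,h_*)$-adjunction, and the projection formula for the perfect complex $\sE^\vee$) is formal. I do not expect to need the relative dualizing complex explicitly, though the identification is equivalent to the relative duality statement $h_*(\sE^\vee)^\vee \simeq h_*(\sE \otimes \omega_{S/T})$ combined with the formula $h_\sharp(-) \simeq h_*(- \otimes \omega_{S/T})$ recorded in \examref{exam:stayship}.
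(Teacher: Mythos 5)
Your proof is correct and follows essentially the same route as the paper: the paper's proof is literally ``combine \propref{prop:unsaddled} with \examref{exam:stayship}'', with the identification $h_\sharp(\sE) \simeq h_*(\sE^\vee)^\vee$ left to \remref{rem:sharpdual}, whose content your Yoneda/adjunction computation reproves explicitly. The only cosmetic quibble is that the projection formula holds because $h$ is qcqs representable (so $h_*$ commutes with colimits and one checks on $\sG = \sO_T$), not because $\sE^\vee$ is perfect; the step itself is fine.
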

\begin{proof}
  Combine \propref{prop:unsaddled} with \examref{exam:stayship}.
\end{proof}

Let $\sE \in \QCoh(X)$ be a quasi-coherent complex on a derived stack $X$.
Given a cosection of $\sE$, i.e., a morphism $s : \sE \to \sO_X$ in $\QCoh(X)$, the (derived) \emph{zero locus} of $s$ is defined by the cartesian square
\begin{equation*}
  \begin{tikzcd}
    Z \ar{r}\ar{d}
    & X \ar{d}{s}
    \\
    X \ar{r}{0}
    & \V_X(\sE),
  \end{tikzcd}
\end{equation*}
where $s$ is the section of $\V_X(\sE) \to X$ corresponding to the cosection under \eqref{eq:besnivel}.

\begin{prop}\label{prop:zeroalg}
  Let $h : S \to T$ be a sharp morphism of Tor-amplitude $\le d$ which is qcqs and representable.
  Let $X$ be a derived stack over $S$, $\sE \in \Perf(X)$ a perfect complex of Tor-amplitude $[a,b]$, and $s : \sE \to \sO_X$ a cosection with zero locus $Z \to X$.
  If $a \ge d$ (resp. $a < d$), then $h_*(Z) \to h_*(X)$ is a closed immersion (resp. $(d-a-1)$-representable).
\end{prop}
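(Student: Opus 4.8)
The plan is to reduce, exactly as in the proofs of \propref{prop:whalefin} and \corref{cor:coxcombical}, to an explicit computation over an affine base, and then to analyse a single zero locus on that base. First I would invoke \corref{cor:thinkably}: to check that $h_*(Z) \to h_*(X)$ is a closed immersion (resp.\ $(d-a-1)$-representable) it suffices to verify this after base change along an arbitrary morphism $T' \to h_*(X)$ with $T'$ affine. Writing $S' := S\fibprod_T T'$ and $h' : S' \to T'$ for the base change of $h$ (still sharp of Tor-amplitude $\le d$, qcqs and representable), \corref{cor:thinkably} identifies this base change with $h'_*(Z\fibprod_X S') \to h'_*(S') \simeq T'$. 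Since the defining square of a zero locus is cartesian and $\V$ commutes with base change, $Z\fibprod_X S'$ is the zero locus of the pulled-back cosection $\sigma : \sF \to \sO_{S'}$, where $\sF := \sE|_{S'} \in \Perf(S')$ is again of Tor-amplitude $[a,b]$. Thus we may assume $T$ is affine and $X = S$.

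Next I would apply $h'_*$ to the cartesian square defining the zero locus. As a right adjoint $h'_*$ preserves limits, so $h'_*(Z)$ is the fibre product $T' \fibprod_{h'_*(\V_{S'}(\sF))} T'$ of the images of the zero section and of $\sigma$. By \propref{prop:unsaddled} we have $h'_*(\V_{S'}(\sF)) \simeq \V_{T'}(h'_\sharp \sF)$, and by naturality of that isomorphism in the cosection the two maps become the zero section and the section attached to the cosection $\tilde\sigma : h'_\sharp\sF \to \sO_{T'}$ adjoint to $\sigma$ under $h'_\sharp \dashv h'^*$. Hence $h'_*(Z)$ is the zero locus of $\tilde\sigma$, and $h'_*(Z)\to T'$ is the base change of the zero section $0 : T' \to \V_{T'}(\sG)$ along $\tilde\sigma$, where $\sG := h'_\sharp\sF$. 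By \lemref{lem:fortuitist} (whose connectivity bound needs only that $h'$ is sharp of Tor-amplitude $\le d$, while the qcqs representable hypothesis supplies the universally finite cohomological dimension needed for perfectness), $\sG$ is perfect and $(a-d)$-connective. Since both closed immersions and $k$-representable morphisms are stable under base change, it remains only to determine the type of the zero section $0$.

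Finally I would analyse $0 : T' \to \V_{T'}(\sG)$ according to the sign of $c := a-d$. If $c \ge 0$ then $\sG$ is connective, $\V_{T'}(\sG) = \Spec_{T'}\Sym_{\sO_{T'}}(\sG) \to T'$ is affine, and the zero section is cut out by the augmentation $\Sym_{\sO_{T'}}(\sG) \to \sO_{T'}$, which is surjective on $\pi_0$; hence $0$, and therefore $h'_*(Z) \to T'$, is a closed immersion. If $c < 0$ then by \thmref{thm:V} the morphism $\V_{T'}(\sG) \to T'$ is $(-c)$-representable, so its relative diagonal is $(-c-1)$-representable. The zero section is a section of $\V_{T'}(\sG) \to T'$, and any section of a morphism $\pi$ is the base change of the relative diagonal $\Delta_\pi$ along $(0\circ\pi, \id) : \V_{T'}(\sG) \to \V_{T'}(\sG)\fibprod_{T'}\V_{T'}(\sG)$; therefore $0$ is $(-c-1)$-representable, and so is its base change $h'_*(Z)\to T'$. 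As $-c-1 = d-a-1$, this is the assertion.

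The step I expect to require the most care is the passage through \propref{prop:unsaddled}: one must check that, under the isomorphism $h'_*(\V_{S'}(\sF)) \simeq \V_{T'}(h'_\sharp\sF)$, the Weil restriction of the tautological section $\sigma$ really corresponds to the $\sharp$-adjoint cosection $\tilde\sigma$ (and the zero section to the zero cosection), so that the limit computed by $h'_*$ is genuinely a zero locus on $T'$. This amounts to tracing the chain of adjunction isomorphisms used to build \propref{prop:unsaddled} on the level of functors of points. The remaining ingredients—the connectivity and perfectness of $h'_\sharp\sF$, the representability of linear stacks, and the reduction of a section to a base change of the diagonal—are either quoted from \lemref{lem:fortuitist} and \thmref{thm:V} or purely formal.
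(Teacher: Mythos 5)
Your proposal is correct and follows essentially the same route as the paper: apply $h_*$ to the zero-locus square, identify $h_*(\V_X(\sE))$ via \propref{prop:unsaddled}/\corref{cor:coxcombical} together with \lemref{lem:fortuitist} and \thmref{thm:V}, and conclude because a section of an affine (resp.\ $k$-representable) morphism is a closed immersion (resp.\ $(k-1)$-representable). The one step you flag as delicate---matching $h_*(s)$ with the $\sharp$-adjoint cosection $\tilde\sigma$---is in fact unnecessary: the paper only uses that $h_*(0)$ is, by functoriality, \emph{some} section of $h_*(\V_X(\sE)) \to h_*(X)$, and that $h_*(Z)\to h_*(X)$ is a base change of it, so no identification of the cosection is needed.
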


\begin{proof}
  Since $h_*$ preserves limits, we have a cartesian square
  \[\begin{tikzcd}
    h_*(Z)\ar{r}\ar{d}
    & h_*(X)\ar{d}{h_*(s)}
    \\
    h_*(X)\ar{r}{h_*(0)}
    & h_*(\V_X(\sE)).
  \end{tikzcd}\]
  By functoriality, the lower horizontal arrow is a section of the morphism $h_*(\V_X(\sE)) \to h_*(X)$.
  By \corref{cor:coxcombical}, the latter is affine when $a\ge d$ and $(d-a)$-representable otherwise; any section of it is therefore a closed immersion, resp.~$(d-a-1)$-representable.
  By base change, the upper horizontal arrow has the same property.
\end{proof}


\section{Normal deformations}

\subsection{Virtual Cartier divisors}
\label{ssec:VCD}

Recall the following definition from \cite{blowups}:

\begin{defn}
  Let $S$ be a derived stack.
  A \emph{virtual Cartier divisor} on $S$ is a closed immersion $D \hook S$ which is quasi-smooth of relative virtual dimension $-1$.
  That is, it is hfp with conormal complex $\sL_{D/S}[-1] \in \QCoh(D)$ perfect of Tor-amplitude $[0,0]$ and of rank $1$.
\end{defn}

For any virtual Cartier divisor $i : D \hook S$, there is a canonical exact triangle
\begin{equation*}
  \sO_S(-D) \to \sO_S \to i_*(\sO_D)
\end{equation*}
in $\QCoh(S)$, where $\sO_S(-D)$ is locally free of rank one.
Let $\sO_D(-D) := i^*(\sO_S(-D))$ denote the restriction and $\sO_D(D) := \sO_D(-D)^{\otimes -1}$ the inverse.
The conormal complex of $D \hook S$ is given by
\begin{equation*}
  \sL_{D/S}[-1] \simeq \sO_D(-D)
\end{equation*}
and its relative dualizing complex is
\begin{equation*}
  \omega_{D/S} := i^!(\sO_S) \simeq \sO_D(D)[-1].
\end{equation*}
See \cite[\S\S 4.1--4.2]{kblow}.

When $S$ is a derived scheme, a closed immersion $D \hook S$ is a virtual Cartier divisor if and only if there exists, Zariski-locally on $S$, a cartesian square
\begin{equation*}
  \begin{tikzcd}
    D\ar[hookrightarrow]{r}\ar{d}
    & S\ar{d}{f}
    \\
    \pt \ar[hookrightarrow]{r}{0}
    & \A^1.
  \end{tikzcd}
\end{equation*}
In other words, it is locally of the form
\begin{equation*}
  \Spec(A\modmod f) \hook \Spec(A)
\end{equation*}
for some animated ring $A$ and some $f \in A$, where $A\modmod f$ is the ``derived quotient'' as in \cite[2.3.1]{blowups}.
For a general $S$, a closed immersion $D \hook S$ is a virtual Cartier divisor if and only if for every (affine) derived scheme $S$ and every morphism $T \to S$, the base change $D \fibprod_S T \hook T$ is a virtual Cartier divisor.
See \cite[Prop.~2.3.8]{blowups}.

For any virtual Cartier divisor $D \hook S$, there exists (globally) a cartesian square
\begin{equation*}
  \begin{tikzcd}
    D\ar[hookrightarrow]{r}\ar{d}
    & S\ar{d}{f}
    \\
    \BGm \ar[hookrightarrow]{r}{\quo{0}}
    & \AGm,
  \end{tikzcd}
\end{equation*}
where $\AGm$ is as in \eqref{eq:Theta}, $\quo{0}$ denotes the zero section, and $D \to \BGm$ classifies the invertible sheaf $\sO_D(-D)$.

More precisely, write $\VCart(S)$ for the \inftyGrpd of virtual Cartier divisors on $S$; as $S$ varies, this defines a derived stack $\VCart$.
The morphism
\begin{equation*}
  \AGm \to \VCart,
\end{equation*}
classifying the virtual Cartier divisor $\quo{0} : \BGm \to \AGm$, is invertible (see \cite[Prop.~3.2.6]{blowups}).

\subsection{Normal deformations}
\label{ssec:D}

\begin{defn}
  Let $f : X \to Y$ be a morphism of derived stacks.
  Given a derived stack $S$ over $Y$, a \emph{virtual Cartier divisor on $S$ over $f$} is the data of a virtual Cartier divisor $D$ on $S$ together with a commutative square
  \begin{equation}\label{eq:vcdover}
    \begin{tikzcd}
      D \ar{r}{i_D}\ar{d}
      & S \ar{d}
      \\
      X \ar{r}{f}
      & Y.
    \end{tikzcd}
  \end{equation}
  We write $\cD_{X/Y}(S \to Y)$ for the \inftyGrpd of virtual Cartier divisors on $S$ over $f$.
  As $S$ varies among derived schemes over $Y$, this defines a derived stack $\cD_{X/Y}$ over $Y$.
\end{defn}

\begin{exam}
  When $f$ is the identity of $X$, we have $\cD_{X/X} \simeq X \times \AGm$.
\end{exam}

By construction, $\cD_{X/Y}$ is equipped with a universal virtual Cartier divisor $\cN_{X/Y}$ over $f$:
\begin{equation}\label{eq:intertrude}
  \begin{tikzcd}
    \cN_{X/Y} \ar{r}{i_\cN}\ar{d}
    & \cD_{X/Y} \ar{d}
    \\
    X \ar{r}{f}
    & Y.
  \end{tikzcd}
\end{equation}
This is classified by a morphism $\cD_{X/Y} \to \AGm$ together with a cartesian square
\begin{equation}
  \begin{tikzcd}
    \cN_{X/Y} \ar{r}{i_\cN}\ar{d}
    & \cD_{X/Y} \ar{d}
    \\
    \BGm \ar{r}{\quo{0}}
    & {\AGm},
  \end{tikzcd}
\end{equation}
or equivalently
\begin{equation}\label{eq:defGm}
  \begin{tikzcd}
    \cN_{X/Y} \ar{r}{i_\cN}\ar{d}
    & \cD_{X/Y} \ar{d}
    \\
    Y \times \BGm \ar{r}{\quo{0}}
    & Y \times {\AGm}.
  \end{tikzcd}
\end{equation}
Note that over $Y \times \AGm$, the functor of points of $\cD_{X/Y}$ is as follows: for $S \in \dStk_{Y \times \AGm}$, classifying a virtual Cartier divisor $D \hook S$, a morphism $S \to \cD_{X/Y}$ over $Y\times\AGm$ is given by a morphism $D \to X$ over $Y$ together with the datum of a commutative square of the form \eqref{eq:vcdover}.

The datum of the cartesian square \eqref{eq:defGm} is equivalently that of a cartesian square
\begin{equation}\label{eq:def}
  \begin{tikzcd}
    \Nl_{X/Y} \ar{r}{i_N}\ar{d}
    & \Dl_{X/Y} \ar{d}
    \\
    Y \ar{r}{0}
    & Y \times \A^1
  \end{tikzcd}
\end{equation}
of derived stacks with $\Gm$-actions, where $\Gm$ acts trivially on $Y$ and by scaling with weight $-1$ on $\A^1$.
Here $\Nl_{X/Y} = \cN_{X/Y} \fibprod_{\BGm} \{0\}$ by definition, so that $\cN_{X/Y} \simeq [\Nl_{X/Y}/\Gm]$.
Likewise for $\Dl_{X/Y} = \cD_{X/Y} \fibprod_{\BGm} \{0\}$. 

\begin{defn}
	\label{def:normdef}
  The \emph{normal deformation} of a morphism of derived stacks $f : X \to Y$ is the derived stack $\Dl_{X/Y}$ over $Y\times\A^1$ together with its canonical $\Gm$-action.
\end{defn}

\subsection{Description via Weil restrictions}

\begin{prop}\label{prop:defweil}
  Let $f : X \to Y$ be a morphism of derived stacks.
  There is a canonical $\Gm$-equivariant\footnote{%
    Note that by functoriality of Weil restrictions, $0_*(X \to Y)$ inherits a canonical $\Gm$-action such that the projection to $Y\times\A^1$ is $\Gm$-equivariant.
  } isomorphism
  \begin{equation*}
    \Dl_{X/Y}
    \simeq 0_*(X \xrightarrow{f} Y)
  \end{equation*}
  of derived stacks over $Y \times \A^1$, where $0 : Y \to Y \times \A^1$ is the zero section.
  In particular, there is a canonical isomorphism
  \begin{equation*}
    \cD_{X/Y}
    \simeq \quo{0}_*(X \times \BGm \xrightarrow{f\times\id} Y \times \BGm)
  \end{equation*}
  of derived stacks over $Y \times \AGm$, where $\quo{0} : Y \times \BGm \to Y \times \AGm$.
\end{prop}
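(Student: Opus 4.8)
The plan is to establish the first isomorphism $\Dl_{X/Y} \simeq 0_*(X \xrightarrow{f} Y)$ by a direct comparison of functors of points over $Y \times \A^1$, and then to deduce the description of $\cD_{X/Y}$ by descent along a $\Gm$-torsor using \lemref{lem:mouthiness}. First I would recall from \ssecref{ssec:VCD} that the morphism $\AGm \to \VCart$ classifying $\quo{0} : \BGm \hook \AGm$ is invertible, so that for any derived stack $S$ a virtual Cartier divisor $D \hook S$ is the same datum as a morphism $S \to \AGm$, with $D \simeq S \fibprod_\AGm \BGm$. By construction (\ssecref{ssec:D}), the universal virtual Cartier divisor \eqref{eq:intertrude} is classified by the structure morphism $\cD_{X/Y} \to Y \times \AGm$, and $\Dl_{X/Y}$ is obtained from $\cD_{X/Y}$ by base change along the quotient $\Gm$-torsor $g : Y \times \A^1 \to Y \times \AGm$ induced by $\A^1 \to \AGm$ (equivalently, by the cartesian square \eqref{eq:def}).

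The key geometric input is the identification $\A^1 \fibprod_\AGm \BGm \simeq \{0\}$: pulling back the zero section $\quo{0}$ along the $\Gm$-torsor $\A^1 \to \AGm$ yields exactly the origin, which in turn maps to $\BGm$ as the universal torsor $\pt \to \BGm$. Consequently, for a derived scheme $S$ over $Y \times \A^1$, the virtual Cartier divisor classified by $S \to Y \times \A^1 \xrightarrow{g} Y \times \AGm$ has underlying closed immersion the zero fibre $D_S := S \fibprod_{Y\times\A^1} Y = 0^*(S)$, since $S \fibprod_{Y\times\AGm}(Y\times\BGm) \simeq S \fibprod_{Y\times\A^1}(Y \times \{0\})$. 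By the cartesian square \eqref{eq:def}, lifting $S \to Y \times \A^1$ to $\Dl_{X/Y}$ is therefore the same as giving a morphism $D_S \to X$ over $Y$. On the other hand, by the very definition of the Weil restriction, $0_*(X \to Y)(S) \simeq \Maps_Y(0^*(S), X) = \Maps_Y(D_S, X)$. These identifications are natural in $S$, so they assemble into the desired isomorphism $\Dl_{X/Y} \simeq 0_*(X \to Y)$ over $Y \times \A^1$. Moreover both sides carry $\Gm$-actions --- the torsor action on $\Dl_{X/Y}$ coming from $g$, and the action on $0_*(X)$ induced functorially by the weight $-1$ scaling on $\A^1$, which fixes $0$ --- and the comparison above is manifestly equivariant.

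For the statement about $\cD_{X/Y}$, rather than re-running the argument I would apply \lemref{lem:mouthiness} to $h = \quo{0} : Y \times \BGm \to Y \times \AGm$ and the base change $g : Y \times \A^1 \to Y \times \AGm$. Since $(Y \times \BGm)\fibprod_{Y\times\AGm}(Y\times\A^1) \simeq Y$ (again by $\A^1 \fibprod_\AGm \BGm \simeq \{0\}$), with the base-changed morphism being the zero section $0 : Y \to Y \times \A^1$, and since $(X\times\BGm)\fibprod_{Y\times\BGm} Y \simeq X$, the lemma yields a $\Gm$-equivariant isomorphism $\quo{0}_*(X\times\BGm \to Y\times\BGm)\fibprod_{Y\times\AGm}(Y\times\A^1) \simeq 0_*(X\to Y) \simeq \Dl_{X/Y}$. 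As $g$ is the quotient $\Gm$-torsor, descent then identifies $\quo{0}_*(X\times\BGm \to Y\times\BGm)$ with $[\Dl_{X/Y}/\Gm] = \cD_{X/Y}$ over $Y \times \AGm$. (Alternatively, the same functor-of-points comparison runs verbatim over $Y \times \AGm$, where a lift of $S \to Y\times\AGm$ to $\cD_{X/Y}$ is a morphism $D \to X$ over $Y$, equivalently a morphism $D \to X \times \BGm$ over $Y \times \BGm$.)

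The one point requiring care is the bookkeeping of the $\Gm$-equivariant structures, together with the computation $\A^1 \fibprod_\AGm \BGm \simeq \{0\}$ that converts the universal virtual Cartier divisor into the zero-fibre functor $0^*$. Once $\VCart \simeq \AGm$ and this fibre computation are in hand, the comparison of functors of points and the final descent step are formal, and I do not anticipate any genuine obstacle beyond this routine verification.
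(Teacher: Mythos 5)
Your proof is correct and is essentially the paper's argument: both rest on the Weil-restriction adjunction combined with the identification $\VCart \simeq \AGm$ and the computation that the universal virtual Cartier divisor pulled back to $S$ over $Y\times\AGm$ (resp.\ $Y\times\A^1$) is exactly $\quo{0}^*(S)$ (resp.\ $0^*(S)$). The only difference is organizational --- the paper proves the $\cD_{X/Y}$ statement directly by the functor-of-points comparison over $Y\times\AGm$ (your parenthetical alternative) and obtains the $\Dl_{X/Y}$ statement by base change, whereas you argue over $Y\times\A^1$ first and then descend along the $\Gm$-torsor; both directions are routine.
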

\begin{proof}
  It will suffice to show the second statement.
  The functor of points of the right-hand side is given as follows: given $S \in \dStk_{Y \times \AGm}$, classified by a virtual Cartier divisor $D \hook S$, we have
  \begin{align*}
    \Maps_{Y\times\AGm}(S, \quo{0}_*(X \times \BGm \to Y \times \BGm))
    &\simeq \Maps_{Y\times\BGm}(D, X \times \BGm)\\
    &\simeq \Maps_{Y}(D, X)
  \end{align*}
  where $D$ is regarded over $Y$ via the composite $D \hook S \to Y$.
  But this is also the functor of points of $\cD_{X/Y}$ over $Y\times\AGm$.
\end{proof}

From \propref{prop:map=weil} we deduce:

\begin{cor}\label{cor:d=map}
  Let $f : X \to Y$ be a morphism of derived stacks.
  There is a canonical $\Gm$-equivariant isomorphism
  \begin{equation*}
    \Dl_{X/Y}
    \simeq \uMaps_{Y\times\A^1}(Y, X \times \A^1)
  \end{equation*}
  over $Y \times \A^1$, or equivalently
  \begin{equation*}
    \cD_{X/Y}
    \simeq \uMaps_{Y\times\AGm}(Y \times \BGm, X \times \AGm)
  \end{equation*}
  over $Y \times \AGm$.
\end{cor}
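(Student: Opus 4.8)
The plan is to obtain the isomorphism by chaining together the two Weil-restriction descriptions already at our disposal: \propref{prop:defweil}, which identifies $\Dl_{X/Y}$ with the Weil restriction $0_*(X \xrightarrow{f} Y)$ along the zero section $0 : Y \to Y\times\A^1$, and \propref{prop:map=weil}, which re-expresses any mapping stack as a Weil restriction. First I would apply \propref{prop:map=weil} to the right-hand side $\uMaps_{Y\times\A^1}(Y, X\times\A^1)$, taking the base to be $S = Y\times\A^1$, the source object to be $Y$ (regarded over $S$ via the zero section $0$), and the target object to be $X\times\A^1$ (regarded over $S$ via $f\times\id$). Since the structural morphism of the source is exactly $0 : Y \to Y\times\A^1$, \propref{prop:map=weil} yields a canonical isomorphism $\uMaps_{Y\times\A^1}(Y, X\times\A^1) \simeq 0_*\big(Z \to Y\big)$, where $Z := Y\fibprod_{Y\times\A^1}(X\times\A^1)$ and the morphism $Z\to Y$ is the projection onto the source factor.

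The crux of the argument is then to identify $Z$ together with its map to $Y$. I would compute the fibre product factor by factor over the product $Y\times\A^1$: equating the $Y$-coordinates (the identity on the zero-section side versus $f\circ\pr_X$ on the other) forces the map to $Y$ to factor through $f$, while equating the $\A^1$-coordinates (the constant map $0$ versus $\pr_{\A^1}$) cuts out the zero fibre of $\A^1$. Concretely this gives $Z \simeq (X\times\A^1)\fibprod_{\A^1, 0}\{0\} \simeq X$, and, tracing the projection through this identification, the structural morphism $Z\to Y$ becomes precisely $f : X \to Y$ rather than a projection. Substituting back, $0_*(Z\to Y) \simeq 0_*(X\xrightarrow{f}Y)$, which is $\Dl_{X/Y}$ by \propref{prop:defweil}. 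I expect this fibre-product identification, and in particular the bookkeeping that turns the projection $Z \to Y$ into $f$, to be the one genuinely non-formal point; everything else is a direct application of the two cited propositions.

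To obtain the stated $\Gm$-equivariance cleanly, rather than tracking a $\Gm$-action on the $\A^1$-level statement by hand, I would run the identical argument one level up over $\AGm$. Here \propref{prop:defweil} gives $\cD_{X/Y} \simeq \quo{0}_*(X\times\BGm \xrightarrow{f\times\id} Y\times\BGm)$ over $Y\times\AGm$, with $\quo{0} : Y\times\BGm \to Y\times\AGm$ the structural morphism of the source, and \propref{prop:map=weil} applied with base $S = Y\times\AGm$ produces $\uMaps_{Y\times\AGm}(Y\times\BGm, X\times\AGm) \simeq \quo{0}_*\big(W \to Y\times\BGm\big)$ with $W := (Y\times\BGm)\fibprod_{Y\times\AGm}(X\times\AGm)$. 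The same factorwise computation, now with $\BGm \hookrightarrow \AGm$ in place of $\{0\}\hookrightarrow\A^1$, gives $W \simeq X\times\BGm$ with structural morphism $f\times\id$, so that $\quo{0}_*(W\to Y\times\BGm) \simeq \cD_{X/Y}$. Since this entire chain is built from the $\Gm$-equivariant data of \propref{prop:defweil} and the natural isomorphism of \propref{prop:map=weil}, the two displayed equivalences are canonical and mutually equivalent reformulations, the $\AGm$-version encoding exactly the asserted $\Gm$-equivariance of the $\A^1$-version.
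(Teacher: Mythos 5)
Your proposal is correct and follows essentially the same route as the paper, which likewise deduces the corollary directly from \propref{prop:map=weil} combined with \propref{prop:defweil}; the fibre-product computation identifying $Y\fibprod_{Y\times\A^1}(X\times\A^1)$ with $X\xrightarrow{f}Y$ is exactly the intended (unwritten) step. Running the argument over $\AGm$ to capture the $\Gm$-equivariance is also the paper's implicit mechanism.
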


\subsection{Description via ring stacks}
\label{ssec:ringstk}

One can describe the normal deformation $\Dl_{X/Y}$ through an intermediate \emph{ring stack}.

\begin{constr}\label{constr:ringstk}
  Let $\cR \to \AGm$ denote the vector bundle stack
  \[
    \cR := \V_{\AGm}(\sR),
    \quad \text{where}~\sR = \Fib(\sO_{\AGm} \xrightarrow{\sigma^\vee} \sO_{\AGm}(-1))
  \]
  and $\sigma^\vee$ is the dual of the tautological cosection \eqref{eq:sigma} on $\AGm$.
  In other words, $\cR$ is the quotient of the linear morphism $\V_{\AGm}(\sO(-1)) \to \V_{\AGm}(\sO)$ induced by $\sigma^\vee$.
  For a derived stack $Y$, we also set $\cR_Y := Y\times\cR$.
\end{constr}

Unravelling definitions, we find that as a functor $(\Aff_{/Y\times\AGm})^\op \to \Grpdoo$, $\cR_Y$ is given by the assignment
\[
  (\Spec(A) \xrightarrow{a} Y \times \AGm)
  \mapsto B^\circ
\]
where $\Spec(B) \hook \Spec(A)$ is the virtual Cartier divisor classified by $a$, and $B^\circ$ denotes the underlying \inftyGrpd of $B$.\footnote{Note that $\V_{\AGm}(\sO(-1))(\Spec A) = \sO_A(1)$, and $B$ is the cofibre of the map $\sO_A(1) \to A$ classifying $a$.}
In particular we may regard $\cR_Y$ as a functor valued in animated rings, i.e., a ``derived ring stack'', via \cite[Lem.~3.2.8]{blowups}.

\begin{prop}
  For every morphism of derived stacks $f : X \to Y$, there is a canonical isomorphism of derived stacks
  \[
    \cD_{X/Y}(-)
    \simeq X(\cR_Y(-)).
  \]
\end{prop}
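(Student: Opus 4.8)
The plan is to compare the two sides as functors of points on $\Aff_{/Y\times\AGm}$. Since a derived stack is determined by its restriction to affines, and since $\cD_{X/Y}$ is a sheaf by construction, it suffices to produce a natural equivalence after evaluating on an arbitrary test object $a : \Spec(A) \to Y \times \AGm$; this will simultaneously show that $X(\cR_Y(-))$ is a sheaf. Write $a = (a_Y, a_\Theta)$ for its two components, let $D = \Spec(B) \hook \Spec(A)$ be the virtual Cartier divisor classified by $a_\Theta : \Spec(A) \to \AGm$, and regard $D$ as a derived stack over $Y$ via $D \hook \Spec(A) \xrightarrow{a_Y} Y$.

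First I would unwind the left-hand side. By \propref{prop:defweil}, $\cD_{X/Y}$ over $Y\times\AGm$ is the Weil restriction $\quo{0}_*(X\times\BGm \to Y\times\BGm)$, and the proof of that proposition identifies $\cD_{X/Y}(a) \simeq \Maps_Y(D, X)$, naturally in $a$. Next I would unwind the right-hand side using \constrref{constr:ringstk} and the unravelling that follows it: the value $\cR_Y(a)$ is the animated ring $B = \sO_D$, namely the cofibre of the map $\sO_A(1) \to \sO_A$ classified by $a_\Theta$, promoted to an animated ring via \cite[Lem.~3.2.8]{blowups}. The key point is that $X(\cR_Y(-))$ denotes the evaluation of the $Y$-stack $X$ on this ring \emph{relative to $Y$}: the morphism $D \to Y$ above is a $Y$-point $\eta \in Y(B)$, and $X(\cR_Y(a))$ is by definition the fibre of $X(B) \to Y(B)$ over $\eta$, i.e.\ $\Maps_Y(\Spec B, X) = \Maps_Y(D, X)$. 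Both sides thus compute $\Maps_Y(D, X)$, and since the two identifications are natural in $a$ they assemble into the asserted canonical isomorphism.

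The step I expect to require the most care is precisely the interpretation of the symbol $X(\cR_Y(-))$: one must use the animated-ring-valued refinement of $\cR_Y$ in order to feed it into the functor $X$ at all, and then verify that the $Y$-structure with respect to which $X$ is evaluated relatively is exactly the one coming from the $Y$-component $a_Y$ of the structure map to $Y\times\AGm$. Once this matching of $Y$-points is pinned down, the identification carries no further geometric content: it is a formal consequence of the functor-of-points description of $\cD_{X/Y}$ recorded in \propref{prop:defweil} and of the description of $\cR_Y$ in \constrref{constr:ringstk}.
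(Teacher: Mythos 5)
Your proof is correct and follows essentially the same route as the paper: both evaluate on a test object $\Spec(A) \to Y\times\AGm$, use the Weil-restriction description of $\cD_{X/Y}$ to identify its $A$-points with $\Maps_Y(D,X)$ for $D=\Spec(B)$ the classified virtual Cartier divisor, and observe that this is exactly what $X$ applied to the animated ring $\cR_Y(a)=B$ computes. Your explicit remark that $X(\cR_Y(-))$ must be read as the evaluation relative to $Y$ (the fibre of $X(B)\to Y(B)$ over the point coming from $a_Y$) is a worthwhile clarification of a notational abuse the paper leaves implicit when it writes $\simeq X(B)$.
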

\begin{proof}
  Given $\Spec(A) \to Y \times \AGm$, recall that the corresponding virtual Cartier divisor is defined by
  \[
    \Spec(B) := \Spec(A) \fibprod_{Y \times \AGm} Y \times \BGm \simeq \quo{0}^*(\Spec(A)),
  \]
  where $\quo{0} : Y \times \BGm \to Y \times \AGm$ is the zero section.
  Thus we have canonical functorial isomorphisms
  \[\begin{multlined}
    \Maps_{Y\times\AGm}(\Spec(A), \quo{0}_*(X \times \BGm))\\
    \simeq \Maps_{Y\times\BGm}(0^*\Spec(A), X \times \BGm) \simeq X(B)
  \end{multlined}\]
  and the claim follows.
\end{proof}

\begin{rem}\label{rem:GadR}
  Consider the variant of $\cR$ where $\V(\sO(-1))$ is replaced by its formal completion $\V(\sO(-1))^\wedge$ along the zero section, i.e., the quotient stack of the linear morphism
  \[
    \V_{\AGm}(\sO(-1))^\wedge \to \V_{\AGm}(\sO).
  \]
  This recovers the ring stack $\bG_a^\mrm{dR,+}$ which controls Hodge-filtered algebraic de Rham cohomology in characteristic zero.
  In particular, taking $Y=\pt$ for simplicity, the formal completion of the normal deformation
  \[
    (\cD_{X/\pt})^\wedge_{X\times\AGm},
  \]
  along $\cD f : X\times\AGm \to \cD_{X/\pt}$ recovers the filtered de Rham stack of Simpson, which degenerates the de Rham stack $X^\mrm{dR}$ to the Hodge stack $X^{\mrm{Hodge}} = [BT_X/\Gm]^\wedge$; see the discussion in \sssecref{sssec:GR}.\footnote{%
    We are following the notation and terminology of \cite{BhattLect}.
    In Simpson's paper \cite{Simpson}, the stacks $X^{\mrm{dR},+}$ and $X^{\mrm{Hodge}}$ were denoted $X_{\mrm{Hod}}$ and $X_{\mrm{Dol}}$, respectively.
  }
  We refer to \cite[\S 2.3]{BhattLect} for an introduction to this circle of ideas, which go back to Simpson \cite[\S 5]{Simpson}.
  See also Drinfeld \cite{DrinfeldPrism}, Bhatt--Lurie \cite{BhattLuriePrism}, and \cite[\S 5.3]{BhattLect} for an analogous construction in prismatic cohomology, called the Cartier--Witt stack.
\end{rem}

\subsection{Description as a right adjoint}
\label{ssec:defadj}

The constructions $\cD_{X/Y}$ and $\Dl_{X/Y}$ are clearly functorial in morphisms $X \to Y$.
In particular, given a commutative diagram
\begin{equation}\label{eq:pinken}
  \begin{tikzcd}
    X' \ar{r}{f'}\ar{d}{p}
    & Y' \ar{d}{q}
    \\
    X \ar{r}{f}
    & Y,
  \end{tikzcd}
\end{equation}
there is a canonical $\Gm$-equivariant morphism
\begin{equation*}
  Dq : \Dl_{X'/Y'} \to \Dl_{X/Y}
\end{equation*}
over $Y$.
It factors as the composite
\begin{equation}\label{eq:subdean}
  Dq : \Dl_{X'/Y'} \xrightarrow{dq} \Dl_{X/Y}\fibprod_Y Y' \xrightarrow{q_D} \Dl_{X/Y}
\end{equation}
where $q_D$ is the base change of $q : Y' \to Y$, and the morphism
\begin{equation*}
  dq : \Dl_{X'/Y'} \to \Dl_{X\fibprod_Y Y'/Y'} \simeq \Dl_{X/Y}\fibprod_Y Y'
\end{equation*}
over $Y'$ is induced by functoriality.

\propref{prop:defweil} admits the following reformulation, where $\Arr(\sC)$ denotes the \inftyCat of morphisms in an \inftyCat $\sC$.

\begin{cor}\label{cor:Drightadj}\leavevmode
  \begin{thmlist}
    \item
    The assignment $(X\to Y) \mapsto \cD_{X/Y}$ determines a right adjoint to the functor
    \begin{equation*}
      \dStk_{/\AGm} \to \Arr(\dStk)
    \end{equation*}
    that sends a derived stack $S$ over $\AGm$ to the induced morphism $D := S \fibprod_{\AGm} \BGm \to S$.

    \item
    The assignment $(X\to Y) \mapsto \Dl_{X/Y}$ determines a right adjoint to the functor
    \begin{equation*}
      \dStk_{/\A^1} \to \Arr(\dStk)
    \end{equation*}
    that sends a derived stack $S$ over $\A^1$ to the induced morphism $D := S \fibprod_{\A^1} \{0\} \to S$.

    \item
    The assignment $(X \to Y) \mapsto (\Dl_{X/Y} \to Y\times\A^1)$ determines a right adjoint to the derived zero-fibre functor
    \begin{equation*}
      0^* : \Arr(\dStk_{/\A^1}) \to \Arr(\dStk),
    \end{equation*}
    that sends a morphism $X \to Y$ over $\A^1$ to the induced morphism $X\fibprod_{\A^1} \{0\} \to Y\fibprod_{\A^1} \{0\}$.
  \end{thmlist}
\end{cor}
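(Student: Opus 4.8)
The plan is to deduce all three adjunctions from the single universal property of Weil restriction along the zero section, as packaged in \propref{prop:defweil}. Recall that by construction $0_* : \dStk_{/Y} \to \dStk_{/Y\times\A^1}$ is right adjoint to the zero-fibre base change $0^*$, and that \propref{prop:defweil} identifies $\Dl_{X/Y}\simeq 0_*(X\xrightarrow{f}Y)$ over $Y\times\A^1$, with its $\cD$-analogue $\cD_{X/Y}\simeq\quo{0}_*(X\times\BGm\to Y\times\BGm)$ over $Y\times\AGm$. Each of the three statements is then a repackaging of this one adjunction at a different ``level'', so my strategy is to exhibit in each case a natural equivalence of mapping \inftyGrpds and then upgrade it to an honest adjunction.

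First I would treat statement (ii), which is the model computation. For $S\in\dStk_{/\A^1}$ and an arrow $f : X\to Y$, I would compute $\Maps_{\dStk_{/\A^1}}(S,\Dl_{X/Y})$ by fibering over the forgetful map to $\Maps(S,Y)$, using that a map to $Y\times\A^1$ over $\A^1$ is the same as a map to $Y$. On the fibre over a chosen $\bar g : S\to Y$, the Weil-restriction adjunction identifies $\Maps_{Y\times\A^1}(S_{\bar g},\Dl_{X/Y})$ with $\Maps_Y(0^*S_{\bar g},X)$, and $0^*S_{\bar g}$ is exactly the virtual Cartier divisor $S\fibprod_{\A^1}\{0\}$ equipped with its induced map to $Y$. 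Assembling over all $\bar g$, the total space of sections becomes the \inftyGrpd of commutative squares with upper-left corner $S\fibprod_{\A^1}\{0\}$, i.e. precisely $\Maps_{\Arr(\dStk)}(F(S),f)$ where $F(S) = (S\fibprod_{\A^1}\{0\}\to S)$. This is the required equivalence.

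Statement (i) would follow from the identical argument with $\A^1$ replaced by $\AGm$ and $0$ by $\quo{0}$, invoking the second isomorphism of \propref{prop:defweil}; here the role of the zero fibre is played by the universal virtual Cartier divisor $D := S\fibprod_{\AGm}\BGm$ classified by the structure map $S\to\AGm$ (\ssecref{ssec:VCD}), and $\Gm$-equivariance is automatic since everything is defined over $\AGm$. Statement (iii) is the same mechanism run one level higher, relative to $\A^1$: a morphism of arrows into $(\Dl_{X/Y}\to Y\times\A^1)$ in $\Arr(\dStk_{/\A^1})$ amounts to a map to $Y$ together with a section over $Y\times\A^1$, and the Weil adjunction converts the latter into data on the zero fibre, producing a morphism of arrows out of the zero-fibre of the source. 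I would obtain (iii) by running the computation of (ii) fibrewise over the target and assembling; alternatively one may use \corref{cor:d=map} to phrase it through the relative mapping stack.

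The routine inputs — that base change to the zero fibre is the universal virtual Cartier divisor, and that $\Maps_{Y\times\A^1}(-,0_*(-))\simeq\Maps_Y(0^*(-),-)$ — are already in hand. The main obstacle is the $\infty$-categorical bookkeeping required to promote these pointwise equivalences of mapping \inftyGrpds into genuine adjunctions: one must verify that the equivalences are natural in \emph{both} variables simultaneously and then either produce the unit and counit explicitly or invoke the standard criterion that a bifunctorial equivalence $\Maps(L-,-)\simeq\Maps(-,R-)$ exhibits $R$ as right adjoint to $L$. For (ii) and (iii) the counit is built from the projection $\Nl_{X/Y}\to X$ together with the identification $\Dl_{X/Y}\fibprod_{\A^1}\{0\}\simeq\Nl_{X/Y}$ of \eqref{eq:def}, which is a useful consistency check. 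The most delicate point is the arrow-category bookkeeping in (iii): one must track carefully which zero fibres are taken on the source and target inside $\Arr(\dStk_{/\A^1})$ so that the resulting left adjoint matches the stated zero-fibre functor, since it is precisely the interplay between the source fibre $S\fibprod_{\A^1}\{0\}$ and the retained $Y\times\A^1$-structure that governs the equivalence.
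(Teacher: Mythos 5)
Your treatment of (i) and (ii) is correct and is essentially the paper's own argument: the corollary is presented there as a direct reformulation of \propref{prop:defweil}, and your fibrewise computation of $\Maps_{/\A^1}(S,\Dl_{X/Y})$ over $\Maps(S,Y)$ (and its $\AGm$-analogue) is exactly how one unwinds the Weil-restriction adjunction into the stated form.

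Part (iii) is where the ``arrow-category bookkeeping'' you defer actually breaks, and it cannot be repaired in the form you set out to prove. Carrying out your own computation, for $(S\to T)\in\Arr(\dStk_{/\A^1})$ one finds
\[
\Maps_{\Arr(\dStk_{/\A^1})}\bigl((S\to T),\,(\Dl_{X/Y}\to Y\times\A^1)\bigr)
\simeq \Maps(S_0,X)\fibprod_{\Maps(S_0,Y)}\Maps(T,Y),
\qquad S_0:=S\fibprod_{\A^1}\{0\},
\]
because $\Maps_{/\A^1}(T,Y\times\A^1)\simeq\Maps(T,Y)$: the bottom of the square is a map out of $T$, not out of $T_0$. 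This is $\Maps_{\Arr(\dStk)}\bigl((S_0\to T),(X\to Y)\bigr)$, so the functor admitting $(X\to Y)\mapsto(\Dl_{X/Y}\to Y\times\A^1)$ as right adjoint is $(S\to T)\mapsto(S\fibprod_{\A^1}\{0\}\to T)$, taking the zero fibre of the \emph{source only}. It is not the arrowwise zero-fibre functor $(S\to T)\mapsto(S_0\to T_0)$ of the statement: already for $(S\to T)=(\initial\to\A^1)$ and $(X\to Y)=(\initial\to Z)$ the two candidate mapping spaces are $\Maps(\pt,Z)$ and $\Maps(\A^1,Z)$. Equivalently, the unit of the claimed adjunction would require a natural map $T\to T_0\times\A^1$ over $\A^1$, i.e.\ a retraction $T\to T_0$, which does not exist; your proposed counit via $\pi:\Nl_{X/Y}\to X$ does exist, which is why the problem is invisible from that side. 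The genuine right adjoint of the arrowwise zero-fibre functor is $\Arr(0_*):(X\to Y)\mapsto(\Dl_{X/\pt}\to\Dl_{Y/\pt})$, of which $(\Dl_{X/Y}\to Y\times\A^1)$ is only the base change along $Y\times\A^1\simeq\Dl_{Y/Y}\to\Dl_{Y/\pt}$ (\corref{cor:bacach}). So for (iii) you must either prove the adjunction with the corrected left adjoint $(S\to T)\mapsto(S\fibprod_{\A^1}\{0\}\to T)$, or prove it for $\Arr(0_*)$; the pairing as printed is not an adjunction, and no amount of bookkeeping will make it one.
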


\begin{cor}
	\label{cor:Dbc}
  The functor $(X \to Y) \mapsto \Dl_{X/Y}$ preserves limits.
\end{cor}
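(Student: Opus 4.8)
The plan is to obtain this as a formal consequence of the adjunction already recorded in \corref{cor:Drightadj}. Indeed, part~(ii) of that corollary exhibits the assignment $(X\to Y)\mapsto \Dl_{X/Y}$ as a right adjoint to the functor $\dStk_{/\A^1}\to\Arr(\dStk)$ sending $S$ to $(S\fibprod_{\A^1}\{0\}\to S)$. Since right adjoints preserve limits, the functor $(X\to Y)\mapsto\Dl_{X/Y}$ preserves all small limits, and I would simply cite this.

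The one point that requires care is the target category. The functor is naturally valued in $\dStk_{/\A^1}$: the normal deformation $\Dl_{X/Y}$ is remembered together with its structural projection $\Dl_{X/Y}\to Y\times\A^1\to\A^1$. Limits in $\Arr(\dStk)$ are computed componentwise, so a limit of a diagram $\{X_i\to Y_i\}$ is $\lim_i X_i\to\lim_i Y_i$, and the assertion unwinds to a canonical isomorphism $\Dl_{\lim_i X_i/\lim_i Y_i}\simeq\lim_i\Dl_{X_i/Y_i}$ with the right-hand limit formed in $\dStk_{/\A^1}$. In particular, binary products are sent to fibre products over $\A^1$: for instance $\Dl_{X_1\times X_2/Y_1\times Y_2}\simeq\Dl_{X_1/Y_1}\fibprod_{\A^1}\Dl_{X_2/Y_2}$, and the terminal object $(\pt\to\pt)$ goes to the terminal object $\A^1$ of $\dStk_{/\A^1}$ (consistent with $\Dl_{X/X}\simeq X\times\A^1$). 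Regarded instead as a functor to $\dStk$, it would fail to preserve products precisely because of this shared $\A^1$-factor, so keeping track of the base is the whole content of the bookkeeping.

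Since the adjunction is already in hand, I expect no substantial obstacle: the statement is essentially automatic, and the only thing to get right is the interpretation just described. If one preferred to argue directly from \propref{prop:defweil}, the plan would be to use that for each fixed $Y$ the Weil restriction $0_*$ along $0:Y\to Y\times\A^1$ preserves limits (being a right adjoint), and then to control the variance in $Y$ through the base-change isomorphism of \lemref{lem:mouthiness}; the main subtlety there would again be exactly the identification of products over varying $Y$ with fibre products over the common parameter $\A^1$.
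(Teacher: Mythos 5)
Your proof is correct and is exactly the paper's (implicit) argument: the corollary is stated immediately after \corref{cor:Drightadj} precisely so that it follows from the fact that right adjoints preserve limits. Your careful remark that the functor must be viewed as valued in $\dStk_{/\A^1}$, so that limits of diagrams $\{X_i\to Y_i\}$ in $\Arr(\dStk)$ go to limits formed over $\A^1$ (e.g.\ fibre products over $\A^1$ rather than absolute products), is the right reading and matches how the paper uses the corollary (e.g.\ the cartesian squares in the proof of \propref{prop:Dqprop}).
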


\subsection{Base change properties}

We record the basic properties of the normal deformation.

\begin{prop}\label{prop:dbc}
  For every cartesian square \eqref{eq:pinken}, the $\Gm$-equivariant morphism over $Y \times \A^1$
  \begin{equation*}
    dq : \Dl_{X'/Y'} \to \Dl_{X/Y} \fibprod_Y Y'
  \end{equation*}
  is invertible.
\end{prop}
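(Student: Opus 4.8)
The plan is to reduce the assertion to the base-change property of Weil restrictions established in \lemref{lem:mouthiness}, via the identification $\Dl_{X/Y} \simeq 0_*(X \to Y)$ of \propref{prop:defweil}. The heart of the matter is the canonical isomorphism $\Dl_{X\fibprod_Y Y'/Y'} \simeq \Dl_{X/Y}\fibprod_Y Y'$ appearing in \eqref{eq:subdean}, valid for \emph{arbitrary} $X$ over $Y$ and $Y' \to Y$; once this is established, the proposition follows formally. Throughout, write $0_Y : Y \to Y \times \A^1$ and $0_{Y'} : Y' \to Y' \times \A^1$ for the zero sections, so that $\Dl_{X/Y} \simeq (0_Y)_*(X)$ and $\Dl_{X'/Y'} \simeq (0_{Y'})_*(X')$.

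First I would observe that $0_{Y'}$ is the base change of $0_Y$ along $q \times \id_{\A^1} : Y' \times \A^1 \to Y \times \A^1$: forming $Y \fibprod_{Y \times \A^1} (Y' \times \A^1)$ along $0_Y$ and $q \times \id$ returns $Y'$, compatibly with the two zero sections. Applying \lemref{lem:mouthiness} with $h = 0_Y$ and this base change then yields a canonical isomorphism
\begin{equation*}
  (0_Y)_*(X) \fibprod_{Y \times \A^1} (Y' \times \A^1)
  \simeq (0_{Y'})_*\bigl(X \fibprod_Y Y'\bigr)
\end{equation*}
over $Y' \times \A^1$. Next I would rewrite the left-hand side using the canonical identification $Y' \times \A^1 \simeq Y' \fibprod_Y (Y \times \A^1)$ together with associativity of fibre products, obtaining $\Dl_{X/Y} \fibprod_Y Y'$ (where $\Dl_{X/Y} \to Y$ is the structure map followed by the projection $Y \times \A^1 \to Y$). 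Since the right-hand side is $\Dl_{X\fibprod_Y Y'/Y'}$, this establishes exactly the base-change isomorphism $\Dl_{X\fibprod_Y Y'/Y'} \simeq \Dl_{X/Y}\fibprod_Y Y'$ of \eqref{eq:subdean}.

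To conclude, note that for the \emph{cartesian} square \eqref{eq:pinken} the canonical map $X' \to X \fibprod_Y Y'$ is an isomorphism, so the functoriality morphism $\Dl_{X'/Y'} \to \Dl_{X\fibprod_Y Y'/Y'}$ is an isomorphism as well; composing it with the base-change isomorphism just produced gives $dq$ (by its very definition in \eqref{eq:subdean}), which is therefore invertible. The logic is non-circular: the base-change identification is proved from scratch via \lemref{lem:mouthiness} for all $X/Y$ and $Y'/Y$, and only afterwards is cartesianness of \eqref{eq:pinken} invoked to see that the remaining functoriality factor is an isomorphism.

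I expect the only delicate point to be the bookkeeping: verifying that the isomorphism produced by \lemref{lem:mouthiness} really coincides, as a morphism, with the $\simeq$ used to build $dq$ in \eqref{eq:subdean}, and that the whole thing is $\Gm$-equivariant. The first is most robustly checked on functors of points — for a test scheme $S$ over $Y' \times \A^1$ classifying a virtual Cartier divisor $D \hook S$, both sides corepresent the \inftyGrpd of morphisms $D \to X$ over $Y$, naturally in $S$ — whereas conceptually both maps are the base-change comparison for the Weil restriction along $0_Y$ and so agree by construction. The $\Gm$-equivariance is then automatic, since every identification above is natural in the $\A^1$-factor on which $\Gm$ acts by weight $-1$ scaling, and \lemref{lem:mouthiness} respects this. (Alternatively, \corref{cor:Dbc} gives the existence of such a base-change isomorphism abstractly, as $\Dl$ preserves limits; but the explicit route through \lemref{lem:mouthiness} is what pins the map down as $dq$.)
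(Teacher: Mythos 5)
Your argument is correct and is exactly the (omitted) intended one: the paper states \propref{prop:dbc} without proof, as an immediate consequence of the identification $\Dl_{X/Y}\simeq 0_*(X\to Y)$ from \propref{prop:defweil} together with base change for Weil restrictions (\lemref{lem:mouthiness}), or equivalently the limit-preservation of \corref{cor:Dbc}. Your extra care in checking that the resulting isomorphism agrees with $dq$ as defined in \eqref{eq:subdean}, and that everything is $\Gm$-equivariant, is sound and fills in precisely the bookkeeping the paper leaves implicit.
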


\begin{cor}\label{cor:bacach}
  Given morphisms $f : X \to Y$ and $g : Y \to Z$, there is a canonical $\Gm$-equivariant isomorphism
  \begin{equation*}
    \Dl_{X/Y} \simeq \Dl_{X/Z} \fibprod_{\Dl_{Y/Z}} Y \times \A^1
  \end{equation*}
  over $\A^1$.
\end{cor}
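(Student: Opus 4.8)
The plan is to deduce the formula from the fact that the normal deformation preserves limits, rather than to manipulate fibre products by hand. By \corref{cor:Drightadj} the assignment $(X \to Y) \mapsto \Dl_{X/Y}$ underlies a right adjoint, so by \corref{cor:Dbc} it defines a limit-preserving functor $\Phi : \Arr(\dStk) \to \dStk$ whose value on an object $(X \xrightarrow{f} Y)$ is $\Dl_{X/Y}$ and whose value on a morphism of arrows — a commutative square as in \eqref{eq:pinken} with source component $p$ and target component $q$ — is the induced morphism $Dq$. Since $\Arr(\dStk) = \Fun(\Delta^1,\dStk)$ and limits in a functor $\infty$-category are computed pointwise, it suffices to exhibit $(X \xrightarrow{f} Y)$ as a fibre product of arrows whose $\Phi$-values are the three terms appearing in the statement, and then to apply $\Phi$.

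The key step is to verify the canonical equivalence
\[
  (X \xrightarrow{f} Y)
  \;\simeq\;
  (X \xrightarrow{gf} Z) \fibprod_{(Y \xrightarrow{g} Z)} (Y \xrightarrow{\id} Y)
\]
in $\Arr(\dStk)$. Here the first leg of the cospan is the morphism of arrows with source component $f : X \to Y$ and target component $\id_Z$, and the second leg has source component $\id_Y$ and target component $g : Y \to Z$; both squares commute because $g\circ f = \id_Z \circ gf$ and $g\circ\id_Y = g$. Computing the fibre product pointwise, its source is $X \fibprod_{Y,f,\id} Y \simeq X$ and its target is $Z \fibprod_{Z,\id,g} Y \simeq Y$, and an elementary check identifies the induced arrow between them with $f : X \to Y$. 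Thus the right-hand side is indeed $(X \xrightarrow{f} Y)$.

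Applying $\Phi$ and using that it preserves this fibre product, together with $\Phi(Y \xrightarrow{\id} Y) = \Dl_{Y/Y} \simeq Y \times \A^1$ (as in the Example following the definition of $\cD_{X/Y}$), yields
\[
  \Dl_{X/Y}
  \simeq \Dl_{X/Z} \fibprod_{\Dl_{Y/Z}} (Y \times \A^1),
\]
where the two legs are the images under $\Phi$ of the cospan morphisms above, i.e.\ the functoriality morphism $\Dl_{X/Z} \to \Dl_{Y/Z}$ and the canonical degeneration $Dg : Y\times\A^1 \to \Dl_{Y/Z}$. Finally, $\Gm$-equivariance and the claim that the isomorphism is over $\A^1$ follow because $\Phi$ refines to a limit-preserving functor valued in $\Gm$-equivariant derived stacks over $\A^1$, the $\Gm$-action and the projection to $\A^1$ being part of the right-adjoint structure (cf. \propref{prop:defweil} and \corref{cor:Drightadj}).

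The main obstacle is not any single hard estimate but the bookkeeping: one must carefully match the two legs of the pointwise fibre product with the intended structure maps $\Dl_{X/Z} \to \Dl_{Y/Z}$ and $Dg$, and confirm that the resulting identification is $\Gm$-equivariant and $\A^1$-linear. One could instead argue more concretely from \propref{prop:dbc} together with the factorization \eqref{eq:subdean}, but there the delicate point is that the naive factorizations produce fibre products over $Z$, whereas the statement requires a fibre product over $\Dl_{Y/Z}$; disentangling these makes the limit-preservation argument the cleaner route.
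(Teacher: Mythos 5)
Your proposal is correct and is essentially identical to the paper's proof: the paper likewise exhibits $(X \xrightarrow{f} Y)$ as the fibre product of $(X \xrightarrow{g\circ f} Z)$ and $(Y \xrightarrow{\id} Y)$ over $(Y \xrightarrow{g} Z)$ in $\Arr(\dStk)$ and applies the limit-preservation of $(X\to Y)\mapsto \Dl_{X/Y}$ from \corref{cor:Dbc}. Your pointwise verification of the cartesian square and the remarks on $\Gm$-equivariance are just the details the paper leaves implicit.
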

\begin{proof}
  The square
  \begin{equation*}
    \begin{tikzcd}
      (X \xrightarrow{f} Y) \ar{r}\ar{d}
      & (X \xrightarrow{g\circ f} Z) \ar{d}
      \\
      (Y \xrightarrow{\id} Y) \ar{r}
      & (Y \xrightarrow{g} Z)
    \end{tikzcd}
  \end{equation*}
  is cartesian in $\Arr(\dStk)$.
\end{proof}

\begin{cor}
  For any commutative square \eqref{eq:pinken}, there is a canonical $\Gm$-equivariant cartesian square
  \begin{equation*}
    \begin{tikzcd}
      \Dl_{X'/X\fibprod_Y Y'} \ar{r}\ar{d}
      & \Dl_{X'/Y'} \ar{d}{dq}
      \\
      X\fibprod_Y Y'\times\A^1 \ar{r}
      & \Dl_{X/Y} \fibprod_Y Y'
    \end{tikzcd}
  \end{equation*}
  over $Y' \times \A^1$.
\end{cor}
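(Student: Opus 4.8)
The plan is to deduce the statement directly from the associativity-type base-change isomorphism \corref{cor:bacach} together with the base-change isomorphism \propref{prop:dbc}, so that essentially no new work is needed.

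First I would set $W := X\fibprod_Y Y'$, which receives a canonical morphism $X' \to W$ from the commutativity of \eqref{eq:pinken}, as well as the projection $W \to Y'$. The observation is that the square in the statement is exactly the base-change square attached to the factorization $X' \to W \to Y'$. Indeed, applying \corref{cor:bacach} to this composite yields a canonical $\Gm$-equivariant cartesian square
\begin{equation*}
  \begin{tikzcd}
    \Dl_{X'/W} \ar{r}\ar{d}
    & \Dl_{X'/Y'} \ar{d}\\
    W\times\A^1 \ar{r}
    & \Dl_{W/Y'}
  \end{tikzcd}
\end{equation*}
over $Y'\times\A^1$, where the lower-left corner is $\Dl_{W/W}\simeq W\times\A^1$.

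It remains to rewrite the bottom row. Since $W = X\fibprod_Y Y' \to Y'$ is the base change of $f : X \to Y$ along $q : Y' \to Y$, \propref{prop:dbc} supplies a canonical $\Gm$-equivariant isomorphism $\Dl_{W/Y'} \simeq \Dl_{X/Y}\fibprod_Y Y'$. Substituting this, together with the tautological identification $W\times\A^1 = X\fibprod_Y Y'\times\A^1$, turns the square above into the one asserted in the statement; the $\Gm$-equivariance of every morphism is automatic, since they all arise from the $\Gm$-equivariant functoriality of the normal deformation.

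The only point that needs care — and the closest thing here to an obstacle — is the verification that the right-hand vertical map $\Dl_{X'/Y'}\to\Dl_{W/Y'}\simeq\Dl_{X/Y}\fibprod_Y Y'$ produced above coincides with the morphism $dq$ of \eqref{eq:subdean}. Both are the functoriality morphism induced by the $Y'$-morphism $X' \to W = X\fibprod_Y Y'$, and the isomorphism $\Dl_{X\fibprod_Y Y'/Y'}\simeq\Dl_{X/Y}\fibprod_Y Y'$ appearing in \eqref{eq:subdean} is the very instance of \propref{prop:dbc} used above; so this is a matter of unwinding definitions. Alternatively, one may avoid invoking \corref{cor:bacach} and argue directly as in its proof, exhibiting the displayed square as the image under the limit-preserving functor $(X\to Y)\mapsto\Dl_{X/Y}$ (\corref{cor:Dbc}) of the square
\begin{equation*}
  \begin{tikzcd}
    (X' \to W) \ar{r}\ar{d}
    & (X' \to Y') \ar{d}\\
    (W \to W) \ar{r}
    & (W \to Y')
  \end{tikzcd}
\end{equation*}
in $\Arr(\dStk)$, which is cartesian because limits in $\Arr(\dStk)$ are computed componentwise and both the square of domains and the square of codomains are evidently cartesian.
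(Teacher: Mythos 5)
Your proposal is correct and follows essentially the same route as the paper, whose one-line proof likewise identifies the bottom arrow with $\Dl_{X\fibprod_Y Y'/X\fibprod_Y Y'} \to \Dl_{X\fibprod_Y Y'/Y'}$ and then invokes \corref{cor:bacach} for the factorization $X' \to X\fibprod_Y Y' \to Y'$ together with \propref{prop:dbc}. Your extra care in matching the right-hand vertical map with $dq$ from \eqref{eq:subdean} is a sensible unwinding of definitions that the paper leaves implicit.
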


\begin{proof}
  The bottom arrow is equivalent to $\Dl_{X\times_YY'/X\times_YY'} \to \Dl_{X\times_YY'/Y'}$.
\end{proof}

\subsection{The normal bundle and the conormal complex}

Let $f : X \to Y$ be a morphism of derived stacks. We revisit the universal virtual Cartier divisor over $f$ \eqref{eq:intertrude}.
The \emph{normal bundle} of $f$ is the derived stack
\begin{equation}
  \Nl_{X/Y} := \Dl_{X/Y} \fibprod_{\A^1} \{0\}
  \simeq {0}^*{0}_*(X \xrightarrow{f} Y)
\end{equation}
with its canonical $\Gm$-action, where $0 : Y \to Y \times \A^1$ is the zero section and the isomorphism is \propref{prop:defweil}.

We denote the quotient stack by $\cN_{X/Y} := [\Nl_{X/Y}/\Gm]$, so that
\begin{equation}\label{eq:cN}
  \cN_{X/Y}
  \simeq \cD_{X/Y} \fibprod_{\AGm} \BGm
  \simeq \quo{0}^*\quo{0}_*(X \times \BGm \xrightarrow{f\times\id} Y \times \BGm),
\end{equation}
where $\quo{0} : Y \times \BGm \to Y \times \AGm$ is the zero section.

By construction, the normal bundle is equipped with canonical morphisms
\begin{equation}
  v : \Nl_{X/Y} \to Y,
  \quad \quo{v} : \cN_{X/Y} \to Y \times \BGm.
\end{equation}
Via the adjunction counit $0^*0_* \to \id$, these in fact lift to canonical morphisms
\begin{equation}\label{eq:pi}
  \pi : \Nl_{X/Y} \to X,
  \quad \quo{\pi} : \cN_{X/Y} \to X \times \BGm.
\end{equation}
We also have the inclusions
\begin{equation}
  i_N : \Nl_{X/Y} \hook \Dl_{X/Y},
  \quad i_\cN : \cN_{X/Y} \hook \cD_{X/Y}
\end{equation}
exhibiting $\cN_{X/Y}$ as the tautological virtual Cartier divisor on $\cD_{X/Y}$, i.e., the universal one over $f$.

\begin{defn}
	If $f$ admits a cotangent complex, then the \emph{conormal complex} of $f$ is the shifted cotangent complex $\sNv_{X/Y} := \sL_{X/Y}[-1]$.
\end{defn}

When $f$ admits a cotangent complex, the normal bundle can be described in terms of the conormal complex.
We recall the notation $\V_X(-)$ from Appendix~\ref{sec:vb}.

\begin{prop}\label{prop:normal}
  If $f : X \to Y$ admits a cotangent complex $\sL_{X/Y}$, then there is a canonical $\Gm$-equivariant isomorphism
  \begin{equation*}
    \Nl_{X/Y} \simeq \V_X(\sNv_{X/Y})
  \end{equation*}
  of derived stacks over $X$.
  In particular, there is a canonical isomorphism
  \begin{equation*}
    \cN_{X/Y} \simeq [\V_X(\sNv_{X/Y})/\Gm]
    \simeq \V_{X\times\BGm}(\sNv_{X/Y}(-1))
  \end{equation*}
  over $X \times \BGm$.
\end{prop}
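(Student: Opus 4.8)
The plan is to establish the $\Gm$-equivariant statement for $\cN_{X/Y}$ directly by a functor-of-points computation, and then read off $\Nl_{X/Y}\simeq\V_X(\sNv_{X/Y})$ from $\Nl_{X/Y}=\cN_{X/Y}\fibprod_{\BGm}\{0\}$. I would start from the description $\cN_{X/Y}\simeq\quo{0}^*\quo{0}_*(X\times\BGm\to Y\times\BGm)$ of \eqref{eq:cN}, recalling that the projection $\quo{\pi}:\cN_{X/Y}\to X\times\BGm$ of \eqref{eq:pi} is the one induced by the Weil-restriction counit. Fixing an affine $T$ and a morphism $h=(g,\sL):T\to X\times\BGm$ (so $g:T\to X$ and $\sL$ is the invertible sheaf classified by $T\to\BGm$), the goal is to identify the fibre of $\quo{\pi}_*:\Maps(T,\cN_{X/Y})\to\Maps(T,X\times\BGm)$ over $h$.

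The key geometric input is a self-intersection computation. The zero section $\quo{0}:Y\times\BGm\hook Y\times\AGm$ is the virtual Cartier divisor cut out by the tautological cosection \eqref{eq:sigma}, so its conormal sheaf is $\sO_{Y\times\BGm}(1)$ (\ssecref{ssec:VCD}). By definition of the Weil restriction, a $T$-point of $\cN_{X/Y}$ over $h$ amounts to a morphism $D_T\to X$ over $Y$, where $D_T:=T\fibprod_{Y\times\AGm}(Y\times\BGm)\hook T$ is the base change of $\quo{0}$ along the composite $T\to X\times\BGm\to Y\times\AGm$. Since this composite factors through $\quo{0}$, the divisor $D_T\hook T$ is the \emph{self-intersection} of $\quo{0}$; and the derived self-intersection of a virtual Cartier divisor $D\hook S$ is the trivial square-zero extension $\uSpec_D(\sO_D\oplus\sO_D(-D)[1])$. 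As the conormal $\sO_{Y\times\BGm}(1)$ pulls back to $\sL$, this gives $D_T\simeq T[\sL[1]]$, the trivial square-zero extension of $T$ by the connective sheaf $\sL[1]$. Under this identification the counit $\quo{\pi}_*$ becomes restriction along the canonical section $T\hook T[\sL[1]]$, so that the fibre of $\quo{\pi}_*$ over $h$ is exactly the derivation space $\on{Der}_T(X/Y;\sL[1])$.

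I would then feed this into the universal property \eqref{eq:derivations} of the cotangent complex, using $\sNv_{X/Y}=\sL_{X/Y}[-1]$:
\[
  \on{Der}_T(X/Y;\sL[1])
  \simeq \Maps_{\QCoh(T)}(g^*\sL_{X/Y},\sL[1])
  \simeq \Maps_{\QCoh(T)}(g^*\sNv_{X/Y},\sL).
\]
On the other side, by the defining property of $\V$ (Appendix~\ref{sec:vb}), the fibre of $\V_{X\times\BGm}(\sNv_{X/Y}(-1))\to X\times\BGm$ over $h$ is $\Maps_{\QCoh(T)}(g^*\sNv_{X/Y}\otimes\sL^{-1},\sO_T)\simeq\Maps_{\QCoh(T)}(g^*\sNv_{X/Y},\sL)$. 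These agree functorially in $(T,h)$, yielding $\cN_{X/Y}\simeq\V_{X\times\BGm}(\sNv_{X/Y}(-1))\simeq[\V_X(\sNv_{X/Y})/\Gm]$ over $X\times\BGm$. Pulling back along $\{0\}\to\BGm$ (equivalently, taking $\sL=\sO_T$) then gives $\Nl_{X/Y}\simeq\V_X(\sNv_{X/Y})$ over $X$, with the weight-one $\Gm$-action recorded by the quotient presentation.

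The main obstacle is the middle paragraph: computing the derived self-intersection correctly while tracking the $\Gm$-weight, so that the twist comes out as $(-1)$ rather than $(+1)$ --- this hinges on the conormal of $\quo{0}$ being $\sO(1)$ and on $\sL$ appearing with the right variance --- and verifying that the counit $\quo{\pi}$ is genuinely restriction along the square-zero section $T\hook T[\sL[1]]$, which is precisely what cuts the full extension space down to the derivation space and thereby produces a $\V$-bundle.
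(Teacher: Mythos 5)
Your proposal is correct and is essentially the paper's own argument: identifying the fibre $D_T$ with the derived self-intersection of the zero section, i.e.\ the trivial square-zero extension $T[\sL[1]]$, is exactly the paper's identification $\quo{0}^*\quo{0}_\sharp(S) \simeq S[\sO_S(1)[1]]$, after which both proofs reduce the counit fibre to a derivation space and invoke the universal property of $\sL_{X/Y}$. The only difference is cosmetic — you prove the $\Gm$-equivariant statement first and specialize along $\pt\to\BGm$, whereas the paper does the untwisted case first and then records the equivariant refinement — and your weight bookkeeping (conormal $\sO(1)$, twist $(-1)$ on $\sNv_{X/Y}$) matches the paper's conventions.
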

\begin{proof}
  Denote by $0_\sharp$ the left adjoint to $0^*$ \eqref{eq:hsharp}.
  The adjunction formula for the pair $(0^*0_\sharp, 0^*0_*)$ reads, for every derived scheme $S$ and morphism $S \to Y$,
  \begin{equation}\label{eq:Nokbuzak}
    \Maps_Y(S, \Nl_{X/Y})
    \simeq \Maps_Y(S[\sO_S[1]], X),
  \end{equation}
  up to identifying $\Nl_{X/Y}$ with $0^*0_*(X \to Y)$, and the trivial square zero-extension $S[\sO_S[1]]$ (see \ssecref{ssec:convent/cot}) with
  \[
    0^*0_\sharp(S \to Y) \simeq S \times (0 \fibprod_{\A^1} 0).
  \]
  Moreover, the isomorphism \eqref{eq:Nokbuzak} commutes with the respective forgetful maps to $\Maps(S, X)$, where on the left-hand side we post-compose with the counit $0^*0_* \to \id$ and on the right-hand side we pre-compose with the unit $\id \to 0^*0_\sharp$.

  Now given a derived scheme $S$ with a morphism $s : S \to X$, we compute:
  \begin{align*}
    \Maps_X(S, \V_X(\sNv_{X/Y}))
    &\simeq \Maps_{\QCoh(S)}(s^*\sL_{X/Y}, \sO_S[1])\\
    &\simeq \Fib(\Maps_Y(S[\sO_S[1]], X) \to \Maps(S, X))\\
    &\simeq \Fib(\Maps_Y(S, \Nl_{X/Y}) \to \Maps(S, X)),\\
    &\simeq \Maps_X(S, \Nl_{X/Y}).
  \end{align*}
  where both fibres are taken at the point determined by $s : S \to X$.
  The first isomorphism is the universal property of $\V_X(-)$ (see App.~\ref{sec:vb}), the second is the universal property of $\sL_{X/Y}$ \eqref{eq:derivations}, the third is \eqref{eq:Nokbuzak}, and the last is tautological.
  These are functorial as $(S,s)$ varies and we obtain the canonical isomorphism $\Nl_{X/Y} \simeq \V_X(\sNv_{X/Y})$.

  We can take $\Gm$-equivariance into account as follows.
  The $\Gm$-equivariant refinement of \eqref{eq:Nokbuzak} reads
  \begin{equation*}
    \Maps_{Y\times\BGm}(S, \cN_{X/Y})
    \simeq \Maps_{Y\times\BGm}(S[\sO_S(1)[1]], X\times\BGm).
  \end{equation*}
  This implies as above that for every derived scheme $S$ with a morphism $S \to X \times \BGm$, we have
  \begin{equation*}
    \Maps_{X\times\BGm}(S, \V_{X\times\BGm}(\sL_{X/Y} \boxtimes \sO(-1)[-1]))
    \simeq \Maps_{X\times\BGm}(S, \cN_{X/Y}),
  \end{equation*}
  whence the claim.
\end{proof}

Recall the functoriality of the normal deformation from \ssecref{ssec:defadj}.
This gives rise to the following functoriality for the normal bundle.
Given a commutative diagram
\begin{equation*}\label{eq:pinken1}
  \begin{tikzcd}
    X' \ar{r}{f'}\ar{d}{p}
    & Y' \ar{d}{q}
    \\
    X \ar{r}{f}
    & Y,
  \end{tikzcd}
\end{equation*}
there is an induced morphism
\begin{equation}\label{eq:Nq}
  Nq : \Nl_{X'/Y'} \xrightarrow{dq} \Nl_{X/Y}\fibprod_Y Y' \xrightarrow{q_N} \Nl_{X/Y}
\end{equation}
where $q_N$ is the base change of $q : Y' \to Y$ and the morphism
\begin{equation*}
  dq : \Nl_{X'/Y'} \to \Nl_{X\fibprod_Y Y'/Y'} \simeq \Nl_{X/Y}\fibprod_Y Y'
\end{equation*}
over $Y'$ is induced by functoriality.
There is by construction a cartesian square
\begin{equation*}
  \begin{tikzcd}
    \Nl_{X'/Y'} \ar{r}\ar{d}{Nq}
    & \Dl_{X'/Y'} \ar{d}{Dq}
    \\
    \Nl_{X/Y} \ar{r}
    & \Dl_{X/Y}.
  \end{tikzcd}
\end{equation*}

\begin{exam}
  Given morphisms $X \to Y \to Z$, \corref{cor:bacach} yields
  \[
    \Nl_{X/Y}
    \simeq \Nl_{X/Z} \fibprod_{\Nl_{Y/Z}} Y
    \simeq \Nl_{X/Z} \fibprod_{\Nl_{Y/Z} \fibprod_Y X} X.
  \]
  When $X \to Y$ and $Y \to Z$ admit cotangent complexes, this recovers the transitivity triangle for cotangent complexes via \propref{prop:normal}.
\end{exam}

\subsection{The deformation diagram}
\label{ssec:defdiag}

For any morphism $f : X \to Y$, the morphisms
\[(X \xrightarrow{\id} X) \to (X \xrightarrow{f} Y) \to (Y \xrightarrow{\id} Y)\]
induce by functoriality, under the $\Gm$-equivariant isomorphisms $\Dl_{X/X} \simeq X \times \A^1$, $\Dl_{Y/Y} \simeq Y \times \A^1$, canonical $\Gm$-equivariant morphisms
\begin{equation}
  X \times \A^1
  \xrightarrow{Df} \Dl_{X/Y}
  \to Y \times \A^1
\end{equation}
over $\A^1$.

Taking fibres over $0$ and $\Gm$, respectively, we get a commutative diagram of cartesian squares of derived stacks with $\Gm$-action
\begin{equation}\label{eq:defdiag}
  \begin{tikzcd}
    X \ar{r}{0}\ar{d}{0}
    & X \times \A^1 \ar[leftarrow]{r}\ar{d}{Df}
    & X \times \Gm \ar{d}{f\times \id}
    \\
    \Nl_{X/Y} \ar{r}{i_N}\ar{d}{v}
    & \Dl_{X/Y} \ar[leftarrow]{r}\ar{d}{u}
    & Y \times \Gm \ar[equals]{d}
    \\
    Y \ar{r}{0}
    & Y \times \A^1 \ar[leftarrow]{r}
    & Y \times \Gm.
  \end{tikzcd}
\end{equation}
In the left-hand column we have used the canonical isomorphisms $\Nl_{X/X} \simeq X$ and $\Nl_{Y/Y} \simeq Y$; recall also that $v$ factors through the projection $\pi : \Nl_{X/Y} \to X$ \eqref{eq:pi}.
In the right-hand column we use \propref{prop:defweil} and base change for the Weil restriction (\lemref{lem:mouthiness}).

Taking quotient stacks, we get the commutative diagram of cartesian squares

\begin{equation}\label{eq:defdiagq}
  \begin{tikzcd}
    X\times\BGm \ar{r}{\quo{0}}\ar{d}{\quo{0}}
    & X \times \AGm \ar[leftarrow]{r}\ar{d}{\cD f}
    & X \ar{d}{f}
    \\
    \cN_{X/Y} \ar{r}{i_\cN}\ar{d}{\quo{v}}
    & \cD_{X/Y} \ar[leftarrow]{r}\ar{d}{\quo{u}}
    & Y \ar[equals]{d}
    \\
    Y\times\BGm \ar{r}{\quo{0}}
    & Y\times\AGm \ar[leftarrow]{r}
    & Y,
  \end{tikzcd}
\end{equation}
where we have used the decoration $\quo{}$ to indicate the induced morphism on $\Gm$-quotients, and $\cD f := \quo{Df}$.

\subsection{Cotangent complex}
\label{ssec:def/cot}

Let $f : X \to Y$ be a morphism of derived stacks.
Specializing \corref{cor:ontogenic}, we obtain the following description of the cotangent complex of the normal deformation:

\begin{prop}\label{prop:defcot}
  If $f$ admits a perfect cotangent complex $\sL_{X/Y}$, then the canonical morphisms
  \begin{align}
    \sL_{\cD_{X/Y}/Y\times\AGm}
    &\to i_{\cN,*}(\sL_{\cN_{X/Y}/X\times\BGm}),\label{eq:defcot1}\\
    \sL_{\cD_{X/Y}/Y}
    &\to i_{\cN,*}(\sL_{\cN_{X/Y}/X}).\label{eq:defcot2}
  \end{align}
  are invertible.
\end{prop}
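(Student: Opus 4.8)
The plan is to obtain both statements from the general cotangent-complex computation for Weil restrictions in \corref{cor:ontogenic}, applied to the zero section $\quo{0}\colon Y\times\BGm \to Y\times\AGm$. First I would check that $h:=\quo{0}\colon Y\times\BGm\to Y\times\AGm$ satisfies the hypotheses of that corollary. Being (the base change of) the universal virtual Cartier divisor, it is a closed immersion, hence proper, representable, and affine, so universally of cohomological dimension $0$ (i.e. $e=0$); it is quasi-smooth, hence hfp and in particular afp; and the triangle $\sO(-D)\to\sO\to\quo{0}_*\sO$ exhibits it as being of Tor-amplitude $\le 1$, so $d=1$. By \propref{prop:defweil} we have $\cD_{X/Y}\simeq h_*(X\times\BGm\to Y\times\BGm)$ with $h_*(Y\times\BGm)\simeq Y\times\AGm$, while \eqref{eq:cN} identifies the fibre product $\cD_{X/Y}\fibprod_{Y\times\AGm}(Y\times\BGm)$ appearing in \eqref{eq:Egizaes} with $\cN_{X/Y}$; under these identifications $h'$ becomes $i_\cN$, the projection $p$ becomes $\quo{v}$, and the counit $\varepsilon$ becomes $\quo{\pi}$ \eqref{eq:pi}. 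Since $\sL_{X/Y}$ is perfect, so is the pullback $\sL_{X\times\BGm/Y\times\BGm}$, and \corref{cor:ontogenic} yields the canonical isomorphism
\begin{equation*}
  \sL_{\cD_{X/Y}/Y\times\AGm}
  \simeq i_{\cN,*}\bigl(\quo{\pi}^*(\sL_{X\times\BGm/Y\times\BGm})\otimes \quo{v}^*\omega_{Y\times\BGm/Y\times\AGm}\bigr).
\end{equation*}

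The next step is to identify the integrand with $\sL_{\cN_{X/Y}/X\times\BGm}$, which is where the weight bookkeeping must be done with care. The relative dualizing complex of the virtual Cartier divisor $\quo{0}$ is $\omega_{Y\times\BGm/Y\times\AGm}\simeq\sO(-1)[-1]$, using $\omega_{D/S}\simeq\sO_D(D)[-1]$ from \ssecref{ssec:VCD} together with $\sO_\BGm(\quo{0})\simeq\sO(-1)$ (which follows from $\sL_{\BGm/\AGm}[-1]\simeq\sO(1)$, the conormal of the zero section of $\V_\BGm(\sO(1))$). Hence the integrand equals $\quo{\pi}^*(\sL_{X/Y})\otimes\sO(-1)[-1]\simeq\quo{\pi}^*\bigl(\sL_{X/Y}[-1]\boxtimes\sO(-1)\bigr)=\quo{\pi}^*(\sNv_{X/Y}(-1))$. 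By \propref{prop:normal} we have $\cN_{X/Y}\simeq\V_{X\times\BGm}(\sNv_{X/Y}(-1))$, and since the cotangent complex of a derived vector bundle $\V_Z(\sE)\to Z$ is $\pi^*\sE$, this is exactly $\sL_{\cN_{X/Y}/X\times\BGm}$. This proves \eqref{eq:defcot1}, where I would also note that the equivalence above is the canonical comparison morphism, as it is assembled from the counit and the universal property of $\sL_{X/Y}$ precisely as in the construction underlying \eqref{eq:defcot1}.

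To deduce \eqref{eq:defcot2} I would compare the transitivity triangles for $\cD_{X/Y}\to Y\times\AGm\to Y$ and for $\cN_{X/Y}\to X\times\BGm\to X$ (the latter pushed forward along $i_\cN$), which fit into a morphism of exact triangles whose right-hand vertical arrow is the isomorphism \eqref{eq:defcot1}. It then suffices to show the left-hand vertical arrow $\quo{u}^*\sL_{Y\times\AGm/Y}\to i_{\cN,*}\quo{\pi}^*\sL_{X\times\BGm/X}$ is invertible. Writing $\sL_{\AGm}\simeq\quo{0}_*\sL_{\BGm}$ via \eqref{eq:LAGm}, derived base change along the cartesian square with vertical maps $\quo{v},\quo{u}$ and horizontal maps $\quo{0},i_\cN$ gives $\quo{u}^*\quo{0}_*(\sL_\BGm)\simeq i_{\cN,*}\quo{v}^*(\sL_\BGm)$, and $\quo{v}^*\sL_\BGm\simeq\quo{\pi}^*\sL_\BGm$ since both are the pullback of $\sL_\BGm$ along the common composite $\cN_{X/Y}\to\BGm$. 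By the two-out-of-three property for exact triangles, the middle arrow $\sL_{\cD_{X/Y}/Y}\to i_{\cN,*}\sL_{\cN_{X/Y}/X}$ is then invertible as well.

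The main obstacle I expect is the second step: pinning down the $\Gm$-weight and the shift in $\omega_{Y\times\BGm/Y\times\AGm}$ so that the twist by the dualizing complex exactly accounts for the $(-1)$-twist in \propref{prop:normal}, and confirming that the equivalence supplied by \corref{cor:ontogenic} genuinely coincides with the canonical comparison morphism of \eqref{eq:defcot1} rather than being merely an abstract isomorphism of complexes.
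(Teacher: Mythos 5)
Your proposal is correct and follows essentially the same route as the paper's own proof: both parts apply \corref{cor:ontogenic} to the Weil restriction along $\quo{0} : Y\times\BGm \hook Y\times\AGm$, identify the twist via $\omega_{Y\times\BGm/Y\times\AGm}\simeq\sO(-1)[-1]$ and \propref{prop:normal} for \eqref{eq:defcot1}, and then deduce \eqref{eq:defcot2} from the morphism of transitivity triangles using \eqref{eq:LAGm} and base change. The weight/shift bookkeeping you flag as the main risk comes out exactly as in the paper.
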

\begin{proof}
  The projection $\quo{\pi} : \cN_{X/Y} \to X\times\BGm$  \eqref{eq:pi} is the derived vector bundle associated with the perfect complex $\sL_{X\times\BGm/Y\times\BGm}(-1)[-1]$ (\propref{prop:normal}), so we have
  \begin{equation*}
    \sL_{\cN_{X/Y}/X\times\BGm} \simeq \quo{\pi}^*\sL_{X\times\BGm/Y\times\BGm}(-1)[-1].
  \end{equation*}
  Under this identification, \eqref{eq:defcot1} is the isomorphism
  \begin{equation*}
    \sL_{\cD_{X/Y}/Y\times\AGm}
    \simeq i_{\cN,*} (\quo{\pi}^*\sL_{X\times\BGm/Y\times\BGm}(-1)[-1]),
  \end{equation*}
  obtained by applying \corref{cor:ontogenic} for the Weil restriction of $\quo{f} : X \times \BGm \to Y \times \BGm$ along the inclusion $\quo{0} : Y \times \BGm \hook Y \times \AGm$.
  Note that $\quo{0}_\sharp(-) \simeq \quo{0}_*(- \otimes \sO(-1)[-1])$ by \examref{exam:sharpvcd}, since the dualizing complex $\omega_{\BGm/\AGm}$ is given by $\sO(-1)[-1]$  since we regard $\AGm$ as the dual of the tautological line bundle over $\BGm$.
  
  For the second isomorphism \eqref{eq:defcot2}, consider the commutative diagram
  \[\begin{tikzcd}
    & \cN_{X/Y} \ar{r}{i_\cN}\ar{d}{\quo{v}}\ar[swap]{ld}{\quo{\pi}}
    & \cD_{X/Y} \ar{d}{\quo{u}}
    \\
    X\times\BGm \ar{r}{\quo{f}}\ar{d}
    & Y\times\BGm \ar{r}{\quo{0}}\ar{d}{q}
    & Y \times \AGm \ar{ld}{p}
    \\
    X \ar{r}{f}
    & Y
    &
  \end{tikzcd}\]
  where the notation is as in \eqref{eq:defdiagq}, with $p$ and $q$ the projections, and both squares are cartesian.
  We have a morphism of exact triangles
  \[\begin{tikzcd}
    \quo{u}^* \sL_{Y\times\AGm/Y} \ar{r}\ar{d}
    & \sL_{\cD_{X/Y}/Y} \ar{r}\ar{d}{\eqref{eq:defcot2}}
    & \sL_{\cD_{X/Y}/Y\times\AGm}\ar{d}{\eqref{eq:defcot1}}
    \\
    i_{\cN,*}\quo{\pi}^*(\sL_{X\times\BGm/X}) \ar{r}
    & i_{\cN,*}(\sL_{\cN_{X/Y}/X}) \ar{r}
    & i_{\cN,*}(\sL_{\cN_{X/Y}/X\times\BGm}).
  \end{tikzcd}\]
  The left-hand vertical arrow is the isomorphism
  \begin{align*}
    i_{\cN,*}\quo{\pi}^*(\sL_{X\times\BGm/X})
    &\simeq i_{\cN,*}\quo{\pi}^*\quo{f}^*(\sL_{Y\times\BGm/Y})\\
    &\simeq \quo{u}^*\quo{0}_*(\sL_{Y\times\BGm/Y})\\
    &\simeq \quo{u}^*\sL_{Y\times\AGm/Y},
  \end{align*}
  where the last identification follows from \eqref{eq:LAGm}.
  It follows that \eqref{eq:defcot2} is also invertible, as claimed.
\end{proof}

\subsection{Closed immersions}
\label{ssec:Rees}

Recall that for a closed immersion of classical algebraic stacks $i : X \to Y$ with ideal sheaf $\sI$, the classical Rees algebra and its extended version are the $\N$-graded and $\Z$-graded $\sO_Y$-algebras
\begin{equation}\label{eq:Rcl}
  {}^\heartsuit\R_{X/Y} := \bigoplus_{n\ge 0} \sI^n \cdot t^n, \qquad
  {}^\heartsuit\Rext_{X/Y} := \bigoplus_{n\in\Z} \sI^n \cdot t^n,
\end{equation}
respectively, where $\sI^n := \sO_Y$ for $n\leq 0$.
The relative spectrum of ${}^\heartsuit\Rext_{X/Y}$ defines the classical deformation to the normal cone.

For a closed immersion of derived stacks $f : X \to Y$, we use the normal deformation to define analogues of \eqref{eq:Rcl}.
We begin with the following affineness result:\footnote{%
  If $f : X \to Y$ is just an affine morphism, one can show that $u : \Dl_{X/Y} \to Y \times \A^1$ is still affine in the sense of \emph{nonconnective} derived algebraic geometry; see \cite[Thm.~4.6]{BenBassatHekking}.
}

\begin{thm}[Hekking]\label{thm:Daff}
  For any closed immersion of derived stacks $f : X \hook Y$, the morphism $u : \Dl_{X/Y} \to Y\times\A^1$ is affine.
  In particular, the canonical $\Gm$-equivariant morphism
  \[
    \Dl_{X/Y} \to \uSpec_{Y\times\A^1}(u_*\sO_{\Dl_{X/Y}})
  \]
  is invertible.
\end{thm}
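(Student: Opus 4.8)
The plan is to reduce to an affine base and then exhibit $\Dl_{X/Y}$ as the relative spectrum of a connective derived \emph{extended Rees algebra}, the connectivity being the real content. Affineness of $u$ may be tested Zariski-locally on the target $Y\times\A^1$, and by base change for the normal deformation (\propref{prop:dbc}) it is compatible with restricting along an affine cover $Y'\to Y$; so I may assume $Y=\Spec(A)$ is affine. Then $X=\Spec(B)$ is affine (a closed immersion into an affine is affine), with $\pi_0(A)\twoheadrightarrow\pi_0(B)$, and $Y\times\A^1=\Spec(A[t^{-1}])$. The decisive consequence of $f$ being a closed immersion is that $\sL_{X/Y}$ is $1$-connective, so that the conormal complex $\sNv_{X/Y}=\sL_{X/Y}[-1]$ is \emph{connective}. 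By \propref{prop:normal} the special fibre $\Nl_{X/Y}\simeq\V_X(\sNv_{X/Y})$ is then affine over $X$, hence over $Y$; this is exactly the feature that fails for a general affine $f$ (e.g.\ a smooth $f$, where $\sNv_{X/Y}$ sits in degree $-1$ and $\Nl_{X/Y}$ is genuinely $1$-Artin), which explains the hypothesis.

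Since $u$ is $\Gm$-equivariant, $\Rext_{X/Y}:=u_*\sO_{\Dl_{X/Y}}$ is naturally a $\Gm$-equivariant (equivalently, via the $\AGm$-dictionary, filtered) $A[t^{-1}]$-algebra: the derived extended Rees algebra. There is a canonical $\Gm$-equivariant comparison $c:\Dl_{X/Y}\to\uSpec_{Y\times\A^1}(\Rext_{X/Y})$ over $Y\times\A^1$, and the assertion splits into two parts: (i) $\Rext_{X/Y}$ is connective, and (ii) $c$ is an equivalence. For (i) I would pass to the associated graded of the Rees filtration. Under the $\Gm$-quotient, the special fibre of $\cD_{X/Y}\to Y\times\AGm$ over the zero section is $\cN_{X/Y}\simeq\V_{X\times\BGm}(\sNv_{X/Y}(-1))$ (\propref{prop:normal}), whose pushforward to $Y\times\BGm$ computes the associated graded as the global functions of $\Nl_{X/Y}$ over $Y$, namely $\bigoplus_{n\ge 0}\Sym^n_{\sO_X}(\sNv_{X/Y})$. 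As $\sNv_{X/Y}$ is connective, each symmetric power, and hence the whole associated graded, is connective; a complete, exhaustive filtration with connective graded pieces has connective underlying object, so $\Rext_{X/Y}$ is connective and $\uSpec_{Y\times\A^1}(\Rext_{X/Y})$ is a genuine affine derived scheme over $Y\times\A^1$.

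For (ii) the map $c$ is $\Gm$-equivariant, so it suffices to check it is an equivalence after restriction to the two strata of $\AGm$, i.e.\ on associated gradeds together with the generic fibre. Over the open stratum $\Gm\subset\A^1$ both sides are $Y\times\Gm$ (the right-hand column of \eqref{eq:defdiag}) and $c$ is the identity; over the zero section $c$ becomes the tautological map $\Nl_{X/Y}=\V_X(\sNv_{X/Y})\to\Spec_X\Sym_{\sO_X}(\sNv_{X/Y})$, which is an equivalence by the very definition of the derived vector bundle $\V_X(-)$. Completeness of the Rees filtration then upgrades these fibrewise equivalences to an equivalence of $c$, giving $\Dl_{X/Y}\simeq\uSpec_{Y\times\A^1}(\Rext_{X/Y})$ as in \thmref{thm:Daff}.

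The main obstacle is step (i): the general algebraicity result only guarantees that $\Dl_{X/Y}$ is $1$-Artin, and promoting this to affineness is precisely the connectivity of the derived extended Rees algebra $u_*\sO_{\Dl_{X/Y}}$. Everything hinges on the connectivity of the conormal complex $\sNv_{X/Y}$ combined with a careful filtered-completeness argument, and this is where the closed-immersion hypothesis is genuinely used. An alternative to the associated-graded computation would be to realize $(X\hook Y)$ as a limit of quasi-smooth closed immersions and invoke \corref{cor:Dbc} together with the known affineness in the quasi-smooth case (via derived blow-ups, \cite[Thm.~4.1.13]{blowups}) and the stability of affine morphisms under limits; but presenting a general derived closed immersion as such a limit compatibly is itself delicate, so I would favour the Rees-algebra route above.
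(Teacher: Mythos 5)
Your reduction to affine $Y$ is the same first step as the paper's, but the paper then simply invokes \cite[Thm.~6.3.3]{Hekking} for the affine case, so what you are really proposing is a new proof of that result --- and it has genuine gaps. First, your identification of the associated graded of $\Rext_{X/Y}:=u_*\sO_{\Dl_{X/Y}}$ with $\bigoplus_{n\ge 0}\Sym^n_{\sO_X}(\sNv_{X/Y})$ is precisely the base change formula $0^*u_*\sO_{\Dl_{X/Y}}\simeq v_*\sO_{\Nl_{X/Y}}$ along the non-flat zero section $0:Y\hook Y\times\A^1$. A priori $u$ is only eventually representable (and even quoting \thmref{thm:Dart} here would be circular, since its proof runs through \thmref{thm:vapidism}, which cites \thmref{thm:Daff}); pushforward along a morphism not yet known to be affine, or at least qcqs representable of finite cohomological dimension, does not satisfy unrestricted base change. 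Indeed, in the paper this very computation \eqref{eq:assgr} is deduced \emph{from} \thmref{thm:Daff}. Second, the lemma you invoke --- that a complete exhaustive filtration with connective graded pieces has connective terms --- is both unavailable and false as stated: the adic/Rees filtration is not complete in general (already classically $\bigcap_n I^n$ need not vanish), and even granting completeness, $\Fil_n\simeq\lim_k \Fil_n/\Fil_{n+k}$ acquires a $\on{lim}^1$-contribution in $\pi_{-1}$. What the triangles $\Fil_{n+1}\to\Fil_n\to\Sym^n_{\sO_X}(\sNv_{X/Y})$ actually require is surjectivity on $\pi_0$ of each map $\Fil_n\to\Sym^n_{\sO_X}(\sNv_{X/Y})$, i.e.\ essentially that $\pi_0(\R_{X/Y})$ is generated in degree one --- which is a substantive part of what Hekking proves, not a formal consequence of connectivity of $\sNv_{X/Y}$.

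For step (ii), a morphism of derived stacks over $\A^1$ that is an equivalence over $\Gm$ and over $\{0\}$ need not be an equivalence (consider $\Gm\sqcup\{0\}\to\A^1$, which is an isomorphism over both strata), and the ``completeness'' you invoke to bridge this is neither defined for the geometric object $\Dl_{X/Y}$ nor established for $\Rext_{X/Y}$. So neither the connectivity of the extended Rees algebra nor the invertibility of the comparison map $c$ is actually proved by your outline; both are exactly the content of the cited theorem from Hekking's thesis, where the Rees algebra is constructed directly as a graded animated ring with a universal property and the generation and connectivity statements are verified by hand. Your alternative suggestion (writing a general closed immersion as a limit of quasi-smooth ones) would also fail as stated, since affineness is not preserved under the relevant colimits/limits of centres in any obvious way.
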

\begin{proof}
  Since $\Dl_{X/Y}$ is stable under base change in $Y$, we may reduce to the case where $Y$ is a derived scheme, treated in \cite[Thm.~6.3.3]{Hekking}.
\end{proof}

In the notation of \eqref{eq:defdiag}, the structure sheaf of $\Dl_{X/Y}$ determines a canonical $\Gm$-equivariant quasi-coherent $\sO_{Y\times\A^1}$-algebra\footnote{\label{fn:qcohalg}%
  A quasi-coherent $\sO_S$-algebra, where $S=\Spec(R)$ is affine, is an animated $R$-algebra.
  One extends the notion to derived stacks by limits, as in \ssecref{ssec:convent/qcoh}.
  The assignment sending a relatively affine $(f : X \to S)$ to the quasi-coherent $\sO_S$-algebra $f_*(\sO_X)$ determines an equivalence of \inftyCats, inverse to the relative spectrum functor $\sA \mapsto \uSpec_S(\sA)$.
  There exists a nonconnective version of this notion (see \cite[\S 4]{Raksit}), but we will not use it here.
}
$u_*(\sO_{\Dl_{X/Y}})$
by push-forward along $u : \Dl_{X/Y} \to Y \times \A^1$.
This may be regarded equivalently as a $\Z$-graded quasi-coherent $\sO_Y[t^{-1}]$-algebra, where $t^{-1}$ denotes the coordinate of $\A^1$.\footnote{See \cite[\S 3]{Hekking} or \cite[Prop.~3.39]{BenBassatHekking} for the equivalence in the derived setting between $\Z$-graded quasi-coherent algebras and $\Gm$-equivariant quasi-coherent algebras.}

\begin{defn}\label{defn:Rees}
  The \emph{extended Rees algebra} $\Rext_{X/Y}$ is the $\Z$-graded quasi-coherent $\sO_Y[t^{-1}]$-algebra determined by the $\Gm$-equivariant quasi-coherent $\sO_{Y\times\A^1}$-algebra $u_*(\sO_{\Dl_{X/Y}})$.
\end{defn}

We denote by $\R_{X/Y} := (\Rext_{X/Y})_{\ge 0}$ the $\N$-graded algebra consisting of only the components of nonnegative degrees.
When $X = \Spec(B)$ and $Y = \Spec(A)$ are affine, we also write $\Rext_{B/A}$ and $\sR_{B/A}$.

\begin{prop}
  Let $f : X \hook Y$ be a closed immersion of derived stacks.
  Then we have:
  \begin{thmlist}
    \item The canonical homomorphism $\sO_Y[t^{-1}] \to \Rext_{X/Y}$ is an isomorphism in degrees $\le 0$.
    \item The $\N$-graded quasi-coherent $\pi_0(\sO_Y)$-algebra $\pi_0(\R_{X/Y})$ is generated in degree $1$.
    \item Let $\sI\in\QCoh(Y)$ denote the fibre of $\sO_Y \to f_*(\sO_X)$.
    Then the canonical morphism $(\R_{X/Y})_1 \simeq (\Rext_{X/Y})_1  \to \sO_Y$ induces an isomorphism $(\R_{X/Y})_1 \simeq \sI$.
  \end{thmlist}
\end{prop}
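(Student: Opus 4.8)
The plan is to extract all three assertions from the structure sheaf $\Rext_{X/Y} = u_*(\sO_{\Dl_{X/Y}})$ of the normal deformation, viewed as a $\Z$-graded $\sO_Y[t^{-1}]$-algebra, by analysing how it behaves under inverting $t^{-1}$ and under setting $t^{-1}=0$. Since all three statements are Zariski-local on $Y$ and the normal deformation commutes with base change in $Y$ (\propref{prop:dbc}, \corref{cor:Dbc}), I would first reduce to $Y=\Spec(A)$ affine, so that $u:\Dl_{X/Y}\to Y\times\A^1$ is affine by \thmref{thm:Daff} and $\Rext_{X/Y}$ is an honest connective graded $A[t^{-1}]$-algebra. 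The two geometric inputs are the columns of the deformation diagram \eqref{eq:defdiag}: restriction along $\Gm\hook\A^1$ gives $\Dl_{X/Y}\fibprod_{\A^1}\Gm\simeq Y\times\Gm$, whence $\Rext_{X/Y}[t]\simeq\sO_Y[t^{-1},t]$ (each graded piece $\simeq\sO_Y$); and restriction along $0\hook\A^1$ gives the normal bundle, whose pushforward is $\Gr:=\Rext_{X/Y}\otimes_{\sO_Y[t^{-1}]}\sO_Y\simeq f_*\Sym_{\sO_X}(\sNv_{X/Y})$ by \propref{prop:normal}, a graded algebra concentrated in \emph{nonnegative} degrees.

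For (i) and (iii) I would run the multiplication-by-$t^{-1}$ cofibre sequence degree by degree. The degree-$m$ part of $\Gr$ is $\Cofib\bigl((\Rext_{X/Y})_{m+1}\xrightarrow{t^{-1}}(\Rext_{X/Y})_m\bigr)$, so the vanishing of $\Gr$ in negative degrees means $t^{-1}:(\Rext_{X/Y})_{m+1}\to(\Rext_{X/Y})_m$ is an equivalence for every $m\le-1$. Comparing with the localization $\Rext_{X/Y}[t]\simeq\sO_Y[t^{-1},t]$, which is $\sO_Y$ in each degree and is computed as the colimit along these maps, forces $(\Rext_{X/Y})_m\simeq\sO_Y$ for all $m\le0$ compatibly with the unit; this is (i). The cofibre sequence in degree $0$ then reads $(\Rext_{X/Y})_1\xrightarrow{t^{-1}}\sO_Y\to f_*(\sO_X)$, identifying $(\R_{X/Y})_1=(\Rext_{X/Y})_1$ with $\Fib(\sO_Y\to f_*\sO_X)=\sI$ and its canonical map to $\sO_Y$ with multiplication by $t^{-1}$; this is (iii).

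For (ii) I would again use $\Gr\simeq f_*\Sym_{\sO_X}(\sNv_{X/Y})$, which is \emph{tautologically generated in degree $1$}; since $\sNv_{X/Y}$ is connective for a closed immersion, $\pi_0\Sym^n(\sNv_{X/Y})\simeq\Sym^n_{\pi_0\sO_X}(\pi_0\sNv_{X/Y})$, so $\pi_0\Gr$ is generated in degree $1$ over $\pi_0\Gr_0=\pi_0 f_*\sO_X$. As $\pi_0\Gr_n=\pi_0(\R_{X/Y})_n/\,t^{-1}\pi_0(\R_{X/Y})_{n+1}$, this gives $\pi_0(\R_{X/Y})_n=(\pi_0(\R_{X/Y})_1)^n+t^{-1}\cdot\pi_0(\R_{X/Y})_{n+1}$, and the associativity identity $t^{-1}\cdot(\pi_0(\R_{X/Y})_1)^k\subseteq(\pi_0(\R_{X/Y})_1)^{k-1}$ lets me absorb each factor of $t^{-1}$ into the degree-$1$ subalgebra. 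The main obstacle is that this only yields $\pi_0(\R_{X/Y})_n\subseteq(\pi_0(\R_{X/Y})_1)^n+(t^{-1})^N\pi_0(\R_{X/Y})_{n+N}$ for every $N$, leaving an infinitely $t^{-1}$-divisible tail that does not visibly vanish without a finiteness input.

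To close this gap I would reduce (ii), a purely $\pi_0$ and Zariski-local statement, to the affine case and invoke the explicit description of $\pi_0$ of the derived Rees algebra in terms of the classical Rees algebra of $I=\pi_0\sI$ from \cite{Hekking}, for which generation in degree $1$ is the classical identity $I^n=I\cdot I^{n-1}$; alternatively one passes to the universal situation in which $X\hook Y$ is a base change of a zero section of an affine space, where $\Rext_{X/Y}$ is the classical extended Rees algebra and the tail genuinely vanishes. I expect this control of the tail — equivalently, the identification of the degree-$1$ subalgebra with $\pi_0(\R_{X/Y})$ modulo $t^{-1}$-divisible elements — to be the only delicate point, parts (i) and (iii) being formal consequences of \thmref{thm:Daff} and \propref{prop:normal}.
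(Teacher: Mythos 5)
Your proposal is correct in substance, but it is worth comparing it with what the paper actually does: the paper's entire proof is the two-sentence reduction ``the claims are local on $Y$, so reduce to the case where $Y$ is a derived scheme, where these properties are proven in [Hekking, \S 6]''. Your derivations of (i) and (iii) from the two identifications \eqref{eq:underlying} and \eqref{eq:assgr} --- the degreewise analysis of the multiplication-by-$t^{-1}$ cofibre sequence, combined with computing the localization $\Rext_{X/Y}[t]$ as a colimit along those maps --- are valid and more self-contained than the paper's citation. Note that although \eqref{eq:underlying}--\eqref{eq:assgr} appear in the paper as a corollary placed \emph{after} this proposition, their proof uses only \thmref{thm:Daff} and base change along the cartesian squares of \ssecref{ssec:defdiag}, so there is no circularity in using them here. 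For (ii) you correctly diagnose that the associated-graded argument only yields $\pi_0(\R_{X/Y})_n = \langle(\pi_0(\R_{X/Y})_1)^n\rangle + (t^{-1})^N\pi_0(\R_{X/Y})_{n+N}$ for all $N$, and that the $t^{-1}$-divisible tail cannot be killed without further input; your primary fallback --- localize to the affine case and quote Hekking's computation --- is exactly the paper's proof of all three parts.

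Two caveats on your proposed fixes for (ii). First, the ``universal situation'' alternative does not work: a general closed immersion of derived stacks is not a base change of the zero section of an affine space (that would essentially force quasi-smoothness), and the normal deformation is a right adjoint, hence commutes with limits but not with the colimits one would need to bootstrap from such a universal case. Second, your gloss that in the affine case generation in degree $1$ ``is the classical identity $I^n = I\cdot I^{n-1}$'' is not quite accurate: by \propref{prop:compMD}, $\pi_0\Rext_{X/Y}$ only \emph{surjects} onto the classical extended Rees algebra of $\pi_0(\sO_Y)\to\pi_0(f_*\sO_X)$ --- already $\pi_0(\R_{X/Y})_1 \simeq \pi_0(\sI)$ can be a nontrivial extension of $I$ by the image of $\pi_1(f_*\sO_X)$ --- so generation in degree $1$ is a genuine statement about the derived Rees algebra and requires Hekking's actual argument rather than the classical identity. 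The citation target is nonetheless the correct one, so this does not affect the validity of your proof, only the explanation of why the cited result suffices.
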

\begin{proof}
  When $Y$ (and hence $X$) is a derived scheme, these properties were proven in \cite[\S 6]{Hekking}.
  Since the claims are local on $Y$, the general case follows.
\end{proof}

\begin{cor}
  For any closed immersion of derived stacks $f : X \hook Y$, there are canonical isomorphisms
  \begin{align}
    \Rext_{X/Y} \otimes_{\sO_Y[t^{-1}]} \sO_Y[t,t^{-1}] &\simeq \sO_Y[t,t^{-1}],\label{eq:underlying}\\
    \Rext_{X/Y} \otimes_{\sO_Y[t^{-1}]} \sO_Y[t^{-1}]/(t^{-1}) &\simeq f_*\Sym^*_{\sO_X}(\sNv_{X/Y}).\label{eq:assgr}
  \end{align}
  of $\Z$-graded algebras over $\sO_Y[t,t^{-1}]$ and $\sO_Y[t^{-1}]/(t^{-1}) \simeq \sO_Y$, respectively.
\end{cor}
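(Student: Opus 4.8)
The plan is to deduce both isomorphisms from the affineness of $u : \Dl_{X/Y} \to Y \times \A^1$ (\thmref{thm:Daff}) together with base change for the relative spectrum, by restricting $u$ to the two fibres of $Y \times \A^1 \to \A^1$ over the open $\Gm \hook \A^1$ and over the closed point $0 \hook \A^1$. Since $u$ is affine we have $\Dl_{X/Y} \simeq \uSpec_{Y\times\A^1}(\Rext_{X/Y})$ $\Gm$-equivariantly, and for any $B' \to Y\times\A^1$ the base change $\Dl_{X/Y}\fibprod_{Y\times\A^1} B'$ is again affine over $B'$ with structure algebra the derived pullback of $\Rext_{X/Y}$. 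Both base changes in the statement are $\Gm$-equivariant, so the resulting algebra isomorphisms are automatically $\Z$-graded; this settles the grading bookkeeping at once.

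For \eqref{eq:underlying}, I would base change along the open immersion $Y \times \Gm \hook Y \times \A^1$, which on rings is the localization $\sO_Y[t^{-1}] \to \sO_Y[t,t^{-1}]$ inverting $t^{-1}$. By \propref{prop:defweil} together with base change for the Weil restriction — this is precisely the right-hand column of the deformation diagram \eqref{eq:defdiag} — the restriction $\Dl_{X/Y}\fibprod_{Y\times\A^1}(Y\times\Gm)$ is canonically $Y \times \Gm$. Hence its structure algebra is $\sO_{Y\times\Gm} \simeq \sO_Y[t,t^{-1}]$, identifying $\Rext_{X/Y}\otimes_{\sO_Y[t^{-1}]}\sO_Y[t,t^{-1}]$ with $\sO_Y[t,t^{-1}]$ as $\Z$-graded algebras.

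For \eqref{eq:assgr}, I would base change along the zero section $Y \simeq Y\times\{0\} \hook Y \times \A^1$, which on rings is $\sO_Y[t^{-1}] \to \sO_Y[t^{-1}]/(t^{-1}) \simeq \sO_Y$ (note $t^{-1}$ is a non-zero-divisor, so this derived quotient is classical). By the left-hand cartesian square \eqref{eq:def}, this fibre is the normal bundle $\Nl_{X/Y}$. As $f$ is a closed immersion it admits a cotangent complex, so \propref{prop:normal} furnishes a $\Gm$-equivariant isomorphism $\Nl_{X/Y}\simeq \V_X(\sNv_{X/Y})$ over $X$, whose pushforward along $\pi : \Nl_{X/Y}\to X$ is $\Sym^*_{\sO_X}(\sNv_{X/Y})$ with $\Sym^n$ in weight $n$ for the weight-$1$ scaling action. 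Pushing further along $f$ (affine, being a closed immersion) and using that $v=f\circ\pi$, the structure algebra of $\Nl_{X/Y}$ over $Y$ is $f_*\Sym^*_{\sO_X}(\sNv_{X/Y})$, which identifies $\Rext_{X/Y}\otimes_{\sO_Y[t^{-1}]}\sO_Y$ with the claimed associated graded.

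I do not anticipate a serious obstacle: the substance is entirely packaged into the affineness result \thmref{thm:Daff} and the identification of the normal bundle in \propref{prop:normal}. The only point deserving a little care is that pushforward of the structure sheaf along the affine morphism $u$ commutes with the non-flat base change along the zero section; but this is exactly the base-change property of the relative spectrum, valid for affine morphisms in derived algebraic geometry with the derived tensor product, and it is manifestly compatible with the $\Gm$-equivariant (that is, $\Z$-graded) structures throughout.
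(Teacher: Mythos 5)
Your proposal is correct and follows essentially the same route as the paper: the paper's proof simply invokes affineness of $u$ (\thmref{thm:Daff}), the base change formula for $u_*$, and the cartesian squares of the deformation diagram \eqref{eq:defdiag}, with the identification of the special fibre's structure algebra coming from \propref{prop:normal} exactly as you describe. Your additional remarks on the $\Gm$-equivariant/graded bookkeeping and on the connectivity of $\sNv_{X/Y}$ (needed so that the pushforward along $\pi$ is the symmetric algebra via \eqref{eq:benzalhydrazine}) are correct and only make explicit what the paper leaves implicit.
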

\begin{proof}
  Since $u$ is affine (\thmref{thm:Daff}), $u_*$ in particular satisfies the base change formula (see \cite[Cor.~3.4.2.2]{LurieSAG}).
  The claimed isomorphisms are the base change isomorphisms for the cartesian squares in \ssecref{ssec:defdiag}.
\end{proof}
  
\subsection{Algebraicity}
\label{ssec:def/alg}

Our main result about the normal deformation reads as follows:

\begin{thm}\label{thm:Dart}
  Let $f : X \to Y$ be a morphism, locally of finite type, between derived stacks.
  If $f$ is $n$-representable, then the normal deformation $\Dl_{X/Y}$ is $(n+1)$-representable over $Y$.
  In particular, if $X$ and $Y$ are Artin, then so is $\Dl_{X/Y}$.
\end{thm}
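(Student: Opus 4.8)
The plan is to realize the normal deformation as a Weil restriction along a virtual Cartier divisor and then invoke the algebraicity result \thmref{thm:intro/weil}\itemref{item:intro/weil/vcd}, which is exactly tailored to this situation. By \propref{prop:defweil} there is a canonical identification
\begin{equation*}
  \Dl_{X/Y} \simeq 0_*(X \xrightarrow{f} Y)
\end{equation*}
with the derived Weil restriction along the zero section $0 : Y \to Y \times \A^1$. The first thing to observe is that this zero section is a virtual Cartier divisor: the closed immersion $Y \times \{0\} \hook Y \times \A^1$ is the base change of $\{0\} \hook \A^1$ along the projection $Y \times \A^1 \to \A^1$, and virtual Cartier divisors are stable under base change. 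Being a quasi-smooth closed immersion of relative virtual dimension $-1$, it is in particular of Tor-amplitude $\le 1$.

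With this in hand, the hypotheses of \thmref{thm:intro/weil}\itemref{item:intro/weil/vcd} are met with $S = Y$, $T = Y \times \A^1$, and $h = 0$: the morphism $h$ is a virtual Cartier divisor and $X \to S = Y$ is locally of finite type by assumption. I would then extract the sharp representability degree from the refinement recorded after \thmref{thm:intro/weil} (see \thmref{thm:algvcd}), according to which a Weil restriction along a morphism of Tor-amplitude $\le d$ raises the degree of representability by $d$. Since $d = 1$ here, if $f$ is $n$-representable over $Y$ then $0_*(X)$ is $(n+1)$-representable over $Y \times \A^1$.

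It remains to pass from representability over $Y \times \A^1$ to representability over $Y$: as $Y \times \A^1 \to Y$ is affine, hence $0$-representable, composing shows that $\Dl_{X/Y} \to Y$ is $(n+1)$-representable. For the final assertion, any morphism $f$ between Artin stacks $X$ and $Y$ is eventually representable, say $n$-representable, whence $\Dl_{X/Y}$ is $(n+1)$-representable over the Artin stack $Y$ and therefore Artin. The substantive content — algebraicity of the Weil restriction along a possibly non-flat virtual Cartier divisor — is entirely absorbed into the cited \thmref{thm:algvcd}; the only remaining care is in pinning down the Tor-amplitude of the zero section (to get the precise shift $n \mapsto n+1$) and in tracking the degree across the affine projection. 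So I do not expect a genuine obstacle here beyond correctly assembling these cited inputs.
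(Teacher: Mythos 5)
Your proposal is correct and is essentially identical to the paper's own proof (\ssecref{ssec:proofDart}): both identify $\Dl_{X/Y}$ with the Weil restriction $0_*(X \to Y)$ along the virtual Cartier divisor $0 : Y \to Y\times\A^1$ via \propref{prop:defweil} and then apply \thmref{thm:algvcd} to get $(n+1)$-representability over $Y\times\A^1$, hence over $Y$. The only superfluous detour is your appeal to the Tor-amplitude-$\le d$ refinement, which is the content of \thmref{thm:alg} (requiring locally hfp); in the locally-of-finite-type setting \thmref{thm:algvcd} already delivers the shift $n \mapsto n+1$ directly.
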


We will prove \thmref{thm:Dart} in \ssecref{ssec:proofDart} as an application of \thmref{thm:algvcd}.

\subsection{Functoriality: finite presentation, smoothness and surjectivity}
\label{ssec:functsm}

Given a commutative diagram
\begin{equation}\label{eq:pinken2}
  \begin{tikzcd}
    X' \ar{r}{f'}\ar{d}{p}
    & Y' \ar{d}{q}
    \\
    X \ar{r}{f}
    & Y,
  \end{tikzcd}
\end{equation}
the deformation diagram \eqref{eq:defdiag} for $f'$ factors through a $\bG_m$-equivariant cartesian diagram
\begin{equation}\label{eq:statocyst}
  \begin{tikzcd}
    \Nl_{X'/Y'}\ar{d}{Nq}\ar{r}{i'_D}
    & \Dl_{X'/Y'}\ar{d}{Dq}\ar[leftarrow]{r}{j'_{D}}
    & Y' \times \bG_m \ar{d}{q\times \id}
    \\
    \Nl_{X/Y} \ar{r}{i_D}
    & \Dl_{X/Y} \ar[leftarrow]{r}{j_D}
    & Y \times \bG_m.
  \end{tikzcd}
\end{equation}

In this subsection we will study some properties of the morphism $\Dl_{X'/Y'} \to \Dl_{X/Y}$ that are inherited from those of $p$, $q$ and/or $Nq$.
We recall the factorization
\begin{equation*}
  Dq : \Dl_{X'/Y'} \xrightarrow{dq} \Dl_{X/Y}\fibprod_Y Y' \xrightarrow{q_D} \Dl_{X/Y}
\end{equation*}
from \eqref{eq:subdean}.

\begin{prop}[Finite presentation]\label{prop:Dafp}
  If $p : X' \to X$ and $q : Y' \to Y$ are locally afp (resp. locally hfp), then so is $Dq : \Dl_{X'/Y'} \to \Dl_{X/Y}$.
\end{prop}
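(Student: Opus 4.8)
The plan is to exploit the factorization
$Dq : \Dl_{X'/Y'} \xrightarrow{dq} \Dl_{X/Y}\fibprod_Y Y' \xrightarrow{q_D} \Dl_{X/Y}$
recorded in \eqref{eq:subdean} and to show that each of the two factors is locally afp (resp.\ locally hfp). Since both classes are stable under composition, this suffices. The factor $q_D$ is by construction the base change of $q : Y'\to Y$ along $\Dl_{X/Y}\to Y$, so it inherits the property directly from $q$ by stability under base change. Thus all the content lies in the factor $dq$.

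To handle $dq$, I would first identify its target: by base-change invariance of the normal deformation (\propref{prop:dbc}) there is a canonical isomorphism $\Dl_{X/Y}\fibprod_Y Y' \simeq \Dl_{X\fibprod_Y Y'/Y'}$, under which $dq$ becomes the functoriality morphism attached to the square over $Y'$ whose base map is $\id_{Y'}$ and whose vertical map is the canonical $p' : X' \to X\fibprod_Y Y'$ induced by $(p,f')$. Because the base is held fixed at $Y'$, this functoriality is exactly the Weil restriction $0_*(p')$ along the zero section $0 : Y' \to Y'\times\A^1$ (using \propref{prop:defweil}). I can then feed $dq \simeq 0_*(p')$ into the finite-presentation result for Weil restrictions, \propref{prop:whalefin}: the hypotheses on $h = 0$ are satisfied, since the zero section is a closed immersion (hence representable and qcqs) and is a virtual Cartier divisor, hence of Tor-amplitude $\le 1$, which supplies the finite Tor-amplitude needed in the afp case.

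It remains to verify that $p' : X' \to X\fibprod_Y Y'$ is itself locally afp (resp.\ locally hfp); this is the only genuinely non-formal point, and I expect it to be the main (though still routine) obstacle. I would argue by cancellation: the projection $\pr_1 : X\fibprod_Y Y' \to X$ is the base change of $q$, hence locally afp (resp.\ hfp), while $\pr_1\circ p' = p$ is locally afp (resp.\ hfp) by hypothesis. Cancellation for these classes then yields that $p'$ has the same property — on classical truncations one invokes the classical cancellation for locally finite-type (resp.\ finite-presentation) morphisms, and on cotangent complexes one uses the transitivity triangle $p'^*\sL_{X\fibprod_Y Y'/X} \to \sL_{X'/X} \to \sL_{X'/X\fibprod_Y Y'}$, whose first two terms are perfect (resp.\ pseudo-coherent), so that the third is as well. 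With $p'$ established to be afp/hfp, \propref{prop:whalefin} gives that $dq \simeq 0_*(p')$ is afp/hfp, and composing with $q_D$ completes the argument.
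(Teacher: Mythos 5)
Your argument is correct and follows essentially the same route as the paper: factor $Dq$ as $q_D\circ dq$, dispose of $q_D$ by base change, identify $dq$ with the Weil restriction $0_*$ of the induced map $X'\to X\fibprod_Y Y'$ along the zero section of $\A^1_{Y'}$, check that this induced map is locally afp (resp.\ hfp) by cancellation, and conclude with \propref{prop:whalefin}. The paper's proof is identical in substance, merely stating the cancellation step more tersely.
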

\begin{proof}
  If $q$ is locally afp (resp. hfp), then so is its base change $q_D$.
  Hence it is enough to consider $dq : \Dl_{X'/Y'} \to \Dl_{X/Y}\fibprod_Y Y' \simeq \Dl_{X\fibprod_Y Y'/Y'}$.
  Replacing \eqref{eq:pinken2} by the square
  \[\begin{tikzcd}
    X' \ar{r}{f'}\ar[swap]{d}{(p,f')}
    & Y' \ar[equals]{d}
    \\
    X \fibprod_Y Y' \ar{r}{f \fibprod_Y Y'}
    & Y',
  \end{tikzcd}\]
  where the left-hand vertical arrow is locally afp (resp. hfp) because $X'$ and $X \fibprod_Y Y'$ are both locally afp (resp. hfp) over $X$, we may assume that $Y'=Y$ and show that $\Dl_{X'/Y} \to \Dl_{X/Y}$ is locally afp (resp. hfp).
  By definition, this is the morphism of Weil restrictions $0_*(p) : 0_*(X' \to Y) \to 0_*(X \to Y)$, where $0 : Y \to Y \times \A^1$.
  Since $p : X' \to X$ is locally afp (resp. hfp), the claim follows from \propref{prop:whalefin}.
\end{proof}

\begin{prop}\label{prop:wire}
  Suppose $p : X' \to X$ is locally hfp and $q : Y' \to Y$ is invertible.
  If $p$ is an effective epimorphism in the étale topology, then so is $Dq : \Dl_{X'/Y'} \to \Dl_{X/Y}$.
  In particular, it is surjective.
\end{prop}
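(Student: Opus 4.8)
The plan is to reduce the statement to \propref{prop:unconversableness}, exploiting the description of the normal deformation as a Weil restriction along the zero section of $\A^1$ furnished by \propref{prop:defweil}.

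First I would use the factorization $Dq = q_D \circ dq$ recalled in \eqref{eq:subdean}, where $q_D : \Dl_{X/Y} \fibprod_Y Y' \to \Dl_{X/Y}$ is the base change of $q$ along $\Dl_{X/Y} \to Y$. Since $q$ is invertible by hypothesis, so is $q_D$; in particular $q_D$ is an effective epimorphism in the étale topology. It therefore suffices to show that $dq : \Dl_{X'/Y'} \to \Dl_{X/Y} \fibprod_Y Y' \simeq \Dl_{X \fibprod_Y Y'/Y'}$ is an effective epimorphism. Exactly as in the reduction carried out in the proof of \propref{prop:Dafp}, I may replace the square \eqref{eq:pinken2} by one in which $Y' = Y$ and $q = \id$, so that the task becomes to prove that $Dp : \Dl_{X'/Y} \to \Dl_{X/Y}$ is an effective epimorphism in the étale topology.

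At this point I would invoke \propref{prop:defweil} to identify this morphism with the morphism of Weil restrictions $0_*(p) : 0_*(X' \to Y) \to 0_*(X \to Y)$ along the zero section $0 : Y \to Y \times \A^1$. The crucial point is that this zero section is a closed immersion---being the base change of $\{0\} \hook \A^1$---and hence \emph{finite}, which is precisely the hypothesis required to apply \propref{prop:unconversableness}. Since $p$ is locally hfp and an effective epimorphism by assumption, that proposition applies directly and yields that $0_*(p)$ is an effective epimorphism in the étale topology. The concluding ``in particular'' then follows, as effective epimorphisms in the étale topology are surjective.

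I do not anticipate a genuine obstacle here: the substantive content has already been established in \propref{prop:unconversableness}, where the finiteness of $h$ is what makes the fibrewise argument over strictly henselian local rings go through. The only items to verify are the reduction to $q = \id$ (routine, given the factorization and the invertibility of $q$) and the finiteness of the zero section (immediate). The proof is thus essentially a matter of unwinding the Weil-restriction description and checking that the hypotheses of \propref{prop:unconversableness} are met.
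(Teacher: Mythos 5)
Your proof is correct and follows the same route as the paper: reduce to $q=\id_Y$, identify $Dq$ with the Weil restriction $0_*(p)$ along the (finite) zero section $0 : Y \to Y\times\A^1$, and apply \propref{prop:unconversableness}. The paper's proof is just a terser version of exactly this argument.
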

\begin{proof}
  Without loss of generality we may assume $q = \id_Y$.
  By definition, $Dq$ is the morphism of Weil restrictions $0_*(p) : 0_*(X' \to Y) \to 0_*(X \to Y)$, where $0 : Y \to Y \times \A^1$.
  Hence it is an effective epimorphism by \propref{prop:unconversableness}.
\end{proof}

\begin{prop}\label{prop:Damp}
  If $p : X' \to X$ and $q : Y' \to Y$ are locally afp, and $q$ and $Nq : \Nl_{X'/Y'} \to \Nl_{X/Y}$ have relative cotangent complexes perfect of Tor-amplitude $[a,b]$, then $Dq : \Dl_{X'/Y'} \to \Dl_{X/Y}$ is locally afp and has relative cotangent complex perfect of Tor-amplitude $[a,b]$.
  In particular, if $q$ and $Nq$ are smooth (resp. quasi-smooth), then so is $Dq$.
\end{prop}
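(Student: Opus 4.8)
The plan is to deduce \emph{local afp} directly from the finite-presentation result already proved, and then to pin down the cotangent complex by restricting $Dq$ to the closed and open strata of the normal deformation. Since $p$ and $q$ are locally afp, \propref{prop:Dafp} gives at once that $Dq : \Dl_{X'/Y'} \to \Dl_{X/Y}$ is locally afp; in particular its relative cotangent complex $\sL_{Dq} := \sL_{\Dl_{X'/Y'}/\Dl_{X/Y}}$ is pseudo-coherent. As a pseudo-coherent complex is perfect exactly when it has finite Tor-amplitude (\ssecref{ssec:convent/qcoh}), it remains only to verify that $\sL_{Dq}$ has Tor-amplitude $[a,b]$.

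For this I would exploit the $\Gm$-equivariant cartesian diagram \eqref{eq:statocyst}, in which $Nq$ is the base change of $Dq$ along the closed immersion $i_D$ and $q\times\id$ is its base change along the open immersion $j_D$. Base change for cotangent complexes along the two cartesian squares yields
\begin{equation*}
  (i'_D)^*\sL_{Dq} \simeq \sL_{Nq},
  \qquad
  (j'_D)^*\sL_{Dq} \simeq \sL_{q\times\id} \simeq \pr^*\sL_q,
\end{equation*}
with $\pr : Y'\times\Gm \to Y'$ the projection. By hypothesis $\sL_{Nq}$ and $\sL_q$ are perfect of Tor-amplitude $[a,b]$, and since perfectness and Tor-amplitude are preserved under arbitrary pullback, both displayed restrictions of $\sL_{Dq}$ are perfect of Tor-amplitude $[a,b]$.

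It remains to glue these two strata, and this is the step I expect to be the main obstacle: the closed stratum $\Nl_{X'/Y'}$ and the open stratum $Y'\times\Gm$ cover $\Dl_{X'/Y'}$ only set-theoretically, and $\{0\}\sqcup\Gm\to\A^1$ is not flat, so flat descent does not apply. Instead I would invoke the pointwise criterion for pseudo-coherent complexes: $\sL_{Dq}$ is perfect of Tor-amplitude $[a,b]$ provided that for every field $\kappa$ and every point $z : \Spec\kappa \to \Dl_{X'/Y'}$, the pullback $z^*\sL_{Dq}$ has homotopy concentrated in $[a,b]$. Now the structural morphism $t : \Dl_{X'/Y'} \to \A^1$ carries $z$ either to $0$ or into $\Gm$; since $\{0\}\hook\A^1$ is a closed immersion and $\kappa$ is a field, in the former case $z$ factors through the fibre $\Nl_{X'/Y'} = \Dl_{X'/Y'}\fibprod_{\A^1}\{0\}$ and in the latter through the open $Y'\times\Gm$. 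Either way $z^*\sL_{Dq}$ is a field-pullback of a complex of Tor-amplitude $[a,b]$, so its homotopy lies in $[a,b]$, and the criterion applies.

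Finally, for the \emph{in particular} clause, having shown that $Dq$ is locally afp with perfect cotangent complex, it is in fact locally hfp (a locally afp morphism with perfect cotangent complex is locally hfp). When $q$ and $Nq$ are smooth (resp. quasi-smooth) one takes $b=0$ (resp. $b=1$), so $\sL_{Dq}$ has Tor-amplitude $\le 0$ (resp. $\le 1$); together with local hfp this is exactly the statement that $Dq$ is smooth (resp. quasi-smooth).
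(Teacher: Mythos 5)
Your argument is correct and is essentially the paper's proof: it likewise deduces local afp (hence pseudo-coherence of $\sL_{Dq}$) from \propref{prop:Dafp} and then pins down the Tor-amplitude fibrewise over field-valued points, which necessarily land in one of the two strata of \eqref{eq:statocyst}. The only cosmetic difference is that the paper unpacks your combined ``pointwise criterion'' into its two constituent steps --- first Tor-amplitude $\le b$ via the fibrewise criterion for pseudo-coherent complexes (\cite[Cor.~6.1.4.7]{LurieSAG}), whence perfectness, and only then the lower bound $a$ by applying the same criterion to the dual --- an order that matters since the fibrewise bound on the dual requires already knowing the complex is perfect.
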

\begin{proof}
  By \propref{prop:Dafp}, $Dq$ is locally afp.
  In particular, its relative cotangent complex $\sL_{Dq}$ is pseudo-coherent (see \cite[Cor.~17.4.2.2]{LurieSAG}).
  We may thus apply \cite[Cor.~6.1.4.7]{LurieSAG} to conclude that $\sL_{Dq}$ is of Tor-amplitude $\le b$, since it is fibrewise so by \eqref{eq:statocyst} and the assumptions.
  By \cite[Prop.~7.2.4.23(4)]{LurieHA} we conclude that $\sL_{Dq}$ is perfect.
  Applying \cite[Cor.~6.1.4.7]{LurieSAG} again, we similarly conclude that the dual of $\sL_{Dq}$ is of Tor-amplitude $\le -a$ since it is fibrewise so.
  Thus, $\sL_{Dq}$ is of Tor-amplitude $[a,b]$.
\end{proof}

\begin{rem}[Smoothness]\label{rem:Nqsm}
  If $p$ and $q$ are both smooth, then so is $Nq : \Nl_{X'/Y'} \to \Nl_{X/Y}$.
  In particular, \propref{prop:Damp} implies that $Dq$ is also smooth in this case.
  To see this, recall that $Nq$ is the composite
  \[
    \Nl_{X'/Y'} \to \Nl_{X/Y} \fibprod_X X' \to \Nl_{X/Y}.
  \]
  As a base change of $p$, the second arrow is smooth.
  The first arrow is a morphism of derived vector bundles over $X'$ with fibre $\Nl_{X'/X\fibprod_Y Y'}$; since $X' \to X \fibprod_Y Y'$ is quasi-smooth (as source and target are both smooth over $X$), the latter is a vector bundle stack, hence smooth over $X'$.
\end{rem}

\begin{prop}[Surjectivity]\label{prop:DsurjY}
  Suppose that $f$ and $f'$ are locally of finite type and eventually representable.
  If $p$ and $q$ are smooth, surjective, and eventually representable, then so is $Dq : \Dl_{X'/Y'} \to \Dl_{X/Y}$.
\end{prop}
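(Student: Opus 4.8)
The plan is to verify the three constituent properties of ``smooth, surjective, and eventually representable'' for $Dq$ one at a time, since smoothness is essentially already in hand. Indeed, \remref{rem:Nqsm} already records that $Dq$ is smooth whenever $p$ and $q$ are: there $Nq$ is shown to be smooth, and \propref{prop:Damp} then upgrades this to smoothness of $Dq$. So only \emph{eventual representability} and \emph{surjectivity} of $Dq$ remain, and I would treat these independently before combining them with smoothness.

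For eventual representability, I would feed the algebraicity theorem \thmref{thm:Dart} into a standard diagonal argument. Since $f$ and $f'$ are locally of finite type and eventually representable, \thmref{thm:Dart} makes both $\Dl_{X/Y}\to Y$ and $\Dl_{X'/Y'}\to Y'$ eventually representable; composing the latter with the eventually representable $q : Y'\to Y$ shows that $\Dl_{X'/Y'}\to Y$ is eventually representable as well. Now factor $Dq$, viewed as a morphism over $Y$, through its graph
\[
  \Dl_{X'/Y'}\xrightarrow{\ \Gamma\ }\Dl_{X'/Y'}\fibprod_Y\Dl_{X/Y}\xrightarrow{\ \pr_2\ }\Dl_{X/Y}.
\]
The projection $\pr_2$ is the base change of $\Dl_{X'/Y'}\to Y$, hence eventually representable, while $\Gamma$ is the base change of the diagonal $\Dl_{X/Y}\to\Dl_{X/Y}\fibprod_Y\Dl_{X/Y}$, which is eventually representable because $\Dl_{X/Y}\to Y$ is. As a composite of eventually representable morphisms, $Dq$ is eventually representable; together with smoothness in the sense of cotangent complexes, this yields smoothness in the atlas sense of \ssecref{ssec:convent/dstk}.

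For surjectivity, I would test on the two strata of the target supplied by the deformation diagram. By \eqref{eq:statocyst} both squares are cartesian, so the preimages under $Dq$ of the open substack $Y\times\Gm\hook\Dl_{X/Y}$ and of the closed special fibre $\Nl_{X/Y}\hook\Dl_{X/Y}$ are $Y'\times\Gm$ and $\Nl_{X'/Y'}$, with restricted maps $q\times\id$ and $Nq$; since these two strata cover $\Dl_{X/Y}$, surjectivity of $Dq$ reduces to that of $q\times\id$ (clear, as $q$ is surjective) and of $Nq$. For the latter I would exploit $\Gm$-equivariance together with the contracting scaling on the normal bundle. As $f$ admits a cotangent complex, \propref{prop:normal} identifies $\Nl_{X/Y}\simeq\V_X(\sNv_{X/Y})$ as a derived vector bundle, carrying the weight-$1$ scaling action, which extends to a contracting $\A^1$-action with limit the zero section. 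Since $Nq$ is smooth, its image $U\sub\Nl_{X/Y}$ is open; it is $\Gm$-stable by equivariance; and it contains the zero section because $Nq\circ 0_{X'}=0_X\circ p$ with $p$ surjective. For any field-valued point $\eta$ of $\Nl_{X/Y}$, pulling $U$ back along the orbit map $\A^1\to\Nl_{X/Y}$, $t\mapsto t\cdot\eta$, yields a scaling-stable open subset of $\A^1$ containing $0$; the only such subset is $\A^1$ itself, so $\eta\in U$. Hence $U=\Nl_{X/Y}$ and $Nq$ is surjective.

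The main obstacle is precisely the surjectivity of $Nq$: smoothness alone only gives that its image is open, and an open substack containing the zero section need not be everything, so the argument genuinely needs the $\Gm$-action to propagate surjectivity off the zero section. Everything else—the reduction to the two strata, the diagonal trick for representability, and the smoothness input—is formal given the earlier results.
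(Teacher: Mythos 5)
Your proposal is correct, and the smoothness and eventual-representability parts coincide with what the paper does (it simply invokes \remref{rem:Nqsm} and \thmref{thm:Dart}; your graph factorization spells out the latter step). The surjectivity argument, however, takes a genuinely different route. The paper argues directly on $\kappa$-valued points of $\Dl_{X/Y}$: such a point is a virtual Cartier divisor $D \hook \Spec(\kappa)$ over $f$, with $D$ either empty or $\Spec(\kappa\oplus\kappa[1])$; in the second case one lifts the underlying $\kappa$-point of $X$ along the surjection $p$, extends the lift over the square-zero extension $D$ using smoothness of $p$, and then uses uniqueness of lifts along the smooth $q$ to make the resulting square commute. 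You instead restrict to the two strata and, on the special fibre $\Nl_{X/Y}\simeq\V_X(\sNv_{X/Y})$, propagate surjectivity from the zero section using openness of the image of the smooth map $Nq$ together with the contracting scaling action. Both are sound. The paper's argument is more self-contained: it needs only the functor of points of $\Dl_{X/Y}$ and the infinitesimal lifting criterion, and in particular neither the identification of the special fibre as a derived vector bundle nor the openness of images of smooth maps. Yours isolates a reusable geometric mechanism -- a $\Gm$-stable open subset of a cone containing the vertex is everything -- and would apply to any $\Gm$-equivariant smooth eventually representable map onto the special fibre whose image contains the zero section. Two points are worth making explicit if you write this up: the orbit map $t\mapsto t\cdot\eta$ uses the extension of the weight-one $\Gm$-action on $\V_X(\sNv_{X/Y})$ to an action of the multiplicative monoid $\A^1$, with $0$ acting as projection to the zero section (this holds for any $\V_X(\sE)$ by its functor of points, and the identification of \propref{prop:normal} is $\Gm$-equivariant, so the two actions agree); and the openness of the image of $Nq$ uses that $Nq$ is eventually representable, which follows from \thmref{thm:Dart} by base change along $0:Y\to Y\times\A^1$.
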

\begin{proof}
  Since $Dq$ is smooth by \remref{rem:Nqsm} and eventually representable by \thmref{thm:Dart}, it will suffice to show that it is surjective, or equivalently, that $Dq$ is surjective on $\kappa$-valued points where $\kappa$ is separably closed.
  Let $S=\Spec(\kappa)$ be the spectrum of a separably closed field and let
  $y : S \to \Dl_{X/Y}$ be a morphism. This corresponds to a virtual Cartier divisor 
  \[\begin{tikzcd}
    D \ar{r}{i_D}\ar{d}{x}
    & S \ar{d}{y}
    \\
    X \ar{r}{f}
    & Y
  \end{tikzcd}\]
  over $f : X \to Y$. It suffices to show that there exists:
  \begin{enumerate}
    \item a morphism $x' : D \to X'$ lifting $x$,
    \item a morphism $y' : S \to Y'$ lifting $y$,
    \item a virtual Cartier divisor over $f' : X' \to Y'$ lifting $i_D : D \hook S$.
  \end{enumerate}
  Note that $D$ is either empty or the trivial square-zero extension $\Spec(\kappa\oplus\kappa[1])$, depending on whether it is cut out by a unit or $0 \in \sO_S$.
  In the former case, the claim follows from the surjectivity of $q : Y' \twoheadrightarrow Y$ on $\kappa$-valued points.
  Otherwise, we argue as follows.
  The point $x \in X(\kappa\oplus\kappa[1])$ restricts to a $\kappa$-valued point $x_0 \in X(\kappa)$, which by surjectivity of $p : X'(\kappa) \to X(\kappa)$ lifts to $x'_0 \in X'(\kappa)$. Put $y' \coloneqq f' \circ x'_0 \in Y'(\kappa)$. Write $z : S \to D$ for the canonical section of $i_D$.
  Since $p$ is smooth and eventually representable we may then find a unique diagonal lift as in the following diagram:
  \[\begin{tikzcd}
    S = \Spec(\kappa) \ar{r}{x'_0}\ar{d}{z}
    & X' \ar[twoheadrightarrow]{d}{p}
    \\
    D = \Spec(\kappa\oplus\kappa[1]) \ar{r}{x}\ar[dashed]{ru}{x'}
    & X
  \end{tikzcd}\]
  by the universal property of the cotangent complex of $p$.
  Now in the diagram
  \[\begin{tikzcd}
  S = \Spec(\kappa) \ar{r}{y'}\ar{d}{z}
  & Y' \ar[twoheadrightarrow]{d}{q}
  \\
  D = \Spec(\kappa\oplus\kappa[1]) \ar{r}{f \circ x}\ar[dashed]{ru}{\delta}
  & Y,
  \end{tikzcd}\]  
  both $\delta=y' \circ i_D$ and $\delta = f' \circ x'$ are possible diagonal lifts.
  By uniqueness of such lifts, there exists a homotopy $y' \circ i_D \simeq f' \circ x'$.
  We thus have a commutative square
  \[\begin{tikzcd}
    D \ar{r}{i_D}\ar{d}{x'}
    & S \ar{d}{y'}
    \\
    X' \ar{r}{f'}
    & Y'
  \end{tikzcd}\]
  which defines the desired virtual Cartier divisor over $f'$.
\end{proof}

\begin{rem}\label{rem:DsurjY}
  \propref{prop:DsurjY} gives a useful description of the normal deformation.
  Namely, with notation as in the statement, $\Dl_{X/Y}$ is the geometric realization of the simplicial diagram
  \[
    \cdots \rightrightrightarrows \Dl_{X'\fibprod_X X' / Y'\fibprod_Y Y'} \rightrightarrows \Dl_{X'/Y'}.
  \]
  Indeed, we have $\Dl_{X'\fibprod_X X' / Y'\fibprod_Y Y'} \simeq \Dl_{X'/Y'} \fibprod_{\Dl_{X/Y}} \Dl_{X'/Y'}$ (and similarly for $n$-fold fibred products) and $Dq : \Dl_{X'/Y'} \to \Dl_{X/Y}$ is smooth and surjective, hence an effective epimorphism.
  Moreover, if $X$ and $Y$ are derived $n$-Artin stacks, we may choose $X' \to Y'$ to be a closed immersion of schemes (\lemref{lem:acturience} below), in which case $\Dl_{X'/Y'}$ can be described as in \ssecref{ssec:Rees} and the other terms in the simplicial presentation are normal deformations of $(n-1)$-Artin stacks.
\end{rem}

\subsection{Functoriality: properness and excessive squares}
\label{ssec:functprop}

We again suppose given a commutative square
\begin{equation}\label{eq:pinkenprop}
  \begin{tikzcd}
    X' \ar{r}{f'}\ar{d}{p}
    & Y' \ar{d}{q}
    \\
    X \ar{r}{f}
    & Y,
  \end{tikzcd}
\end{equation}
and preserve the notation of \ssecref{ssec:functsm}.

To study the functoriality of $\Dl_{X/Y}$ under proper morphisms, we introduce the notion of \emph{excessive} squares:

\begin{defn}\label{defn:excessive}
  A commutative square \eqref{eq:pinkenprop} is \emph{excessive} if the following conditions hold:
  \begin{defnlist}
    \item\label{item:excessive/cart}
    The square is cartesian on classical truncations; that is, the morphism $X \to X \fibprod_Y Y'$ induces an isomorphism on classical truncations.

    \item\label{item:excessive/norm}
    The morphism of normal bundles $dq : \Nl_{X'/Y'} \to \Nl_{X/Y} \fibprod_X X'$ is a closed immersion.\footnote{\label{fn:Nsurj}%
      When $f$ and $f'$ admit cotangent complexes, this is equivalent to requiring that the morphism of conormal complexes $p^*\sNv_{X/Y} \twoheadrightarrow \sNv_{X'/Y'}$ has connective fibre.
      When $f$ and $f'$ are formally unramified in the sense that $\sL_f$ and $\sL_{f'}$ are $1$-connective (e.g., when $f$ and $f'$ are closed immersions), then this is equivalent to surjectivity of $p^*\sNv_{X/Y} \twoheadrightarrow \sNv_{X'/Y'}$ on $\pi_0$.
    }
  \end{defnlist}
\end{defn}

The terminology is inspired by intersection theory: these conditions are precisely those appearing in the virtual excess intersection formula of \cite{excess}.
This notion will make another appearance in \secref{sec:bl} below.

The condition that \eqref{eq:pinkenprop} is excessive may be reformulated in terms of the induced morphism $e : X' \to X \times_Y Y'$.
Note that when $e$ is a closed immersion (as is necessarily the case when \eqref{eq:pinkenprop} is excessive), the fibre $\sI_e$ of $e^\sharp : \sO_{X\times_Y Y'} \to e_*(\sO_{X'})$ is connective (as it is surjective on $\pi_0$), as is the conormal complex $\sNv_{e} = \sL_{e}[-1]$.

\begin{lem}[{\cite[Rmk.~4.1.3]{blowups}}]\label{lem:excvcd}
  Suppose given a square \eqref{eq:pinkenprop} where $f$ and $f'$ admit cotangent complexes.
	The following conditions are equivalent:
	\begin{thmlist}
    \item\label{item:excvcd/1} The square \eqref{eq:pinkenprop} is excessive;
		\item\label{item:excvcd/2} $e$ is a closed immersion, $e^\sharp$ is bijective on $\pi_0$, and $\sNv_e$ is $1$-connective;
		\item\label{item:excvcd/3} $e$ is a closed immersion and $\sI_e$ is $1$-connective; equivalently, $e$ is a closed immersion, and $e^\sharp$ is bijective on $\pi_0$ and surjective on $\pi_1$.
	\end{thmlist}
\end{lem}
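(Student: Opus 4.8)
The plan is to read everything off from the induced morphism $e : X' \to \bar X := X \fibprod_Y Y'$ together with the transitivity triangle for its cotangent complex. Writing $\bar f : \bar X \to Y'$ for the base change of $f$ (so that $\sL_{\bar X/Y'} \simeq \bar p^*\sL_{X/Y}$ for the projection $\bar p : \bar X \to X$, by base change of cotangent complexes), the composite $X' \xrightarrow{e} \bar X \xrightarrow{\bar f} Y'$ yields the triangle $p^*\sL_{X/Y} \to \sL_{X'/Y'} \to \sL_e$; shifting by $[-1]$ gives
\begin{equation*}
  p^*\sNv_{X/Y} \xrightarrow{\alpha} \sNv_{X'/Y'} \to \sNv_e .
\end{equation*}
In particular $\sNv_e \simeq \Cofib(\alpha)$ and $\Fib(\alpha) \simeq \sNv_e[-1]$. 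By footnote~\ref{fn:Nsurj}, condition \itemref{item:excessive/norm} is equivalent to $\alpha$ having connective fibre, hence to $\sNv_e$ being $1$-connective.

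Next I would dispose of \itemref{item:excvcd/1}$\iff$\itemref{item:excvcd/2}. Condition \itemref{item:excessive/cart} asserts that $e$ is an isomorphism on classical truncations, which is exactly the statement that $e^\sharp : \sO_{\bar X} \to e_*\sO_{X'}$ is bijective on $\pi_0$; since surjectivity on $\pi_0$ is by definition the condition that $e$ be a closed immersion, \itemref{item:excessive/cart} is equivalent to the conjunction ``$e$ is a closed immersion and $e^\sharp$ is bijective on $\pi_0$''. Combining this with the reformulation of \itemref{item:excessive/norm} from the previous paragraph gives precisely \itemref{item:excvcd/2}.

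It remains to prove \itemref{item:excvcd/2}$\iff$\itemref{item:excvcd/3}, where both conditions already include that $e$ is a closed immersion with $e^\sharp$ bijective on $\pi_0$. The ``equivalently'' clause of \itemref{item:excvcd/3} follows from the long exact sequence of the fibre sequence $\sI_e \to \sO_{\bar X} \xrightarrow{e^\sharp} e_*\sO_{X'}$: for a closed immersion $\sI_e$ is connective, and $\pi_0\sI_e = 0$ holds iff $e^\sharp$ is injective on $\pi_0$ and surjective on $\pi_1$. To compare the two remaining conditions I would invoke the standard connectivity estimate relating the cotangent complex of a closed immersion to its ideal (see \cite[\S 7.4]{LurieHA}): the canonical map $e^*\sI_e[1] \to \sL_e$ is $2$-connective, so it induces an isomorphism $\pi_0\sNv_e = \pi_1\sL_e \simeq \pi_0(e^*\sI_e)$. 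Since $e^\sharp$ is bijective on $\pi_0$, right-exactness of the tensor product gives $\pi_0(e^*\sI_e) \simeq \pi_0\sI_e$. Hence $\sNv_e$ is $1$-connective, i.e. $\pi_0\sNv_e = 0$, iff $\pi_0\sI_e = 0$, i.e. iff $\sI_e$ is $1$-connective, which is \itemref{item:excvcd/3}.

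All of the above is functorial and may be checked on a smooth atlas, reducing the cotangent-complex and $\pi_0$ computations to maps of animated rings, where the cited results apply directly. I expect the only delicate point to be the connectivity estimate $\pi_1\sL_e \simeq \pi_0(e^*\sI_e)$ together with the base-change identity $\pi_0(e^*\sI_e) \simeq \pi_0\sI_e$: this is where the hypothesis that $e^\sharp$ is bijective (and not merely surjective) on $\pi_0$ is essential, and it is what reconciles the genuinely different-looking conditions ``$\sNv_e$ is $1$-connective'' and ``$e^\sharp$ is surjective on $\pi_1$''.
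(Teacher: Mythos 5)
Your argument is correct and follows the paper's proof essentially verbatim: the same reformulation of the classical-cartesian condition as ``$e$ is a closed immersion with $e^\sharp$ bijective on $\pi_0$'', the same use of Footnote~\ref{fn:Nsurj} and the transitivity triangle to translate the normal-bundle condition into $1$-connectivity of $\sNv_e$, and the same Hurewicz identification $\pi_0(\sNv_e) \simeq \pi_1(\sL_e) \simeq \pi_0(e^*\sI_e) \simeq \pi_0(\sI_e)$ for \itemref{item:excvcd/2}$\Leftrightarrow$\itemref{item:excvcd/3}. The only nitpick is the citation: in the animated setting the Hurewicz statement for the \emph{algebraic} cotangent complex is \cite[Prop.~25.3.6.1]{LurieSAG} rather than \cite[\S 7.4]{LurieHA} (which concerns the topological one), though the two agree in the degree you need.
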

\begin{proof}
  Note first that part \itemref{item:excessive/cart} of \defnref{defn:excessive} is equivalent to the condition that $e$ is a closed immersion with $e^\sharp$ bijective on $\pi_0$.
  By Footnote~\ref{fn:Nsurj} and the exact triangle $e^*\sNv_{X\times_Y Y'/Y'} \simeq p^*\sNv_{X/Y} \to \sNv_{X'/Y'} \to \sNv_e$, part \itemref{item:excessive/norm} of \defnref{defn:excessive} is equivalent to the condition that $\sNv_e$ is $1$-connective.
  This shows \itemref{item:excvcd/1} $\Leftrightarrow$ \itemref{item:excvcd/2}.

	When $e$ is a closed immersion, we have the Hurewicz isomorphism $\pi_0(\sNv_e) \simeq \pi_1(\sL_e) \simeq \pi_0(e^*\sI_e)$ (see \cite[Prop.~25.3.6.1]{LurieSAG}).
  When $e^\sharp$ is bijective on $\pi_0$, the last term is further isomorphic to $\pi_0(\sI_e)$.
	This shows \itemref{item:excvcd/2} $\Leftrightarrow$ \itemref{item:excvcd/3}.
\end{proof}

\begin{exam}
  If \eqref{eq:pinkenprop} is cartesian, then it is excessive (as in that case $dq$ is invertible).
\end{exam}

\begin{exam}
  If \eqref{eq:pinkenprop} is a commutative square of \emph{classical} Artin stacks, then it is excessive if and only if it is cartesian in the ordinary sense.
  Indeed, in this case $e_*(\sO_{X'})$ is discrete and condition~\itemref{item:excvcd/3} of \lemref{lem:excvcd} collapses to the requirement that $e^\sharp$ is bijective on $\pi_0$.
\end{exam}

\begin{exam}
  When every morphism in \eqref{eq:pinkenprop} admits a cotangent complex, \lemref{lem:excvcd} shows that the condition of excessivity is invariant under the reflection swapping $f$ with $q$ and $f'$ with $p$. 
\end{exam}

\begin{exam}\label{exam:selfint}
  For closed immersions $Z \hook X \hook Y$, the square
  \[\begin{tikzcd}
    Z \ar{r}\ar[equals]{d}
    & X \ar{d}
    \\
    Z \ar{r}
    & Y
  \end{tikzcd}\]
  is excessive.
  In particular, for any closed immersion $Z \hook X$, the square
  \[\begin{tikzcd}
    Z \ar[equals]{r}\ar[equals]{d}
    & Z \ar{d}
    \\
    Z \ar{r}
    & X
  \end{tikzcd}\]
  is excessive.
\end{exam}

\begin{rem}\label{rem:twoexc}
  Suppose given a commutative diagram
  \begin{equation*}
    \begin{tikzcd}
    	X'' \ar{r} \ar{d}{p'} & Y'' \ar{d}{q'} \\
    	X' \ar{r} \ar{d}{p} & Y' \ar{d}{q} \\ 
    	X \ar{r} &  Y  	
    \end{tikzcd}
  \end{equation*}
  where the bottom square is excessive.
  Then the outer square is excessive if and only if the top square is excessive.
  Indeed, the analogous statement for condition~\itemref{item:excessive/cart} of \defnref{defn:excessive} is clear, and for condition~\itemref{item:excessive/norm} it follows from the factorization
  \[
    d(q\circ q') : N_{X''/Y''}
    \xrightarrow{dq'} N_{X'/Y'} \fibprod_{X'} X''
    \xrightarrow{dq} N_{X/Y} \fibprod_X X' \fibprod_{X'} X''
    \simeq N_{X/Y} \fibprod_X X''.
  \]
\end{rem}

\begin{prop}\label{prop:Dqprop}
  If $f : X \to Y$ and $f' : X'\to Y'$ are locally of finite type and eventually representable, $q : Y' \to Y$ is proper (resp.\ finite, resp.\ a closed immersion) and the square \eqref{eq:pinkenprop} is excessive, then $Dq : \Dl_{X'/Y'} \to \Dl_{X/Y}$ is proper (resp.\ finite, resp.\ a closed immersion).
\end{prop}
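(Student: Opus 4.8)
The plan is to exploit the factorization
$Dq : \Dl_{X'/Y'} \xrightarrow{dq} \Dl_{X/Y}\fibprod_Y Y' \xrightarrow{q_D} \Dl_{X/Y}$
of \eqref{eq:subdean}. The morphism $q_D$ is the base change of $q$, hence proper (resp.\ finite, resp.\ a closed immersion) whenever $q$ is; since each of these three classes is stable under composition, it suffices to prove that $dq$ is a \emph{closed immersion}. Writing $X'' := X\fibprod_Y Y'$ and $e : X' \to X''$ for the induced morphism, $dq$ is the morphism of normal deformations over $Y'$ induced by $e$. As $f$ and $f'$ are eventually representable they admit cotangent complexes, so \lemref{lem:excvcd} applies: excessivity means precisely that $e$ is a closed immersion with $\sI_e$ being $1$-connective, in particular an isomorphism on classical truncations. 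We are thus reduced to showing that, for a closed immersion $e : X' \hook X''$ over $Y'$ which is an isomorphism on classical truncations, $dq$ is a closed immersion.

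As a first observation, since $e$ is a monomorphism we have $X'\fibprod_{X''}X' \simeq X'$, so the limit-preservation of $(X\to Y)\mapsto \Dl_{X/Y}$ (\corref{cor:Dbc}) already yields $\Dl_{X'/Y'}\fibprod_{\Dl_{X''/Y'}}\Dl_{X'/Y'}\simeq \Dl_{X'/Y'}$, i.e.\ $dq$ is a monomorphism. To promote this to a closed immersion I would argue smooth-locally on the target, which is legitimate as closed immersions are fppf-local on the base. Using \propref{prop:DsurjY} together with \lemref{lem:acturience}, I choose smooth surjections onto $X''$ and $Y'$ realized by a closed immersion $\tilde X''\hook \tilde Y'$ of affine schemes, so that $\Dl_{\tilde X''/\tilde Y'}\to \Dl_{X''/Y'}$ is smooth and surjective. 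Setting $\tilde X' := X'\fibprod_{X''}\tilde X''$ and invoking \corref{cor:Dbc} once more, the base change of $dq$ along this cover is identified with $\Dl_{\tilde X'/\tilde Y'}\to \Dl_{\tilde X''/\tilde Y'}$, where now $\tilde X',\tilde X''\hook \tilde Y'$ are closed immersions of affine schemes and $\tilde e : \tilde X'\to \tilde X''$ is a closed immersion inducing an isomorphism on classical truncations.

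In this affine situation both normal deformations are affine over $\tilde Y'\times\A^1$ by \thmref{thm:Daff}, so the base-changed map corresponds to the morphism of extended Rees algebras $\Rext_{\tilde X''/\tilde Y'}\to \Rext_{\tilde X'/\tilde Y'}$, and it is a closed immersion if and only if this morphism is surjective on $\pi_0$. I would check this degree by degree using the structural properties of $\R_{X/Y}$ established above: in degrees $\le 0$ both algebras coincide with $\sO_{\tilde Y'}[t^{-1}]$ and the map is an isomorphism; in degree $1$ it is the map of derived ideals $\sI_{\tilde X''}\to \sI_{\tilde X'}$, which is an isomorphism on $\pi_0$ because $\tilde X'_\cl=\tilde X''_\cl$ as closed subschemes of $\tilde Y'_\cl$ (here excessivity condition \itemref{item:excessive/cart} enters, through the identification $(\R)_1\simeq \sI$); and since $\pi_0\R_{\tilde X'/\tilde Y'}$ is generated in degree $1$, surjectivity in every degree follows. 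Hence the base-changed map is a closed immersion, and by fppf descent so is $dq$.

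The main difficulty is not a single computation but the bookkeeping of this reduction: one must arrange the smooth cover so that $\tilde X''\to \tilde Y'$ is a closed immersion of \emph{schemes}, and verify that excessivity—above all the equality of classical centres—survives the base change, which is exactly what \propref{prop:DsurjY}, \lemref{lem:acturience} and the limit-preservation of $\Dl$ are there to guarantee. Once the problem is transported into the affine Rees-algebra picture, the remaining surjectivity is the genuine content, and it is condition \itemref{item:excessive/cart} of excessivity, fed through $(\R)_1\simeq \sI$ and generation in degree one, that makes it work; condition \itemref{item:excessive/norm} is used only upstream, via \lemref{lem:excvcd}, to ensure that $e$ is a closed immersion to begin with.
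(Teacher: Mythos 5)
Your overall strategy is the same as the paper's: factor $Dq$ as $q_D\circ dq$, reduce to showing $dq$ is a closed immersion, pass via smooth covers (using \propref{prop:DsurjY}, \lemref{lem:acturience} and \corref{cor:Dbc}) to the case of closed immersions of affine schemes, and then check surjectivity on $\pi_0$ of the map of extended Rees algebras using \thmref{thm:Daff} and generation of $\pi_0\R$ in degree one. The reduction bookkeeping is essentially the paper's (which does it in two stages, first covering $X$ and then factoring $f$ through a closed immersion into a smooth cover of $Y$), and that part is fine.

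However, the final step contains a genuine error. You claim that the degree-one map $\pi_0(\sI_{\tilde X''})\to\pi_0(\sI_{\tilde X'})$ is an isomorphism ``because $\tilde X'_\cl=\tilde X''_\cl$,'' and you explicitly assert that condition \itemref{item:excessive/norm} of excessivity is used only upstream to ensure $e$ is a closed immersion. This is false. Writing $A\twoheadrightarrow B\to B'$ for the corresponding animated rings, the fibre sequence $\sI_{B/A}\to A\to B$ shows that $\pi_0(\sI_{B/A})$ is an extension of the classical ideal $\ker(\pi_0 A\to\pi_0 B)$ by $\Coker(\pi_1 A\to\pi_1 B)$, so it is \emph{not} determined by the classical truncation. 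Precisely, $\Fib(\sI_{B/A}\to\sI_{B'/A})\simeq\Fib(B\to B')[-1]=\sI_e[-1]$, so surjectivity on $\pi_0$ of the degree-one map is equivalent to $1$-connectivity of $\sI_e$, i.e.\ to $e^\sharp$ being surjective on $\pi_1$ in addition to bijective on $\pi_0$ --- which is exactly condition \itemref{item:excessive/norm} via \lemref{lem:excvcd}\itemref{item:excvcd/3}, and is the argument the paper gives. A counterexample to your claim: take $A=B=k$ and $B'=k\oplus k[1]$; then $e$ is a closed immersion inducing an isomorphism on classical truncations, yet $\pi_0(\sI_{B/A})=0$ while $\pi_0(\sI_{B'/A})=k$, so the Rees algebra map is not surjective --- and indeed the square is not excessive since $\sI_e\simeq k$ is not $1$-connective. (Even when the square is excessive, the degree-one map is only surjective on $\pi_0$, not an isomorphism in general; surjectivity is all that is needed.) So condition \itemref{item:excessive/norm} is the essential input at precisely the point where you discard it; with that correction your argument coincides with the paper's proof.
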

\begin{proof}
  Since $q$ is proper (resp.\ finite, resp.\ a closed immersion), it will suffice using the factorization \eqref{eq:subdean} to show that $dq : \Dl_{X'/Y'} \to \Dl_{X/Y}\fibprod_Y Y' \simeq \Dl_{X\fibprod_Y Y'/Y'}$ is a closed immersion.
  The square \eqref{eq:pinkenprop} factors through
  \begin{equation}\label{eq:pinken'}
    \begin{tikzcd}
      X' \ar{r}{f'}\ar[swap]{d}{(p,f')}
      & Y' \ar[equals]{d}
      \\
      X \fibprod_Y Y' \ar{r}{f \fibprod_Y Y'}
      & Y'
    \end{tikzcd}
  \end{equation}
  and the base change square, so \eqref{eq:pinkenprop} is excessive if and only if \eqref{eq:pinken'} is.
  Thus we may replace \eqref{eq:pinkenprop} by \eqref{eq:pinken'} and thereby assume that $Y'=Y$ and $q=\id_Y$.

  The question is local on $Y$, so we may assume that $Y$ is affine and $X$ is $n$-Artin.
  The question is also local on $X$; let $u : U \twoheadrightarrow X$ be a smooth surjection from a scheme $U$, and write $u' : U' \twoheadrightarrow X'$ for its base change along $p : X' \to X$.
  Consider the induced commutative square
  \[\begin{tikzcd}
    \Dl_{U'/Y} \ar[twoheadrightarrow]{r}\ar{d}
    & \Dl_{X'/Y} \ar{d}
    \\
    \Dl_{U/Y} \ar[twoheadrightarrow]{r}
    & \Dl_{X/Y}.
  \end{tikzcd}\]
  This square is cartesian by \corref{cor:Dbc} and the horizontal arrows are smooth and surjective
  by \propref{prop:DsurjY}. It is thus enough to prove that $\Dl_{U'/Y}\to \Dl_{U/Y}$ is a closed immersion. We may therefore work smooth-locally on $X$.

  In particular, we may assume that $f : X \to Y$ factors through a closed immersion $i : X \hook M$ and a smooth surjective morphism $g : M \to Y$.
  The square \eqref{eq:pinkenprop} then factors as
  \begin{equation*}
    \begin{tikzcd}
      X' \ar{r}{ip}\ar{d}{p}
      & M \ar{r}{g}\ar[equals]{d}
      & Y \ar[equals]{d}
      \\
      X \ar{r}{i}
      & M \ar{r}{g}
      & Y.
    \end{tikzcd}
  \end{equation*}
  Note that the left-hand square is excessive by \remref{rem:twoexc}.
  As before, the commutative square
  \[\begin{tikzcd}
  	\Dl_{X'/M} \ar[twoheadrightarrow]{r}\ar{d}
	  & \Dl_{X'/Y} \ar{d}
    \\
    \Dl_{X/M} \ar[twoheadrightarrow]{r}
    & \Dl_{X/Y}
	\end{tikzcd}\]
  is cartesian by \corref{cor:Dbc}, and the horizontal arrows are smooth and surjective by \propref{prop:DsurjY}, so we may assume that $f : X \to Y$ is a closed immersion.
  
  Since \eqref{eq:pinkenprop} is cartesian on classical truncations, it follows that $f' : X' \to Y$ also is a closed immersion.
  Write $A \twoheadrightarrow B$ and $A \twoheadrightarrow B'$ for the surjections of animated rings corresponding to $f$ and $f'$, respectively.
  We have $\Dl_{X/Y} = \Spec(\Rext_{B/A})$, where $\Rext_{B/A}$ is the extended Rees algebra (\ssecref{ssec:Rees}), and similarly for $\Dl_{X'/Y}$, so the claim is that $\Rext_{B/A} \to \Rext_{B'/A}$ is surjective on $\pi_0$.
  Let $t^{-1}$ denote the coordinate of $\A^1$.
  We recall from \cite[\S 6.10]{Hekking} that $\Rext_{B/A}$ is a $\Z$-graded $A[t^{-1}]$-algebra with
  \begin{align*}
    (\Rext_{B/A})_{\le 0} &\simeq A[t^{-1}],\\
    (\Rext_{B/A})_{\ge 0} &\simeq \sR_{B/A},
  \end{align*}
  where $\sR_{B/A}$ is the ``unextended'' Rees algebra, whose degree $1$ homogeneous component $(\sR_{B/A})_1 = \sI_{B/A}$ is the fibre of $A \to B$.
  Since $\pi_0(\sR_{B'/A})$ is generated in degree $1$, it suffices to show that the $A$-module map $\sI_{B/A} \to \sI_{B'/A}$ is surjective on $\pi_0$. But $\Fib(\sI_{B/A} \to \sI_{B'/A})=\Fib(B\to B')[-1]$ which is $0$-connective by Lemma~\ref{lem:excvcd}\itemref{item:excvcd/3}. The result follows.
\end{proof}

\subsection{Deformation to the (intrinsic) normal cone}
\label{ssec:defcl}

Let $i: X \hook Y$ be a closed immersion of derived schemes, inducing the closed immersion $i_\cl : X_\cl \hook Y_\cl$ on classical truncations.
In this paragraph we compare the classical truncation of the normal deformation $\Dl_{X/Y}$ with the classical deformation to the normal cone of $i_\cl$.
Recall from \cite[\S 5.1]{Fulton} that the latter is a family
\[ X_\cl \times \A^1 \to \sfD_{X_\cl/Y_\cl} \to \A^1 \]
degenerating $i_\cl : X_\cl \hook Y_\cl$ to the zero section $0 : X_\cl \to \sfC_{X_\cl/Y_\cl}$ of the normal cone.
Note that it need not agree with the classical truncation of $\Dl_{X/Y}$ even when $X=X_\cl$ and $Y=Y_\cl$ are classical\footnote{%
  Unless $i_\cl : X_\cl \to Y_\cl$ is quasi-smooth---which, since $X_\cl$ and $Y_\cl$ are classical, means precisely that $i_\cl$ is a local complete intersection morphism in the sense of \cite[Exp.~VIII, \S 1, D\'ef.~1.1]{SGA6}.
}.

More generally, for any morphism of classical (higher) Artin stacks $f : X \to Y$ that is locally of finite type, we may consider the deformation to the \emph{intrinsic normal cone} defined in \cite{AranhaPstragowski}.
This is a family
\[
  X \times \A^1 \to \sfD_{X/Y} \to \A^1
\]
degenerating $f$ to the zero section $0 : X \to \sfC_{X/Y}$ to the (relative) \emph{intrinsic normal cone}.
In the case of Deligne--Mumford-type morphisms of $1$-Artin stacks, this recovers a construction of Kresch (see \cite[Thm.~2.31]{Manolache} or the proof of \cite[Prop.~1]{KimKreschPantev}).
In that case, $\sfC_{X/Y}$ is the relative intrinsic normal cone of Behrend--Fantechi (see \cite[\S 7]{BehrendFantechi}).

We proceed to the comparison.
Let $f : X \to Y$ be a locally of finite type morphism of derived Artin stacks.
We consider two different degenerations of the induced morphism $f_\cl : X_\cl \to Y_\cl$ on classical truncations.
On one hand we have $(\Dl_{X/Y})_\cl$, the classical truncation of the normal deformation of $f$, degenerating to the ``virtual'' normal bundle $(N_{X/Y})_\cl$.
On the other hand we may consider the deformation $\sfD_{X_\cl/Y_\cl}$ to the intrinsic normal cone $\sfC_{X_\cl/Y_\cl}$ for $f_\cl : X_\cl \to Y_\cl$.
The square
\[\begin{tikzcd}
  \sfC_{X_\cl/Y_\cl} \ar{r}\ar{d}
  & \sfD_{X_\cl/Y_\cl} \ar{d}
  \\
  X_\cl \ar{r}{f_\cl}
  & Y_\cl
\end{tikzcd}\]
defines a virtual Cartier divisor over $f_\cl$, and hence over $f$.
This is classified by a canonical morphism $\sfD_{X_\cl/Y_\cl} \to \Dl_{X/Y}$, which factors through the classical truncation $(\Dl_{X/Y})_\cl$ and fits into the diagram of cartesian squares
\begin{equation}\label{eq:Bifrutam}
  \begin{tikzcd}
    \msf{C}_{X_\cl/Y_\cl} \ar{r}\ar{d}
    & \sfD_{X_\cl/Y_\cl} \ar[leftarrow]{r}\ar{d}
    & Y_\cl \times \Gm \ar[equals]{d}
    \\
    (\Nl_{X/Y})_\cl \ar{r}\ar{d}{v}
    & (\Dl_{X/Y})_\cl \ar[leftarrow]{r}\ar{d}{u}
    & Y_\cl \times \Gm \ar{d}
    \\
    0 \ar{r}{0}
    & \A^1 \ar[leftarrow]{r}
    & \Gm.
  \end{tikzcd}
\end{equation}

\begin{prop}\label{prop:compMD}
  Let $f : X \to Y$ be a morphism, locally of finite type, between derived Artin stacks.
  The morphisms $\sfD_{X_\cl/Y_\cl} \to (\Dl_{X/Y})_\cl$ and $\msf{C}_{X_\cl/Y_\cl} \to (\Nl_{X/Y})_\cl$ are closed immersions.
  Moreover, the inclusions
  \begin{equation*}
    Y_\cl \times \Gm \hook \sfD_{X_\cl/Y_\cl} \hook (\Dl_{X/Y})_\cl
  \end{equation*}
  exhibit $\sfD_{X_\cl/Y_\cl}$ as the schematic closure of $Y_\cl \times \Gm$ in $(\Dl_{X/Y})_\cl$.
\end{prop}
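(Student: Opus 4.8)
The plan is to reduce to the case of a closed immersion of affine derived schemes, where the normal deformation is computed by the extended Rees algebra of \ssecref{ssec:Rees}, and then to read off both assertions directly from the structure of that algebra. Before doing so, I would dispose of the normal cone statement by base change: the left-hand squares of \eqref{eq:Bifrutam} exhibit $\msf{C}_{X_\cl/Y_\cl}$ as the fibre product $\sfD_{X_\cl/Y_\cl} \fibprod_{(\Dl_{X/Y})_\cl} (\Nl_{X/Y})_\cl$, so once $\sfD_{X_\cl/Y_\cl} \to (\Dl_{X/Y})_\cl$ is known to be a closed immersion, its base change $\msf{C}_{X_\cl/Y_\cl} \to (\Nl_{X/Y})_\cl$ is automatically one as well. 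It therefore suffices to treat the deformation morphism and the schematic closure claim.

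Next I would reduce to closed immersions. Both the property of being a closed immersion and the identification with a schematic closure may be checked after pulling back along a smooth surjection $U \twoheadrightarrow (\Dl_{X/Y})_\cl$ from a scheme: closed immersions satisfy smooth (fppf) descent, and schematic closure commutes with the flat base change along $U \to (\Dl_{X/Y})_\cl$. Using \propref{prop:dbc} and \remref{rem:DsurjY}, such atlases of $(\Dl_{X/Y})_\cl$ arise from smooth covers of $X$ and $Y$, compatibly with the smooth-local description of the intrinsic normal deformation $\sfD_{X_\cl/Y_\cl}$ of \cite{AranhaPstragowski}. This brings us to $X$ and $Y$ affine. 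Choosing a Zariski-local factorization of $f$ through a smooth morphism $M \to Y$ (for instance an affine space), \corref{cor:bacach} expresses $\Dl_{X/Y}$ in terms of $\Dl_{X/M}$ and the smooth factor $\Dl_{M/Y}$; matching this against the corresponding functoriality of $\sfD_{(-)}$ under the smooth morphism $M \to Y$ (whose contribution is the $T_{M/Y}$-direction appearing in the quotient presentation of the intrinsic cone) reduces the problem to a closed immersion $f : X \hook Y$ of affine derived schemes.

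For a closed immersion, \thmref{thm:Daff} gives $\Dl_{X/Y} \simeq \uSpec_{Y\times\A^1}(\Rext_{X/Y})$, so $(\Dl_{X/Y})_\cl = \Spec(\pi_0\Rext_{X/Y})$, while by the definition of the classical construction (\eqref{eq:Rcl}) one has $\sfD_{X_\cl/Y_\cl} = \Spec\bigl({}^\heartsuit\Rext_{X_\cl/Y_\cl}\bigr)$ with ${}^\heartsuit\Rext_{X_\cl/Y_\cl} = \bigoplus_n I_\cl^n\, t^n \subseteq \sO_{Y_\cl}[t,t^{-1}]$, where $I_\cl = \ker(\sO_{Y_\cl} \to \sO_{X_\cl})$ is the classical ideal. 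The open $Y_\cl \times \Gm \hook (\Dl_{X/Y})_\cl$ is the non-vanishing locus of $t^{-1}$, and by \eqref{eq:underlying} inverting $t^{-1}$ yields $\pi_0(\Rext_{X/Y})[1/t^{-1}] \simeq \sO_{Y_\cl}[t,t^{-1}]$; hence the schematic closure of $Y_\cl \times \Gm$ is $\Spec$ of the image of $\pi_0(\Rext_{X/Y})$ in this localization. Now I would invoke the structure of the Rees algebra from \ssecref{ssec:Rees}: $\pi_0(\R_{X/Y})$ is generated in degree $1$, with $(\R_{X/Y})_1 \simeq \sI := \Fib(\sO_Y \to f_*\sO_X)$ whose degree‑$1$ component maps onto $I_\cl$. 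It follows that the image of $\pi_0(\Rext_{X/Y})_n$ is exactly $I_\cl^n\, t^n$ for $n \ge 0$ and $\sO_{Y_\cl}\, t^n$ for $n \le 0$, i.e. precisely ${}^\heartsuit\Rext_{X_\cl/Y_\cl}$. This single computation shows simultaneously that $\pi_0(\Rext_{X/Y}) \twoheadrightarrow {}^\heartsuit\Rext_{X_\cl/Y_\cl}$ is surjective (so $\sfD_{X_\cl/Y_\cl} \to (\Dl_{X/Y})_\cl$ is a closed immersion) and that its kernel is the $t^{-1}$-power-torsion (so $\sfD_{X_\cl/Y_\cl}$ is the schematic closure of $Y_\cl \times \Gm$). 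The fibre over $0$ is handled compatibly by \eqref{eq:assgr}, which recovers $\msf{C}_{X_\cl/Y_\cl} \hook (\Nl_{X/Y})_\cl$ as the usual closed immersion $\Spec\bigl(\bigoplus_n I_\cl^n/I_\cl^{n+1}\bigr) \hook \Spec\bigl(\Sym_{\sO_{X_\cl}}(I_\cl/I_\cl^2)\bigr)$.

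The main obstacle I anticipate lies in the reduction step rather than in the Rees computation: one must verify that the smooth-local, stacky presentation of the intrinsic normal cone of \cite{AranhaPstragowski} matches the derived normal bundle $\Nl_{X/Y} \simeq \V_X(\sNv_{X/Y})$ under the comparison morphism — concretely, that peeling off the smooth factor $M \to Y$ introduces the same $T_{M/Y}$-quotient on both the classical and the derived sides — and that this matching is compatible with the formation of schematic closures under the flat base changes used to descend. Once this functoriality is pinned down, the closed-immersion case via \thmref{thm:Daff} and the explicit image computation above is essentially formal.
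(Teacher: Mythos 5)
Your proposal is correct and follows essentially the same route as the paper: reduce smooth-locally (via \propref{prop:DsurjY} and the corresponding descent for $\sfD$ from \cite{AranhaPstragowski}) to a closed immersion of affines, then apply \thmref{thm:Daff} and identify ${}^\heartsuit\Rext_{X_\cl/Y_\cl}$ as the image of $\pi_0\Rext_{X/Y}$ in its localization at $t^{-1}$. The only differences are cosmetic: the paper reduces to closed immersions directly via \lemref{lem:acturience} rather than through a factorization $X \hook M \to Y$, and it outsources your final degree-by-degree image computation (which is correct, using generation of $\pi_0(\R_{X/Y})$ in degree $1$ and $\pi_0(\sI) \twoheadrightarrow I_\cl$) to \cite[Prop.~6.12.1]{Hekking}.
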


\begin{proof}
  In view of \propref{prop:DsurjY} and the analogous descent statement for $\sfD_{X_\cl/Y_\cl}$ in \cite{AranhaPstragowski}, we may work smooth-locally on $X$ and $Y$. We can thus assume that
  $X\to Y$ is a closed immersion (cf.\ \lemref{lem:acturience}).
  Using \thmref{thm:Daff} we may then translate the claim to the following statement about the extended Rees algebra $\Rext_{X/Y}$ and its classical version $\msf{R}^{\mrm{ext}}_{X_\cl/Y_\cl}$\footnote{
    See the recollection in \ssecref{ssec:Rees}.
  }: the homomorphism
  \begin{equation*}
    \pi_0 \Rext_{X/Y} \to \msf{R}^{\mrm{ext}}_{X_\cl/Y_\cl}
  \end{equation*}
  is surjective, and moreover exhibits its target as the image of the localization homomorphism $\pi_0 \Rext_{X/Y} \to (\pi_0 \Rext_{X/Y})_{t^{-1}} \simeq (\pi_0 \Rext_{X/Y}) \otimes_{\Z[t^{-1}]} \Z[t,t^{-1}]$, where $t^{-1}$ denotes the coordinate of $\A^1$.
  This statement is proven in \cite[Prop.~6.12.1]{Hekking}.
\end{proof}

\begin{rem}
  \propref{prop:compMD} admits the following geometric interpretation when $X$ and $Y$ are defined over a base field: informally speaking, the classical deformation $\sfD_{X_\cl/Y_\cl} \to \A^1$ may be regarded as the ``flattening'' of the classical truncation of its derived version $(\Dl_{X_\cl/Y_\cl})_\cl \to \A^1$ (cf. \cite[III, Prop.~9.8]{Hartshorne}).
\end{rem}

\begin{rem}\label{rem:Reituba}
  There is a canonical $\Gm$-action on $\sfD_{X_\cl/Y_\cl}$ which makes the projection to $\A^1$ equivariant (with respect to scaling by weight $-1$ action on $\A^1$) and which restricts to the canonical action on the cone stack $\sfC_{X/Y}$ (see \cite[Rem.~6.5]{AranhaPstragowski}).
  By construction, the morphism $\sfD_{X_\cl/Y_\cl} \to (\Dl_{X/Y})_\cl$ is $\Gm$-equivariant.
  In fact, the entire diagram \eqref{eq:Bifrutam} is $\Gm$-equivariant, and the statement of \propref{prop:compMD} passes to the quotient stacks.
\end{rem}

\subsection{Infinitesimal neighbourhoods}
\label{ssec:inf}

Recall that if $X \hook Y$ is a closed immersion of classical schemes defined by an ideal sheaf $\sI \sub \sO_Y$, the $n$th infinitesimal neighbourhood is the thickening defined by the ideal sheaf $\sI^n$.
In this paragraph we explain how to define an analogue of this construction in derived algebraic geometry, by reinterpreting the extended Rees algebra (\ssecref{ssec:Rees}) as a derived analogue of the adic filtration.

Let $f : X \hook Y$ be a closed immersion of derived stacks.
Recall that there is a canonical equivalence between quasi-coherent complexes on $Y\times\AGm$ and filtered quasi-coherent complexes on $Y$ (see e.g. \cite[Chap.~9, \S 1.3]{GaitsgoryRozenblyumII} or \cite[\S 2.2]{BhattLuriePrism}), as well as a compatible equivalence between quasi-coherent $\sO_{Y\times\AGm}$-algebras and filtered quasi-coherent $\sO_Y$-algebras\footnote{%
  I.e., quasi-coherent $\sO_Y$-algebras equipped with a multiplicative filtration.
  See Footnote~\ref{fn:qcohalg} for what we mean by ``quasi-coherent $\sO_Y$-algebra''.
} (see \cite[\S 3]{Hekking} or \cite[Prop.~3.39]{BenBassatHekking}).
Under this equivalence, the extended Rees algebra $\Rext_{X/Y}$ (\defnref{defn:Rees}) corresponds to a canonical multiplicative filtration
\begin{equation}\label{eq:Fil}
  \cdots
  \to \Fil_n (\sO_Y)
  \to \cdots
  \to \Fil_1 (\sO_Y)
  \to \Fil_0 (\sO_Y) \simeq \sO_Y
\end{equation}
where $\Fil_n (\sO_Y) := (\Rext_{X/Y})_n$.
The formulas \eqref{eq:underlying} and \eqref{eq:assgr} say that the underlying object of this filtration is $\sO_Y$, and the associated graded object is $f_*\Sym^*_{\sO_X}(\sNv_{X/Y})$, i.e., there are exact triangles
\begin{equation}
  \Fil_{n+1}(\sO_Y)
  \to \Fil_{n}(\sO_Y)
  \to f_*\Sym^n_{\sO_X}(\sNv_{X/Y}).
\end{equation}

We refer to \eqref{eq:Fil} as the \emph{adic filtration} on $\sO_Y$ determined by the closed immersion $f : X \hook Y$.
Note that the same filtration has also been constructed by Bhatt, by animating the classical adic filtration (see the proof of \cite[Cor.~4.14]{BhattComp}, or \cite[\S 3.4]{Mao}).
The comparison between these constructions will appear in forthcoming work of the first-named author.

\begin{thm}\label{thm:inf}
  Let $f : X \hook Y$ be a closed immersion of derived stacks.
  Then there exists a canonical tower of nilpotent closed immersions
  \begin{equation}\label{eq:inf}
    X = X^{(0)}
    \hook X^{(1)}
    \hook X^{(2)}
    \hook \cdots
  \end{equation}
  between closed immersions $f_n : X^{(n)} \hook Y$, such that for every $n>0$:
  \begin{thmlist}
    \item\label{item:inf/sym}
    There is a canonical exact triangle
    \begin{equation*}
      f_* \Sym^n_{\sO_X}(\sNv_{X/Y})
      \to f_{n,*}\sO_{X^{(n)}}
      \to f_{n-1,*}\sO_{X^{(n-1)}}
    \end{equation*}
    in $\QCoh(Y)$.
    \item\label{item:inf/Rn}
     There is a canonical exact triangle
    \begin{equation*}
      (\R_{X/Y})_{n+1}\simeq(\Rext_{X/Y})_{n+1}
      \to \sO_Y
      \to f_{n,*}\sO_{X^{(n)}}
    \end{equation*}
    in $\QCoh(Y)$.
    \item\label{item:inf/ideal}
    There is a short exact sequence
    \begin{equation*}
      0
      \to {}^\heartsuit\sI^n
      \hook \pi_0 \sO_Y
      \twoheadrightarrow \pi_0 f_{n,*} \sO_{X^{(n)}}
      \to 0,
    \end{equation*}
    of (discrete) quasi-coherent sheaves on $Y$,
    where ${}^\heartsuit\sI$ is the ideal of definition of $f_\cl : X_\cl \hook Y_\cl$.
  \end{thmlist}
\end{thm}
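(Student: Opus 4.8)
The plan is to \emph{define} the tower directly from the multiplicative filtration \eqref{eq:Fil}, whose stages are the graded components $\Fil_n(\sO_Y) = (\Rext_{X/Y})_n$ of the extended Rees algebra, and then to read off the three assertions from the structural properties of $\Rext_{X/Y}$ recalled in \ssecref{ssec:Rees}. Concretely, I would set
$$f_{n,*}\sO_{X^{(n)}} := \Cofib\big((\Rext_{X/Y})_{n+1} \xrightarrow{t^{-(n+1)}} \sO_Y\big),$$
where the map is the composite $\Fil_{n+1} \to \Fil_n \to \cdots \to \Fil_0 = \sO_Y$ of the structure maps of the filtration (multiplication by $t^{-(n+1)}$). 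Since the filtration is multiplicative, $\Fil_{n+1}$ is a filtration ideal, so this cofibre inherits a canonical quasi-coherent $\sO_Y$-algebra structure; I would phrase it as the derived quotient $\sO_Y \modmod \Fil_{n+1}$. For $n=0$ one has $\Fil_1 \simeq \sI$, and the cofibre is $f_*\sO_X$, so $X^{(0)} = X$. The structure maps $t^{-1} : \Fil_{n+2} \to \Fil_{n+1}$ induce algebra maps $f_{n+1,*}\sO_{X^{(n+1)}} \to f_{n,*}\sO_{X^{(n)}}$, and hence the closed immersions $X^{(n)} \hook X^{(n+1)}$ of \eqref{eq:inf}; these are nilpotent because on $\pi_0$ the relevant ideal is a subquotient of a power of the classical ideal ${}^\heartsuit\sI$, hence nilpotent.

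Before anything else I would record that $\Rext_{X/Y}$ is \emph{connective}: by \thmref{thm:Daff} the morphism $u : \Dl_{X/Y} \to Y\times\A^1$ is affine, so $u_*\sO_{\Dl_{X/Y}}$ is a connective quasi-coherent algebra, whence each $(\Rext_{X/Y})_m$ is connective. With this in hand, part \itemref{item:inf/Rn} is nothing but the defining cofibre sequence above, together with the identification $(\R_{X/Y})_{n+1} \simeq (\Rext_{X/Y})_{n+1}$ valid in nonnegative degrees (\ssecref{ssec:Rees}). For part \itemref{item:inf/sym} I would apply the octahedral axiom to the factorization $(\Rext_{X/Y})_{n+1} \xrightarrow{t^{-1}} (\Rext_{X/Y})_n \xrightarrow{t^{-n}} \sO_Y$, which produces an exact triangle
$$\Cofib\big(t^{-1} : (\Rext_{X/Y})_{n+1} \to (\Rext_{X/Y})_n\big) \to f_{n,*}\sO_{X^{(n)}} \to f_{n-1,*}\sO_{X^{(n-1)}}.$$
It then remains to identify the left-hand term, which is exactly the degree-$n$ component of the associated graded $\Rext_{X/Y}\otimes_{\sO_Y[t^{-1}]}\sO_Y[t^{-1}]/(t^{-1})$; by \eqref{eq:assgr} this is $f_*\Sym^n_{\sO_X}(\sNv_{X/Y})$, giving the triangle of \itemref{item:inf/sym}.

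Part \itemref{item:inf/ideal} then follows by passing to $\pi_0$ in \itemref{item:inf/Rn}. Connectivity of $(\Rext_{X/Y})_{n+1}$ kills the $\pi_{-1}$-term, so the long exact sequence collapses to $\pi_0(\Rext_{X/Y})_{n+1} \to \pi_0\sO_Y \to \pi_0 f_{n,*}\sO_{X^{(n)}} \to 0$; the image of the first map is identified with the corresponding classical ideal power using that $\pi_0\R_{X/Y}$ is generated in degree $1$ with $(\R_{X/Y})_1 \simeq \sI$ (\ssecref{ssec:Rees}), so that this image is the honest power of ${}^\heartsuit\sI = \Im(\pi_0\sI \to \pi_0\sO_Y)$. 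This yields the short exact sequence of \itemref{item:inf/ideal}, and in particular re-proves that each $\sO_Y \to f_{n,*}\sO_{X^{(n)}}$ is surjective on $\pi_0$, i.e.\ that $f_n$ is a closed immersion.

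The main obstacle is the very first step: upgrading the purely module-theoretic cofibre $\sO_Y \modmod \Fil_{n+1}$ to a genuine quasi-coherent $\sO_Y$-\emph{algebra}, and checking that the filtration structure maps induce \emph{algebra} homomorphisms, so that the $X^{(n)}$ really are closed substacks of $Y$ forming a tower. This is where the multiplicative structure of the Rees filtration must be used in an essential and careful way. Once the objects and maps are arranged at the level of animated $\sO_Y$-algebras, assertions \itemref{item:inf/sym}--\itemref{item:inf/ideal} are formal consequences of the triangles above.
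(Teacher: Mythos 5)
Your tower, once constructed, is the same as the paper's, and your derivations of \itemref{item:inf/Rn}, \itemref{item:inf/sym} (octahedron on $\Fil_{n+1}\to\Fil_n\to\sO_Y$ plus the identification of $\Cofib(t^{-1}\colon (\Rext_{X/Y})_{n+1}\to(\Rext_{X/Y})_n)$ with the degree-$n$ piece of the associated graded via \eqref{eq:assgr}), and \itemref{item:inf/ideal} (take $\pi_0$, use connectivity and generation of $\pi_0\R_{X/Y}$ in degree $1$) all match the paper's argument. But the step you yourself flag as ``the main obstacle'' is a genuine gap, not a routine verification: in derived algebraic geometry the cofibre of a map of $\sO_Y$-modules $\Fil_{n+1}\to\sO_Y$ carries \emph{no} canonical algebra structure, even when the filtration is multiplicative. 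The paper's notation $A\modmod f$ is only defined for quotients by (sequences of) elements, i.e.\ cosections of free modules, and the introduction explicitly warns that ideals and adic quotients are subtle in the derived setting (the link to Smith ideals being deferred to forthcoming work). Asserting that ``$\Fil_{n+1}$ is a filtration ideal, so the cofibre inherits an algebra structure'' is precisely the point that needs an argument, and without it you have not defined the closed substacks $X^{(n)}$ at all, so none of \itemref{item:inf/sym}--\itemref{item:inf/ideal} can even be stated.

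The paper closes this gap with a specific device you are missing: instead of quotienting $\sO_Y$ by the module $\Fil_{n+1}$, it forms the base change of \emph{graded algebras}
\[
  \sR\{n\} := \Rext_{X/Y} \otimes_{\sO_Y[t^{-1}]} \sO_Y[t^{-1}]/(t^{-n-1}),
\]
which is automatically an animated $\sO_Y$-algebra (a pushout of algebras, with $\sO_Y[t^{-1}]/(t^{-n-1}) = \sO_Y[t^{-1}]\modmod t^{-n-1}$ a genuine derived quotient by an element), and then defines $\sR^{(n)}$ as its degree-zero component --- again automatically a subalgebra. The module identification you want then falls out for free: tensoring the triangle $(t^{-n-1})\to\sO_Y[t^{-1}]\to\sO_Y[t^{-1}]/(t^{-n-1})$ with $\Rext_{X/Y}$ gives $\Rext_{X/Y}(n+1)\to\Rext_{X/Y}\to\sR\{n\}$, whose degree-zero part is exactly your cofibre sequence $(\Rext_{X/Y})_{n+1}\to\sO_Y\to\sR^{(n)}$. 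With this replacement your argument for the three assertions goes through as written; the algebra homomorphisms $\sR^{(n+1)}\to\sR^{(n)}$ needed for the tower likewise come from the evident maps $\sO_Y[t^{-1}]/(t^{-n-2})\to\sO_Y[t^{-1}]/(t^{-n-1})$ rather than from any ad hoc multiplicativity argument.
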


\begin{defn}
  We call $f_n : X^{(n)} \hook Y$ in \thmref{thm:inf} the \emph{$n$th infinitesimal neighbourhood} of $f : X \hook Y$.
\end{defn}

\begin{proof}[Proof of \thmref{thm:inf}]
  For each $n\ge 0$, consider the $\Z$-graded quasi-coherent $\sO_{Y}$-algebra
  \begin{equation*}
    \sR\{n\} := \Rext_{X/Y} \otimes_{\sO_Y[t^{-1}]} \sO_Y[t^{-1}]/(t^{-n-1}).
  \end{equation*}
  We define $\sR^{(n)}:=(\sR\{n\})_0$ as the degree zero component of $\sR\{n\}$ and $X^{(n)}$ as the relative spectrum,
  \begin{equation*}
    f_n : X^{(n)} := \uSpec_Y(\sR^{(n)}) \to Y.
  \end{equation*}

  Tensoring up from $(t^{-n-1}) \hook \sO_Y[t^{-1}] \twoheadrightarrow \sO_Y[t^{-1}]/(t^{-n-1})$, we obtain the canonical exact triangle of underlying $\Z$-graded quasi-coherent sheaves on $Y\times\A^1$
  \begin{equation}\label{eq:R(n)}
    \Rext_{X/Y} (n+1) \to \Rext_{X/Y} \to \sR \{n\},
  \end{equation}
  where $\Rext_{X/Y}(n+1)$ denotes the grading shift such that $\Rext_{X/Y}(n+1)_i \simeq (\Rext_{X/Y})_{n+1+i}$.
  Taking degree zero components of \eqref{eq:R(n)}, we obtain the exact triangle of \itemref{item:inf/Rn}.
  Taking $\pi_0$ and using the discussion in \cite[\S 6.12]{Hekking} (or the proof of \propref{prop:compMD}), this immediately yields \itemref{item:inf/ideal}.
  For the remaining point \itemref{item:inf/sym}, we use the exact triangle
  \begin{equation*}
    \begin{multlined}
      \Fib(\sO_Y \to f_{n,*}\sO_{X^{(n)}})
      \to \Fib(\sO_Y \to f_{n-1,*}\sO_{X^{(n-1)}})\\
      \to \Fib(f_{n,*}\sO_{X^{(n)}} \to f_{n-1,*}\sO_{X^{(n-1)}})
    \end{multlined}
  \end{equation*}
  to compute the last term as the cofibre of $(\Rext_{X/Y})_{n+1} \to (\Rext_{X/Y})_{n}$, i.e., the $n$th associated graded of the filtration determined by $\Rext_{X/Y}$, which is $\Sym^n_{\sO_X}(\sNv_{X/Y})$ by \eqref{eq:assgr}.
\end{proof}

\begin{cor}\label{cor:filf_*}
  Let $f : X \hook Y$ be a closed immersion of derived stacks.
  Then for every $\sF \in \QCoh(Y)$, there exists a canonical tower of quasi-coherent complexes on $Y$
  \begin{equation}
    \cdots \to \sF_n \to \cdots \to \sF_1 \to \sF_0 = f_*f^*(\sF),
  \end{equation}
  and for each $n>0$ a canonical exact triangle
  \begin{equation}
    f_* \Sym^n_{\sO_X}(\sNv_{X/Y}) \otimes \sF
    \to \sF_n
    \to \sF_{n-1}
  \end{equation}
  in $\QCoh(Y)$.
\end{cor}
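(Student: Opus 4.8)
The statement is the universal ($\sF$-linear) form of \thmref{thm:inf}, so the plan is to produce the tower by tensoring the one built there with $\sF$. Explicitly, writing $f_n : X^{(n)} \hook Y$ for the $n$th infinitesimal neighbourhood of $f$, I would set
\[
  \sF_n := f_{n,*}\sO_{X^{(n)}} \otimes_{\sO_Y} \sF,
\]
with transition maps obtained by applying $(-)\otimes_{\sO_Y}\sF$ to the maps $f_{n,*}\sO_{X^{(n)}} \to f_{n-1,*}\sO_{X^{(n-1)}}$ underlying the tower \eqref{eq:inf} of \thmref{thm:inf}. All of these data are canonical because the infinitesimal neighbourhoods are.

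To identify the bottom of the tower, note that $X^{(0)} = X$, so $\sF_0 = f_*\sO_X \otimes_{\sO_Y} \sF$; since $f$ is a closed immersion (in particular affine), the projection formula yields a canonical isomorphism $f_*\sO_X \otimes_{\sO_Y}\sF \simeq f_*f^*(\sF)$, as required. For the exact triangles I would apply the exact functor $(-)\otimes_{\sO_Y}\sF$ to the triangle of \thmref{thm:inf}\itemref{item:inf/sym}; since $\QCoh(Y)$ is stable symmetric monoidal, tensoring with a fixed object preserves cofibre sequences, producing
\[
  f_*\Sym^n_{\sO_X}(\sNv_{X/Y}) \otimes \sF \to \sF_n \to \sF_{n-1}
\]
for every $n>0$, which is the desired triangle.

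There is no essential obstacle here: the content is entirely contained in \thmref{thm:inf}, and the corollary follows by a single application of $(-)\otimes\sF$. The only points meriting care are that the tensor product is understood in the derived sense and that the triangles of \thmref{thm:inf} are genuine cofibre sequences in the stable $\infty$-category $\QCoh(Y)$, so that exactness of $(-)\otimes\sF$ applies verbatim; and the invocation of the projection formula, which is standard for the affine morphism $f$.
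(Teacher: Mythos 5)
Your proposal is correct and matches the paper's own proof: the paper likewise takes the tower $\sR^{(n)} = f_{n,*}\sO_{X^{(n)}}$ from \thmref{thm:inf}, tensors it with $\sF$, and invokes the projection formula $f_*(\sO_X)\otimes\sF \simeq f_*f^*(\sF)$ to identify the bottom term. The points you flag (derived tensor, exactness of $(-)\otimes\sF$ in the stable setting) are exactly the ones implicitly used there.
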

\begin{proof}
  Under the equivalence between relatively affine derived stacks over $Y$ and quasi-coherent $\sO_Y$-algebras, the tower of \thmref{thm:inf} corresponds to a tower
  \begin{equation}\label{eq:filf_*}
    \cdots \to \sR^{(n)} \to \cdots \to \sR^{(1)} \to \sR^{(0)} \simeq f_*(\sO_X)
  \end{equation}
  of quasi-coherent complexes on $Y$, with exact triangles
  \begin{equation}
    f_* \Sym^n_{\sO_X}(\sNv_{X/Y})
    \to \sR^{(n)}
    \to \sR^{(n-1)}
  \end{equation}
  for each $n\ge 0$.
  This is the desired filtration for $\sF = \sO_Y$.
  The filtration for a general $\sF \in \QCoh(Y)$ is $\eqref{eq:filf_*}\otimes\sF$, up to the projection formula $f_*(\sO_X)\otimes\sF \simeq f_*f^*(\sF)$.
\end{proof}

In characteristic zero, a similar construction of infinitesimal neighbourhoods was given by Gaitsgory and Rozenblyum, using their formal version of the normal deformation; see \cite[Chap.~9, Sec.~5]{GaitsgoryRozenblyumII} and the discussion in \sssecref{sssec:GR}.
For almost finitely presented closed immersions between qcqs derived Artin stacks, another construction of infinitesimal neighbourhoods was given in \cite[Thm.~A.0.1]{HalpernLeistnerDEquiv}.
The construction there involves globalizing from the affine case, where the tower \eqref{eq:inf} is constructed by animation (compare e.g. \cite[Thm.~3.23]{Mao}).
However, as pointed out to us by Lucas Mann, the proof of assertions (2)--(4) in the affine case contains an error.\footnote{
  The proof of \cite[Thm.~A.0.1(2)--(4)]{HalpernLeistnerDEquiv} is done by checking that Construction~A.0.6 satisfies the conditions of Lemma~A.0.2, i.e., that for any $d\ge 0$ the morphism $\pi_0 (A/I) \to \tau_{\le d} ((A/I^n) \otimes_A^L \pi_0(A/I))$ is a weak equivalence for all $n\gg 0$.
  But this typically fails for $d>0$, take e.g. $A=\bZ[x]$, $I=(x)$.
}


\section{Blow-ups}
\label{sec:bl}

\subsection{Excessive virtual Cartier divisors}

Let $i : X \hook Y$ be a closed immersion of derived stacks.
We say that a virtual Cartier divisor $D$ on $S$ over $i$ is \emph{excessive} if the associated commutative square
\begin{equation}\label{eq:excess}
  \begin{tikzcd}
    D \ar{r}{i_D}\ar{d}{g}
    & S \ar{d}{f}
    \\
    X \ar{r}{i}
    & Y
  \end{tikzcd}
\end{equation}
is excessive in the sense of \defnref{defn:excessive}.
That is:
\begin{defnlist}
  \item
  It is cartesian on classical truncations.

  \item
  The morphism of normal bundles $df : N_{D/S} \to N_{X/Y} \fibprod_X D$ is a closed immersion.
  Equivalently, the morphism of conormal complexes $g^*\sNv_{X/Y} \to \sNv_{D/S}$ is surjective on $\pi_0$.
\end{defnlist}
Recall here that the conormal complex of $i : X \hook Y$ is defined as $\sNv_{X/Y} := \sL_{X/Y}[-1]$, and that \lemref{lem:excvcd} gives a reformulation of the excessiveness condition in terms of the induced closed immersion $e : D \to X_S := X \times_Y S$. 

\begin{rem}\label{rem:Nft}
  Note that $\pi_0(\sNv_e)$ is of finite type, regardless of whether \eqref{eq:excess} is excessive.
  Indeed, in the exact triangle $g^*\sNv_{X/Y} \to \sNv_{D/S} \to \sNv_e$, the fibre of the first map is connective so the second map is surjective on $\pi_0$.
  Since $\sNv_{D/S}$ is locally free, the claim follows.
\end{rem}

\subsection{The excessive locus}

Recall from \ssecref{ssec:D} that the normal deformation $\cD_{X/Y}$ is the derived stack classifying virtual Cartier divisors over $i : X \hook Y$.
We identify the locus of \emph{excessive} virtual Cartier divisors as the open complement of the canonical morphism
\begin{equation}\label{eq:Di}
  \cD i : X \times \AGm \simeq \cD_{X/X} \to \cD_{X/Y},
\end{equation}
which is a closed immersion by \propref{prop:Dqprop} and \examref{exam:selfint}.

\begin{lem}\label{lem:orange}
  Let $i : X \hook Y$ be a closed immersion.
  Denote by $\sU \sub \cD_{X/Y}$ the substack consisting of points classifying virtual Cartier divisors over $i$ which are excessive.
  Then the inclusion $\sU \hook \cD_{X/Y}$ is the complement of the closed immersion \eqref{eq:Di}; in particular, it is an open immersion.
\end{lem}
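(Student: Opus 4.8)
The plan is to prove the equality of the two open/closed stratifications by reducing to a closed immersion of affine derived schemes and then matching the excessiveness condition against the irrelevant ideal of the extended Rees algebra. Both the excessive locus $\sU$ and the closed immersion $\cD i$ are stable under base change: the former because the defining square \eqref{eq:excess}, the induced closed immersion $e : D \to X \fibprod_Y S$, and all conormal complexes commute with base change, and the latter by \propref{prop:dbc}. I would then work smooth-locally on $X$ and $Y$ exactly as in the proofs of \propref{prop:Dqprop} and \propref{prop:compMD}: choosing smooth surjections $p : X' \to X$ and $q : Y' \to Y$ with $X' \hook Y'$ a closed immersion of schemes (\lemref{lem:acturience}, \remref{rem:DsurjY}), the morphism $Dq : \Dl_{X'/Y'} \to \Dl_{X/Y}$ is smooth and surjective, so it suffices to treat $X' \hook Y'$; localizing, I may assume $X = \Spec B \hook Y = \Spec A$ is a closed immersion of affines.

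In this case \thmref{thm:Daff} gives $\Dl_{X/Y} \simeq \Spec(\Rext_{X/Y})$, hence $\cD_{X/Y} \simeq [\Spec(\Rext_{X/Y})/\Gm]$. First I would identify $\cD i$ with the vanishing of the irrelevant ideal. Indeed $\cD i$ is the $\Gm$-equivariant closed immersion $\Dl_{X/X} = \Spec(\sO_X[t^{-1}]) \hook \Spec(\Rext_{X/Y})$, i.e.\ the graded surjection $\Rext_{X/Y} \twoheadrightarrow \Rext_{X/X} \simeq \sO_X[t^{-1}]$; since $\Rext_{X/Y}$ agrees with $\sO_Y[t^{-1}]$ in nonpositive degrees and $\pi_0(\R_{X/Y})$ is generated in degree one by $(\R_{X/Y})_1 \simeq \sI$ (\ssecref{ssec:Rees}), the support of the ideal of $\cD i$ is cut out by the degree-one part. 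Thus the open complement of $\cD i$ is exactly the locus where $(\Rext_{X/Y})_1 \simeq \sI$ generates the unit ideal.

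It then remains to match this with excessiveness on test points. For a morphism $S = \Spec R \to \cD_{X/Y}$, classifying a virtual Cartier divisor $D = \Spec(R\modmod f) \hook S$ over $i$ with structure morphism $a : S \to Y$, the generators $a_1,\dots,a_m$ of $\sI$ pull back to $\bar a_j = f b_j \in \pi_0 R$, the relation $\bar a_j \in (f)$ being precisely the commutativity of the defining square; the classes $b_j$ are the images of the degree-one Rees generators, so that $S \fibprod_{\cD_{X/Y}} \cD i$ is the vanishing locus $V(b_1,\dots,b_m)$. On the other hand, by \lemref{lem:excvcd}\itemref{item:excvcd/3} the divisor is excessive iff $\sI_e$ is $1$-connective for $e : D \to X_S := X \fibprod_Y S$. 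Using the exact triangle of ideals $\sI_{X_S/S} \to \sI_{D/S} \to j_*\sI_e$ (where $j : X_S \hook S$) together with the fact that $\sI_{D/S} \simeq \sO_S(-D)$ is invertible, this holds iff $\pi_0\sI_{X_S/S} \to \pi_0\sI_{D/S}$ is surjective; identifying $\sO_S(-D)$ with $\sO_S$ and tracking $\bar a_j \mapsto b_j$, surjectivity amounts to $(b_1,\dots,b_m)$ generating the unit ideal. As this is a pointwise condition, the excessive locus equals the open complement of $V(b_1,\dots,b_m) = S \fibprod_{\cD_{X/Y}} \cD i$, and since this holds for all $S$ we conclude $\sU$ is open and equal to the complement of $\cD i$.

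The hard part will be the final matching, and in particular pinning down $S \fibprod_{\cD_{X/Y}} \cD i = V(b_1,\dots,b_m)$ at the derived level: the subtle point is that $\cD i$ is \emph{strictly smaller} than the naive locus where $a$ factors through $X$, because passing to $\cD i$ rigidifies the nullhomotopies witnessing $\bar a_j \in (f)$ (equivalently, kills the normal-direction classes $b_j$); one must track this carefully to see that the degenerate stratum $D = \emptyset$ (where $f$ is a unit, $b_j = \bar a_j$, and excessiveness becomes $X_S = \emptyset$) is handled correctly rather than being conflated with the whole preimage of $X$. A secondary technical point, needed to legitimize the reduction, is verifying that both $\sU$ and $\cD i$ are genuinely compatible with the smooth base change along $Dq$, i.e.\ that $\cD_{X'/X'} \simeq \cD_{X'/Y'} \fibprod_{\cD_{X/Y}} \cD_{X/X}$ and that excessiveness descends along $Dq$.
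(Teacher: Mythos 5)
Your route is genuinely different from the paper's, but the reduction step on which everything rests does not work as stated. The chart square supplied by \lemref{lem:acturience} (smooth surjections $p : X' \twoheadrightarrow X$, $q : Y' \twoheadrightarrow Y$ with $X' \hook Y'$ a closed immersion of schemes) is \emph{not} cartesian, and in particular fails condition \itemref{item:excessive/cart} of \defnref{defn:excessive}. Consequently neither of the two substacks you need to compare pulls back to its counterpart for $i' : X' \hook Y'$: by \corref{cor:Dbc} one has $\cD_{X'/Y'} \fibprod_{\cD_{X/Y}} \cD_{X/X} \simeq \cD_{X'/(Y'\fibprod_Y X)}$, which strictly contains $\cD i'(X'\times\AGm) = \cD_{X'/X'}$ unless $X' \to Y'\fibprod_Y X$ is invertible; and since the chart square is not excessive, \remref{rem:twoexc} does not apply, so a virtual Cartier divisor can be excessive over $i'$ without being excessive over $i$ (already for $D=\initial$: excessiveness over $i$ asks that $S \to Y$ miss $X$, while excessiveness over $i'$ asks that the chosen lift $S \to Y'$ miss $X'$, and these differ). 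So proving the statement for $X'/Y'$ does not prove it for $X/Y$. The fix is cheap and you should use it instead: because $i$ is a closed immersion, it is affine, so you only need to base change along a smooth affine cover $V \twoheadrightarrow Y$; the resulting square is cartesian, hence excessive, so both $\sU$ and $\cD i$ do pull back to their counterparts for $X\fibprod_Y V \hook V$, and $X \fibprod_Y V$ is automatically affine. No cover of $X$ is needed.

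With that repair, your affine argument is a legitimate alternative to the paper's proof, and is essentially the Rees-algebra/$\uProj$ computation that the paper carries out later (\lemref{lem:town}, \lemref{lem:ice}, \thmref{thm:blproj}) — note that the paper deduces those \emph{from} \lemref{lem:orange}, so you are in effect proving both at once, which is fine as long as you do not quote \thmref{thm:blproj}. The paper itself proceeds quite differently: openness of $\sU$ is proved directly from the universal virtual Cartier divisor over $\cD_{X/Y}$ by finite-type arguments on $\pi_0(\sNv_e)$ and on $\ker\pi_0(i'_*e^\sharp)$ (using \remref{rem:Nft}), with no reduction to the affine case, and complementarity is checked on field-valued points by a case analysis ($D=\initial$ versus $D=\Spec(\kappa\oplus\kappa[1])$) using the universal property of the cotangent complex to produce or obstruct the lift along $\cD i$. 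Your approach buys a uniform treatment of the two cases (the empty-divisor case is just ``$f$ a unit'' in your framework) and gives openness and complementarity simultaneously, at the price of needing the precise compatibility between $(\R_{X/Y})_1 \simeq \sI$ and the classifying map $\Rext_{X/Y} \to R$ — in particular the identification of the image of $\pi_0\sI_{X_S/S} \to \pi_0\sI_{D/S} \simeq \pi_0(\sO_S)$ with the ideal generated by the degree-one classes $b_j$, which requires more of the structure of \cite[\S 6]{Hekking} than is quoted in \ssecref{ssec:Rees} (note also that $\pi_0(\sI)$ need not be finitely generated since $i$ is not assumed of finite presentation, so you should work with an arbitrary generating family). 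The paper's argument avoids all of this.
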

\begin{proof}
  Consider the universal virtual Cartier divisor
  \[\begin{tikzcd}
    D \ar{r}{i_{D}}\ar{d}
    & S \ar{d}
    \\
    X \ar{r}{i}
    & Y
  \end{tikzcd}\]
  where $D := \cN_{X/Y}$ and $S := \cD_{X/Y}$, and let $e : D\hook X_S = X \fibprod_Y S$ denote the induced morphism with corresponding map $e^\sharp : \sO_{X_{S}} \to e_*(\sO_{D})$.
  By \lemref{lem:excvcd}, $\sU$ is the locus of points in $\cD_{X/Y}$ where
  \begin{inlinelist}
    \item $i_{D,*}(\sNv_e) \in \QCoh(S)$ has vanishing $\pi_0$, and
    \item $i'_*(e^\sharp)$ is bijective on $\pi_0$, where $i': X_S \hook S$ is the projection.
  \end{inlinelist}
  Since $\pi_0 (i_{D,*}(\sNv_e))$ is of finite type (\remref{rem:Nft}), its support is closed in $S_\cl \sub S$ (see \cite[Chap.~0\textsubscript{I}, 5.2.2]{EGA}), hence its vanishing locus is open.
  Similarly, consider the surjection of (discrete) quasi-coherent sheaves
  \begin{equation*}
    \pi_0 (i'_* e^\sharp) : \pi_0 (i'_* \sO_{X_S}) \to \pi_0 (i_{D,*} \sO_D).
  \end{equation*}
  The target is of finite presentation (since $i_D : D \hook S$ is quasi-smooth) and the source is of finite type, so the kernel is of finite type, hence also has open vanishing locus.
  This shows that the inclusion $\sU \hook \cD_{X/Y}$ is an open immersion.

  We next show that it is complementary to the closed immersion \eqref{eq:Di}.
  Let $S = \Spec(\kappa) \to \cD_{X/Y}$ be a morphism where $\kappa$ is a field, classifying a virtual Cartier divisor $D \hook S$ over $i : X \to Y$.
  Unravelling definitions, $S \to \cD_{X/Y}$ lifts along $\cD i : X \times \AGm \to \cD_{X/Y}$ if and only if in the solid arrow diagram below, there exists a dashed arrow together with homotopies witnessing the commutativity of the two triangles:
  \begin{equation}\label{eq:wood}
    \begin{tikzcd}
      D \ar{r}{i_D}\ar{d}{g}
      & S \ar{d}{s}\ar[dashed,swap]{ld}{h}\\
      X \ar{r}{i}
      & Y.
    \end{tikzcd}
  \end{equation}
  The claim is that such a lift exists if and only if $D \hook S$ is not excessive over $i$.

  Suppose that such a lift exists and assume for the sake of contradiction that $D \hook S$ is excessive.
  We find that $\sNv_{X/Y}|_{D} \to \sNv_{X/X}|_{D} \to \sNv_{D/S}$ is surjective on $\pi_0$.
  Since the middle term vanishes, the rank one free sheaf $\sNv_{D/S}$ on $D$ also vanishes, hence $D$ must be empty.
  But then the classical cartesianness condition implies that $S$ is also empty, whence the contradiction.

  Conversely, suppose $D \hook S$ is not excessive.
  As a virtual Cartier divisor in $S = \Spec(\kappa)$, $D$ is either empty or the trivial square zero extension $\Spec(\kappa\oplus\kappa[1])$.
  Let us construct a lift in both cases.

  If $D$ is empty, then it is excessive if and only if $D \simeq X\fibprod_Y S$ on classical truncations (as the surjectivity condition is vacuous), hence if and only if $s : S \to Y$ does not lie in the image of $i : X \hook Y$.
  Since we assumed it is not excessive, we conclude $s : S \to Y$ does factor through $i : X \hook Y$.
  This provides the lift $h$ in this case.

  If $D = \Spec(\kappa\oplus\kappa[1])$, we want to construct a lift $h$ in the following diagram:
  \begin{equation}\label{eq:ancient}
    \begin{tikzcd}
      \Spec(\kappa\oplus\kappa[1]) \ar{r}{i_D}\ar[swap]{d}{g}
      & \Spec(\kappa) \ar{d}{s}\ar[dashed,swap]{ld}{h}
      \\
      X \ar{r}{i}
      & Y.
    \end{tikzcd}
  \end{equation}
  Note that commutativity of the solid arrow square implies at the level of classical truncations that $s$ lies in $X_\cl$ and hence in $X$; we let $h$ denote the lift thus obtained.
  It remains to show commutativity of the upper triangle.
  Adopting the notation of \ssecref{ssec:convent/cot}, the following commutative diagram exhibits $g$ as a derivation of $i : X \hook Y$ at $s$ with values in $\sO[1]$:
  \begin{equation*}
    \begin{tikzcd}
      & \Spec(\kappa) \ar{ld}{z}\ar{rd}{h} & 
      \\
      \Spec(\kappa)[\sO[1]] \ar{rr}{g}\ar{rd} & & X\ar{ld}{i}
      \\
      & Y, &
    \end{tikzcd}
  \end{equation*}
  where $z$ is the canonical section. The assumption that $D \hook S$ is not excessive implies that the canonical map $g^*\sNv_{X/Y} \to \sO_D$ is zero on $\pi_0$, and hence that $h^*\sNv_{X/Y} \to \sO_S$ is null-homotopic.
  This translates to the fact that the derivation $g$ above is homotopic to the trivial derivation
  \[
    \Spec(\kappa\oplus\kappa[1]) \xrightarrow{i_D} \Spec(\kappa) \xrightarrow{h} X.
  \]
  In particular, this yields a homotopy making the upper triangle in \eqref{eq:ancient} commute.
\end{proof}

\subsection{Blow-ups}
\label{ssec:blups}

Let $i : X \hook Y$ be a closed immersion of derived stacks.

\begin{defn}
  The \emph{blow-up} $\Bl_{X/Y}$ is the derived stack classifying excessive virtual Cartier divisors over $i$.
  That is, it is the derived stack over $Y$ whose $S$-points, for a derived scheme $S$ over $Y$, are excessive virtual Cartier divisors on $S$ over $i$.
\end{defn}

Note that $\Bl_{Y/Y} \simeq \initial$ and $\Bl_{\initial/Y} \simeq Y$, while if $i$ is a virtual Cartier divisor then the projection $\Bl_{X/Y} \to Y$ is invertible.
It is also clear that the formation of $\Bl_{X/Y}$ is stable under arbitrary base change in $Y$.

\begin{prop}
  For any morphism $Y' \to Y$, there is a canonical isomorphism $\Bl_{X/Y} \fibprod_Y Y' \simeq \Bl_{X\fibprod_YY'/Y'}$ over $Y'$.
\end{prop}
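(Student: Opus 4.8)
The plan is to verify the isomorphism on functors of points, reducing everything to the base-change stability of the moduli description of $\Bl_{X/Y}$. Write $X' := X \fibprod_Y Y'$, so that $i' : X' \hook Y'$ is the base change of $i$ and in particular a closed immersion. For a derived scheme $S$ over $Y'$, the universal property of the fibre product identifies $\Maps_{Y'}(S, \Bl_{X/Y}\fibprod_Y Y')$ with $\Maps_Y(S, \Bl_{X/Y})$, the latter computed with respect to the composite $S \to Y' \to Y$. By definition this is the groupoid of excessive virtual Cartier divisors on $S$ over $i$, while $\Maps_{Y'}(S, \Bl_{X'/Y'})$ is the groupoid of excessive virtual Cartier divisors on $S$ over $i'$. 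It therefore suffices to produce an equivalence between these two groupoids, natural in $S$.

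First I would match the underlying virtual Cartier divisors. Given a virtual Cartier divisor $D \hook S$, the data of a commutative square over $i$ is a morphism $D \to X$ together with a homotopy filling the square over $Y$; since $D \to S \to Y$ factors through $Y'$, the universal property of $X' = X\fibprod_Y Y'$ turns this into a morphism $D \to X'$ filling the corresponding square over $Y'$, and conversely. This is precisely the base-change isomorphism $\cD_{X/Y}\fibprod_Y Y' \simeq \cD_{X'/Y'}$, which one may alternatively read off from \propref{prop:dbc}, applied to the cartesian square with corners $X', Y', X, Y$, together with passage to $\Gm$-quotients.

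It then remains to check that this correspondence preserves the two excessiveness conditions of \defnref{defn:excessive}. For the classical-cartesianness clause, the identity $X'\fibprod_{Y'} S \simeq X \fibprod_Y S$ shows the two conditions are literally the same. For the normal-bundle clause, base-change stability of the cotangent complex gives $\sL_{X'/Y'} \simeq p^*\sL_{X/Y}$, where $p : X' \to X$ is the projection, hence a canonical identification $(g')^*\sNv_{X'/Y'} \simeq g^*\sNv_{X/Y}$ for $g : D \to X$ and $g' : D \to X'$ with $p\circ g' = g$; under this identification the two conormal maps to $\sNv_{D/S}$ agree, so surjectivity on $\pi_0$ holds for one iff it holds for the other. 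Thus $D$ is excessive over $i$ iff the corresponding virtual Cartier divisor is excessive over $i'$, and the two groupoids coincide naturally in $S$, yielding the asserted isomorphism over $Y'$ by Yoneda.

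I expect no real obstacle here: the statement is the precise form of the remark that $\Bl_{X/Y}$ is manifestly stable under base change in $Y$, and every step is a formal manipulation of universal properties once base-change stability of the cotangent complex is invoked. Alternatively, one can argue entirely at the level of stacks: by \lemref{lem:orange} the blow-up is the open complement of the closed immersion $\cD i$ of \eqref{eq:Di} inside $\cD_{X/Y}$, and both this open–closed decomposition and the morphism $\cD i$ are compatible with base change (the latter by functoriality of the normal deformation, \ssecref{ssec:defadj}); combining this with $\cD_{X/Y}\fibprod_Y Y'\simeq \cD_{X'/Y'}$ gives the claim. The only point to keep straight in that route is the base-change compatibility of $\cD i$, which is where I would spend the most care.
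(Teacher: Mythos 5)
Your proof is correct and is essentially the paper's own argument: the paper offers no written proof beyond the remark immediately preceding the proposition that the formation of $\Bl_{X/Y}$ is ``clearly'' stable under base change in $Y$, which is exactly the functor-of-points verification you carry out (matching virtual Cartier divisors over $i$ with those over $i'$ via the universal property of $X\fibprod_Y Y'$, and checking both excessiveness clauses are preserved using $\sL_{X'/Y'}\simeq p^*\sL_{X/Y}$). Nothing is missing; you have simply made explicit what the paper leaves implicit.
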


We have the universal excessive Cartier divisor
\begin{equation}\label{eq:blowupsquare}
  \begin{tikzcd}
    \El_{X/Y} \ar{r}{i_E}\ar{d}
    & \Bl_{X/Y} \ar{d}
    \\
    X \ar{r}{i}
    & Y
  \end{tikzcd}
\end{equation}
over $i$.
Away from $X$, the projection $\Bl_{X/Y} \to Y$ is invertible, since
\[
  \Bl_{X/Y} \fibprod_Y (Y\setminus X)
  \simeq \Bl_{\initial/Y\setminus X}
  \simeq Y\setminus X.
\]

We claim that the projection $\El_{X/Y} \to X$ exhibits the exceptional divisor as the projectivized normal bundle.
Consider the diagram of cartesian squares over $X$,
\begin{equation}\label{eq:Oznufci}
  \begin{tikzcd}
    \El_{X/Y} \ar{r}{i_E}\ar{d}
    & \Bl_{X/Y} \ar{d}\ar[leftarrow]{r}
    & Y \setminus X \ar{d}
    \\
    \cN_{X/Y} \ar{r}{i_\cD}
    & \cD_{X/Y} \ar[leftarrow]{r}{j_\cD}
    & Y
    \\
    X \times \BGm \ar{r}\ar{u}{0}
    & X \times \AGm \ar[leftarrow]{r}\ar{u}{\cD i}
    & X \ar{u}{i}
  \end{tikzcd}
\end{equation}
which is built as follows.
We begin with the middle row, where $i_\cD$ and $j_\cD$ are the inclusions of the fibres over and away from $0 : \BGm \hook \AGm$, respectively.
The lower row is the same for $\id : X \to X$ in place of $i : X \hook Y$.
Finally, the upper row is obtained by taking open complements in the vertical direction, using \lemref{lem:orange} to compute the central term.
We conclude in particular that the exceptional divisor $\El_{X/Y}$ can be described as
\begin{equation}
  \El_{X/Y} \simeq \P_X(N_{X/Y}) := [(N_{X/Y} \setminus X)/\Gm] \simeq
  \cN_{X/Y} \setminus (X \times \BGm)
\end{equation}
over $X$.

\subsection{Blow-ups as projective cones}

Let $Y$ be a derived stack.
An \emph{affine cone} $C$ over $Y$ is the affine spectrum $\uSpec_Y(\sA)$ of an $\N$-graded connective quasi-coherent $\sO_Y$-algebra $\sA$, such that the canonical homomorphism $\sO_Y \to \sA_0$ is invertible and $\sA$ is generated in degree $1$.
We identify $Y$ with the vertex of the cone via the closed immersion $Y \hook C$, determined by the projection $\sA \twoheadrightarrow \sA_0 \simeq \sO_Y$.
The scaling action of $\bG_m$ on $C$, determined by the grading on $\sA$, fixes the vertex and thus restricts to an action on the complement.
The \emph{projective cone} $\bP_Y(C) = \uProj_Y(\sA)$ is the quotient stack
\begin{equation*}
  \bP_Y(C) := [C \setminus Y/\bG_m]
\end{equation*}
which is schematic over $Y$ (see e.g. \cite[\S 5]{Hekking}).

If $\sA$ is $\Z$-graded, we write $\sA_{\ge 0}$ for the $\N$-graded algebra formed by the components of nonnegative degrees.
We assume that $\sA_{\ge 0}$ satisfies the conditions above, and write $C$ for the corresponding affine cone.
In terms of $\sA$, the projective cone $\bP_Y(C)$ can also be realized by
\begin{equation}\label{eq:0noqejwp6kV}
  \bP_Y(C) = [(\uSpec_Y(\sA) \setminus \uSpec_Y(\sA \otimes_{\sA_{\ge 0}} \sA_0))/\bG_m].
\end{equation}
Indeed, we have:

\begin{lem}\label{lem:town}
  The $\Gm$-equivariant morphism $\uSpec_Y(\sA) \to \uSpec_Y(\sA_{\ge 0}) = C$ induces an isomorphism
  \begin{equation*}
    \uSpec_Y(\sA) \setminus \uSpec_Y(\sA \otimes_{\sA_{\ge 0}} \sA_0) \simeq C \setminus Y
  \end{equation*}
  away from the vertex.
\end{lem}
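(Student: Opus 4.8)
The plan is to show that the morphism $\phi : \uSpec_Y(\sA) \to C = \uSpec_Y(\sA_{\ge 0})$ induced by the inclusion $\sA_{\ge 0} \hook \sA$ restricts to an isomorphism over the open complement $C \setminus Y$ of the vertex. First I would observe that, under the equivalence between relatively affine derived stacks over $Y$ and quasi-coherent $\sO_Y$-algebras (which turns fibre products into relative tensor products), the closed substack $\uSpec_Y(\sA \otimes_{\sA_{\ge 0}} \sA_0)$ is precisely the preimage $\phi^{-1}(Y) = \uSpec_Y(\sA) \fibprod_C Y$ of the vertex $Y \hook C$, the latter being classified by the augmentation $\sA_{\ge 0} \twoheadrightarrow \sA_0 \simeq \sO_Y$. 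Hence the left-hand side of the asserted isomorphism is exactly $\phi^{-1}(C \setminus Y)$, and it remains to prove that $\phi$ is an isomorphism over $C \setminus Y$.

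Since being an isomorphism may be checked locally on $Y$ and on the target, and all constructions in sight commute with base change along $Y' \to Y$, I would reduce to the case $Y = \Spec(R)$ affine. The vertex $Y \hook C$ is cut out by the augmentation ideal $\sA_{\ge 1} = \Fib(\sA_{\ge 0} \to \sA_0)$, whose $\pi_0$ is generated over $\pi_0(\sO_Y)$ by $\pi_0(\sA_1)$ because $\sA_{\ge 0}$ is generated in degree $1$. Consequently the open complement $C \setminus Y$ is covered by the basic opens $D(x)$, as $x$ ranges over a generating set of $\pi_0(\sA_1)$ (viewed inside $\sA_{\ge 0} = \Gamma(C, \sO_C)$), and their preimages, the basic opens defined by the images of $x$ in $\sA$, cover $\phi^{-1}(C \setminus Y)$. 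It therefore suffices to show that $\phi$ is an isomorphism over each $D(x)$, i.e.\ that the localization map $\sA_{\ge 0}[x^{-1}] \to \sA[x^{-1}]$ is an equivalence of $\Z$-graded animated $\sO_Y$-algebras.

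This last point is the crux, and it is exactly where the degree $1$ of $x$ enters. Computing the localization degreewise, for each $d \in \Z$ one has
\[
  (\sA[x^{-1}])_d \simeq \colim\bigl( \sA_d \xrightarrow{\,\cdot x\,} \sA_{d+1} \xrightarrow{\,\cdot x\,} \sA_{d+2} \to \cdots \bigr),
\]
and similarly for $\sA_{\ge 0}$, where $(\sA_{\ge 0})_m = \sA_m$ for $m \ge 0$ and vanishes for $m < 0$. Because multiplication by $x$ raises degrees by one, the two filtered diagrams share a cofinal tail (all terms with $d + k \ge 0$ agree), so their colimits coincide; thus $\sA_{\ge 0}[x^{-1}] \to \sA[x^{-1}]$ is an equivalence in every degree, and $\phi$ is an isomorphism over $D(x)$. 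Gluing over the cover $\{D(x)\}$ yields the desired isomorphism over $C \setminus Y$. The main obstacle to be careful about is the interface between the derived structure and the hypothesis that $\sA_{\ge 0}$ is generated in degree $1$: this is a condition on $\pi_0$, and one must confirm that it genuinely forces the $D(x)$ to cover the (classical) complement of the vertex, which is what licenses reducing the otherwise purely formal localization computation to an honest open cover.
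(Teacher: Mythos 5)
Your proof is correct and follows essentially the same route as the paper: reduce to $Y$ affine, observe that the complement of the vertex is covered by the principal opens $D(x)$ for $x$ of degree $1$ (using generation in degree $1$), and check that $\sA_{\ge 0}[x^{-1}] \to \sA[x^{-1}]$ is an equivalence. The only cosmetic difference is in the last step, where the paper passes to homotopy groups and cites EGA II (8.2.1.5) while you give the direct degreewise cofinal-colimit computation; both establish the same localization statement.
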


\begin{proof}
  Since the claim is stable under base change in $Y$, we may assume that $Y$ is affine.
  It will suffice to show that, for an animated commutative ring $R$ and a $\Z$-graded $R$-algebra $A$, the canonical morphism $\Spec(A) \to \Spec(A_{\ge 0})$ induces an isomorphism over every principal open $D(f) \sub \Spec(A_{\ge 0})$ where $f \in A$ is homogeneous of degree $1$; in other words, that the homomorphism $(A_{\ge 0})[f^{-1}] \to A[f^{-1}]$ is invertible for every $f \in A_1$.
  Since formation of homotopy groups commutes with localization, it will suffice to note that the $\pi_0(A)_{\ge 0}[f^{-1}] \simeq \pi_0(A_{\ge 0})[f^{-1}]$-module homomorphism $\pi_n(A)_{\ge 0}[f^{-1}] \simeq \pi_n(A_{\ge 0})[f^{-1}] \to \pi_n(A)[f^{-1}]$ is invertible for every $f \in A_1$ and $n\ge 0$ (see e.g. \cite[II, (8.2.1.5)]{EGA}).
\end{proof}

Let $i : X \hook Y$ be a closed immersion of derived stacks.
By \thmref{thm:Daff}, $\Dl_{X/Y}$ is the affine spectrum over $Y$ of the extended Rees algebra $\Rext_{X/Y}$.
Let $C$ be the affine cone $\uSpec_Y(\R_{X/Y})$.
In this case, the left-hand side of \eqref{eq:0noqejwp6kV} is the projective cone $\P_Y(C) = \uProj_Y(\R_{X/Y})$.
The right-hand side is the $\Gm$-scaling quotient of
\begin{equation}\label{eq:gray}
  \uSpec_Y(\Rext_{X/Y}) \setminus \uSpec_Y(\Rext_{X/Y} \otimes_{\R_{X/Y}} \sO_Y).
\end{equation}

The following lemma will show that \eqref{eq:gray} is identified with
\begin{equation}\label{eq:gray2}
  \uSpec_Y(\Rext_{X/Y}) \setminus X \times \A^1 = \Dl_{X/Y} \setminus X \times \A^1,
\end{equation}
whose $\Gm$-scaling quotient is nothing else than the derived blow-up $\Bl_{X/Y}$ (\lemref{lem:orange}).

\begin{lem}\label{lem:ice}
  Let $A \to B$ be a homomorphism of animated commutative rings which is surjective on $\pi_0$.
  Then the canonical $\Z$-graded $A$-algebra homomorphism $\Rext_{B/A} \otimes_{\R_{B/A}} A \to B[t^{-1}]$ is bijective on $\pi_0$.\footnote{%
    In fact, one can show that it is invertible; see the proof of \cite[Lem.~4.28]{BenBassatHekking}.
  }
\end{lem}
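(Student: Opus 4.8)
The plan is to reduce this statement about a derived relative tensor product to an elementary computation with the \emph{classical} graded rings $\pi_0(\Rext_{B/A})$ and $\pi_0(\R_{B/A})$, exploiting that $\pi_0$ carries derived tensor products to ordinary tensor products of $\pi_0$'s. First I would record that for the augmentation $\R_{B/A} \twoheadrightarrow (\R_{B/A})_0 \simeq A$ one has
\[
  \pi_0\bigl(\Rext_{B/A} \otimes_{\R_{B/A}} A\bigr)
  \simeq \pi_0(\Rext_{B/A}) \otimes_{\pi_0(\R_{B/A})} \pi_0(A),
\]
as $\Z$-graded $\pi_0(A)$-algebras, while $\pi_0(B[t^{-1}]) \simeq \pi_0(B)[t^{-1}]$. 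Writing $R := \pi_0(\Rext_{B/A})$ and $R_{\ge 0} := \pi_0(\R_{B/A})$, the augmentation $R_{\ge 0} \to \pi_0(A) = R_0$ is the projection killing $R_{\ge 1}$, so the tensor product is the quotient $R/J$, where $J = R_{\ge 1}\cdot R$ is the homogeneous ideal generated by the strictly positive part. It thus suffices to compute $R/J$ degree by degree and to check that the canonical graded homomorphism $R/J \to \pi_0(B)[t^{-1}]$ is bijective.

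Next I would collect the structural facts about $R$ and $R_{\ge 0}$ supplied by the Proposition following \defnref{defn:Rees}. Taking $\pi_0$ of the isomorphism $\sO_Y[t^{-1}] \xrightarrow{\,\sim\,} \Rext_{B/A}$ in degrees $\le 0$ shows $R_n = \pi_0(A)\cdot t^n$ for every $n \le 0$, with $t^{-1}$ acting invertibly between these degrees; that $R_{\ge 0}$ is generated in degree $1$ over $R_0 = \pi_0(A)$; and that the degree-one component $R_1 = \pi_0\bigl((\R_{B/A})_1\bigr) = \pi_0(\sI_{B/A})$, together with its structural map $R_1 \xrightarrow{\cdot t^{-1}} R_0 = \pi_0(A)$, identifies with $\pi_0$ of the canonical map $\sI_{B/A} = \Fib(A\to B) \to A$. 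Since $A \to B$ is surjective on $\pi_0$, the fibre sequence $\sI_{B/A} \to A \to B$ yields an exact sequence $\pi_0(\sI_{B/A}) \to \pi_0(A) \to \pi_0(B) \to 0$, so this structural map has image exactly $\bar I := \ker(\pi_0(A) \to \pi_0(B))$. Because $R_{\ge 0}$ is generated in degree $1$, the ideal $J$ is generated by $R_1$, whence $J_k = R_1 \cdot R_{k-1}$ in each degree $k$.

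Finally I would carry out the graded bookkeeping. For $k \ge 1$, generation in degree $1$ gives $R_k = R_1 \cdot R_{k-1}$, so $(R/J)_k = 0$, matching the vanishing of $\pi_0(B)[t^{-1}]$ in positive degrees. For $k \le 0$ one has $R_{k-1} = \pi_0(A)\cdot t^{k-1}$, and factoring multiplication by $t^{k-1} = (t^{-1})^{\,1-k}$ as the structural map $R_1 \xrightarrow{\cdot t^{-1}} R_0$ followed by the isomorphism $\cdot\, t^{k}\colon R_0 \xrightarrow{\,\sim\,} R_k$ (an isomorphism since these degrees are $\le 0$) shows $J_k = R_1 \cdot R_{k-1} = \bar I\cdot t^k$. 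Hence $(R/J)_k = \pi_0(A)\,t^k / \bar I\,t^k \simeq \pi_0(B)\,t^k$, and under these identifications the canonical map is the evident reduction $\pi_0(A) \twoheadrightarrow \pi_0(B)$, which is bijective. Assembling the degrees gives $R/J \simeq \pi_0(B)[t^{-1}]$, as required.

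The main obstacle is conceptual rather than computational: one must be certain that passing to $\pi_0$ genuinely linearizes the problem — that $\pi_0$ of the derived base change along $\R_{B/A} \twoheadrightarrow A$ is computed by the naive tensor product and that the $\Z$-grading is preserved throughout — and, crucially, that the image of $R_1$ in $R_0$ is controlled solely by $\ker(\pi_0(A)\to\pi_0(B))$ even though $\pi_0(\sI_{B/A})$ itself may be strictly larger (the discrepancy being absorbed by $\pi_1(B)$). Once these points are secured, the remainder is the graded computation above, which mirrors the classical identification of the extended Rees construction away from the vertex of its cone.
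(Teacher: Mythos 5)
Your proof is correct and follows essentially the same route as the paper's: reduce to $\pi_0$ via right-exactness of the relative tensor product, then use generation of $\pi_0(\R_{B/A})$ in degree $1$ together with the identification $(\R_{B/A})_1 \simeq \sI_{B/A}$ to see that the quotient vanishes in positive degrees and equals $\pi_0(A)t^{-d}/\bar{I}t^{-d} \simeq \pi_0(B)t^{-d}$ in nonpositive degrees. Your repackaging of the paper's relation-chasing as the computation of the extended ideal $J = R_{\ge 1}\cdot R$ is a clean equivalent, and your explicit remark that the image of $\pi_0(\sI_{B/A})$ in $\pi_0(A)$ is exactly $\ker(\pi_0(A)\to\pi_0(B))$ makes precise a point the paper leaves implicit.
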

\begin{proof}
  For simplicity we set $R := \R_{B/A}$ and $R^+ := \Rext_{B/A}$.
  We first note that $\pi_0(R^+\otimes_R A) \simeq \pi_0(R^+) \otimes_{\pi_0(R)} \pi_0(A)$ vanishes in degrees $d>0$.
  Indeed, its homogeneous component of degree $d$ is a quotient of
  \[
    \pi_0(R^+_{d}) \otimes_{\pi_0(R_{0})} \pi_0 (A)
    \simeq \pi_0(R_{d}) \otimes_{\pi_0(A)} \pi_0(A)
  \]
  by relations including
  \[
    x \cdot y \otimes a = y \otimes x \cdot a
  \]
  for $x \in \pi_0(R_1)$, $y \in \pi_0(R_{d-1})$, $a \in \pi_0(A)$.
  But we have $x \cdot a = 0$ since the action of $R$ on $R_0 = A$ is via the projection sending all positive degree components to zero.
  Since $\pi_0(R)$ is generated in degree $1$, this implies that $z \otimes a = 0$ for all $z \in \pi_0(R_d)$.

  In degree $-d$ (for any $d\geq 0$), $\pi_0(R^+ \otimes_R A)$ is a quotient of
  \[
    \pi_0(R^+_{-d}) \otimes_{\pi_0(R_0)} \pi_0(A)
    \simeq \pi_0(A) \cdot t^{-d} \otimes_{\pi_0(A)} \pi_0(A)
  \]
  by relations including
  \[
    x \cdot y \otimes a = y \otimes x \cdot a
  \]
  for $x \in \pi_0(R_1) \simeq \pi_0(I) \cdot t$, $y \in \pi_0(R^+_{-d-1}) \simeq \pi_0(A) \cdot t^{-d-1}$, $a \in \pi_0(A)$.
  Again, we have $x \cdot a = 0$ as above, hence in particular we have the relation
  \[
    f \cdot t^{-d} \otimes a
    = (f \cdot t) \cdot (t^{-d-1}) \otimes a = 0
  \]
  for all $f \in \pi_0(I)$ and $a \in \pi_0(A)$.
  The other relations, coming from $x \in \pi_0(R_j)$, $y \in \pi_0(R^+_k)$, $a \in \pi_0(A)$, with $j+k=-d$ and $j>1$, are redundant.
  Thus the map induced by $\pi_0(R^+\otimes_R A) \to \pi_0(B)[t^{-1}]$ in degree $-d$ is identified with the isomorphism $\pi_0(A) \cdot t^{-d}/\pi_0(I)\cdot t^{-d} \to \pi_0(B) \cdot t^{-d}$.
\end{proof}

In view of \eqref{eq:gray2}, \lemref{lem:town} yields in this situation:

\begin{thm}\label{thm:blproj}
  For any closed immersion of derived stacks $i : X \hook Y$, there is a canonical isomorphism
  \[
    \Bl_{X/Y} \simeq \uProj_Y(\R_{X/Y})
  \]
  over $Y$.
\end{thm}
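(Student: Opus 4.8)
The plan is to realize both $\Bl_{X/Y}$ and $\uProj_Y(\R_{X/Y})$ as the $\Gm$-quotient of one and the same open substack $\Dl_{X/Y} \setminus (X\times\A^1)$ of the normal deformation. First I would recall, from \lemref{lem:orange} together with the discussion in \ssecref{ssec:blups}, that $\Bl_{X/Y}$ is the excessive locus $\sU \sub \cD_{X/Y}$, i.e.\ the open complement of the closed immersion $\cD i : X \times \AGm \hook \cD_{X/Y}$ of \eqref{eq:Di}. Since $\cD_{X/Y} \simeq [\Dl_{X/Y}/\Gm]$ and $X \times \AGm \simeq [X\times\A^1/\Gm]$, with $\cD i$ the $\Gm$-quotient of the $\Gm$-equivariant closed immersion $Di : X\times\A^1 \hook \Dl_{X/Y}$, and since forming open complements commutes with passing to quotients by a group action, this yields $\Bl_{X/Y} \simeq [(\Dl_{X/Y}\setminus(X\times\A^1))/\Gm]$ over $Y$.

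Next I would analyse the right-hand side. By \thmref{thm:Daff} the morphism $\Dl_{X/Y} \to Y$ is affine, so $\Dl_{X/Y} \simeq \uSpec_Y(\Rext_{X/Y})$ as a $\Gm$-equivariant $Y$-stack, where $\Rext_{X/Y}$ is the $\Z$-graded extended Rees algebra with $(\Rext_{X/Y})_{\ge 0} = \R_{X/Y}$ and $(\Rext_{X/Y})_0 \simeq \sO_Y$. Applying the definition of the projective cone together with the identity \eqref{eq:0noqejwp6kV} (which is \lemref{lem:town} for $\sA = \Rext_{X/Y}$), the cone $\uProj_Y(\R_{X/Y})$ is the $\Gm$-scaling quotient of the open substack \eqref{eq:gray}, namely $\uSpec_Y(\Rext_{X/Y}) \setminus \uSpec_Y(\Rext_{X/Y}\otimes_{\R_{X/Y}}\sO_Y)$.

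The crux is then to identify \eqref{eq:gray} with \eqref{eq:gray2}, i.e.\ to show that the closed substacks $\uSpec_Y(\Rext_{X/Y}\otimes_{\R_{X/Y}}\sO_Y)$ and $X\times\A^1$ of $\Dl_{X/Y}$ have the same open complement. As the formation of $\Dl_{X/Y}$, of the Rees algebra, and of these loci is stable under base change in $Y$, I would reduce to the affine case $Y = \Spec(A)$, $X = \Spec(B)$ with $A \to B$ surjective on $\pi_0$, where $X\times\A^1 = \Spec(B[t^{-1}])$. By \lemref{lem:ice} the canonical map $\Rext_{B/A}\otimes_{\R_{B/A}}A \to B[t^{-1}]$ is bijective on $\pi_0$; hence the two surjections out of $\pi_0\Rext_{B/A}$ have the same kernel, so the two closed immersions into $\Dl_{X/Y}$ share the same classical truncation and therefore cut out the same underlying closed subset. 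Since the open complement of a closed immersion depends only on that subset, \eqref{eq:gray} and \eqref{eq:gray2} agree. I expect this topological identification to be the only genuine point of the argument, and it is exactly the content of \lemref{lem:ice}; everything else is formal bookkeeping of gradings and $\Gm$-actions.

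Finally I would combine the three steps: $\uProj_Y(\R_{X/Y})$ is the $\Gm$-quotient of \eqref{eq:gray}, which equals \eqref{eq:gray2}, namely $\Dl_{X/Y}\setminus(X\times\A^1)$, whose $\Gm$-quotient is precisely $\Bl_{X/Y}$. Since every identification above is $\Gm$-equivariant and takes place over $Y$, the resulting isomorphism $\Bl_{X/Y} \simeq \uProj_Y(\R_{X/Y})$ is canonical and over $Y$, as required.
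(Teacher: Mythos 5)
Your proposal is correct and follows essentially the same route as the paper: both arguments identify $\Bl_{X/Y}$ with the $\Gm$-quotient of $\Dl_{X/Y}\setminus(X\times\A^1)$ via \lemref{lem:orange}, use \thmref{thm:Daff} and \lemref{lem:town} to realize $\uProj_Y(\R_{X/Y})$ as the $\Gm$-quotient of \eqref{eq:gray}, and invoke \lemref{lem:ice} to match the two complements on classical truncations. The only difference is that you spell out the bookkeeping (commuting open complements with $\Gm$-quotients, reduction to the affine case) which the paper leaves implicit.
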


\begin{cor}
  For any closed immersion of derived stacks $i : X \hook Y$, the structural morphism $\pi : \Bl_{X/Y} \to Y$ is schematic.
  Moreover, if the fibre $\sI$ of $i^\sharp : \sO_Y \twoheadrightarrow i_*(\sO_X)$ has $\pi_0(\sI)$ is of finite type (e.g. if $i$ is almost of finite presentation), then $\pi$ is projective (\emph{a fortiori} proper).
\end{cor}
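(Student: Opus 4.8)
The plan is to derive both assertions from the identification $\Bl_{X/Y} \simeq \uProj_Y(\R_{X/Y})$ of \thmref{thm:blproj}, reducing the geometric claims to the structural properties of the Rees algebra established in \ssecref{ssec:Rees}.

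For schematicity I would simply observe that $\R_{X/Y}$ is an $\N$-graded connective quasi-coherent $\sO_Y$-algebra with $(\R_{X/Y})_0 \simeq \sO_Y$ and $\pi_0(\R_{X/Y})$ generated in degree $1$, so that $\pi : \Bl_{X/Y} \simeq \uProj_Y(\R_{X/Y}) \to Y$ is a relative projective cone. Such morphisms are schematic over the base (see \cite[\S 5]{Hekking} and the discussion surrounding \lemref{lem:town}), which gives the first assertion.

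For projectivity, the key reduction is that, $\pi$ being schematic and hence representable, both properness and projectivity are detected on classical truncations by our conventions. I would first identify the classical truncation of the blow-up. Since the formation of the relative spectrum, of open complements, and of $\Gm$-quotients each commutes with passage to classical truncations, \thmref{thm:Daff} together with \lemref{lem:orange}, \lemref{lem:town}, and \lemref{lem:ice} yields
\[
  (\Bl_{X/Y})_\cl \simeq \uProj_{Y_\cl}\bigl(\pi_0(\R_{X/Y})\bigr),
\]
the ordinary relative Proj of the classical $\N$-graded $\pi_0(\sO_Y)$-algebra $\pi_0(\R_{X/Y})$. By the structural results of \ssecref{ssec:Rees}, this algebra is generated in degree $1$, with degree-$1$ component $\pi_0\bigl((\R_{X/Y})_1\bigr) \simeq \pi_0(\sI)$. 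Under the hypothesis that $\pi_0(\sI)$ is of finite type, $\pi_0(\R_{X/Y})$ is therefore a finite-type graded $\pi_0(\sO_Y)$-algebra generated in degree $1$, and the canonical surjection $\Sym^*_{\pi_0(\sO_Y)}(\pi_0(\sI)) \twoheadrightarrow \pi_0(\R_{X/Y})$ induces a closed immersion $\uProj_{Y_\cl}(\pi_0(\R_{X/Y})) \hook \P_{Y_\cl}(\pi_0(\sI))$. As $\pi_0(\sI)$ is of finite type, the target is projective over $Y_\cl$ (see \cite[II, 5.5]{EGA}), so $\pi_\cl$ is projective, whence so is $\pi$; in particular it is proper. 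When $i$ is almost of finite presentation the fibre $\sI$ is connective and pseudo-coherent, so $\pi_0(\sI)$ is automatically of finite type, recovering the parenthetical sufficient condition.

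The step requiring the most care is the identification of $(\Bl_{X/Y})_\cl$ with the classical relative Proj $\uProj_{Y_\cl}(\pi_0(\R_{X/Y}))$: one must verify that truncation commutes with each of the operations producing $\Bl_{X/Y}$ from $\R_{X/Y}$ (relative spectrum, removal of the vertex, and the scaling $\Gm$-quotient). Once this compatibility is in place, the remaining projectivity assertion is exactly the classical statement that the relative Proj of a finite-type graded algebra generated in degree $1$ is projective, and the rest is formal.
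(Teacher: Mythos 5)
Your proof is correct and takes essentially the same route the paper intends: the corollary is stated as an immediate consequence of \thmref{thm:blproj}, with schematicity coming from the projective-cone description and projectivity from the fact that $\pi_0(\R_{X/Y})$ is generated in degree $1$ by $\pi_0((\R_{X/Y})_1)\simeq\pi_0(\sI)$. The details you supply --- compatibility of classical truncation with $\uSpec$, vertex removal and the $\Gm$-quotient, and the reduction to the classical relative Proj of a finite-type graded algebra --- are exactly the ones the paper leaves implicit.
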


\subsection{Functoriality}

Suppose given a commutative square
\begin{equation}\label{eq:pinken4}
  \begin{tikzcd}
    X' \ar{r}{i'}\ar{d}{p}
    & Y' \ar{d}{q}
    \\
    X \ar{r}{i}
    & Y,
  \end{tikzcd}
\end{equation}
where $i$ and $i'$ are closed immersions.
If the square is excessive in the sense of \defnref{defn:excessive}, then this gives rise to a canonical morphism
\[
  Bq : \Bl_{X'/Y'} \to \Bl_{X/Y}
\]
which factors as
\[
  \Bl_{X'/Y'} \xrightarrow{dq}
  \Bl_{X\fibprod_Y Y'/Y'}
  \simeq \Bl_{X/Y}\fibprod_Y Y' \xrightarrow{q_B}
  \Bl_{X/Y}.
\]
In particular, for any sequence of closed immersions $X' \to Y' \to Y$ we obtain a canonical map $\Bl_{X'/Y'} \to \Bl_{X'/Y}$ via \examref{exam:selfint}.

\begin{prop}\label{prop:Blprop}
  If $q : Y' \to Y$ is proper (resp. finite, a closed immersion), then the morphism $Bq : \Bl_{X'/Y'} \to \Bl_{X/Y}$ is proper (resp. finite, a closed immersion).
\end{prop}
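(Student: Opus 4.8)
The plan is to realise $Bq$ as a base change of the morphism $\cD q : \cD_{X'/Y'} \to \cD_{X/Y}$ induced on the (quotient) normal deformations by \eqref{eq:pinken4}, and then to transfer properness, finiteness, and the property of being a closed immersion from the deformation setting, where they are already recorded in \propref{prop:Dqprop}. The subtle point — and the place where some care is needed — is that \eqref{eq:pinken4} is only assumed to be \emph{excessive}, not cartesian, so one cannot simply invoke base change for the normal deformation (\corref{cor:Dbc}) to compare the relevant open loci; instead the compatibility of the excessive loci must be extracted from \remref{rem:twoexc}.

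First I would note that $i$ and $i'$ are closed immersions, hence locally of finite type and (eventually) representable, and that \eqref{eq:pinken4} is excessive; thus \propref{prop:Dqprop} applies and shows that $Dq : \Dl_{X'/Y'} \to \Dl_{X/Y}$ is proper (resp.\ finite, resp.\ a closed immersion). Since $Dq$ is $\Gm$-equivariant, it descends to $\cD q : \cD_{X'/Y'} \to \cD_{X/Y}$, and the square comparing the two along the $\Gm$-torsors $\Dl_{X/Y} \to \cD_{X/Y}$ and $\Dl_{X'/Y'} \to \cD_{X'/Y'}$ is cartesian. Because properness, finiteness, and being a closed immersion are all smooth-local on the target, and $Dq$ is the base change of $\cD q$ along the smooth surjection $\Dl_{X/Y} \twoheadrightarrow \cD_{X/Y}$, I would conclude that $\cD q$ enjoys the same property as $Dq$.

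Next I would identify $Bq$ with the restriction of $\cD q$ to the excessive loci. By \lemref{lem:orange}, $\Bl_{X/Y} \hook \cD_{X/Y}$ is the open substack of excessive virtual Cartier divisors over $i$, and similarly $\Bl_{X'/Y'} \hook \cD_{X'/Y'}$. The key claim is that $\cD q^{-1}(\Bl_{X/Y}) = \Bl_{X'/Y'}$, i.e.\ that a virtual Cartier divisor over $i'$ is excessive if and only if its image under $\cD q$ is excessive over $i$. This is precisely \remref{rem:twoexc}: the image point is the composite of the virtual Cartier divisor square over $i'$ with the bottom square \eqref{eq:pinken4}, and since the latter is excessive, the outer (composite) square is excessive if and only if the top one is. Consequently the square
\begin{equation*}
  \begin{tikzcd}
    \Bl_{X'/Y'} \ar[hookrightarrow]{r}\ar{d}{Bq}
    & \cD_{X'/Y'} \ar{d}{\cD q}
    \\
    \Bl_{X/Y} \ar[hookrightarrow]{r}
    & \cD_{X/Y}
  \end{tikzcd}
\end{equation*}
is cartesian, its horizontal arrows being open immersions, and $Bq$ is the base change of $\cD q$ along $\Bl_{X/Y} \hook \cD_{X/Y}$.

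Finally, since properness (resp.\ finiteness, resp.\ being a closed immersion) is stable under base change, the assertion for $Bq$ follows from that for $\cD q$ established above. I expect the only genuinely delicate step to be the identification $\cD q^{-1}(\Bl_{X/Y}) = \Bl_{X'/Y'}$, which is exactly where the hypothesis of excessiveness (rather than cartesianness) of \eqref{eq:pinken4} must be used, via \remref{rem:twoexc}; the descent of the three properties along the $\Gm$-torsor is routine, relying only on their fppf-locality on the target and on $\Gm$-torsors being smooth and surjective.
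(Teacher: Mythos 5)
Your argument is correct, but it takes a genuinely different route from the paper. The paper's proof factors $Bq$ as $q_B \circ dq$, observes that $q_B$ is a base change of $q$ and hence inherits the property, and reduces to showing that $dq : \Bl_{X'/Y'} \to \Bl_{X\fibprod_Y Y'/Y'}$ is a closed immersion; this is checked over affine $Y'$ via the $\uProj$ description of \thmref{thm:blproj}, by reusing the surjectivity on $\pi_0$ of the map of Rees algebras established inside the proof of \propref{prop:Dqprop}. You instead treat \propref{prop:Dqprop} as a black box: you descend its conclusion along the $\Gm$-torsors $\Dl_{-/-} \to \cD_{-/-}$ (the comparison square is indeed cartesian, since $\Dl_{X/Y} \simeq \cD_{X/Y}\fibprod_{\BGm}\pt$ compatibly with $\cD q$, and properness, finiteness and being a closed immersion are fppf-local on the target), and then realize $Bq$ as the base change of $\cD q$ along the open immersion $\Bl_{X/Y}\hook\cD_{X/Y}$ of \lemref{lem:orange}. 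The one step that genuinely needs justification in your route --- the identification $\cD q^{-1}(\Bl_{X/Y}) = \Bl_{X'/Y'}$ --- is exactly \remref{rem:twoexc} applied to the vertical composite of a virtual Cartier divisor square over $i'$ with the excessive square \eqref{eq:pinken4}, and since that remark applies to arbitrary $S$-points (not just field-valued ones) the relevant square really is cartesian. What your approach buys is that it never reopens the proof of \propref{prop:Dqprop} and never invokes \thmref{thm:blproj} or the Rees algebra; what it costs is the extra descent step along the $\Gm$-torsor, which the paper avoids by arguing directly on the blow-ups.
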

\begin{proof}
  If $q$ is proper (resp. finite, a closed immersion), then so is $q_B$.
  Hence it suffices to show that $dq: \Bl_{X'/Y'} \to \Bl_{X\fibprod_Y Y'/Y'}$ is a closed immersion.
  We may assume that $Y'$ is affine.
  By \thmref{thm:blproj} it will then suffice to show that the induced homomorphism of Rees algebras is surjective on $\pi_0$, which was already demonstrated in the proof of \propref{prop:Dqprop}.
\end{proof}

\subsection{Cotangent complex}
\label{ssec:bl/cot}

Let $i : X \hook Y$ be a closed immersion of derived stacks.
Recall the blow-up square \eqref{eq:blowupsquare}:
\begin{equation*}
  \begin{tikzcd}
    \El_{X/Y} \ar{r}{i_E}\ar{d}{q}
    & \Bl_{X/Y} \ar{d}{p}
    \\
    X \ar{r}{i}
    & Y.
  \end{tikzcd}
\end{equation*}

\begin{prop}
  The canonical morphism
  \begin{equation*}
    \sL_{\Bl_{X/Y}/Y} \to i_{E,*} (\sL_{\El_{X/Y}/X})
  \end{equation*}
  is invertible.
\end{prop}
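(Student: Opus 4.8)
The plan is to realise the assertion as the restriction, along the open immersion $\Bl_{X/Y}\hook\cD_{X/Y}$, of the cotangent-complex computation for the normal deformation. Recall from \lemref{lem:orange} that $j:\Bl_{X/Y}\hook\cD_{X/Y}$ is an open immersion, and from the upper-left cartesian square of \eqref{eq:Oznufci} that $\El_{X/Y}\simeq\cN_{X/Y}\fibprod_{\cD_{X/Y}}\Bl_{X/Y}$, with corresponding open immersion $j':\El_{X/Y}\hook\cN_{X/Y}$; moreover the structure maps $\El_{X/Y}\to X$ and $\Bl_{X/Y}\to Y$ of the blow-up square are the restrictions of $\cN_{X/Y}\to X$ and $\cD_{X/Y}\to Y$. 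Thus the blow-up square is obtained from the square computing \eqref{eq:defcot2} by base change along the étale maps $j$ and $j'$, and by naturality of the cotangent complex and its functorial maps under étale localization, the canonical morphism of the proposition is identified with $j^*$ of \eqref{eq:defcot2}.

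The first step is to use that open immersions are étale, so that $\sL_{\Bl_{X/Y}/Y}\simeq j^*\sL_{\cD_{X/Y}/Y}$ and $\sL_{\El_{X/Y}/X}\simeq (j')^*\sL_{\cN_{X/Y}/X}$ (the relative cotangent complexes of $j$ and $j'$ vanish). Combined with flat base change $j^*i_{\cN,*}\simeq i_{E,*}(j')^*$ for the pushforward along the closed immersion $i_\cN$ (e.g.\ \cite[Cor.~3.4.2.2]{LurieSAG}), applying $j^*$ to \eqref{eq:defcot2} produces precisely the canonical morphism $\sL_{\Bl_{X/Y}/Y}\to i_{E,*}(\sL_{\El_{X/Y}/X})$. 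Since $j^*$ carries isomorphisms to isomorphisms, it then remains to show that \eqref{eq:defcot2} is invertible for the closed immersion $f=i$.

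The main obstacle is that \propref{prop:defcot} is stated under the hypothesis that $\sL_{X/Y}$ is perfect, which a general closed immersion need not satisfy; so I would re-run its proof, replacing the appeal to \corref{cor:ontogenic} by a direct application of \propref{prop:cotan}. Indeed, $\quo{0}:Y\times\BGm\to Y\times\AGm$ is a virtual Cartier divisor, hence sharp, with $\quo{0}_\sharp(-)\simeq\quo{0}_*(-\otimes\sO(-1)[-1])$ (\examref{exam:sharpvcd}); and the complex
\[
  i_{\cN,*}\bigl(\quo{\pi}^*\sL_{X\times\BGm/Y\times\BGm}(-1)[-1]\bigr)\simeq i_{\cN,*}(\sL_{\cN_{X/Y}/X\times\BGm}),
\]
where the identification is \propref{prop:normal}, is connective: for a closed immersion $\sL_{X/Y}$ is $1$-connective, so $\sNv_{X/Y}=\sL_{X/Y}[-1]$ is connective, and pullback, twisting by the invertible sheaf $\sO(-1)$, and pushforward along the closed immersion $i_\cN$ all preserve connectivity. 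This verifies the local-eventual-connectivity hypothesis of \propref{prop:cotan}, which therefore yields \eqref{eq:defcot1}; the transitivity argument of \propref{prop:defcot}, which uses only \eqref{eq:LAGm} and not perfectness, then upgrades this to \eqref{eq:defcot2}. The two points requiring care are precisely this connectivity estimate feeding \propref{prop:cotan} and the compatibility of the canonical morphisms under restriction along $j$; both are routine once one uses that closed immersions have $1$-connective cotangent complex.
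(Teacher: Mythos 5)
Your proof is correct and takes essentially the same route as the paper: the paper's own proof is a one-liner observing that, since $\El_{X/Y} \simeq \cN_{X/Y} \fibprod_{\cD_{X/Y}} \Bl_{X/Y}$, the morphism in question is the restriction of the isomorphism \eqref{eq:defcot2} of \propref{prop:defcot} along the open immersion $\Bl_{X/Y} \hook \cD_{X/Y}$, which is exactly your first two paragraphs. Your additional step is not a detour but a genuine improvement in rigour: \propref{prop:defcot} is stated under the hypothesis that $\sL_{X/Y}$ is perfect, which an arbitrary closed immersion need not satisfy, and the paper invokes it without comment; your re-run of its proof via \propref{prop:cotan} and \examref{exam:sharpvcd}, using the $1$-connectivity of the cotangent complex of a closed immersion to verify the local eventual connectivity of $i_{\cN,*}\bigl(\quo{\pi}^*\sL_{X\times\BGm/Y\times\BGm}(-1)[-1]\bigr)$, is precisely the patch needed to justify the citation in the stated generality.
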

\begin{proof}
  Since $\El_{X/Y} \simeq \cN_{X/Y} \fibprod_{\cD_{X/Y}} \Bl_{X/Y}$ (see \eqref{eq:Oznufci}), the morphism in question is the restriction of the isomorphism
  \[
    \sL_{\cD_{X/Y}/Y}
    \to i_{\cN,*}(\sL_{\cN_{X/Y}/X})
  \]
  of \propref{prop:defcot}.
\end{proof}

\subsection{From blow-ups to normal deformations}
\label{ssec:bl/undeck}

In the case of closed immersions, the normal deformation may be realized in terms of blow-ups, just as in classical algebraic geometry.

\begin{prop}\label{prop:farmyardy}
  Let $i : X\hook Y$ be a closed immersion of derived stacks.
  Then there is a canonical $\Gm$-equivariant isomorphism
  \[
    \Dl_{X/Y} \simeq \Bl_{X\times\{0\}/Y\times\A^1} \setminus \Bl_{X\times\{0\}/Y\times\{0\}}
  \]
  over $Y \times \A^1$.
\end{prop}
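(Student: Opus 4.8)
Write $Z := X\times\{0\}$, $W := Y\times\A^1$ and $W_0 := Y\times\{0\}$, so that we have closed immersions $Z \hook W_0 \hook W$, with $Z\hook W_0$ being just $i$ over $\{0\}$. The plan is to produce a canonical $\Gm$-equivariant morphism $\Phi\colon \Dl_{X/Y}\to \Bl_{Z/W}$ over $W$, to check that it lands in the open complement of $\Bl_{Z/W_0}$, and to check that it is an isomorphism onto that complement. Note first that by \examref{exam:selfint} the square with rows $Z = Z$ and $Z\hook W_0$, $Z\hook W$ is excessive, so \propref{prop:Blprop} already guarantees that $\Bl_{Z/W_0}\hook \Bl_{Z/W}$ is a closed immersion, hence that the right-hand side of the proposition is an open substack of $\Bl_{Z/W}$.

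I would construct $\Phi$ on the functor of points. By \propref{prop:defweil} (or \corref{cor:Drightadj}), for a derived scheme $S$ with structure map $(\sigma,g)\colon S\to Y\times\A^1 = W$, a point of $\Dl_{X/Y}$ over $(\sigma,g)$ is a morphism $\phi\colon D_g\to X$ over $Y$, where $D_g := S\fibprod_{\A^1,g}\{0\}$ is the virtual Cartier divisor cut out by $g$. To this datum I attach the virtual Cartier divisor $E := D_g\hook S$ with the square
\[
  \begin{tikzcd}
    D_g \ar{r}{(\phi,0)}\ar{d}
    & Z \ar{d}
    \\
    S \ar{r}{(\sigma,g)}
    & W,
  \end{tikzcd}
\]
which commutes since $g|_{D_g}\simeq 0$ and $i\circ\phi\simeq\sigma|_{D_g}$. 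This square is automatically cartesian on classical truncations (on $\{g=0\}=D_g$ the map $\sigma$ already factors through $X$), and the induced map of conormals $\sNv_{Z/W}|_{D_g}\to\sNv_{D_g/S}$ is surjective on $\pi_0$: the conormal line $\sNv_{D_g/S}$ is generated by the class of $g$, which is the image of the conormal generator of $W_0=\{s=0\}\hook W$ under $s\mapsto g$. Thus $E$ is an \emph{excessive} virtual Cartier divisor over $Z\hook W$, and the assignment is manifestly functorial, $\Gm$-equivariant (for the weight $-1$ scaling of $\A^1$), and compatible with the projections to $W$; this defines $\Phi$.

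To show $\Phi$ is an isomorphism onto $\Bl_{Z/W}\setminus\Bl_{Z/W_0}$, I would reduce to the affine case. A morphism of derived stacks over $W$ is an equivalence once it is one on $S$-points for every derived scheme $S\to W$, and each such $S\to W$ factors through a derived scheme over $Y$; since both sides commute with base change in $Y$ (\corref{cor:Dbc} and \propref{prop:dbc} for the normal deformation, and base change of blow-ups for the right-hand side), this reduces to $Y$—hence $X$—a derived scheme, and then, working Zariski-locally, to $Y=\Spec A$ and $X=\Spec B$ affine with $A\to B$ surjective on $\pi_0$. Here \thmref{thm:Daff} gives $\Dl_{X/Y}\simeq\uSpec_W(\Rext_{X/Y})$, while \thmref{thm:blproj} gives $\Bl_{Z/W}\simeq\uProj_W(\R_{Z/W})$ and $\Bl_{Z/W_0}\simeq\uProj_Y(\R_{X/Y})$, the latter sitting inside the former as the strict transform of $\{s=0\}$. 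The statement then becomes the algebraic assertion that the homogeneous localization of the Rees algebra of $\Fib(A[s]\to B)=(I,s)$ away from this strict transform—i.e.\ the chart $D_+(s)$—is the extended Rees algebra $\Rext_{B/A}$, compatibly with $\Phi$. This is the derived form of Verdier's construction, and I would invoke \cite[\S 7.6]{Hekking} (the quasi-smooth case being \cite[Thm.~4.1.13]{blowups}).

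The main obstacle is exactly this scheme-level identification. Excessivity of $E$ by itself does not force $E=D_g$: the universal excessive divisor can be cut out by directions transverse to $s$, and this is precisely what occurs along $\Bl_{Z/W_0}$. The real content is therefore to show that deleting the strict transform $\Bl_{Z/W_0}$ of the zero-fibre leaves exactly the locus on which the universal excessive divisor coincides with $D_{g}$ (equivalently, where the class of $g$ generates the universal conormal line), for on that locus $\Phi$ is tautologically inverse to the assignment $E\mapsto (E=D_g\to Z\to X)$. Unwinding this via the Rees-algebra chart computation is the crux, and it is the one step I would borrow wholesale from \cite[\S 7.6]{Hekking} rather than reprove.
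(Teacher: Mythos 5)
Your construction of the comparison map is the same as the paper's: the counit square exhibiting $D_g \hook S$ as an excessive virtual Cartier divisor over $X\times\{0\} \hook Y\times\A^1$, with excessivity checked exactly as you do (classical cartesianness from the section of a closed immersion, conormal surjectivity because $s$ maps to the generator $[g]$). Where you diverge is in the crux you flag at the end. The paper never reduces to affines or touches Rees algebras here; instead it proves the more general \propref{prop:conversation} — for \emph{any} virtual Cartier divisor $h : S \hook T$ and closed immersion $Z \hook S$, the Weil restriction $h_*(Z)$ is the open complement of $\Bl_{Z/S}$ in $\Bl_{Z/T}$ — entirely on the functor of points, and then deduces \propref{prop:farmyardy} as the special case $h = (0 : Y \hook Y\times\A^1)$. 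In that argument, full faithfulness of $h_*(Z)(U) \to \Bl_{Z/T}(U)$ is immediate; the essential image is precisely the locus where the universal excessive divisor $D \hook U$ maps isomorphically to $U_S := U\fibprod_T S$ (your condition ``$E = D_g$''), which is open because it amounts to vanishing of the fibre of a map of structure sheaves (\cite[Lem.~2.9.3.3]{LurieSAG}); and complementarity with the closed substack $\Bl_{Z/S}$ is checked on field-valued points, where both $D\hook U$ and $U_S \hook U$ are either empty or $\Spec(\kappa\oplus\kappa[1])$, and an endomorphism of $\Spec(\kappa\oplus\kappa[1])$ over $\Spec(\kappa)$ is either invertible or factors through $\Spec(\kappa)$. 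Your route — base-changing to affines and identifying the chart $D_+(st)$ of $\uProj_{Y\times\A^1}(\R_{X\times\{0\}/Y\times\A^1})$ with $\uSpec(\Rext_{X/Y})$ — is the derived Verdier computation of \cite[\S 7.6]{Hekking} and does close the gap, but it imports the decisive step from outside the paper, whereas the paper's dichotomy argument is self-contained and yields the statement for an arbitrary virtual Cartier divisor $h$ rather than only the zero section of $\A^1$.
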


In particular, we see that the normal deformation $\Dl_{X/Y}$ recovers the construction of \cite{blowups} in the quasi-smooth case.

Slightly more generally, we have:

\begin{prop}\label{prop:conversation}
  Let $i : Z \hook S$ be a closed immersion and $h : S \hook T$ a virtual Cartier divisor.
  Then the morphism $Bh : \Bl_{Z/S} \to \Bl_{Z/T}$ is a closed immersion, and its open complement is the Weil restriction $h_*(Z)$.
\end{prop}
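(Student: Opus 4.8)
The plan is to treat the two assertions in turn. For the first, I would appeal directly to the functoriality already in hand. The square with top row $Z \xrightarrow{i} S$, bottom row $Z \xrightarrow{h\circ i} T$, left arrow $\id_Z$ and right arrow $h$ is excessive by \examref{exam:selfint}, and its right vertical arrow $h$ is a closed immersion (being a virtual Cartier divisor); the induced map on blow-ups is exactly $Bh : \Bl_{Z/S} \to \Bl_{Z/T}$, so \propref{prop:Blprop} gives at once that $Bh$ is a closed immersion.

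For the second assertion I would realize $h_*(Z)$ as the complementary open substack through the universal properties. Recall that a $W$-point of $\Bl_{Z/T}$ (for $W$ a derived scheme over $T$) is an excessive virtual Cartier divisor $D \hook W$ over $h\circ i$, whereas by definition of the Weil restriction a $W$-point of $h_*(Z)$ is a morphism $\phi : W\fibprod_T S \to Z$ over $S$. The crucial observation is that $D'' := W\fibprod_T S \hook W$ is itself a virtual Cartier divisor (a base change of $h$), and that $\phi$ promotes it to a virtual Cartier divisor over $h\circ i$. I would then check this is \emph{excessive}: classical cartesianness is automatic because $\phi$ is a morphism over $S$, so that $D'' \to S$ factors through the closed immersion $Z \hook S$ and hence $(W\fibprod_T Z)_\cl \simeq (D'')_\cl$; and, writing the induced closed immersion as $e : D'' \to Z\fibprod_T W \simeq Z\fibprod_S D''$, one identifies $e$ with the base change along $\phi$ of the relative diagonal $Z \to Z\fibprod_S Z$, which is excessive by \examref{exam:selfint}. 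Since $1$-connectivity of conormal complexes is preserved by pullback, \lemref{lem:excvcd} shows $e$ is excessive. This produces a canonical morphism $j : h_*(Z) \to \Bl_{Z/T}$.

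It remains to see that $j$ is an open immersion with image the complement of $Bh$. Conceptually, on $\Bl_{Z/T}$ the universal exceptional divisor $\El_{Z/T}$ factors through the total transform $p^*S := \Bl_{Z/T}\fibprod_T S$ (itself a virtual Cartier divisor), and the resulting inclusion $\El_{Z/T} \hook p^*S$ of virtual Cartier divisors is classified by a section $r$ of an invertible sheaf on $\Bl_{Z/T}$, the residual (``strict transform'') section. Its non-vanishing locus — where $\El_{Z/T} = p^*S$, i.e. where the universal divisor coincides with the total transform of $S$ — is open and is precisely the image of $j$, while its vanishing locus is the strict transform. I expect the \textbf{main obstacle} to be the identification of this vanishing locus with $Bh(\Bl_{Z/S})$ together with the verification that $j$ is an open immersion onto the complement. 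For this I would reduce to $T$ affine with $h$ cut out by a single function $f$ (a virtual Cartier divisor is Zariski-locally principal, and all three of $\Bl_{Z/T}$, $\Bl_{Z/S}$ and $h_*(Z)$ are stable under base change in $T$ by \lemref{lem:mouthiness} and the base-change property of blow-ups), invoke \thmref{thm:blproj} to write $\Bl_{Z/T} \simeq \uProj_T(\R_{Z/T})$, and observe that $f$ determines a homogeneous element $\tilde f \in (\R_{Z/T})_1 \simeq \sI$. Then, using the surjectivity of Rees algebras established in the proof of \propref{prop:Dqprop}, I would identify $V_+(\tilde f)$ with $Bh(\Bl_{Z/S})$, and, by computing the degree-zero part of the localization $\R_{Z/T}[\tilde f^{-1}]$ with the help of \lemref{lem:town} and \lemref{lem:ice}, identify the distinguished open $D_+(\tilde f)$ with the Weil restriction $h_*(Z)$, matching the morphism $j$.
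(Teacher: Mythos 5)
Your first assertion and your construction of $j : h_*(Z) \to \Bl_{Z/T}$ agree with the paper: the closed-immersion statement is exactly \propref{prop:Blprop} applied to the excessive square from \examref{exam:selfint}, and the paper likewise sends $\phi : W\fibprod_T S \to Z$ to the virtual Cartier divisor $W\fibprod_T S \hook W$ regarded over $h\circ i$. Your verification of excessiveness --- identifying $e$ with a base change of the diagonal $Z \to Z\fibprod_S Z$, which is excessive by \examref{exam:selfint}, and using that $1$-connectivity of $\sI_e$ is preserved under pullback --- is a clean way of filling in a detail the paper leaves to the reader (``which one verifies is excessive'').

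The gap is in the final step. Granting the reduction to $T$ affine with $h$ cut out by a single $f$, the two identifications you need, $V_+(\tilde f) = Bh(\Bl_{Z/S})$ and $D_+(\tilde f) \simeq h_*(Z)$, are precisely the content of the proposition and are not supplied by the tools you cite: the surjectivity of $\pi_0\R_{Z/T} \to \pi_0\R_{Z/S}$ only gives the containment $Bh(\Bl_{Z/S}) \sub V_+(\tilde f)$, while \lemref{lem:town} and \lemref{lem:ice} compute the complement of the vertex and the fibre over the vertex of the affine cone, not the chart $D_+(\tilde f)$. Note moreover that $\R_{Z/S}$ is in general \emph{not} the derived quotient $\R_{Z/T}\modmod \tilde f$ (already classically the surjection from $I^n/fI^{n-1}$ onto the $n$th power of $I/(f)$ can fail to be injective), so $V_+(\tilde f)$ and $Bh(\Bl_{Z/S})$ can at best be expected to agree set-theoretically, and even that requires an argument you have not given. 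The paper avoids Rees algebras entirely here: it observes that $h_*(Z)(U) \to \Bl_{Z/T}(U)$ is fully faithful with essential image the excessive divisors $D \hook U$ for which the induced map $D \to U_S := U\fibprod_T S$ is invertible; this is an open condition because it amounts to the vanishing of the perfect complex $\Fib(\sO_{U_S}\to\sO_D)$, which is open by \cite[Lem.~2.9.3.3]{LurieSAG}; and complementarity with $Bh(\Bl_{Z/S})$ is then checked on field-valued points, using that a virtual Cartier divisor in $\Spec(\kappa)$ is either empty or $\Spec(\kappa\oplus\kappa[1])$, and that an endomorphism of the latter over $\Spec(\kappa)$ is either invertible or factors through $\Spec(\kappa)$. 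If you wish to keep your chart computation you must prove the two identifications above directly; otherwise the pointwise argument is the shorter route.
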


\begin{proof}
  The first statement follows from \propref{prop:Blprop}.
  
  Let $D \hook h_*(Z)$ be the virtual Cartier divisor defined by the base change of $h : S \hook T$ along $h_*(Z) \to T$.
  We have a counit morphism $D \to Z$ over $S$ which fits in a commutative square
  \[\begin{tikzcd}
    D \ar{r}\ar{d}
    & h_*(Z) \ar{d}
    \\
    Z \ar{r}{h\circ i}
    & T
  \end{tikzcd}\]
  which one verifies is excessive.
  Regarded as an excessive virtual Cartier divisor over $h \circ i$, this gives rise to a canonical morphism $h_*(Z) \to \Bl_{Z/T}$.
  For any $u : U \to T$, the functor
  \[
    h_*(Z)(U\xrightarrow{u}T) \to \Bl_{Z/T}(U\xrightarrow{u}T)
  \]
  sends a morphism $\phi : U_S := U \fibprod_T S \to Z$ over $S$ to the excessive virtual Cartier divisor
  \[\begin{tikzcd}
    U_S \ar{rr}\ar{d}{\phi}\ar{rd}{u_S}
    &
    & U \ar{d}{u}
    \\
    Z \ar{r}{i}
    & S \ar{r}{h}
    & T.
  \end{tikzcd}\]
  This functor is clearly fully faithful, and an excessive virtual Cartier divisor $D \hook U$ over $h\circ i$ lies in the essential image if and only if the induced morphism $D \to U_S$ is invertible:
  \[\begin{tikzcd}
    D \ar{r}{\sim}\ar{d}
    & U_S \ar{r}\ar{d}
    & U \ar{d}{u}\ar{d}
    \\
    Z \ar{r}{i}
    & S \ar{r}{h}
    & T.
  \end{tikzcd}\]
  This is an open condition (e.g. it is equivalent to the condition that the fibre of the induced map on structure sheaves vanishes, which is open by \cite[Lem.~2.9.3.3]{LurieSAG}).
  This shows that $h_*(Z) \to \Bl_{Z/T}$ is an open immersion.
  
  It remains to show that when $U=\Spec(\kappa)$ for a field $\kappa$, a morphism $u : U \to \Bl_{Z/T}$ factors through $\Bl_{Z/S}$ if and only if $D \to U_S$ is not invertible.
  The former condition is equivalent to the existence of a lift in the following diagram:
  \[\begin{tikzcd}
    D \ar{r}\ar{d}
    & U_S \ar{r}\ar{d}
    & U \ar{d}{u}\ar{d}\ar[dashed]{ld}
    \\
    Z \ar{r}{i}
    & S \ar{r}{h}
    & T,
  \end{tikzcd}\]
  or equivalently that of a section of $U_S \to U$ compatible with the morphisms $D \hook U$ and $D \hook U_S$.
  Since $U = \Spec(\kappa)$, the virtual Cartier divisors $D\hook U$ and $U_S\hook U$ are either empty or of the form $\Spec(\kappa\oplus\kappa[1])$.
  The only nontrivial case is where neither is empty.
  In that case, a morphism $\Spec(\kappa\oplus\kappa[1]) \to \Spec(\kappa\oplus\kappa[1])$ over $\Spec(\kappa)$ is either invertible or factors through $\Spec(\kappa) = U$.
\end{proof}

\begin{proof}[Proof of \propref{prop:farmyardy}]
  Apply \propref{prop:conversation} with $h$ the virtual Cartier divisor $0 : Y \hook Y \times \A^1$ and $i$ the closed immersion $i : X \hook Y$.
\end{proof}

\subsection{Classical blow-ups}

Let $i : X \hook Y$ be a closed immersion of derived Artin stacks, and denote by $i_\cl : X_\cl \hook Y_\cl$ the closed immersion on classical truncations.
In this paragraph we compare the classical blow-up of $\sf{Y}$ along $\sf{X}$ with the classical truncation of the derived blow-up $\Bl_{X/Y}$.

Denote by $\msf{Bl}_{X_\cl/Y_\cl}$ the classical blow-up of $i_\cl : X_\cl \hook Y_\cl$.
We have the blow-up squares
\[\begin{tikzcd}
  (\El_{X/Y})_\cl \ar{r}\ar{d}
  & (\Bl_{X/Y})_\cl \ar{d}
  \\
  X_\cl \ar{r}{i_\cl}
  & Y_\cl,
\end{tikzcd}
\qquad
\begin{tikzcd}
  \sfE_{X_\cl/Y_\cl} \ar{r}\ar{d}
  & \msf{Bl}_{X_\cl/Y_\cl} \ar{d}
  \\
  X_\cl \ar{r}{i_\cl}
  & Y_\cl.
\end{tikzcd}\]
where $(\El_{X/Y})_\cl \simeq \P_{X_\cl}((\cN_{X/Y})_\cl)$ is the ``virtual'' exceptional divisor (i.e., the projectivization of the classical truncation of the normal bundle of $i$) and $\sfE_{X_\cl/Y_\cl} := \P_{X_\cl}(\sfC_{X_\cl/Y_\cl})$ is the classical one (i.e., the projectivized normal cone of $i_\cl$).

Since the right-hand square defines an excessive virtual Cartier divisor over $i_\cl$ (and hence $i$), it determines a canonical morphism $\msf{Bl}_{X_\cl/Y_\cl} \to (\Bl_{X/Y})_\cl$ which fits in the diagram of cartesian squares
\[\begin{tikzcd}
  \sfE_{X_\cl/Y_\cl} \ar{r}\ar{d}
  & \msf{Bl}_{X_\cl/Y_\cl} \ar{d}\ar[leftarrow]{r}
  & \msf{Bl}_{X_\cl/Y_\cl}\setminus\sfE_{X_\cl/Y_\cl}\ar[equals]{d}
  \\
  (\El_{X/Y})_\cl \ar{r}\ar{d}
  & (\Bl_{X/Y})_\cl \ar[leftarrow]{r}\ar{d}
  & (\Bl_{X/Y})_\cl\setminus (\El_{X/Y})_\cl\ar{d}
  \\
  X_\cl \ar{r}{i_\cl}
  & Y_\cl \ar[leftarrow]{r}
  & Y_\cl \setminus X_\cl.
\end{tikzcd}\]

\begin{prop}\label{prop:compbl}
  The morphisms $\msf{Bl}_{X_\cl/Y_\cl} \to (\Bl_{X/Y})_\cl$ and $\sfE_{X_\cl/Y_\cl} \to (\El_{X/Y})_\cl$ are closed immersions.
  Moreover, the inclusions
  \begin{equation*}
    (\Bl_{X/Y})_\cl \setminus (\El_{X/Y})_\cl
    \simeq \msf{Bl}_{X_\cl/Y_\cl} \setminus \sfE_{X_\cl/Y_\cl}
    \hook \msf{Bl}_{X_\cl/Y_\cl}
    \hook (\Bl_{X/Y})_\cl
  \end{equation*}
  exhibit the classical blow-up $\msf{Bl}_{X_\cl/Y_\cl}$ as the schematic closure of $(\Bl_{X/Y})_\cl \setminus (\El_{X/Y})_\cl$ in $(\Bl_{X/Y})_\cl$.
\end{prop}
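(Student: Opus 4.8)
The plan is to mirror the proof of \propref{prop:compMD}, transporting its schematic‑closure statement for deformations into one for blow‑ups by means of the $\uProj$ description of \thmref{thm:blproj}. Since all three assertions — that $\msf{Bl}_{X_\cl/Y_\cl}\to(\Bl_{X/Y})_\cl$ and $\sfE_{X_\cl/Y_\cl}\to(\El_{X/Y})_\cl$ are closed immersions, and that $\msf{Bl}_{X_\cl/Y_\cl}$ is a schematic closure — are local on the target, I would first reduce to the case where $Y=\Spec(A)$ is affine, so that $X=\Spec(B)$ for a homomorphism $A\to B$ surjective on $\pi_0$, and both blow‑ups become honest $\uProj$'s of Rees algebras.

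Next I would identify the classical truncations. By \thmref{thm:blproj} we have $\Bl_{X/Y}\simeq\uProj_Y(\R_{X/Y})$, and since classical truncation commutes with relative $\uSpec$ of a connective graded algebra, with removal of the vertex, and with the $\Gm$‑quotient, this gives $(\Bl_{X/Y})_\cl\simeq\uProj_{Y_\cl}(\pi_0\R_{X/Y})$; likewise $\msf{Bl}_{X_\cl/Y_\cl}\simeq\uProj_{Y_\cl}({}^\heartsuit\R_{X_\cl/Y_\cl})$ for the classical Rees algebra ${}^\heartsuit\R_{X_\cl/Y_\cl}=\bigoplus_{n\ge0}({}^\heartsuit\sI)^n$. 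Under these identifications the comparison morphism is the one induced by the graded homomorphism $\pi_0\R_{X/Y}\to{}^\heartsuit\R_{X_\cl/Y_\cl}$, which is the restriction to nonnegative degrees of the surjection $\pi_0\Rext_{X/Y}\twoheadrightarrow\msf{R}^{\mrm{ext}}_{X_\cl/Y_\cl}$ established in the proof of \propref{prop:compMD}, and hence is itself surjective. As both algebras are generated in degree $1$ over $\pi_0(A)$ (by the results of \ssecref{ssec:Rees}), a graded surjection induces a closed immersion on $\uProj$, proving the first assertion. The exceptional‑divisor statement is then immediate: $\sfE_{X_\cl/Y_\cl}\to(\El_{X/Y})_\cl$ is the base change of $\msf{Bl}_{X_\cl/Y_\cl}\to(\Bl_{X/Y})_\cl$ along the closed immersion $(\El_{X/Y})_\cl\hook(\Bl_{X/Y})_\cl$, by the leftmost cartesian square in the diagram preceding the statement.

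For the schematic‑closure assertion I would argue formally, reducing to a single classical input. Writing $U:=(\Bl_{X/Y})_\cl\setminus(\El_{X/Y})_\cl$, the cartesian diagram preceding the proposition identifies $U\simeq\msf{Bl}_{X_\cl/Y_\cl}\setminus\sfE_{X_\cl/Y_\cl}$, an open of the classical blow‑up through which the open immersion $U\hook(\Bl_{X/Y})_\cl$ factors. Because $\msf{Bl}_{X_\cl/Y_\cl}\hook(\Bl_{X/Y})_\cl$ is a closed immersion and schematic closure is compatible with closed immersions, the schematic closure of $U$ in $(\Bl_{X/Y})_\cl$ coincides with the schematic closure of $U$ in $\msf{Bl}_{X_\cl/Y_\cl}$. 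The proof is then completed by the classical fact that the complement of the exceptional divisor is schematically dense in the classical blow‑up: on each affine chart $D_+(f)$ with $f\in{}^\heartsuit\sI$ one has $U\cap D_+(f)=D(f)\subseteq Y_\cl$, and the restriction $({}^\heartsuit\R_{X_\cl/Y_\cl})_{(f),0}\hookrightarrow(\pi_0 A)_f$ is injective since ${}^\heartsuit\R_{X_\cl/Y_\cl}$ is a graded subalgebra of $(\pi_0 A)[t]$.

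The main obstacle I anticipate is bookkeeping rather than conceptual: verifying that the comparison morphism $\msf{Bl}_{X_\cl/Y_\cl}\to(\Bl_{X/Y})_\cl$ built from the universal property of excessive virtual Cartier divisors agrees, under \thmref{thm:blproj}, with the morphism induced by the graded homomorphism $\pi_0\R_{X/Y}\to{}^\heartsuit\R_{X_\cl/Y_\cl}$, and that the surjection extracted from \propref{prop:compMD} is exactly the one needed after passing to nonnegative degrees. Once this identification is in place, the closed‑immersion assertions follow from the graded surjection and the schematic‑closure assertion follows from the elementary density computation above.
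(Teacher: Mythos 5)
Your proposal is correct, but it takes a genuinely different route from the paper. The paper's proof is a one-line formal deduction: by \lemref{lem:orange}, $\Bl_{X/Y}$ is an open substack of $\cD_{X/Y}$ (the complement of $\cD i : X\times\AGm \to \cD_{X/Y}$), so both the closed-immersion and schematic-closure assertions are obtained by base-changing the already-proven \propref{prop:compMD} (in its $\Gm$-equivariant form, \remref{rem:Reituba}) along the open immersion $(\Bl_{X/Y})_\cl \hook (\cD_{X/Y})_\cl$; no new computation with Rees algebras is needed, and in particular the identification of the universal-property comparison map with the algebra-induced map is inherited from the deformation case. You instead rerun the argument directly on the blow-up via \thmref{thm:blproj}: truncate the $\uProj$ description, restrict the surjection $\pi_0\Rext_{X/Y}\twoheadrightarrow{}^\heartsuit\Rext_{X_\cl/Y_\cl}$ to nonnegative degrees, and prove schematic density of the complement of the exceptional divisor by the standard chart computation $({}^\heartsuit\R_{X_\cl/Y_\cl})_{(ft)}\hookrightarrow (\pi_0 A)_f$. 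Your route is more self-contained (it replaces the citation of the image statement from Hekking's thesis by an elementary density argument), at the cost of the compatibility check you flag — that the map $\msf{Bl}_{X_\cl/Y_\cl}\to(\Bl_{X/Y})_\cl$ classified by the excessive virtual Cartier divisor agrees under \thmref{thm:blproj} with the one induced by the graded surjection. That check does need to be done (it is the Proj-analogue of the translation step in the proof of \propref{prop:compMD}, and follows by restricting that identification to the open locus of \lemref{lem:orange}), but it is routine; with it in place your argument is complete. All other steps — the smooth-local reduction to affine $Y$, the commutation of classical truncation with $\uSpec$, vertex removal and $\Gm$-quotients, the deduction of the exceptional-divisor statement from the cartesian square, and the reduction of the schematic closure to density inside $\msf{Bl}_{X_\cl/Y_\cl}$ — are sound.
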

\begin{proof}
  This follows from \propref{prop:compMD} (and \remref{rem:Reituba}) by base change along the open immersion $(\Bl_{X/Y})_\cl \hook (\cD_{X/Y})_\cl$ (\lemref{lem:orange}).
\end{proof}


\section{Algebraicity I}
\label{sec:alg1}

\subsection{Statement}

Our goal in this section is to prove the following algebraicity statement, which in particular implies \thmref{thm:intro/weil}\itemref{item:intro/weil/hfp}:

\begin{thm}\label{thm:alg}
  Let $h : S \to T$ be an afp finite morphism of Tor-amplitude $\le d$.
  Let $f : X_1 \to X_2$ be an $n$-representable locally hfp morphism of derived stacks over $S$.
  Then $h_*(f) : h_*(X_1) \to h_*(X_2)$ is $(n+d)$-representable.

  If $d=0$ (i.e., $h$ is finite flat), then $h_*(f)$ is moreover separated representable if $f$ is separated representable, and $n$-representable with separated $(n+1)$-fold diagonal if $f$ is $n$-representable.
\end{thm}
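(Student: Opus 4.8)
The plan is to reduce to an affine situation, prove algebraicity at \emph{some} level by induction on $n$, and only afterwards pin down the precise level $n+d$ using the truncatedness estimate. First I would invoke \corref{cor:thinkably} to reduce to the case where $T$ is affine and $X_2 = S$; since $h$ is finite (hence affine), $S$ is then affine as well, and $X := X_1$ becomes an $n$-Artin, locally hfp derived stack over $S$. It thus suffices to show $h_*(X)$ is $(n+d)$-Artin over $T$. At this point I would decouple the two difficulties: by \propref{prop:weiltrunc} the stack $h_*(X)$ is almost $(n+d)$-truncated, so by the criterion recalled in \ssecref{ssec:convent/trunc} (an $m$-Artin stack with $m\ge n+d$ is $(n+d)$-Artin as soon as it is almost $(n+d)$-truncated) it is enough to prove that $h_*(X)$ is Artin \emph{at all}. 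This isolates the genuine geometric content from the bookkeeping of the precise level.

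The heart is then an induction on $n$. For the base case I would first reduce from algebraic spaces to schemes to affines. Since $h$ is finite, hence proper representable, \thmref{thm:flocculency} shows $h_*$ carries open immersions (resp.\ representable étale morphisms) to open immersions (resp.\ representable étale morphisms), while \corref{cor:diffusedly} (via \lemref{lem:metapsychology}) supplies the corresponding covering families; this is exactly what is needed because $h_*$ does \emph{not} commute with coproducts, so an atlas cannot be Weil-restricted naively. Gluing then reduces us to the affine case, where a locally hfp $X$ over the affine $S$ embeds as a derived zero locus of a cosection $\sO_{\A^N_S}^{\oplus M}\to\sO_{\A^N_S}$ inside $\A^N_S \simeq \V_S(\sO_S^{\oplus N})$. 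Now \corref{cor:coxcombical} (using that $h$ is sharp, \examref{exam:stayship}) shows $h_*(\A^N_S)$ is $d$-Artin over $T$, and \propref{prop:zeroalg} shows that $h_*(X)\to h_*(\A^N_S)$ is a closed immersion when $d=0$ and $(d-1)$-representable otherwise; in either case $h_*(X)$ is Artin.

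For the inductive step I would choose a smooth surjection $U\twoheadrightarrow X$ from a scheme. \corref{cor:ontogenic} (with cohomological dimension $e=0$, since a finite morphism is affine) shows each $h_*(U_J)\to h_*(X)$ is smooth, and \corref{cor:diffusedly} shows that together they form a smooth effective epimorphism; each $h_*(U_J)$ is Artin by the base case, so composing with a scheme atlas of $h_*(U_J)$ produces a scheme atlas of $h_*(X)$. For the diagonal, $h_*$ preserves limits, so $\Delta_{h_*(X)/T}\simeq h_*(\Delta_{X/S})$; as $\Delta_{X/S}$ is $(n-1)$-representable and locally hfp (the diagonal of a locally hfp morphism is locally hfp), the inductive hypothesis makes $h_*(\Delta_{X/S})$ eventually representable. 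Hence $h_*(X)$ has a scheme atlas and eventually representable diagonal, so it is Artin, which together with the truncatedness reduction proves the first assertion. Finally, when $d=0$ the refinements follow by feeding the diagonal $\Delta_f$, resp.\ the $(n+1)$-fold diagonal of $f$, into \propref{prop:zeroalg} with $d=0$: there $h_*$ sends closed immersions to closed immersions and (hence) monomorphisms to monomorphisms, so the separatedness of the relevant diagonal of $f$ is inherited by $h_*(f)$.

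The main obstacle I expect is the affine base case: correctly realizing $h_*$ of a locally hfp affine $X$ as a zero locus inside a Weil-restricted derived vector bundle and extracting the sharp representability bounds from \corref{cor:coxcombical} and \propref{prop:zeroalg}. A close second is the globalization, where the failure of $h_*$ to commute with coproducts forces the covering arguments of \corref{cor:diffusedly} in place of a naive Weil restriction of a chosen atlas, and where one must be careful that the opens $h_*(U_J)$ glue to an Artin stack rather than merely an étale sheaf.
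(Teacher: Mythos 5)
Your overall architecture --- reduce to $T$ affine and $X_2=S$ via \corref{cor:thinkably}, descend along the covers supplied by \thmref{thm:flocculency} and \corref{cor:diffusedly}, treat the affine case by Weil-restricting vector bundles and zero loci, and recover the precise level $n+d$ at the end from \propref{prop:weiltrunc} --- is essentially the paper's, and your decoupling of ``Artin at all'' from ``Artin at level $n+d$'' is a legitimate reorganization. But there is a genuine gap at exactly the step you flag as the crux: a locally hfp affine derived scheme $X$ over an affine $S$ is \emph{not} in general the derived zero locus of a cosection $\sO_{\A^N_S}^{\oplus M}\to\sO_{\A^N_S}$. Such a zero locus is automatically quasi-smooth over $\A^N_S$ (its conormal complex is $\sO^{\oplus M}$, of Tor-amplitude $[0,0]$), hence quasi-smooth over $S$, whereas an hfp closed immersion $X\hook\A^N_S$ can have cotangent complex of arbitrarily large Tor-amplitude: for instance $X=\Spec(\Sym_k(k[2]))$ over $S=\Spec(k)$ is hfp but is not the vanishing locus of unshifted functions on any affine space. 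As written, your affine base case therefore only proves the theorem when $f$ is quasi-smooth.

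The missing ingredient is Lurie's structure theorem for hfp closed immersions (\thmref{thm:hfp}, with \remref{rem:rumply}): over an affine base, $X\hook\A^N_S$ factors as a finite tower $X=X^{(m)}\to\cdots\to X^{(0)}=\A^N_S$ in which each stage is the zero locus of a cosection of a perfect complex of Tor-amplitude $[i,i]$, with $i$ increasing. One then Weil-restricts the tower stage by stage: \propref{prop:zeroalg} applies with $a=i$, giving a closed immersion for $i\ge d$ and a $(d-i-1)$-representable morphism otherwise; this is \lemref{lem:goblin}, packaged as \thmref{thm:clalg}. The same point resurfaces in your $d=0$ refinements: the fact that $h_*$ carries hfp closed immersions to closed immersions when $h$ is finite flat is \thmref{thm:clalg}, not \propref{prop:zeroalg}, and its proof again runs through the tower. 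Once this input is inserted, your argument goes through and is close to the paper's, which additionally streamlines the smooth descent by choosing the atlas $U\twoheadrightarrow X$ to sit as an hfp closed subscheme of a separated scheme $V$ smooth over $S$ (\lemref{lem:acturience}), so that $h_*(U)$ is controlled directly by \lemref{lem:depository} and \thmref{thm:clalg} rather than by a separate scheme-level reduction.
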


Using \propref{prop:map=weil} we also obtain the following reformulation in terms of mapping stacks, in particular proving \corref{cor:intro/map}\itemref{item:intro/weil/hfp}:

\begin{cor}\label{cor:algmap}
  Let $X \to S$ and $Y \to S$ be eventually representable morphisms of derived stacks.
  If $X \to S$ is afp, finite, and of Tor-amplitude $\le d$, and $Y \to S$ is locally hfp and $n$-representable, then $\uMaps_S(X, Y) \to S$ is $(n+d)$-representable.
\end{cor}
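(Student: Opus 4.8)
The statement is immediate from \thmref{thm:alg} once one unwinds \propref{prop:map=weil}: with $h\colon X\to S$ the structural morphism, $\uMaps_S(X,Y)\simeq h_*(X\fibprod_S Y\to X)$, and $X\fibprod_S Y\to X$ is the base change of $Y\to S$, hence locally hfp and $n$-representable over $X$, while $h$ is afp, finite and of Tor-amplitude $\le d$; thus $h_*(X\fibprod_S Y)\to h_*(X)\simeq S$ is $(n+d)$-representable. So the plan is to prove \thmref{thm:alg} and read off \corref{cor:algmap} as above. By \corref{cor:thinkably} (base change for Weil restrictions) it suffices to treat $T$ affine and $X_2=S$, so the goal becomes: for an $n$-Artin stack $X$ locally hfp over $S$, the Weil restriction $h_*(X)$ is $(n+d)$-Artin over $T$.

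The decisive simplification is that the sharp bound $n+d$ is produced for free at the end. Indeed, \propref{prop:whalefin} shows $h_*(X)\to T$ is locally hfp, and \propref{prop:weiltrunc} shows it is almost $(n+d)$-truncated (as $X$ is $n$-representable, hence almost $n$-truncated, over $S$). By the truncatedness criterion recalled in \ssecref{ssec:convent/trunc}, once $h_*(X)$ is known to be $M$-Artin for some $M$, almost $(n+d)$-truncatedness automatically upgrades this to $(n+d)$-Artin. The whole problem thus reduces to the qualitative assertion that $h_*(X)$ is Artin of \emph{some} finite level, which I prove by induction on $n$.

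For the inductive step I produce a diagonal bound and an atlas. Since $h_*$ preserves limits, $h_*(X)\fibprod_T h_*(X)\simeq h_*(X\fibprod_S X)$ and the diagonal of $h_*(X)$ is $h_*(\Delta_{X/S})$, where $\Delta_{X/S}\colon X\to X\fibprod_S X$ is $(n-1)$-representable and locally hfp; by the inductive hypothesis this is eventually representable. For the atlas, choose a smooth surjection $\coprod_\alpha U_\alpha\twoheadrightarrow X$ with each $U_\alpha$ an affine scheme, so each $U_\alpha\to X$ is locally hfp and the total map is an effective epimorphism in the étale topology. By \corref{cor:diffusedly} the induced map $\coprod_{J\ \mathrm{finite}} h_*(U_J)\to h_*(X)$ is an effective epimorphism; and since $h$ is finite, hence affine and so universally of cohomological dimension $0$, \corref{cor:ontogenic} shows each $h_*(U_J)\to h_*(X)$ is smooth. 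Granting that each $h_*(U_J)$ is Artin (the base case below), composing with atlases of the $h_*(U_J)$ gives a smooth surjection onto $h_*(X)$ from a scheme; together with the eventually representable diagonal this makes $h_*(X)$ Artin.

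It remains to settle the base case $n=0$, where $X=U$ is an affine scheme locally hfp over the affine $S$, so $U\to S$ is affine and $\sO_U$ is a compact animated $\sO_S$-algebra. This is the main obstacle: $U\to S$ may be badly non-quasi-smooth, so $U$ is not the zero locus of a section of a single vector bundle. Instead I use the cell structure of compact algebras: $\sO_U$ is a retract of a finite cell $\sO_S$-algebra, and every such cell algebra is built from $S$ by finitely many steps of the form $W\mapsto \V_W(\sO_W)=W\times\A^1$ (adjoining a variable) and $W\mapsto (W\modmod f)$ (the derived quotient by a function, i.e. the zero locus of a cosection $\sO_W\to\sO_W$). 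Each step preserves ``$h_*$ is Artin of some level'': for the affine-line step this is \corref{cor:coxcombical} applied to $\sE=\sO_W$ (Tor-amplitude $[0,0]$, so $a=0$), and for the derived-quotient step it is \propref{prop:zeroalg} applied to the cosection $\sO_W\to\sO_W$. Starting from the affine $h_*(S)\simeq T$ and iterating, $h_*$ of the finite cell algebra is Artin; as $U$ is a retract, $h_*(U)$ is a retract of an Artin stack, hence Artin (retracts being absorbed by idempotent-completeness at the level of the diagonal and the atlas). Finally, the $d=0$ addendum follows because finite flat $h$ makes $h_*$ agree with the classical Weil restriction and, by the $a\ge d$ cases of \corref{cor:coxcombical} and \propref{prop:zeroalg}, preserves affineness and closed immersions; tracking the diagonal through the above argument then yields the stated separatedness and higher-diagonal refinements.
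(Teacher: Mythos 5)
Your global strategy is sound and matches the paper's: the reduction of \corref{cor:algmap} to \thmref{thm:alg} via \propref{prop:map=weil}, the further reduction to $T$ affine and $X_2=S$ via \corref{cor:thinkably}, the induction on $n$ through the diagonal and an atlas built from \corref{cor:ontogenic}, \propref{prop:unconversableness} and \corref{cor:diffusedly}, and the observation that \propref{prop:weiltrunc} upgrades ``Artin of some level'' to ``$(n+d)$-Artin'' for free (the paper uses exactly this upgrade in one step of its proof). The gap is in your base case, and it is twofold. First, your claimed cell structure is false: a finite cell animated $\sO_S$-algebra is built by attaching cells in \emph{all} degrees, where attaching an $(i+1)$-cell means forming the zero locus of a cosection of $\sO^{\oplus r}[i]$, not of $\sO^{\oplus r}$. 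An algebra built only from your two moves (adjoining variables and derived quotients by functions) has relative cotangent complex of Tor-amplitude contained in $[0,1]$, a property that passes to retracts and base changes; so, for instance, $\V_S(\sO_S[2])=\uSpec_S(\Sym_{\sO_S}(\sO_S[2]))$ is locally hfp and affine over $S$ but is not a retract of anything of your form. This is precisely why the paper invokes Lurie's factorization (\thmref{thm:hfp}), which produces zero loci of $i$-shifted cosections for every $i$ up to the Tor-amplitude of $\sL_{X/Y}$, and why \propref{prop:zeroalg} is stated for perfect complexes of arbitrary amplitude $[a,b]$ rather than just for $\sO$.

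Second, even after correcting the cell structure, compactness of $\sO_U$ only makes it a \emph{retract} of a finite cell algebra, and your parenthetical appeal to ``idempotent-completeness'' does not establish that a retract of an Artin stack is Artin. Almost $n$-truncatedness does pass to retracts, but neither the existence of a smooth atlas nor the representability of the diagonal obviously does: pulling back an atlas of $Y$ along the section $X\to Y$ of the retraction only yields an atlas of $X$ once one already knows $X\to Y$ is sufficiently representable, which is what is to be proved. The paper sidesteps both problems at once: by \lemref{lem:acturience} (or, in the affine case, a closed embedding into some $\A^N_S$) the atlas $U\to S$ factors as an hfp \emph{closed immersion} into a smooth separated scheme; the smooth separated piece is handled by \lemref{lem:depository} using the étale-local structure of smooth morphisms together with \thmref{thm:flocculency}, and the closed immersion is handled by \thmref{thm:clalg}, since Lurie's factorization of hfp closed immersions is an honest finite tower with no retract. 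You should either adopt this factorization or supply an actual proof that retracts of $n$-Artin stacks are $n$-Artin, which is not in the paper and is not a formality.
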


\subsection{Proof for closed immersions}

We begin by proving the following version of \thmref{thm:alg} in the case of closed immersions:

\begin{thm}\label{thm:clalg}
  Let $h : S \to T$ be an afp finite morphism of Tor-amplitude $\le d$.
  Let $f : X_1 \to X_2$ be an hfp closed immersion of derived stacks over $S$.
  Then the induced morphism $h_*(f) : h_*(X_1) \to h_*(X_2)$ is:
  \begin{thmlist}
    \item $(d-1)$-representable, if $d\ge 1$;
    \item a closed immersion, if $d=0$ (i.e., $h$ is finite flat);
    \item affine, if $f$ is an isomorphism on classical truncations and $d=1$.
  \end{thmlist}
\end{thm}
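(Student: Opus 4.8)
The plan is to reduce to an affine situation and then exhibit the closed immersion as a derived zero locus, so that \propref{prop:zeroalg} applies on the nose.

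First I would use base change. By \corref{cor:thinkably} it suffices to treat the case where $T$ is affine and $X_2 = S$; since $h$ is finite and hence affine, $S = \Spec(A)$ is then affine as well, and $X = X_1 = \Spec(B)$ with $A \to B$ surjective on $\pi_0$ and hfp, so that the conormal complex $\sNv_{X/S} = \sL_{X/S}[-1]$ is perfect and connective. Note also that $h$ is \emph{sharp} of Tor-amplitude $\le d$ by \examref{exam:stayship}, so the hypotheses of \propref{prop:zeroalg} are in force. The statement to prove becomes: $h_*(X) \to h_*(S) \simeq T$ is $(d-1)$-representable, resp.\ a closed immersion if $d = 0$, resp.\ affine if $d = 1$ and $f$ is an isomorphism on classical truncations.

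The key step is to realize $f : X \hook S$ as the derived zero locus of a cosection $s : \sE \to \sO_S$ of a perfect complex $\sE$ on $S$ which is \emph{connective}, i.e.\ of Tor-amplitude $[0,b]$, and which can be taken of Tor-amplitude $[1,b]$ when $f$ is an isomorphism on classical truncations. Since $B$ is hfp over $A$ and $A \to B$ is surjective on $\pi_0$, the algebra $B$ admits a finite cell presentation as a derived quotient of $A$; as $S$ is affine one can lift the finitely many generators, relations, and nullhomotopies to $A$, producing $\sE \in \Perf(S)$ and $s$ with $Z(s) \simeq X$. The zero locus then has conormal $\sNv_{X/S} \simeq \sE|_X$, so that the Tor-amplitude lower bound $a$ of $\sE$ is governed by the connectivity of the conormal: it is $0$ in general, and $1$ when $f$ is an isomorphism on classical truncations, since in that case $\sI = \Fib(\sO_S \to f_*\sO_X)$ is $1$-connective and hence $\pi_0(\sNv_{X/S}) \simeq \pi_0(f^*\sI) = 0$ by the Hurewicz isomorphism used in the proof of \lemref{lem:excvcd}. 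This structural realization is the main obstacle: the rest is formal, whereas here one must simultaneously produce a \emph{global} cosection cutting out $X$ and control the precise connectivity of $\sE$ (a finite tower of zero loci would also work, with representability composing, should a single cosection be awkward).

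It then remains to apply \propref{prop:zeroalg}, with $S$ as base and $Z = X$ the zero locus, reading off the three cases from the dichotomy $a \ge d$ versus $a < d$. When $d = 0$ the general bound $a = 0 \ge d$ gives that $h_*(X) \to T$ is a closed immersion, proving (ii). When $d \ge 1$, the general bound $a = 0 < d$ gives that $h_*(X) \to T$ is $(d - a - 1) = (d-1)$-representable, proving (i). Finally, when $f$ is an isomorphism on classical truncations and $d = 1$ we may take $a = 1 \ge d$, so \propref{prop:zeroalg} yields a closed immersion, which is in particular affine, proving (iii).
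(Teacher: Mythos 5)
Your overall strategy (reduce to $T$ affine and $X_2=S$ via \corref{cor:thinkably}, present $f$ via zero loci, apply \propref{prop:zeroalg}) is the right skeleton, but the central structural step is not justified and, in the form you need it, is false. The paper does \emph{not} realize an hfp closed immersion $X\hook S$ as the derived zero locus of a \emph{single} cosection $s:\sE\to\sO_S$; it uses Lurie's structure theorem (\thmref{thm:hfp}) to produce a \emph{tower} $X=X^{(m)}\to\cdots\to X^{(0)}=S$ in which the $k$th stage is cut out by a cosection of a perfect complex of Tor-amplitude $[k,k]$ \emph{on $X^{(k)}$}, and then composes representability stage by stage (\lemref{lem:goblin}). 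Your proposed collapse --- ``lift the finitely many generators, relations, and nullhomotopies to $A$'' --- does not work: the attaching data for the degree-$k$ cells lives in $\pi_k(\sO_{X^{(k)}})$, which is not in the image of $\pi_k(\sO_S)$ in general (these are exactly the new homotopy classes created by the earlier stages), so there is no candidate cosection over $S$. Your parenthetical fallback (``a finite tower would also work'') is indeed the paper's proof of (i) and (ii), but it is the whole content of the argument rather than a remark, and it matters that the higher stages have $a=k\ge 1$ so that they contribute only $(d-k-1)$-representability, leaving the first stage ($a=0$) as the dominant term.

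Case (iii) contains a concrete error. Being an isomorphism on classical truncations only says $\sO_S\to f_*\sO_X$ is bijective on $\pi_0$; for $\sI=\Fib(\sO_S\to f_*\sO_X)$ to be $1$-connective one also needs surjectivity on $\pi_1$ (this is exactly the content of \lemref{lem:excvcd}\itemref{item:excvcd/3}). For example, $X=\Spec(k\oplus k[1])\hook S=\Spec(k)$ is an isomorphism on classical truncations, yet $\sI\simeq k$ and $\sNv_{X/S}\simeq k$ has Tor-amplitude $[0,0]$, so no presentation with $\sE$ of Tor-amplitude $[1,b]$ can exist. Moreover your conclusion would be too strong: taking $h:\Spec(k)\to\Spec(k[x])$ the origin, one computes $h_*(X)\simeq\V_{\Spec k[x]}(k[x]/x)$, which is affine but not a closed subscheme of $\Spec(k[x])$ --- consistent with the theorem's claim of affineness but refuting the closed immersion you would deduce from \propref{prop:zeroalg} with $a=1\ge d$. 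The paper's actual argument for (iii) is different: in the tower, the hypothesis forces the \emph{first} cosection $\sigma_0:\sO^{\oplus r_0}\to\sO_S$ to vanish, so $X^{(1)}\simeq\V_S(\sE_0[1])$ is the total space of a perfect complex of Tor-amplitude $[1,1]$ and \corref{cor:coxcombical} (not \propref{prop:zeroalg}) gives that $h_*(X^{(1)})\to T$ is affine when $d\le 1$, while the remaining stages have $k\ge 1=d$ and contribute closed immersions.
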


We will need the following local structural result for hfp closed immersions, due to J.~Lurie.

\begin{thm}[Lurie]\label{thm:hfp}
  Let $f : X \to Y$ be a closed immersion between derived schemes.
  Then $f$ is hfp if and only if Zariski-locally on $Y$, there exists a factorization
  \[
    f : X := X^{(n)} \xrightarrow{f_n}
    X^{(n-1)} \xrightarrow{f_{n-1}}
    \cdots \xrightarrow{f_2}
    X^{(1)} \xrightarrow{f_1}
    X^{(0)} = Y
  \]
  where $n$ is an integer for which $\sL_{X/Y}$ is of Tor-amplitude $\le n$, and the morphism $f_{i+1}$ exhibits $X^{(i+1)}$ as the zero locus of a cosection $\sigma_i : \sE_i \to \sO_{X^{(i)}}$ of a perfect complex $\sE_i \in \QCoh(X^{(i)})$ of Tor-amplitude $[i,i]$ for each $i\ge 0$.
\end{thm}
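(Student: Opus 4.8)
The plan is to prove the two implications separately, with the converse (\emph{factorization $\Rightarrow$ locally hfp}) being routine and the forward direction (\emph{locally hfp $\Rightarrow$ factorization}) carrying all the content. For the easy direction, each $f_{i+1}$ is a derived zero locus (as recalled before \propref{prop:zeroalg}), hence a base change of the zero section $0 : X^{(i)} \to \V_{X^{(i)}}(\sE_i)$ along the section classifying $\sigma_i$. Since $\sE_i$ is perfect, $\V_{X^{(i)}}(\sE_i) \to X^{(i)}$ is relatively affine and locally hfp, and $\sL_{X^{(i)}/\V(\sE_i)} \simeq \sE_i[1]$ is perfect of Tor-amplitude $[i+1,i+1]$; thus $f_{i+1}$ is a locally hfp closed immersion with perfect conormal. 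Composing and chasing the transitivity triangles shows that $\sL_{X/Y}$ is perfect of Tor-amplitude $\le n$ and that $f$ is finitely presented on classical truncations, so $f$ is locally hfp by the cotangent criterion of \ssecref{ssec:convent/cot}.

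For the forward direction, since both hypothesis and conclusion are Zariski-local on $Y$, I would reduce to $Y = \Spec(A)$ and $X = \Spec(B)$ with $A \to B$ surjective on $\pi_0$ and $\sL_{B/A}$ perfect. Writing $\sNv_{X/Y} = \sL_{X/Y}[-1]$ for the conormal, which is connective, perfect, and of Tor-amplitude $\le n-1$, I would prove by induction on $n-k$ the following statement: every locally hfp closed immersion $g : X \to W$ whose conormal $\sNv_{X/W}$ is $k$-connective and of Tor-amplitude $\le n-1$ factors, Zariski-locally, as a tower of zero loci of $\sE_k, \dots, \sE_{n-1}$ with $\sE_i$ of Tor-amplitude $[i,i]$. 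The base case $k=n$ is the observation that a perfect, $n$-connective complex of Tor-amplitude $\le n-1$ must vanish (apply Nakayama to $\pi_n$, using that Tor-amplitude $\le n-1$ kills $\pi_n(\sNv \otimes \kappa)$ for every residue field $\kappa$); hence $\sNv_{X/W}=0$, so $g$ is formally étale, and being an isomorphism on $\pi_0$ it is an isomorphism, giving the empty tower.

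The inductive step peels off one stage. Given $g : X \to W$ with $k$-connective conormal, I would construct a zero locus $W' \hook W$ of a perfect complex $\sE_k$ of Tor-amplitude $[k,k]$ together with a factorization $X \to W' \to W$ (automatically by closed immersions) such that $\sNv_{X/W'}$ is $(k+1)$-connective, and then apply the inductive hypothesis to $X \to W'$. Since $\sNv_{X/W}$ is perfect, $\pi_k(\sNv_{X/W})$ is finitely generated over $\pi_0\sO_X$, so Zariski-locally one may choose generators and take $\sE_k \simeq \sO_W^{\oplus r}[k]$. For $k=0$ this is concrete: a choice of generators $g_1,\dots,g_r$ of the classical ideal $\ker(\pi_0 A \to \pi_0 B)$ gives the honest cosection $(g_1,\dots,g_r) : \sO_W^{\oplus r} \to \sO_W$, whose quasi-smooth zero locus $W' = \Spec(A \modmod (g_1,\dots,g_r))$ has the same classical truncation as $X$; hence $X \to W'$ is an isomorphism on $\pi_0$ and its conormal is $1$-connective. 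For $k \ge 1$ the cosection of the shifted bundle $\sO_W^{\oplus r}[k]$ is forced to be zero, so $W'$ must be built as a square-zero thickening: the chosen generators correspond, via the universal property \eqref{eq:derivations}, to a derivation $\sL_{X/W} \to \sO_X^{\oplus r}[k+1]$, and I would use this $k$-invariant to produce $W'$ over $W$ so that $\sNv_{W'/W}|_X \to \sNv_{X/W}$ is surjective on $\pi_k$; its cofiber $\sNv_{X/W'}$ is then $(k+1)$-connective by the long exact sequence.

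The main obstacle is exactly this inductive step for $k \ge 1$: realizing the cell attachment on the \emph{ambient} $W$ rather than merely on $X$, since the relevant cosection vanishes and the genuine content sits in a Postnikov-type $k$-invariant classifying a square-zero extension by the shifted bundle $\sE_k$. Matching this deformation-theoretic data so that the residual conormal gains precisely one degree of connectivity, and verifying that the whole construction can be carried out Zariski-locally (finite generation of $\pi_k$ together with trivialization of the bundles), is the crux of the argument. Once this cell-attachment lemma is in place, assembling the stages yields a tower of length $n$ and the termination argument closes the induction; this is the form of the result fed into the proof of \thmref{thm:clalg}.
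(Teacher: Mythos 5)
Your overall architecture (easy converse; forward direction by induction, peeling off one degree of connectivity per stage, with the $k=0$ stage given by generators of the classical ideal) is the right one, and it matches the construction the paper is invoking — though note the paper does not reprove this: it simply cites both implications to the proof of \cite[Prop.~3.2.18]{LurieDAG}. However, your inductive step for $k\ge 1$ rests on a computational error that derails the whole argument. You claim that a cosection of $\sO_W^{\oplus r}[k]$ is ``forced to be zero'' for $k\ge 1$. This is false: by \eqref{eq:besnivel}, a cosection is a point of $\Maps_{\QCoh(W)}(\sO_W^{\oplus r}[k],\sO_W)$, and $\pi_0\Maps_{\QCoh(W)}(\sO_W[k],\sO_W)\simeq \pi_k(\Gamma(W,\sO_W))$, which is nonzero in general precisely because $W=X^{(k)}$ is a genuinely derived scheme at this stage. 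These nonzero cosections are the entire mechanism of the construction: writing $W=\Spec(A_k)$ and $X=\Spec(B)$, one takes generators of $\pi_k$ of the fibre $I=\Fib(A_k\to B)$, pushes them to classes $b_1,\dots,b_r\in\pi_k(A_k)$, and the derived zero locus of the resulting cosection $(b_1,\dots,b_r):\sO_W^{\oplus r}[k]\to\sO_W$ is exactly the cell attachment $A_k\otimes_{\Sym(A_k^{\oplus r}[k])}A_k$ killing those classes. The factorization $X\to W'\to W$ comes from the nullhomotopies of the $b_j$ in $B$ supplied by their lifts to $I$, and the new fibre is $(k+1)$-connective by the evident long exact sequence. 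In other words, the step you flag as ``the main obstacle'' — realizing the cell attachment on the ambient $W$ via a Postnikov-type $k$-invariant and a square-zero extension — is a phantom difficulty created by the error: no deformation-theoretic gluing is needed, and the detour through $k$-invariants is never actually carried out, so the proof is incomplete at exactly the step that carries the content.

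Two smaller points. First, had your claim been correct, the zero locus of the \emph{zero} cosection of $\sE_k$ would be $\V_W(\sE_k[1])$, a derived thickening of $W$; a factorization of $X\to W$ through it would require extra data (a cosection of $\sE_k[1]|_X$) and would not kill $\pi_k$ of the fibre, so that route genuinely does not work. Second, your termination argument needs a touch more care: the cofibre of $\sE_k|_X\to\sNv_{X/W}$ a priori has Tor-amplitude $\le\max(k+1,n-1)$, so at the last stage $k=n-1$ you cannot immediately quote ``$n$-connective and Tor-amplitude $\le n-1$ implies zero''; one has to use that the chosen generators already exhaust $\pi_{n-1}$ fibrewise (this is where Tor-amplitude $\le n$ of $\sL_{X/Y}$ enters, as in Lurie's argument). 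Also, in your base case, a surjective étale map is only Zariski-locally an isomorphism (clopen decomposition), which is harmless here since the statement is Zariski-local, but worth saying.
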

\begin{proof}
  The condition is clearly sufficient.
  For the other direction, let $f$ be an hfp morphism and assume without loss of generality that $Y$ is affine.
  It follows from the definitions that the induced morphism $f_\cl : X_\cl \to Y_\cl$ is of finite presentation, and the forward direction of \cite[Prop.~3.2.18]{LurieDAG} shows that the relative cotangent complex $\sL_{X/Y}$ is perfect.
  Now, a tower of the desired form is constructed in the proof of the reverse implication of \cite[Prop.~3.2.18]{LurieDAG}.
\end{proof}

\begin{rem}\label{rem:rumply}
  If $Y$ is affine, then the factorization of \thmref{thm:hfp} exists globally on $Y$.
  Moreover, one may take $\sE_i \simeq \sO^{\oplus r_i}[i]$ where $r_i \ge 0$, so that each $X^{(i+1)}$ is the zero locus of some $i$-shifted functions on $X^{(i)}$.
\end{rem}

\begin{lem}\label{lem:goblin}
  Let $h : S \to T$ be an afp proper representable morphism of Tor-amplitude $\le d$.
  Let $f : X_1 \to X_2$ be a closed immersion of derived stacks over $S$ with a factorization
  \[
    X_1 = X^{(m)} \xrightarrow{f_m}
    \cdots \xrightarrow{f_2}
    X^{(1)} \xrightarrow{f_1}
    X^{(0)} = X_2
  \]
  where for each $0\le k<m$ the morphism $f_{k+1}$ exhibits $X^{(k+1)}$ as the zero locus of a cosection of a perfect complex on $X^{(k)}$ of Tor-amplitude $[k,k]$.
  Then the induced morphism $h_*(f) : h_*(X_1)\to h_*(X_2)$ is:
  \begin{thmlist}
    \item $(d-1)$-representable, if $d>0$;
    \item a closed immersion, if $d=0$;
    \item affine, if $d=1$ and $f$ is an isomorphism on classical truncations.
  \end{thmlist}
\end{lem}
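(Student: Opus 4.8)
The plan is to telescope the given tower and apply the zero-locus estimate \propref{prop:zeroalg} at each stage. Since $h_*$ is a functor on derived stacks over $S$, the morphism $h_*(f)$ is the composite of the stages $h_*(f_{k+1}) : h_*(X^{(k)} ) \leftarrow h_*(X^{(k+1)})$ for $0 \le k < m$. To invoke \propref{prop:zeroalg} I first record that $h$ meets its hypotheses: it is representable and qcqs (being proper), of Tor-amplitude $\le d$ by assumption, and sharp by \examref{exam:stayship}. As $f_{k+1}$ exhibits $X^{(k+1)}$ as the zero locus of a cosection of a perfect complex of Tor-amplitude $[k,k]$, \propref{prop:zeroalg} applied with $a=k$ shows that $h_*(f_{k+1})$ is a closed immersion when $k \ge d$, and is $(d-k-1)$-representable when $k < d$.

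The first two assertions then follow by bookkeeping. When $d=0$ every index satisfies $k \ge d$, so each $h_*(f_{k+1})$ is a closed immersion and hence so is the composite $h_*(f)$; this is assertion (ii). When $d>0$, the degree $(d-k-1)$ of the stages with $k<d$ is maximized at $k=0$, giving $d-1$, while the stages with $k \ge d$ contribute closed immersions, which are in particular $0$-representable. Since $n$-representable morphisms are stable under composition with the composite of an $r_1$- and an $r_2$-representable morphism being $\max(r_1,r_2)$-representable, and the tower is finite, $h_*(f)$ is $(d-1)$-representable; this is assertion (i).

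It remains to treat assertion (iii), where $d=1$ and $f$ is an isomorphism on classical truncations. By the above, the stages $f_{k+1}$ with $k \ge 1$ already yield closed immersions under $h_*$, so it suffices to show that the single remaining stage $h_*(f_1)$, arising from the zero locus of a cosection $\sigma : \sE \to \sO_{X^{(0)}}$ of a vector bundle $\sE$ of Tor-amplitude $[0,0]$, is affine. The key point is that $f_1$ is itself an isomorphism on classical truncations: the higher stages $f_2,\dots,f_m$ are zero loci of cosections of $(\ge 1)$-shifted perfect complexes, and such a $\V_{X^{(k)}}(\sE_k)$ has classical truncation $X^{(k)}_\cl$ (as $\pi_0\sE_k=0$), so $(X_1)_\cl = X^{(1)}_\cl$; the hypothesis on $f$ then forces $X^{(1)}_\cl = X^{(0)}_\cl$. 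Consequently the induced map $\pi_0\sE \to \pi_0\sO_{X^{(0)}}$ vanishes. Because $\sE$ is locally free in degree $0$, homotopy classes of cosections $\sE \to \sO_{X^{(0)}}$ are classified by $\Hom_{\pi_0\sO}(\pi_0\sE, \pi_0\sO_{X^{(0)}})$, so $\sigma$ is nullhomotopic. Hence $X^{(1)}$ is the self-intersection of the zero section of $\V_{X^{(0)}}(\sE)$, i.e. the shifted derived vector bundle $\V_{X^{(0)}}(\sE[1])$. Using that $h_*$ commutes with this construction (\propref{prop:unsaddled}), we obtain
\[
  h_*(X^{(1)}) \simeq \V_{h_*(X^{(0)})}\bigl(h_\sharp(\sE)[1]\bigr).
\]
By \lemref{lem:fortuitist} the complex $h_\sharp(\sE)$ is perfect and $(a-d)=(-1)$-connective, so $h_\sharp(\sE)[1]$ is connective; therefore $\V_{h_*(X^{(0)})}(h_\sharp(\sE)[1]) \to h_*(X^{(0)})$ is affine by \thmref{thm:V} (equivalently the $a \ge d$ clause of \corref{cor:coxcombical}). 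Thus $h_*(f_1)$ is affine, and composing with the closed immersions from the higher stages shows $h_*(f)$ is affine.

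I expect the main obstacle to be precisely assertion (iii): the naive telescoping yields only $0$-representability for the $k=0$ stage, and upgrading this to affineness genuinely requires both the reduction showing that $f_1$ is an isomorphism on classical truncations (so that $\sigma$ is nullhomotopic and $X^{(1)}$ is a shifted vector bundle) and the connectivity estimate for $h_\sharp(\sE)$ that makes the resulting Weil-restricted bundle affine. The cases (i) and (ii) are, by contrast, a routine consequence of \propref{prop:zeroalg} together with the additivity of representability degrees under composition.
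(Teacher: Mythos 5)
Your proposal is correct and follows essentially the same route as the paper's proof: parts (i) and (ii) by applying \propref{prop:zeroalg} stage by stage and composing, and part (iii) by observing that the higher stages are isomorphisms on classical truncations, so that $f_1$ is too, the defining cosection is null-homotopic, $X^{(1)}$ is identified with the shifted vector bundle $\V_{X^{(0)}}(\sE[1])$, and \corref{cor:coxcombical} (whose proof is exactly your \propref{prop:unsaddled} + \lemref{lem:fortuitist} + \thmref{thm:V} chain) gives affineness since $a=1\ge d$. No substantive differences from the paper's argument.
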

\begin{proof}
  Applying \examref{exam:stayship} and \propref{prop:zeroalg}, we see:
  \begin{itemize}
    \item
    The morphism $h_*(f_1) : h_*(X^{(1)}) \to h_*(X^{(0)})$ is a closed immersion if $d=0$ and $(d-1)$-representable if $d>0$.

    \item
    For $0< k< m$, the morphism $h_*(X^{(k+1)}) \to h_*(X^{(k)})$ is a closed immersion if $k\ge d$ and $(d-k-1)$-representable otherwise.
  \end{itemize}
  Now suppose $f$ is an isomorphism on classical truncations.
  Since $f_k$ is always an isomorphism on classical truncations for $k>1$, this implies that the same holds for $f_1 : X^{(1)} \to X^{(0)}$.
  In particular, the cosection defining $X^{(1)}$ is null-homotopic, and we may identify $f_1$ with a projection of the form $\V_{X^{(0)}}(\sE_0) \to X^{(0)}$ where $\sE_0$ is perfect of Tor-amplitude $[1,1]$.
  By \corref{cor:coxcombical} we see that $h_*(f_1) : h_*(X^{(1)}) \to h_*(X^{(0)})$ is affine if $d \le 1$ and $(d-1)$-representable otherwise.
\end{proof}

\begin{proof}[Proof of \thmref{thm:clalg}]
  It will suffice to show that for every affine $T'$ and every morphism $T' \to h_*(X_2)$, the base change $h_*(X_1) \fibprod_{h_*(X_2)} T' \to T'$ has the claimed property.
  By \corref{cor:thinkably} we may replace $T$ by $T'$, $h$ by $h' : S' := S \fibprod_T T' \to T'$, and $f$ by $f' : X_1\fibprod_{X_2} S' \to S'$, and thereby assume that $T$ is affine and $X_2=S$.
  Since $S$ is then also affine, there exists (see \remref{rem:rumply}) a factorization
  \[
    f : Z = Z^{(m_\alpha)} \to
    \cdots \to
    Z^{(1)} \to
    Z^{(0)} = S
  \]
  as in \thmref{thm:hfp}.
  Now the claim follows from \lemref{lem:goblin}.
\end{proof}

\subsection{Proof for smooth separated representable morphisms}

The other special case we consider as preparation for \thmref{thm:alg} is as follows:

\begin{lem}\label{lem:depository}
  Let $h : S \to T$ be an afp finite morphism of Tor-amplitude $\le d$.
  Let $f : X_1 \to X_2$ be a morphism of derived stacks over $S$.
  If $f$ is smooth, separated and representable, then $h_*(f) : h_*(X_1) \to h_*(X_2)$ is $d$-representable.
  Moreover, if $d=0$ (i.e., $h$ is finite flat), then $h_*(f)$ is separated.
\end{lem}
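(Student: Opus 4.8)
The plan is to verify directly that $h_*(f)$ satisfies the two defining properties of a $d$-representable morphism: a $(d-1)$-representable diagonal, and a smooth surjection onto its target from a $d$-representable source. As in the proofs of \propref{prop:whalefin} and \thmref{thm:clalg}, I would first invoke \corref{cor:thinkably} to base change along an arbitrary affine $T' \to h_*(X_2)$, thereby reducing to the case where $T$ is affine and $X_2 = S$. Write $X := X_1$, a smooth separated algebraic space over the (now affine) scheme $S$.

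For the diagonal, note that since $f$ is separated and smooth, the diagonal $\Delta_f : X \to X \fibprod_S X$ is a closed immersion with $\sL_{\Delta_f} \simeq \sL_{X/S}[1]$ perfect, hence an hfp closed immersion of derived stacks over $S$. Because $h_*$ preserves fibre products, the diagonal of $h_*(f)$ is identified with $h_*(\Delta_f) : h_*(X) \to h_*(X) \fibprod_T h_*(X) \simeq h_*(X \fibprod_S X)$. Applying \thmref{thm:clalg} to the hfp closed immersion $\Delta_f$ shows that $h_*(\Delta_f)$ is $(d-1)$-representable when $d \ge 1$, and a closed immersion when $d = 0$; the latter case already gives the separatedness assertion.

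For the atlas, I would use the local structure of smooth morphisms to choose an étale cover $\{U_\alpha \to X\}$ by affine schemes, each equipped with an étale morphism $U_\alpha \to \A^{n_\alpha}_S$. Since $\coprod_\alpha U_\alpha \to X$ is an effective epimorphism of locally hfp morphisms and $h$ is finite, \corref{cor:diffusedly} yields an effective epimorphism $p : \coprod_{J\;\text{finite}} h_*(U_J) \to h_*(X)$ with $U_J := \coprod_{\alpha\in J} U_\alpha$, and by \thmref{thm:flocculency} (using that $h$ is finite, hence proper representable) the map $p$ is representable étale, in particular smooth. It remains to show each $h_*(U_J)$ is $d$-representable over $T$. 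The étale morphism $U_J \to V_J := \coprod_{\alpha\in J}\A^{n_\alpha}_S$ induces, again by \thmref{thm:flocculency}, a representable étale morphism $h_*(U_J) \to h_*(V_J)$, so it suffices to treat $h_*(V_J)$.

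The main obstacle is precisely this last step: $h_*$ is a right adjoint and does \emph{not} commute with coproducts, so $h_*(V_J)$ is not a disjoint union of the $h_*(\A^{n_\alpha}_S)$, and the finite unions $U_J$ forced by the surjectivity in \corref{cor:diffusedly} must genuinely be controlled as wholes. To resolve this, set $\underline{J}_S := S \times \underline{J} = \coprod_{\alpha\in J} S$, the constant finite $S$-scheme on the index set $J$, and observe that $V_J \simeq \V_{\underline{J}_S}(\sE)$ for the locally free sheaf $\sE$ on $\underline{J}_S$ which restricts to $\sO^{\oplus n_\alpha}$ on the $\alpha$-th copy of $S$. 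Now $S \times \underline{J} \to S$ is finite étale, so \thmref{thm:flocculency} gives that $h_*(\underline{J}_S) \to T$ is representable étale, hence $0$-representable. Since $\sE$ has Tor-amplitude $[0,0]$ (so $a = 0$ in the notation there), \corref{cor:coxcombical} applied to the derived vector bundle $\V_{\underline{J}_S}(\sE)$ shows that $h_*(V_J) = h_*(\V_{\underline{J}_S}(\sE)) \to h_*(\underline{J}_S)$ is affine when $d = 0$ and $d$-representable when $d > 0$. Composing the two, $h_*(V_J) \to T$ is $d$-representable, whence so is $h_*(U_J)$. Thus $p$ is a representable étale (smooth) surjection onto $h_*(f)$ whose source is $d$-representable over $T$; combined with the $(d-1)$-representable diagonal this shows $h_*(f)$ is $d$-representable, and for $d = 0$ the closed-immersion diagonal gives separatedness.
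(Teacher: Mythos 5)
Your proof is correct, and its overall skeleton matches the paper's: reduce via \corref{cor:thinkably} to $T$ affine and $X_2=S$, handle the diagonal by observing that $\Delta_f$ is an hfp closed immersion and invoking \thmref{thm:clalg} (which also yields separatedness when $d=0$), and build an atlas by Weil-restricting an étale cover of $X$ that is étale over affine spaces, using \thmref{thm:flocculency} and \corref{cor:coxcombical} to control the pieces and the effective-epimorphism results to get surjectivity. Where you genuinely diverge is in the atlas step. The paper simply chooses a single scheme $U$ with an étale surjection $U \twoheadrightarrow X$ factoring étale through \emph{one} vector bundle over $S$, applies \corref{cor:coxcombical} and \thmref{thm:flocculency} to see $h_*(U)$ is $d$-Artin, and uses \propref{prop:unconversableness} directly on $U \to X$ for surjectivity of $h_*(U) \to h_*(X)$; it never passes through \corref{cor:diffusedly}. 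You instead keep the cover $\{U_\alpha\}$ disaggregated, invoke \lemref{lem:metapsychology}/\corref{cor:diffusedly} to reduce to finite index sets $J$, and realize $\coprod_{\alpha\in J}\A^{n_\alpha}_S$ as a derived vector bundle over the finite étale $S$-scheme $\underline{J}_S$ so that \corref{cor:coxcombical} applies. This is slightly longer but more scrupulous: it cleanly handles the case where $X$ is not quasi-compact over $S$ and the relative dimension $n_\alpha$ varies, a situation in which the paper's ``single vector bundle over $S$'' does not literally exist (an étale map cannot change relative dimension, and $h_*$ does not commute with the infinite coproduct $\coprod_\alpha \A^{n_\alpha}_S$). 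So your version buys a genuinely airtight treatment of exactly the subtlety you flagged, at the cost of routing through two more lemmas; the paper's version buys brevity by eliding it.
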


\begin{proof}
  By \corref{cor:thinkably} (as in the proof of \thmref{thm:clalg}) we may assume that $T$ is affine and $X_2=S$.
  It will suffice to show that if $X$ is a separated algebraic space which is smooth over $S$, then $h_*(X)$ is $d$-Artin.
  
  We may choose a scheme $U$ and an étale surjection $U \twoheadrightarrow X$ such that the composite $U \twoheadrightarrow X \to S$ factors via an étale (necessarily representable) morphism to a vector bundle over $S$ (see, e.g., \cite[Tag~\spref{039P}]{Stacks}).
  Then $h_*(U)$ is $d$-Artin (\thmref{thm:flocculency} and \corref{cor:coxcombical}) and $h_*(U) \to h_*(X)$ is an étale surjection (\corref{cor:ontogenic} and \propref{prop:unconversableness}).
  In particular, $h_*(X)$ admits an étale surjection from a derived scheme.
  
  Since $X$ is \emph{separated} and locally hfp, its diagonal is an hfp closed immersion.
  Thus \thmref{thm:clalg} implies that the diagonal $h_*(X) \to h_*(X\fibprod_S X) \simeq h_*(X) \fibprod_T h_*(X)$ is $(d-1)$-representable if $d\ge 1$ and a closed immersion if $d=0$.
  It follows that $h_*(X)$ is $d$-Artin, and moreover separated if $d=0$.
\end{proof}

\subsection{Proof for hfp morphisms}

We need the following standard lemma.

\begin{lem}\label{lem:acturience}
  Let $f : X \to Y$ be a locally of finite type morphism of derived Artin stacks.
  Then there exists a commutative square
  \begin{equation*}
    \begin{tikzcd}
      U \ar{r}{f_0}\ar[twoheadrightarrow]{d}{u}
      & V \ar[twoheadrightarrow]{d}{v}
      \\
      X \ar{r}{f}
      & Y
    \end{tikzcd}
  \end{equation*}
  with $u$ and $v$ smooth surjections and $f_0$ a closed immersion of separated derived schemes.
\end{lem}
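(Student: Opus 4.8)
The plan is to reduce to the classical fact that a locally of finite type morphism of affine derived schemes factors as a closed immersion into an affine space followed by a smooth projection, after first replacing $X$ and $Y$ by suitable separated atlases. The only genuinely derived input will be that a closed immersion is detected on classical truncations and that local finiteness of type means finite generation on $\pi_0$; everything else is the usual Noetherian-free bookkeeping.

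First I would choose a smooth surjection $v_0 : V \twoheadrightarrow Y$ from a disjoint union of affine derived schemes $V = \coprod_i V_i$; such exists because $Y$ is Artin (take an affine open cover of any scheme atlas and form the disjoint union), and a disjoint union of affines is automatically separated. To find a compatible atlas of $X$ that actually maps to $V$, rather than merely to $Y$, I would pass to the fibre product $X_V := X \fibprod_Y V$, which is again a derived Artin stack with $X_V \to X$ smooth surjective and $X_V \to V$ locally of finite type. Here I would use that the classical truncation of a derived fibre product is the classical fibre product of the truncations, so that local finiteness of type is stable under this base change. Choosing a smooth surjection $P \twoheadrightarrow X_V$ from a disjoint union of affines and refining along $P \to V = \coprod_i V_i$, I may assume $P = \coprod_k P_k$ with each $P_k = \Spec(B_k)$ affine and mapping into a single component $V_{i(k)} = \Spec(A_k)$. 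Then $u : U := P \twoheadrightarrow X_V \to X$ is smooth surjective, and the composite $g : P \to X_V \to V$ is locally of finite type with $v_0 \circ g = f \circ u$.

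Next I would perform the embedding component by component. Since $g|_{P_k}$ is locally of finite type, $\pi_0(B_k)$ is a finitely generated $\pi_0(A_k)$-algebra; picking generators and lifting them to $B_k$ produces a $V_{i(k)}$-morphism $e_k : P_k \to \A^{n_k}_{V_{i(k)}}$ which is surjective on $\pi_0$, hence a closed immersion by the paper's convention. Finally I would assemble $V' := \bigl(\coprod_k \A^{n_k}_{V_{i(k)}}\bigr) \sqcup V$, a disjoint union of affines and so separated, equipped with the evident smooth morphism $V' \to V$ (the projections $\A^{n_k}_{V_{i(k)}} \to V_{i(k)} \hook V$ on the first summands and the identity on the last) and hence a smooth surjection $v : V' \to V \xrightarrow{v_0} Y$. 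Taking $f_0 : U \to V'$ to be $\coprod_k e_k$ followed by the inclusion of the clopen part $\coprod_k \A^{n_k}_{V_{i(k)}} \hook V'$, I get a closed immersion of separated derived schemes, and the square commutes because $V' \to V$ restricted along $f_0$ recovers $g$, whence $v \circ f_0 = v_0 \circ g = f \circ u$.

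The main obstacle I anticipate is bookkeeping rather than depth, concentrated in one global point: the \emph{surjectivity} of the final $v : V' \twoheadrightarrow Y$. The atlas $P$ of $X_V$ need only cover those components of $V$ lying over the image of $f$, so the affine-space summands $\A^{n_k}_{V_{i(k)}}$ alone can miss part of $V$ entirely, and then $V' \to Y$ would fail to be surjective. Adjoining the extra clopen summand $V$ to $V'$, mapping by the identity to $V$, repairs this at once without disturbing the closed immersion $f_0$, since $f_0$ lands in the complementary summand; this is the one step where I would be careful to separate the roles of ``covering $X$'' and ``covering $Y$.''
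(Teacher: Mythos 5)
Your proof is correct. The paper does not give its own argument for this lemma --- it simply cites \cite[Lem.~3.18]{dimredcoha} --- and your construction is precisely the standard one that reference contains: pass to an affine atlas $V$ of $Y$, take an affine atlas $P$ of $X \fibprod_Y V$ refined so that each affine piece lands in a single component of $V$, use that a finite type morphism of affine schemes gives finite generation on $\pi_0$ to embed each piece into a relative affine space, and restore surjectivity over $Y$ by adjoining a clopen copy of $V$. All the small points check out: the clopen refinement of each quasi-compact affine $P_k$ over $\coprod_i V_i$ yields finitely many affine pieces, surjectivity of $\pi_0(A_k)[x_\bullet] \to \pi_0(B_k)$ makes $e_k$ a closed immersion under the paper's convention that closed immersions are detected on classical truncations, and disjoint unions of affines are separated, so $f_0$ is a closed immersion of separated derived schemes as required.
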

\begin{proof}
  See \cite[Lemma~3.18]{dimredcoha}.
\end{proof}

\begin{proof}[Proof of \thmref{thm:alg}]
  By \corref{cor:thinkably} (as in the proof of \thmref{thm:clalg}) we may assume that $T$ is affine and $X_2=S$.
  Let $X := X_1$ be an $n$-Artin stack which is locally hfp over $S$.
  Choose a commutative square
  \begin{equation*}
    \begin{tikzcd}
      U \ar{r}{f_0}\ar[twoheadrightarrow]{d}{u}
      & V \ar[twoheadrightarrow]{d}{v}
      \\
      X \ar{r}{f}
      & S
    \end{tikzcd}
  \end{equation*}
  as in \lemref{lem:acturience}.
  Since $V$ is a separated scheme, $v$ is separated representable, hence $h_*(V)$ is $d$-Artin by \lemref{lem:depository}.
  Since the closed immersion $f_0$ is hfp (as $U$ and $V$ are both locally hfp over $X_2$), \thmref{thm:clalg} implies that $h_*(U) \to h_*(V)$ is $(d-1)$-representable if $d\ge 1$ and a closed immersion if $d=0$.
  In particular, $h_*(U)$ is also $d$-Artin.
  By \corref{cor:ontogenic} and \propref{prop:unconversableness}, $h_*(U) \twoheadrightarrow h_*(X)$ is a smooth surjection.
  In particular, $h_*(X)$ admits a smooth surjection from a derived scheme.
  To show it is $(n+d)$-Artin, it remains to show that it has $(n+d-1)$-representable diagonal.

  Now suppose $X$ is a separated algebraic space (so $n=0$).
  The diagonal $X \to X\fibprod_S X$ is then an hfp closed immersion, so by \thmref{thm:clalg} the diagonal $h_*(X) \to h_*(X\fibprod_S X) \simeq h_*(X) \fibprod_T h_*(X)$ is $(d-1)$-representable if $d\ge 1$ and a closed immersion if $d=0$.
  We conclude that $h_*(X)$ is $d$-Artin, and separated if $d=0$.
  More generally, we have shown at this point that $h_*$ sends separated representable locally hfp morphisms to $d$-representable morphisms (which are moreover separated if $d=0$).

  If $X$ is an arbitrary algebraic space, its diagonal is separated and representable.
  Hence $h_*(X)$ has $d$-representable diagonal by the previous case (which is moreover separated if $d=0$), so that $h_*(X)$ is $(d+1)$-Artin.
  Since it is almost $d$-truncated by \propref{prop:weiltrunc}, it is moreover $d$-Artin.
  More generally, we have shown that $h_*$ sends representable locally hfp morphisms to $d$-representable morphisms (with separated diagonal if $d=0$).

  Finally consider the case $n>0$.
  By induction we may assume that $h_*$ sends $(n-1)$-representable locally hfp morphisms to $(n+d-1)$-representable morphisms (with separated $n$-fold diagonal if $d=0$).
  If $X$ is $n$-Artin then its diagonal is $(n-1)$-representable, hence $h_*(X)$ has $(n+d-1)$-representable diagonal (with separated $n$-fold diagonal if $d=0$).
  We conclude that $h_*(X)$ is $(n+d)$-Artin (with separated $(n+1)$-fold diagonal if $d=0$), as desired.
\end{proof}

\section{Algebraicity II}
\label{sec:alg2}

\subsection{Statement}

Our goal in this section is to prove the following algebraicity statement, which in particular implies \thmref{thm:intro/weil}\itemref{item:intro/weil/vcd}:

\begin{thm}\label{thm:algvcd}
  Let $h : S \to T$ be a virtual Cartier divisor.
  Let $f : X_1 \to X_2$ be a locally of finite type morphism of derived stacks over $S$.
  If $f$ is $n$-representable, then $h_*(f) : h_*(X_1) \to h_*(X_2)$ is $(n+1)$-representable.
\end{thm}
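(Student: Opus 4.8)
The plan is to run the same atlas-and-diagonal induction on $n$ as in the proof of \thmref{thm:alg}, but with the role of the hfp structural result (\thmref{thm:clalg}) replaced by a direct analysis of \emph{finite type} closed immersions. By \corref{cor:thinkably} I first reduce to the case where $T=\Spec(A)$ is affine and $X_2=S$, so that $S=\Spec(A\modmod g)\hook T$ for some $g\in A$; the task becomes to show that $h_*(X)$ is $(n+1)$-Artin for $X$ an $n$-representable, locally of finite type derived stack over $S$. Two observations organize the argument. First, a virtual Cartier divisor $h$ is an afp finite morphism of Tor-amplitude $\le 1$ and cohomological dimension $0$, so \propref{prop:weiltrunc} (with $d=1$) shows $h_*(X)$ is almost $(n+1)$-truncated; by the criterion recalled in \ssecref{ssec:convent/trunc} it then suffices to prove that $h_*(X)$ is $m$-Artin for \emph{some} $m$, and almost-truncatedness pins the level down to $n+1$. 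Second, smoothness and surjectivity are preserved: for a smooth surjection $u:U\twoheadrightarrow X$ from a scheme (\lemref{lem:acturience}), $h_*(u)$ is smooth by \corref{cor:ontogenic} and a smooth effective epimorphism by \corref{cor:diffusedly} and \propref{prop:unconversableness}. Combined with the induction hypothesis applied to the $(n-1)$-representable, locally of finite type diagonal of $X$ — giving that $h_*(X)\to h_*(X\fibprod_S X)\simeq h_*(X)\fibprod_T h_*(X)$ is $n$-representable — this reduces the problem, exactly as in \secref{sec:alg1}, to the base case where $X=U$ is an affine derived scheme of finite type over $S$.

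For the base case I would present $U$ as a finite-type closed immersion $U\hook W:=\A^m_S$, which is possible because $\pi_0$ of the coordinate ring of $U$ is finitely generated over $\pi_0(\sO_S)$, and where $h_*(W)\to T$ is smooth affine by \corref{cor:floriated}. The heart of the matter is to control $h_*(U)\to h_*(W)$ for a closed immersion that is only of finite type. The idea is to build a relative Postnikov tower
\[
  U \simeq \lim_i Z^{(i)}, \qquad
  \cdots\hook Z^{(i+1)}\hook Z^{(i)}\hook\cdots\hook Z^{(0)}=W,
\]
in which $Z^{(1)}\hook W$ is the derived zero locus of a cosection of a \emph{perfect} complex $\sE_0$ of Tor-amplitude $[0,0]$ (finitely many equations cutting out the classical truncation — here the finite-type hypothesis is used), and for $i\ge 1$ the step $Z^{(i+1)}\hook Z^{(i)}$ kills $\pi_i$ of the fibre of $\sO_{Z^{(1)}}\to\sO_U$, i.e.\ is the zero locus of a cosection of an $i$-connective complex $\sE_i$ (no finiteness is available, nor needed, on these higher modules). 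Applying $h_*$ and using $h_\sharp(\sE)\simeq h_*(\sE\otimes\omega_{S/T})$ with $\omega_{S/T}\simeq\sO_S(S)[-1]$ (\examref{exam:stayship}, \examref{exam:sharpvcd}), one finds that $h_\sharp(\sE_i)$ is $(i-1)$-connective. For $i=0$, $\sE_0$ is perfect and \corref{cor:coxcombical}/\propref{prop:zeroalg} (with $a=0$, $d=1$) give that $h_*(Z^{(1)})\to h_*(W)$ is $0$-representable. For $i\ge 1$, $h_\sharp(\sE_i)$ is connective, so $h_*(\V_{Z^{(i)}}(\sE_i))\to h_*(Z^{(i)})$ is affine (by \propref{prop:unsaddled}, the spectrum of a connective symmetric algebra), whence the zero-locus section $h_*(Z^{(i+1)})\hook h_*(Z^{(i)})$ is a closed immersion. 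This reproduces the dichotomy of \propref{prop:zeroalg} with $a=i$, $d=1$, now in the merely connective rather than perfect setting.

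Since $h_*$ preserves limits, $h_*(U)\simeq\lim_i h_*(Z^{(i)})$. As $h_*(Z^{(1)})$ is $0$-Artin (an algebraic space, being $0$-representable over the affine $h_*(W)$) and each transition $h_*(Z^{(i+1)})\hook h_*(Z^{(i)})$ for $i\ge 1$ is a closed immersion, this inverse limit is a closed immersion into the algebraic space $h_*(Z^{(1)})$, hence itself an algebraic space. Thus $h_*(U)$ is $0$-Artin, and Zariski descent for Weil restrictions (\thmref{thm:flocculency}, \lemref{lem:metapsychology}) promotes this to arbitrary schemes $U$, feeding the induction above. The main obstacle is precisely this base case, and within it the two points distinguishing finite type from the hfp situation of \secref{sec:alg1}: constructing the connective Postnikov tower that realizes a finite-type closed immersion as an inverse limit of zero loci of $i$-connective cosections, and verifying that the algebraicity estimates of \propref{prop:zeroalg} persist without perfectness. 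The latter rests on the elementary fact that $\V_T(\sF)\to T$ is affine for any \emph{connective} $\sF$, so that only the single bottom ($\pi_0$) step of the tower consumes the finite-type hypothesis, while all higher steps are handled by pure connectivity bookkeeping.
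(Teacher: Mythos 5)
Your handling of the higher cells of the tower is sound, and is in spirit exactly what the paper does for closed immersions that are isomorphisms on classical truncations (\lemref{lem:goblin}, \thmref{thm:clalg}): once $\sE_i$ is $i$-connective with $i\ge 1$, twisting by $\omega_{S/T}\simeq\sO_S(S)[-1]$ keeps $h_\sharp(\sE_i)$ connective, so $h_*(\V(\sE_i))$ is relatively affine and the zero locus becomes a closed immersion after $h_*$. The fatal problem is at the bottom of the tower. \emph{Locally of finite type} controls the algebra generators of $\pi_0(\sO_U)$ over $\pi_0(\sO_W)$, not the generators of the ideal $\pi_0(\sI)$ cutting out $U_\cl$ in $W_\cl$; finite generation of that ideal is a finite-\emph{presentation} condition which is not assumed, and which genuinely fails for the closed immersions arising in your own reduction (take $\pi_0(\sO_S)=k[y_1,y_2,\dots]$ and $U$ cut out by all the $y_i$; note also that the closed immersion $U\hook V$ supplied by \lemref{lem:acturience} for the diagonal and atlas steps comes with no control on its ideal). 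So in general $\sE_0$ must be an infinite direct sum $\bigoplus_\alpha\sO$, which is connective but neither perfect nor $0$-pseudo-coherent. Then $h_\sharp(\sE_0)\simeq\bigoplus_\alpha h_*(\sO_S(S))[-1]$ has infinitely generated $\pi_{-1}$, hence is not $(-1)$-pseudo-coherent; since every eventually representable morphism has $(-1)$-pseudo-coherent cotangent complex (\ssecref{ssec:convent/cot}), $\V_{T'}(h_\sharp(\sE_0))\to T'$ is not eventually representable, and neither \thmref{thm:V} nor \propref{prop:zeroalg} applies. Your route to the algebraicity of $h_*(Z^{(1)})$ is therefore closed. (A smaller slip: $h_*(\A^m_S)\simeq\V_T\bigl(h_*(\sO_S^{\oplus m})^\vee\bigr)$ is smooth and $1$-representable over $T$ by \corref{cor:coxcombical} with $a=0<d=1$, not affine; this only shifts your base case from $0$-Artin to $1$-Artin and is harmless for the induction.)

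This is precisely where the paper's proof diverges. It establishes \thmref{thm:vapidism} --- $h_*$ of an \emph{arbitrary} closed immersion $Z\hook S$ is affine, with no finiteness hypothesis --- by factoring $Z\hook S$ through its graph $Z\to Z\fibprod_T S\to S$. The graph is an hfp closed immersion which is an isomorphism on classical truncations, so only the connective, higher-cell part of Lurie's tower is needed and \thmref{thm:clalg} gives affineness; while $h_*(Z\fibprod_T S\to S)$ is a base change of the normal deformation $\Dl_{Z/T}\to\A^1_T$, whose affineness is Hekking's \thmref{thm:Daff}. The extended Rees algebra packages the entire, possibly infinitely generated, $\pi_0$-ideal into a single connective quasi-coherent algebra in one stroke --- exactly the step your cell-by-cell construction cannot reproduce. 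To repair your argument you would need a substitute for \thmref{thm:Daff}, and it is not clear that one exists along these lines.
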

\begin{rem}
  The proof will show moreover that the $(n+2)$-fold diagonal of $h_*(f)$ is affine.
\end{rem}

Using \propref{prop:map=weil} we also obtain the following reformulation in terms of mapping stacks, in particular proving \corref{cor:intro/map}\itemref{item:intro/weil/vcd}:

\begin{cor}\label{cor:algmapvcd}
  Let $X \to S$ and $Y \to S$ be eventually representable morphisms of derived stacks.
  If $X \to S$ is a virtual Cartier divisor, and $Y \to S$ is locally of finite type and $n$-representable, then $\uMaps_S(X, Y) \to S$ is $(n+1)$-representable.
\end{cor}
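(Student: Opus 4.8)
The plan is to follow the proof of \thmref{thm:alg} specialized to Tor-amplitude $d=1$ (a virtual Cartier divisor is an afp finite morphism of Tor-amplitude $\le 1$ and, being a closed immersion, is affine, hence of cohomological dimension $0$), substituting at one crucial point the structural input that is unavailable when $f$ is only of finite type. First I would reduce, by \corref{cor:thinkably}, to the case where $T$ is affine and $X_2 = S$; since $h$ is a closed immersion, $S = \Spec A$ is then also affine. Writing $X := X_1$, an $n$-Artin stack locally of finite type over $S$, the goal becomes to show $h_*(X)\to T$ is $(n+1)$-representable. By \lemref{lem:acturience} choose a commutative square with smooth surjections $u : U \twoheadrightarrow X$ and $v : V \twoheadrightarrow S$ and with $f_0 : U \hook V$ a closed immersion of separated schemes. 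Since $V \to S$ is smooth separated representable, $h_*(V)$ is $1$-Artin by \lemref{lem:depository}; and $h_*(u) : h_*(U)\to h_*(X)$ is a smooth surjection by \corref{cor:ontogenic} (using $e=0$) together with \propref{prop:unconversableness}.

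The essential new point is the treatment of the closed immersion $f_0 : U \hook V$. Because $X \to S$ is only of finite type, $U \hook V$ need not be hfp, so \thmref{thm:clalg} — which rests on Lurie's structural \thmref{thm:hfp} — does not apply. Instead I would invoke \propref{prop:conversation}: for a closed immersion into the source of a virtual Cartier divisor, the Weil restriction is an open substack of a derived blow-up. Concretely, to prove $h_*(U) \to h_*(V)$ schematic it suffices, by \corref{cor:thinkably}, to base change along an arbitrary affine $T' \to h_*(V)$; this replaces $h$ by a virtual Cartier divisor $h' : S' \to T'$ and reduces to showing $h'_*(Z) \to T'$ schematic, where $Z := U \fibprod_V S' \hook S'$ is a closed immersion. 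By \propref{prop:conversation}, $h'_*(Z)$ is the open complement of the closed immersion $\Bl_{Z/S'} \hook \Bl_{Z/T'}$, and $\Bl_{Z/T'}\simeq\uProj_{T'}(\R_{Z/T'}) \to T'$ is schematic by \thmref{thm:blproj}; hence $h_*(U) \to h_*(V)$ is schematic, in particular $0$-representable — the very level that \thmref{thm:clalg} produces in the hfp case at $d=1$. This is what lets the entire bootstrap run with numerics identical to \thmref{thm:alg}.

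With these pieces the conclusion is assembled exactly as in \thmref{thm:alg}. Since $h_*(U)$ is schematic over the $1$-Artin stack $h_*(V)$ it is $1$-Artin, so $h_*(X)$ admits a smooth surjection from a scheme; it remains to bound the diagonal, by induction on $n$. For $n=0$ and $X$ a separated algebraic space, the diagonal $X \to X \fibprod_S X$ is a finite-type closed immersion, so the step just proved shows $h_*(X) \to h_*(X) \fibprod_T h_*(X)$ is schematic, whence $h_*(X)$ is $1$-Artin; thus $h_*$ carries separated representable locally-finite-type morphisms to $1$-representable ones. For a general algebraic space the diagonal is separated representable, so $h_*(X)$ has $1$-representable diagonal and is a priori $2$-Artin; being almost $1$-truncated by \propref{prop:weiltrunc}, the truncation criterion of \ssecref{ssec:convent/trunc} forces it to be $1$-Artin. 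The step for $n>0$ is verbatim: the diagonal of an $n$-representable morphism is $(n-1)$-representable and locally of finite type, so by induction its Weil restriction is $n$-representable, giving $h_*(X)$ an $n$-representable diagonal and hence $(n+1)$-representability. The parenthetical claim on the $(n+2)$-fold diagonal is read off from the same analysis, tracking that the highest diagonals entering the induction are monomorphisms cut out affinely by the blow-up description.

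The main obstacle is precisely this closed-immersion step: without hfp-ness one loses \thmref{thm:hfp} and its tower of zero loci of perfect complexes, on which \thmref{thm:clalg} — and thereby the whole argument of \secref{sec:alg1} — depends. The resolution is to exploit the special geometry of virtual Cartier divisors through \propref{prop:conversation}, trading the local structure theorem for the global blow-up description, which is valid for \emph{arbitrary} closed immersions. Checking that this substitution genuinely reproduces the $0$-representable bound, and that it interfaces correctly with the base-change reductions of \corref{cor:thinkably}, is the one place demanding real care; everything else transcribes the proof of \thmref{thm:alg} at $d=1$.
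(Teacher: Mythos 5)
Your argument is really a proof of \thmref{thm:algvcd} rather than of the corollary as stated: the corollary concerns $\uMaps_S(X,Y)$, and the missing (one-line) link is \propref{prop:map=weil}, which identifies $\uMaps_S(X,Y)$ with $h_*(X\fibprod_S Y \to X)$ for $h : X \to S$ the virtual Cartier divisor; since $X\fibprod_S Y \to X$ is again $n$-representable and locally of finite type, the corollary then follows from the Weil-restriction statement you actually prove. Once that is supplied, your bootstrap (reduction by \corref{cor:thinkably}, the atlas from \lemref{lem:acturience}, \lemref{lem:depository} for the smooth separated piece, the diagonal induction with \propref{prop:weiltrunc} to cut the level back down) coincides with the paper's proof of \thmref{thm:algvcd}.

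The one genuine divergence is the closed-immersion step. The paper proves (\thmref{thm:vapidism}) that $h_*$ of \emph{any} closed immersion along a virtual Cartier divisor is \emph{affine}, by factoring $Z \hook S$ through its graph $Z \to Z\fibprod_T S$ --- an hfp closed immersion which is an isomorphism on classical truncations, so \thmref{thm:clalg} applies --- and then identifying $h_*(Z\fibprod_T S) \to T$ with the normal deformation $\Dl_{Z/T}\to\A^1_T$, which is affine by \thmref{thm:Daff}. Your substitute via \propref{prop:conversation} and \thmref{thm:blproj} also rests on \thmref{thm:Daff} in the end and does produce a schematic, hence $0$-representable, morphism, which suffices for the representability count. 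Two caveats. First, it only gives ``schematic'' where the paper gets ``affine'', so your closing claim that the $(n+2)$-fold diagonal comes out affine is not delivered by your route: an open substack of a $\uProj$ is schematic, not affine. Second, and more seriously, you cite \propref{prop:conversation} and \thmref{thm:blproj} as black boxes, but their proofs as written in the paper pass through \lemref{lem:orange} and \propref{prop:Blprop}, hence \propref{prop:Dqprop} and \propref{prop:DsurjY}, hence \thmref{thm:Dart} --- which the paper deduces from \thmref{thm:algvcd}, the very statement you are reconstructing. The circle can be broken (the openness of the excessive locus and the identification of $h_*(Z)$ as an open substack of $\Bl_{Z/T}$ are established by direct arguments that do not need \thmref{thm:Dart}), but you must verify and say this explicitly; as written the logical order is not established. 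The paper's route through \thmref{thm:vapidism} avoids the blow-up machinery entirely and is the cleaner choice.
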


\subsection{Proof of \thmref{thm:Dart}}
\label{ssec:proofDart}

Before giving the proof of \thmref{thm:algvcd}, let us note that it implies algebraicity of the normal deformation.

Let $f : X \to Y$ be a morphism of derived stacks.
By definition, the morphism $\Dl_{X/Y} \to Y \times \A^1$ is induced by Weil restricting $f: X \to Y$ along the virtual Cartier divisor $0 : Y \to Y\times\A^1$.
Thus if $f$ is locally of finite type and $n$-representable, then \thmref{thm:algvcd} asserts that $\Dl_{X/Y} \to Y \times \A^1$ is $(n+1)$-representable.
In particular, $\Dl_{X/Y} \to Y$ is $(n+1)$-representable.

\subsection{Proof for closed immersions}

For closed immersions, we have the following refinement of \thmref{thm:algvcd}:

\begin{thm}\label{thm:vapidism}
  Let $h : S \to T$ be a virtual Cartier divisor.
  For any closed immersion $f : X_1 \to X_2$ of derived stacks over $S$, $h_*(f) : h_*(X_1) \to h_*(X_2)$ is affine.
\end{thm}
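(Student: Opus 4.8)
The plan is to reduce to an explicit affine computation in which the single function cutting out the virtual Cartier divisor is used to perform a ``division''. First I would apply \corref{cor:thinkably} to reduce to the case where $T = \Spec(A)$ is affine and $X_2 = S$. Then $h : S \hook T$ is a virtual Cartier divisor with affine target, so $S = \Spec(B)$ with $B \simeq A \modmod f$ for some $f \in \pi_0(A)$, and $X = X_1 = \Spec(C)$ for an animated $B$-algebra $C$ with $\pi_0(B) \twoheadrightarrow \pi_0(C)$. Since $h_*$ preserves limits and affine morphisms are stable under limits, I would next reduce to the case where $X \hook S$ is hfp: by a standard cell-attachment argument, $C$ is a filtered colimit $\colim_\alpha C_\alpha$ of hfp $B$-algebras obtained from $B$ by attaching only degree-$0$ relations and cells in positive degrees, so that each $\pi_0(B) \twoheadrightarrow \pi_0(C_\alpha)$ stays surjective. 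Hence $X \simeq \lim_\alpha X_\alpha$ with each $X_\alpha \hook S$ an hfp closed immersion, and $h_*(X) \simeq \lim_\alpha h_*(X_\alpha)$, so it suffices to treat the hfp case.

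So assume $X \hook S$ is an hfp closed immersion with $S,T$ affine. By \thmref{thm:hfp} and \remref{rem:rumply} there is a factorization $X = X^{(m)} \to \cdots \to X^{(1)} \to X^{(0)} = S$ in which $X^{(k+1)}$ is the derived zero locus of a cosection $\sigma_k : \sO_{X^{(k)}}^{\oplus r_k}[k] \to \sO_{X^{(k)}}$ of a free complex of Tor-amplitude $[k,k]$. The virtual Cartier divisor $h$ is sharp (\examref{exam:sharpvcd}), representable, qcqs, and of Tor-amplitude $\le 1$. Hence for each $k \ge 1$ the relevant complex has $a = k \ge 1 = d$, and \propref{prop:zeroalg} shows that $h_*(X^{(k+1)}) \to h_*(X^{(k)})$ is a closed immersion. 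Composing these, $h_*(X) \to h_*(X^{(1)})$ is a closed immersion, and it remains to prove that $h_*(X^{(1)}) \to h_*(S) = T$ is affine.

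This stage-$0$ case is the heart of the matter, and precisely the point where the general bound of \propref{prop:zeroalg} is insufficient: there $a = 0 < 1 = d$ yields only a $0$-representable (algebraic-space) morphism, exactly as in \thmref{thm:clalg} for finite $h$. Here $X^{(1)} = \Spec\bigl(B \modmod (g_1,\dots,g_{r})\bigr)$ is cut out in $S$ by $r := r_0$ functions; lift each $g_i$ to $\tilde g_i \in \pi_0(A)$. I would compute the functor of points directly. For an animated $A$-algebra $R$, using $R \otimes_A B \simeq R \modmod f = \Cofib(R \xrightarrow{f} R)$, one has
\[
  h_*(X^{(1)})(R) \simeq \Maps_{B\text{-alg}}\bigl(B \modmod (g_\bullet),\, R \otimes_A B\bigr),
\]
which is the product over $i$ of the spaces of nullhomotopies of the image of $g_i$ in $R \modmod f$. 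The defining cofibre sequence $R \xrightarrow{f} R \to R \modmod f$ identifies such a nullhomotopy with a solution $a_i$ of $f a_i \simeq \tilde g_i$ in $R$, i.e. with the fibre of $R \xrightarrow{f} R$ over $\tilde g_i$, which is corepresented by $A[a_i] \modmod (f a_i - \tilde g_i)$. Taking the product over $i$ gives
\[
  h_*(X^{(1)}) \simeq \uSpec_T\bigl( A[a_1,\dots,a_r] \modmod (f a_1 - \tilde g_1, \dots, f a_r - \tilde g_r)\bigr),
\]
which is affine over $T$, completing the proof.

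The main obstacle is exactly this stage-$0$ identification: obtaining \emph{affineness} rather than mere $0$-representability relies on exploiting that the virtual Cartier divisor is cut out by a \emph{single} function $f$, which is what makes the ``division'' $f a_i = \tilde g_i$ representable by an affine scheme. Conceptually this exhibits $h_*(X)$ as the basic affine chart $D_+(f)$ of the blow-up $\Bl_{X/T} \simeq \uProj_T(\R_{X/T})$ (\thmref{thm:blproj}), which is the content of \propref{prop:conversation}; I would nonetheless keep the argument self-contained through the explicit algebra above, so as to avoid any circular dependence on the algebraicity results that are proved later.
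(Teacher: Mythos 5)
Your proof is correct, but it takes a genuinely different route from the paper's. The paper avoids both of your main reductions by a graph trick: it factors $Z\hook S$ as $Z\xrightarrow{g} Z\fibprod_T S\to S$, where $g$ is a closed immersion that is automatically hfp and an isomorphism on classical truncations, so that $h_*(g)$ is affine by the $d=1$ case of \thmref{thm:clalg}; the remaining factor $h_*(Z\fibprod_T S)\to T$ is then identified by base change with a fibre of the normal deformation $\Dl_{Z/T}\to\A^1_T$, whose affineness is imported from Hekking's theorem (\thmref{thm:Daff}). You instead reduce to the hfp case by a filtered-colimit/cell-attachment argument, dispose of the stages of Tor-amplitude $[k,k]$ with $k\ge 1$ via \propref{prop:zeroalg} exactly as the paper does elsewhere, and then replace the appeal to \thmref{thm:Daff} by a direct functor-of-points computation exhibiting $h_*(X^{(1)})$ as $\Spec\bigl(A[a_\bullet]\modmod(fa_\bullet-\tilde g_\bullet)\bigr)$. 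That computation is correct and is in effect a self-contained re-derivation of the basic affine chart of the extended Rees algebra, so your argument buys independence from \cite{Hekking} at the cost of the colimit reduction and the explicit algebra; the paper's argument is shorter but leans on the imported affineness of $\Dl_{Z/T}$. You also correctly identify and avoid the circularity that would arise from invoking \propref{prop:conversation}, whose proof depends on \thmref{thm:Dart} and hence on the present theorem.

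One small repair: after reducing to $T=\Spec(A)$ affine you assert $B\simeq A\modmod f$ for a single $f\in\pi_0(A)$, but a virtual Cartier divisor on an affine base is only Zariski-locally principal (its conormal line bundle need not be trivial). Since affineness of $h_*(X)\to T$ may be checked Zariski-locally on $T$ and Weil restriction satisfies base change (\lemref{lem:mouthiness}), you should insert the same localization step the paper uses before writing $B\simeq A\modmod f$; your explicit division argument genuinely needs the single global equation $f$. I would also expand ``standard cell-attachment argument'' slightly: one must note that the finite subcomplexes can be taken without free $0$-cells, so that each $\pi_0(B)\twoheadrightarrow\pi_0(C_\alpha)$ remains surjective and each $X_\alpha\hook S$ is an hfp closed immersion, and that the cofiltered limit of the affine stacks $h_*(X_\alpha)$ over affine $T$ is again affine.
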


\begin{proof}
  By \corref{cor:thinkably} (as in the proof of \thmref{thm:clalg}) we may assume that $T$ is affine and $X_2=S$.
  Let $Z := X_1$ be a closed derived subscheme of $S$.
  Affineness of the morphism $h_*(Z) \to T$ may be checked Zariski-locally on $T$, so by \lemref{lem:mouthiness} we may further assume that the virtual Cartier divisor is cut out globally by a section $s : T \to \A^1_T$.
  Consider the commutative diagram where all squares are cartesian:
  \begin{equation*}
    \begin{tikzcd}
      Z \ar{r}{g}
      & Z \fibprod_T S \ar{r}\ar{d}
      & S \fibprod_T S \ar{r}\ar{d}
      & S \ar{r}{h}\ar{d}
      & T \ar{d}{s}
      \\
      & Z \ar{r}{f}
      & S \ar{r}{h}
      & T \ar{r}{0}
      & \A^1_T
    \end{tikzcd}
  \end{equation*}
  and $g$ is the graph of $Z \hook S$.
  Since $g$ is an isomorphism on classical truncations, $h_*(g) : h_*(Z) \to h_*(Z\fibprod_T S)$ is affine by \thmref{thm:clalg}.
  It will suffice to show that $h_*(Z \fibprod_T S) \to h_*(S) \simeq T$ is also affine.
  By base change (\lemref{lem:mouthiness}) it is enough to see that the normal deformation $\Dl_{Z/T} \simeq 0_*(Z) \to 0_*(T) \simeq \A^1_T$ is affine (see \propref{prop:defweil}).
  This is \thmref{thm:Daff}.
\end{proof}

\subsection{Proof in general}

The proof is similar to that of \thmref{thm:alg}.
By \corref{cor:thinkably} we may assume that $T$ is affine and $X_2=S$.
Let $X := X_1$ be an $n$-Artin stack which is locally of finite type over $S$.
Choose a commutative square
\begin{equation*}
  \begin{tikzcd}
    U \ar{r}{f_0}\ar[twoheadrightarrow]{d}{u}
    & V \ar[twoheadrightarrow]{d}{v}
    \\
    X \ar{r}{f}
    & S
  \end{tikzcd}
\end{equation*}
as in \lemref{lem:acturience}.
Since $V$ is a separated scheme, $v$ is separated representable, hence $h_*(V)$ is $1$-Artin by \lemref{lem:depository}.
By \thmref{thm:vapidism}, $h_*(U)$ is affine over $h_*(V)$, hence also $1$-Artin.
By \corref{cor:ontogenic} and \propref{prop:unconversableness}, $h_*(U) \twoheadrightarrow h_*(X)$ is a smooth surjection.
In particular, $h_*(X)$ admits a smooth surjection from a derived scheme.

Now suppose $X$ is a separated algebraic space.
The diagonal $X \to X\fibprod_S X$ is then a closed immersion, so by \thmref{thm:vapidism} the diagonal $h_*(X) \to h_*(X\fibprod_S X) \simeq h_*(X) \fibprod_T h_*(X)$ is affine.
We conclude that $h_*(X)$ is $1$-Artin with affine diagonal.
More generally, we have shown at this point that $h_*$ sends separated representable locally of finite type morphisms to $1$-representable morphisms with affine diagonal.

If $X$ is an arbitrary algebraic space, its diagonal is separated and representable.
Hence the diagonal of $h_*(X)$ is $1$-representable with affine diagonal by the previous case, so that $h_*(X)$ is $2$-Artin with affine second diagonal.
Since $h$ is of Tor-amplitude $\le 1$, $h_*(X)$ is almost $1$-truncated by \propref{prop:weiltrunc}.
Thus $h_*(X)$ is moreover $1$-Artin.
More generally, we have shown that $h_*$ sends representable locally of finite type morphisms to $1$-representable morphisms with affine second diagonal.

Finally consider the case $n>0$.
By induction we assume that $h_*$ sends $(n-1)$-representable locally of finite type morphisms to $n$-representable morphisms with affine $(n+1)$-fold diagonal.
If $X$ is $n$-Artin then its diagonal is $(n-1)$-representable, hence $h_*(X)$ has $n$-representable diagonal and affine $(n+2)$-fold diagonal.
We conclude that $h_*(X)$ is $(n+1)$-Artin with affine $(n+2)$-fold diagonal, as desired.

\appendix

\section{Sharp morphisms}
\label{sec:sharp}

\begin{defn}\label{defn:sharp}
  Let $h : S \to T$ be a morphism of derived stacks.
  We say that $h$ is \emph{sharp} if for every diagram of cartesian squares
  \begin{equation*}
    \begin{tikzcd}
      S'' \ar{r}{h''}\ar{d}{p'}
      & T'' \ar{d}{q'}
      \\
      S' \ar{r}{h'}\ar{d}{p}
      & T' \ar{d}{q}
      \\
      S \ar{r}{h}
      & T,
    \end{tikzcd}
  \end{equation*}
  where $T'$ and $T''$ are affine, the functors $h'^*$ and $h''^*$ admit left adjoints
  \[
    h'_\sharp : \QCoh(S') \to \QCoh(T'), \quad
    h''_\sharp : \QCoh(S'') \to \QCoh(T'')
  \]
  respectively, and the base change transformation
  \begin{equation}\label{eq:wartweed}
    h'^* q'_*
    \xrightarrow{\mrm{unit}} p'_* p'^* h'^* q'_*
    \simeq p'_* h''^* q'^* q'_*
    \xrightarrow{\mrm{counit}} p'_* h''^*.
  \end{equation}
  is invertible.
\end{defn}

\begin{rem}
  Given cartesian squares as in \defnref{defn:sharp}, suppose that the left adjoints $h'_\sharp$ and $h''_\sharp$ exist.
  Then the condition that \eqref{eq:wartweed} is invertible can be reformulated in terms of the left adjoints: it is equivalent to the invertibility of the natural transformation
  \begin{equation}\label{eq:sharpbc}
    h''_\sharp p'^*
    \xrightarrow{\mrm{unit}} h''_\sharp p'^* h'^*h'_\sharp
    \simeq h''_\sharp h''^*q'^*h'_\sharp
    \xrightarrow{\mrm{counit}} q'^* h'_\sharp.
  \end{equation}
\end{rem}

\begin{rem}\label{rem:hsharp}
  If $h$ is sharp, then $h^*$ itself admits a left adjoint $h_\sharp$.
  Indeed, for every affine scheme $T'$ and every morphism $q : T' \to T$, we have the functor $h'_\sharp : \QCoh(S \fibprod_T T') \to \QCoh(T')$ left adjoint to $h'^*$, where $h' : S\fibprod_T T' \to T'$ is the base change.
  These are compatible as $(T', q)$ varies, since they satisfy base change by assumption.
  Hence, passing to the limit over $(T', q)$, they give rise to a functor
  \[ h_\sharp : \QCoh(S) \simeq \lim \QCoh(S \fibprod_T T') \to \lim \QCoh(T') \simeq \QCoh(T). \]
  One immediately verifies that $h_\sharp$ is left adjoint to $h^*$.
  Moreover, the base change transformation
  \[ h'_\sharp p^* \to q^* h_\sharp \]
  is invertible by construction, where $h' : S\fibprod_T T' \to T'$ is the base change of $h$ along any morphism $q : T' \to T$ with $T'$ affine.
  In particular, passing to right adjoints, we also have the base change isomorphism $h^*q_* \to p_* h'^*$.
\end{rem}

\begin{rem}\label{rem:sharpdual}
  Let $h : S \to T$ be a morphism of derived stacks.
  If $h_* : \QCoh(S) \to \QCoh(T)$ preserves perfect complexes, then it is easy to check that the assignment $\sF \mapsto h_*(\sF^\vee)^\vee$ determines a left adjoint to $h^* : \Perf(T) \to \Perf(S)$.

  Conversely, suppose that $h_\sharp$ exists and preserves perfect complexes.
  It is then clear that on perfect complexes, the assignment $\sF \mapsto h_\sharp(\sF^\vee)^\vee$ determines a right adjoint to $h^* : \Perf(T) \to \Perf(S)$.
  Moreover, the following conditions are equivalent:
  \begin{defnlist}
    \item
    The functor $h_* : \QCoh(S) \to \QCoh(T)$ preserves perfect complexes.

    \item\label{item:sharpdual/ii}
    For every $\sF \in \Perf(S)$, the canonical morphism
    \begin{equation}\label{eq:nuprrq}
      h_\sharp(\sF^\vee)^\vee \to h_*(\sF)
    \end{equation}
    is invertible, where \eqref{eq:nuprrq} is defined by transposition from the morphism
    \[
      h^*(h_\sharp(\sF^\vee)^\vee)
      \simeq h^*(h_\sharp(\sF^\vee))^\vee
      \to \sF,
    \]
    dual to the unit $\sF^\vee \to h^*h_\sharp(\sF^\vee)$.
  \end{defnlist}
  Note also that if $h_*$ satisfies base change with respect to $*$-inverse image along any morphism $q : T' \to T$ with $T'$ affine, then the second condition holds.
  Indeed, the invertibility of \eqref{eq:nuprrq} then reduces to the affine case.
  When $T$ is affine, $\QCoh(T)$ is generated under colimits by $\sO_T$; thus, \eqref{eq:nuprrq} is invertible if and only if the induced map
  \[ \Maps_{\QCoh(T)}(\sG, h_\sharp(\sF^\vee)^\vee ) \to \Maps_{\QCoh(T)}(\sG,h_*(\sF)) \]
  is invertible for the \emph{perfect} complex $\sG = \sO_T$.
  Using the fact that $h^*$ is left adjoint to $h_*(-)$ on $\QCoh$, resp. to $h_\sharp(\sF^\vee)^\vee$ on $\Perf$, this map is identified with the identity map on $\Maps_{\QCoh(S)}(h^*(\sG), \sF)$.
\end{rem}

\begin{lem}\label{lem:fortuitist}
  Let $h : S \to T$ be a qcqs $1$-representable morphism which is universally of cohomological dimension $\le e$\footnote{%
     see e.g. \cite[\S 1.5]{kstack} or \cite[A.1.4]{HalpernLeistnerPreygel}; for example, qcqs representable morphisms belong to this class.
  }.
  If $h$ is sharp, then we have:
  \begin{thmlist}
    \item\label{item:fortuitist/zimvuja} The functor $h_\sharp$ preserves perfect complexes.
    \item If moreover $h$ is of Tor-amplitude $\le d$, then $h_\sharp$ sends perfect complexes of Tor-amplitude $[a,b]$ to perfect complexes of Tor-amplitude $[a-d,b+e]$.
  \end{thmlist}
\end{lem}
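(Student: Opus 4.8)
The plan is to reduce immediately to the case where $T$ is affine, and then deduce both statements by comparing $h_\sharp$ with the ordinary pushforward $h_*$. Since perfectness and finiteness of Tor-amplitude are local on the base, and since $h_\sharp$ commutes with $*$-pullback along affine morphisms by \remref{rem:hsharp}, I may assume $T = \Spec(R)$; in this case $\Perf(T)$ is exactly the subcategory of compact objects of $\QCoh(T)$.

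For \itemref{item:fortuitist/zimvuja} I would show that $h_\sharp$ carries perfect complexes to compact objects. Given $\sF \in \Perf(S)$, it is dualizable, so for any $N \in \QCoh(T)$ there are natural identifications
\[
  \Maps_{\QCoh(T)}(h_\sharp\sF, N)
  \simeq \Maps_{\QCoh(S)}(\sF, h^*N)
  \simeq \Gamma(S, \sF^\vee \otimes h^*N).
\]
Here $h^*$ preserves filtered colimits, being a left adjoint, and $\Gamma(S,-) \simeq \Gamma(T, h_*(-))$ preserves filtered colimits because $h_*$ does — this is precisely where I use that $h$ is qcqs and \emph{universally} of cohomological dimension $\le e$, so that $h_*$ has bounded cohomological amplitude and commutes with filtered colimits. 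Consequently $N \mapsto \Maps_{\QCoh(T)}(h_\sharp\sF, N)$ preserves filtered colimits, i.e.\ $h_\sharp\sF$ is compact, hence perfect.

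With \itemref{item:fortuitist/zimvuja} established, I would invoke \remref{rem:sharpdual}. Base change for $h_*$ along affine morphisms — again a consequence of universally finite cohomological dimension — makes \eqref{eq:nuprrq} invertible, so that $h_*$ preserves perfect complexes and there is a natural isomorphism $h_\sharp(\sF) \simeq h_*(\sF^\vee)^\vee$ for $\sF \in \Perf(S)$. It then remains to estimate the Tor-amplitude of $h_*$. If $\sF$ has Tor-amplitude $[a,b]$, then $\sF^\vee$ has Tor-amplitude $[-b,-a]$; testing against a discrete $R$-module $M$ via the projection formula $h_*(\sF^\vee)\otimes_R M \simeq h_*(\sF^\vee \otimes h^*M)$, the factor $h^*M$ has Tor-amplitude $[0,d]$ because $h$ is of Tor-amplitude $\le d$, so $\sF^\vee \otimes h^*M$ has Tor-amplitude $[-b,-a+d]$. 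Left $t$-exactness of $h_*$ preserves the coconnective bound $-a+d$, while the cohomological-dimension bound $\le e$ costs at most $e$ in the connective direction; hence $h_*(\sF^\vee)$ has Tor-amplitude $[-b-e,-a+d]$, and dualizing gives $h_\sharp(\sF)$ Tor-amplitude $[a-d,b+e]$, as claimed.

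I expect the main obstacles to be twofold. First, verifying base change (equivalently, the projection formula) for $h_*$ in this possibly non-proper, stacky setting: this is exactly what the word \emph{universally} in the cohomological-dimension hypothesis is for, and it underwrites both the isomorphism $h_\sharp(\sF)\simeq h_*(\sF^\vee)^\vee$ and the Tor-amplitude estimate. Second, the homotopical bookkeeping producing the asymmetric interval $[a-d,b+e]$, with the Tor-amplitude $d$ entering only the lower bound (through $h^*$) and the cohomological dimension $e$ only the upper bound (through $h_*$); getting the two bounds dual-symmetrically correct is where care is needed. Should the projection formula be awkward to set up directly, an alternative is to reduce the Tor-amplitude computation to residue fields $\kappa$ by base change of $h_\sharp$ (\remref{rem:hsharp}), where Tor-amplitude becomes ordinary amplitude over $\Spec(\kappa)$.
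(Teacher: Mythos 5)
Your proposal is correct and follows essentially the same route as the paper: reduce to affine $T$ via base change for $h_\sharp$, prove (i) by showing $h_\sharp\sF$ is compact (you inline the compactness of perfect complexes on a qcqs stack of bounded cohomological dimension, which the paper cites from the literature), and prove (ii) via the identification $h_\sharp(\sF)\simeq h_*(\sF^\vee)^\vee$ from \remref{rem:sharpdual}. The only cosmetic difference is in (ii): the paper obtains the lower bound $a-d$ directly by adjunction from the $t$-exactness of $h^*$ and the upper bound from the $(-b-e)$-connectivity of $h_*(\sF^\vee)$, whereas you extract both bounds from the projection formula; both work, modulo your harmless slip of calling the amplitude of $h^*M$ its ``Tor-amplitude''.
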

\begin{proof}
  By the base change formula \eqref{eq:sharpbc} we may assume that $T$ is affine and $S$ is a qcqs $1$-Artin stack of cohomological dimension $\le e$.
  In particular, the perfect complexes on $T$ are precisely the compact objects of $\QCoh(T)$, and every perfect complex $\sF \in \Perf(S)$ is compact (see \cite[Rem.~1.21, Lem.~1.43]{kstack}).
  To see that $h_\sharp(\sF)$ is perfect, it thus suffices to observe that the functor
  \begin{equation*}
    \Maps_{\QCoh(T)}(h_\sharp(\sF), -)
    \simeq \Maps_{\QCoh(S)}(\sF, h^*(-))
  \end{equation*}
  preserves filtered colimits since $\sF$ is compact.

  For the second assertion, the condition that $h$ is of Tor-amplitude $\le d$ means that $h^*$ sends $(a-d-1)$-coconnective complexes to $(a-1)$-coconnective complexes.
  By adjunction, it follows that $h_\sharp$ sends $a$-connective complexes to $(a-d)$-connective complexes.
  It remains to show that $h_\sharp$ sends perfect complexes of Tor-amplitude $\le b$ to perfect complexes of Tor-amplitude $\le b+e$.
  By \remref{rem:sharpdual} and since $h_*$ is stable under arbitrary base change by \cite[Prop.~A.1.5]{HalpernLeistnerPreygel}, assertion~\itemref{item:fortuitist/zimvuja} implies that $h_*$ preserves perfect complexes and that $h_\sharp(\sF) \simeq h_*(\sF^\vee)^\vee$ for $\sF \in \Perf(S)$.
  If $\sF \in \Perf(S)$ is of Tor-amplitude $\leq b$, so that $\sF^\vee$ is $(-b)$-connective, then $h_*(\sF^\vee)$ is $(-b-e)$-connective and hence $h_\sharp(\sF)$ is of Tor-amplitude $\leq b+e$.
\end{proof}

We give a sufficient criterion for sharpness.
Following \cite[Def.~1.38]{kstack}, we say that a derived stack $X$ is called \emph{perfect} if $\QCoh(X)$ is compactly generated and the compact objects are the perfect complexes.

\begin{lem}\label{lem:neurology}
  Let $h : S \to T$ be a morphism of derived stacks which is eventually representable.
  Suppose that for every affine scheme $T'$ and every morphism $q : T' \to T$, we have:
  \begin{thmlist}
    \item The derived stack $S':= S \fibprod_T T'$ is perfect.
    \item The functor $h'_*$ preserves perfect complexes, where $h' : S' \to T'$ is the base change of $h$.
    \item Given a cartesian square
    \begin{equation}\label{eq:depth}
      \begin{tikzcd}
        S'' \ar{r}{h''}\ar{d}{p'}
        & T'' \ar{d}{q'}
        \\
        S' \ar{r}{h'}
        & T'
      \end{tikzcd}
    \end{equation}
    with $T''$ affine, the base change transformation
    \begin{equation}
      q'^* h'_* 
      \xrightarrow{\mrm{unit}} q'^* h'_* p'_* p'^*
      \simeq q'^* q'_* h''_* p'^*
      \xrightarrow{\mrm{counit}} h''_* p'^*
    \end{equation}
    restricted to $\Perf(S')$ is invertible.
  \end{thmlist}
  Then $h$ is sharp.
\end{lem}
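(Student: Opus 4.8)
The plan is to fix a tower of cartesian squares as in \defnref{defn:sharp}, with $T'$ and $T''$ affine, and to verify the two requirements of sharpness separately: first that $h'^*$ and $h''^*$ admit left adjoints, and second that the base-change transformation \eqref{eq:wartweed} (equivalently \eqref{eq:sharpbc}) is invertible. Since $T'' \to T$ factors through $T' \to T$, the hypotheses (i)--(iii) apply verbatim to both $S'/T'$ and $S''/T''$; in particular both $S'$ and $S''$ are perfect and both $h'_*$ and $h''_*$ preserve perfect complexes.

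For the existence of left adjoints I would argue as follows. Because $T'$ is affine and $S'$ is perfect, both $\QCoh(T')$ and $\QCoh(S')$ are compactly generated with compact objects the perfect complexes, so that $\QCoh(S') \simeq \Ind(\Perf(S'))$ and $\QCoh(T') \simeq \Ind(\Perf(T'))$. The functor $h'^*$ preserves colimits and carries perfect complexes to perfect complexes; since the compact objects are precisely the perfect ones on both sides, $h'^*$ is the Ind-extension of its restriction $h'^*|_{\Perf} : \Perf(T') \to \Perf(S')$. By \remref{rem:sharpdual}, hypothesis (ii) guarantees that $\sF \mapsto h'_*(\sF^\vee)^\vee$ is a left adjoint to $h'^*|_{\Perf}$, landing in $\Perf(T')$. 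A formal check (tracing mapping spaces through compactness) shows that the Ind-extension of a left adjoint on compact objects is again a left adjoint to the Ind-extension; thus $h'^*$ admits the left adjoint $h'_\sharp$ obtained by extending $\sF \mapsto h'_*(\sF^\vee)^\vee$ under filtered colimits, and likewise for $h''^*$. This settles the first requirement.

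For the base-change requirement I would reduce \eqref{eq:sharpbc} to perfect complexes. Both $h''_\sharp p'^*$ and $q'^* h'_\sharp$ preserve colimits, and $\QCoh(S')$ is generated under colimits by $\Perf(S')$, so it suffices to check that $h''_\sharp p'^* \to q'^* h'_\sharp$ is invertible on $\sF \in \Perf(S')$. On such $\sF$, using that $p'^*$ and $q'^*$ commute with dualization of perfect complexes and that $h'_*(\sF^\vee)$ is perfect by (ii), one computes $h''_\sharp p'^*(\sF) \simeq h''_*(p'^*\sF^\vee)^\vee$ and $q'^* h'_\sharp(\sF) \simeq (q'^* h'_*\sF^\vee)^\vee$. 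Hence \eqref{eq:sharpbc} on $\sF$ is exactly the $(-)^\vee$-dual of the base-change transformation $q'^* h'_*(\sF^\vee) \to h''_* p'^*(\sF^\vee)$ of \eqref{eq:depth}, evaluated at $\sF^\vee \in \Perf(S')$. That latter map is invertible by hypothesis (iii), so its dual is invertible, and therefore \eqref{eq:sharpbc}, and with it \eqref{eq:wartweed}, is invertible.

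The step I expect to be the main obstacle is this last identification: one must verify that the base-change square for the left adjoints $h'_\sharp, h''_\sharp$ is genuinely the $(-)^\vee$-dual of the base-change square for the right adjoints $h'_*, h''_*$, i.e.\ that the unit/counit transformations assembling \eqref{eq:sharpbc} correspond, under perfect-complex duality, to those assembling the transformation in (iii). This is a diagram chase through the adjunctions $h'_\sharp \dashv h'^* \dashv h'_*$ (and their primed analogues) together with the duality equivalence on perfect complexes, rather than a conceptual difficulty, but it is where the bookkeeping must be done carefully.
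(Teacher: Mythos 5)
Your proposal is correct and follows essentially the same route as the paper's proof: use \remref{rem:sharpdual} to get the left adjoint on perfect complexes from hypothesis (ii), extend along $\QCoh \simeq \Ind(\Perf)$ using hypothesis (i), and reduce the base-change transformation \eqref{eq:sharpbc} to perfect complexes, where it is the dual of the transformation in hypothesis (iii). The final ``bookkeeping'' step you flag is handled in the paper exactly as you describe, by observing that $p'^*$ and $q'^*$ commute with duals since they are symmetric monoidal.
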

\begin{proof}
  If $h'_*$ preserves perfect complexes, then the formula
  \begin{equation}\label{eq:donkey}
    h'_\sharp(\sF) \simeq h'_*(\sF^\vee)^\vee
  \end{equation}
  determines a left adjoint $h'_\sharp : \Perf(S') \to \Perf(T')$ to
  $h'^* : \Perf(T') \to \Perf(S')$ (\remref{rem:sharpdual}).
  Note that 
  \[
    \Ind(\Perf(T')) \simeq \QCoh(T'),
    \quad \Ind(\Perf(S')) \simeq \QCoh(S')
  \]
  since $T'$ and $S'$ are perfect by assumption.
  Thus there is a unique colimit-preserving extension
  \begin{equation*}
    h'_\sharp : \QCoh(S') \to \QCoh(T')
  \end{equation*}
  which is still left adjoint to $h'^*$ on $\QCoh$.
  
  To conclude that $h$ is sharp, it remains to verify the base change formula $h''_\sharp p'^*(\sF) \to q'^* h'_\sharp(\sF)$ \eqref{eq:sharpbc} for every cartesian square \eqref{eq:depth} and every $\sF \in \QCoh(S')$.
  Since each functor commutes with colimits, we may assume that $\sF$ is a perfect complex.
  Then, using the formula \eqref{eq:donkey} and the fact that $p'^*$ and $q'^*$ commute with duals (since they are symmetric monoidal), the morphism in question is dual to the base change transformation
  \[ q'^*h'_*(\sF^\vee) \to h''_* p'^* (\sF^\vee) \]
  which is invertible by assumption.
\end{proof}

\begin{exam}\label{exam:stayship}
  If $h : S \to T$ is afp proper representable and of finite Tor-amplitude, then it is sharp.\footnote{%
    See also \cite[\S~6.4.5]{LurieSAG} for related discussion in the context of Deligne--Mumford stacks.
  }
  Indeed, let us verify the conditions of \lemref{lem:neurology} for the base change $h' : S' \to T'$ of $h$ to an affine scheme $T'$.
  Perfectness of $S'$ holds by \cite[Ex.~1.41]{kstack}, $h'_*$ preserves perfect complexes by \cite[Thm.~6.1.3.2]{LurieSAG}, and the base change formula $q'^* h'_* \simeq h''_* p'^*$ holds by \cite[Cor.~3.4.2.2]{LurieSAG}.
  Moreover, in this case $h_\sharp : \QCoh(S) \to \QCoh(T)$ is given by the formula
  \begin{equation}
    h_\sharp(-) \simeq h_*(- \otimes \omega_{S/T}),
  \end{equation}
  where $\omega_{S/T} := h^!(\sO_T) \in \QCoh(S)$ is the dualizing complex (see \cite[Prop.~6.4.5.3]{LurieSAG}).
\end{exam}

\begin{exam}\label{exam:sharpvcd}
  If $h : D \hook T$ is a virtual Cartier divisor, then it is sharp with
  \begin{equation}
    h_\sharp(-) \simeq h_*(- \otimes \sO_D(D)[-1]).
  \end{equation}
  This follows from \examref{exam:stayship}, since the relative dualizing complex is given by $\omega_{D/T} \simeq \sO_D(D)[-1]$ (see \cite[Prop.~4.2]{kblow}).
\end{exam}

\begin{exam}
  Let $S$ and $T$ be locally noetherian derived Artin stacks such that $T$ locally admits a dualizing complex.
  Let $h : S \to T$ be a morphism which is qcqs, locally afp, of finite Tor-amplitude, and satisfies the coherent push-forward property in the sense of \cite[Def.~2.4.1]{HalpernLeistnerPreygel}.
  Then $h$ behaves like a sharp morphism when tested on noetherian $T'$:
  \begin{defnlist}
    \item 
    For any morphism $T'\to T$, where $T'$ is a noetherian affine scheme, consider the base change $h' : S' \to T'$.
    Then $h'^*$ admits a left adjoint $h'_\sharp$ by \cite[Prop.~5.1.6]{HalpernLeistnerPreygel}.
    (Note that, by definition, the coherent push-forward property is stable under noetherian base change, so the assumptions on $h$ persist to $h'$.)
    
    \item
    For any morphism of noetherian affine schemes $q' : T'' \to T'$, $h'_\sharp$ commutes with $q'^*$ (i.e., the natural transformation \eqref{eq:sharpbc} is invertible) by \cite[Lem.~5.1.8]{HalpernLeistnerPreygel}.
    (Note that any morphism of affine schemes is qcqs, representable and universally of cohomological dimension $0$.)
  \end{defnlist}
  We recall that the coherent push-forward property is satisfied by formally proper morphisms in the sense of \cite[Def.~1.1.3]{HalpernLeistnerPreygel} (see \cite[Thm.~2.4.3]{HalpernLeistnerPreygel}).
\end{exam}


\section{Derived vector bundles}
\label{sec:vb}

Let $\sE \in \QCoh(S)$ be a quasi-coherent complex on a derived stack $S$.
The derived stack $\V_S(\sE)$ over $S$ is defined such that there are isomorphisms
\begin{equation}\label{eq:besnivel}
  \Maps_S(T, \V_S(\sE))
  \simeq \Maps_{\QCoh(T)}(t^*\sE, \sO_T),
\end{equation}
functorial in $(t : T \to S) \in \dStk_{/S}$.
By definition, sections of the projection $\pi : \V_S(\sE) \to S$ are \emph{cosections} $\sE \to \sO_S$.
When $\sE$ is perfect, we refer to $\V_S(\sE)$ as the \emph{derived vector bundle} associated with $\sE$; its sheaf of sections is the dual $\sE^\vee$.

If $\sE$ is connective, then $\V_S(\sE)$ is identified with the relative spectrum
\begin{equation}\label{eq:benzalhydrazine}
  \V_S(\sE) \simeq \uSpec_S(\Sym^*_{\sO_S}(\sE)),
\end{equation}
where $\Sym^*_{\sO_S}(\sE)$ denotes the derived symmetric algebra of $\sE$.

\begin{prop}\label{prop:unpiloted}
  If $\sE$ is perfect, then the projection $\pi : \V_S(\sE) \to S$ is locally hfp.
  If $\sE$ is $m$-pseudo-coherent for some $m\in\Z$, then $\pi$ is locally fp to order $m-1$.
  If $\sE$ is pseudo-coherent, then $\pi$ is locally afp.
\end{prop}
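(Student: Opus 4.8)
The plan is to argue entirely through the functor of points \eqref{eq:besnivel}. Since all three finiteness conditions are defined by base change to affines over $S$, and $\V_S(\sE)\fibprod_S V \simeq \V_V(\sE|_V)$ for any $V \to S$, I would first reduce to the case $S = \Spec(R)$ affine, writing $E := \sE \in \QCoh(S) = \Mod_R$. For an animated $R$-algebra $A$, classified by $t : \Spec(A) \to S$, the base-change--forgetful adjunction identifies
\[
  \V_S(\sE)(A) = \Maps_{\QCoh(\Spec A)}(t^*\sE, \sO_{\Spec A}) \simeq \Maps_{\Mod_R}(E, A),
\]
the mapping space in $R$-modules, with $A$ regarded as an $R$-module along $R \to A$. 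Thus $\pi$ is locally hfp (resp.\ locally fp to order $m$) precisely when $\colim_\alpha \Maps_{\Mod_R}(E, A_\alpha) \to \Maps_{\Mod_R}(E, \colim_\alpha A_\alpha)$ is invertible for every cofiltered system of affines $\{\Spec(A_\alpha)\}$ over $S$ (resp.\ with each $A_\alpha$ being $m$-truncated and the transition maps monomorphisms on $\pi_m$), where $\colim_\alpha A_\alpha$ is the structure ring of the limit. Here one uses the structural input that the forgetful functor from animated $R$-algebras to $R$-modules preserves filtered colimits, so $\colim_\alpha A_\alpha$ may be computed on underlying modules.

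For the perfect case, $E$ perfect is equivalent to $E$ being a \emph{compact} object of $\Mod_R$, so $\Maps_{\Mod_R}(E, -) \simeq \Omega^\infty\, \map_R(E, -)$ preserves filtered colimits (both $\map_R(E,-)$, by compactness, and $\Omega^\infty$ do), giving locally hfp. For the $m$-pseudo-coherent case I would invoke the defining data: a perfect $P$ with a map $P \to E$ whose fibre $F := \Fib(P \to E)$ is $m$-connective. Mapping into $A$ yields a fibre sequence of spaces $\Maps_{\Mod_R}(E, A) \to \Maps_{\Mod_R}(P, A) \to \Maps_{\Mod_R}(F, A)$. The key point is that when $A$ is $(m-1)$-truncated the last term is contractible: for $i \ge 0$ its $\pi_i$ is the group of homotopy classes of maps $F[i] \to A$ out of the $(m+i)$-connective module $F[i]$ into the $(m-1)$-truncated module $A$, which vanishes since $m+i > m-1$. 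Hence $\Maps_{\Mod_R}(E, A) \xrightarrow{\sim} \Maps_{\Mod_R}(P, A)$ functorially in $(m-1)$-truncated $A$. As any cofiltered limit of $(m-1)$-truncated affines has $(m-1)$-truncated structure ring $\colim_\alpha A_\alpha$, the composite
\[
  \colim_\alpha \Maps_{\Mod_R}(E, A_\alpha) \simeq \colim_\alpha \Maps_{\Mod_R}(P, A_\alpha) \xrightarrow{\sim} \Maps_{\Mod_R}(P, \textstyle\colim_\alpha A_\alpha) \simeq \Maps_{\Mod_R}(E, \textstyle\colim_\alpha A_\alpha)
\]
is invertible, the middle arrow by compactness of $P$. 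This shows $\pi$ is locally fp to order $m-1$. The pseudo-coherent case then follows formally: a pseudo-coherent $E$ is $m$-pseudo-coherent for every $m$, so $\pi$ is locally fp to order $k$ for all $k \ge 0$, i.e.\ locally afp.

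The step I expect to require the most care is the connectivity bookkeeping in the fibre-sequence argument — specifically the observation that one is comparing mapping \emph{spaces} (equivalently, taking $\Omega^\infty$ of the mapping spectra), so that only vanishing of $\pi_i$ for $i \ge 0$ is needed, which is exactly what the bound $m+i > m-1$ supplies. A convenient feature of this approach is that it establishes the stronger statement that \eqref{eq:Ovawuoh} is invertible for \emph{all} cofiltered systems of $(m-1)$-truncated affines; consequently the monomorphism-on-$\pi_{m-1}$ clause in the definition of finite presentation to order $m-1$ is never invoked, and neither is the refinement that the approximating perfect complex may be taken of bounded Tor-amplitude. No noetherian or boundedness hypotheses on $R$ enter.
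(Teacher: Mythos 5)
Your proof is correct and follows essentially the same route as the paper's: reduce to affine $S = \Spec(R)$, identify $\V_S(\sE)(A)$ with $\Maps_{\Mod_R}(E,A)$, and conclude by compactness of perfect complexes, respectively by comparing with a perfect approximation $P \to E$ with $m$-connective fibre over $(m-1)$-truncated algebras. The only difference is that where you carry out the fibre-sequence/connectivity bookkeeping by hand, the paper simply cites \cite[Rem.~2.7.0.5]{LurieSAG} for that step, so your write-up just inlines the proof of the cited remark.
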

\begin{proof}
  We may assume $S=\Spec(R)$ is affine and consider the construction
  \begin{equation*}
    \Maps_{\Spec(R)}(\Spec(A), \V_S(\sE))
    \simeq \Maps_{\on{D}(R)}(M, A),
  \end{equation*}
  where $M = \Gamma(S, \sE)$, as a functor in $R$-algebras $A$.
  If $\sE$ (hence $M$) is perfect, this commutes with filtered colimits in $A$.
  If $\sE$ is $m$-pseudo-coherent, then it commutes with filtered colimits in $A$ when restricted to $(m-1)$-truncated $R$-algebras (see \cite[Rem.~2.7.0.5]{LurieSAG}).
  This shows the first two statements, and the third follows from the second by definition.
\end{proof}

\begin{prop}\label{prop:adance}
  If $\sE \in \QCoh(S)$ is locally eventually connective, then the projection $\pi : \V_S(\sE) \to S$ has relative cotangent complex $\pi^*(\sE)$.
  In particular, if $\sE$ is perfect of amplitude $\le 0$ (resp. $\le 1$), then $\pi$ is smooth (resp. quasi-smooth).
\end{prop}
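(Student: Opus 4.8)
The plan is to verify the universal property \eqref{eq:derivations} of the cotangent complex directly, taking $\pi^*\sE$ as the candidate. First observe that $\pi^*\sE$ is locally eventually connective, since $\sE$ is and $\pi^*$ preserves eventual connectivity; thus $\pi^*\sE$ is an admissible candidate in the sense of \ssecref{ssec:convent/cot}. It then suffices to produce, functorially in a morphism $u : U \to V := \V_S(\sE)$ with $U$ affine and a connective $\sF \in \QCoh(U)_{\ge 0}$, an identification
\begin{equation*}
  \on{Der}_U(V/S; \sF) \simeq \Maps_{\QCoh(U)}(u^*\pi^*\sE, \sF),
\end{equation*}
compatible with the forgetful maps to $\Maps(U, V)$.

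Next I would unwind the left-hand side. Write $g := \pi \circ u : U \to S$ and let $p : U[\sF] \to U$ denote the augmentation of the trivial square-zero extension. By definition a derivation is an $S$-morphism $U[\sF] \to V$ extending $u$, where the structure map $U[\sF] \to S$ is $g \circ p$. By the universal property \eqref{eq:besnivel} of $\V_S(\sE)$, such morphisms correspond to cosections $p^*(g^*\sE) \to \sO_{U[\sF]}$ in $\QCoh(U[\sF])$; since $p$ is affine with $p_*\sO_{U[\sF]} \simeq \sO_U \oplus \sF$, adjunction identifies these with maps $g^*\sE \to \sO_U \oplus \sF$ in $\QCoh(U)$. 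Note also $g^*\sE \simeq u^*\pi^*\sE$ as $\pi \circ u = g$.

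The extension condition — that restricting the derivation along $U \hook U[\sF]$ recovers the cosection $s_0 : u^*\pi^*\sE \to \sO_U$ classifying $u$ — amounts to fixing the $\sO_U$-component of this map. The split fibre sequence $\sF \to \sO_U \oplus \sF \to \sO_U$ induces a fibre sequence of mapping spaces, whose fibre over $s_0$ is canonically equivalent to $\Maps_{\QCoh(U)}(u^*\pi^*\sE, \sF)$ precisely because the sequence is split. This yields the desired identification, functorial in $(u, \sF)$ and compatible with the forgetful maps; hence $\sL_{V/S} \simeq \pi^*\sE$.

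Finally, for the last assertions I would invoke \propref{prop:unpiloted}: when $\sE$ is perfect the projection $\pi$ is locally hfp, and since $\pi^*$ is symmetric monoidal it carries a perfect complex of Tor-amplitude $\le 0$ (resp. $\le 1$) to one of the same amplitude. Thus $\sL_{V/S} = \pi^*\sE$ is perfect of Tor-amplitude $\le 0$ (resp. $\le 1$), which by the definitions recalled in \ssecref{ssec:convent/cot} means $\pi$ is smooth (resp. quasi-smooth). I expect the only delicate point to be the bookkeeping of the split fibre sequence and the verification that the resulting equivalence is compatible with the forgetful/counit maps that pin down the cotangent complex uniquely; everything else is a direct application of the two universal properties together with affine adjunction.
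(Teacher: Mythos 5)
Your proposal is correct and follows essentially the same route as the paper: the paper's proof simply declares that $\pi^*(\sE)$ ``clearly'' satisfies the universal property \eqref{eq:derivations} and then cites \propref{prop:unpiloted} for local hfp-ness, so your argument just fills in the routine unwinding (the universal property \eqref{eq:besnivel}, affine adjunction along $U[\sF]\to U$, and the split fibre sequence) that the authors leave implicit.
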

\begin{proof}
  It is clear that $\pi^*(\sE)$ satisfies the universal property of $\sL_{\V_S(\sE)/S}$ \eqref{eq:derivations}.
  Since $\pi$ is locally hfp when $\sE$ is perfect (\propref{prop:unpiloted}), the second statement follows by definition.
\end{proof}

\begin{rem}
  Note that the construction $\sE \mapsto \V_S(\sE)$ sends colimits in $\QCoh(S)$ to limits in $\dStk_{/S}$.
  In particular, if $\sE \to \sF \to \sG$ is an exact triangle in $\QCoh(S)$, then there is a cartesian square
  \begin{equation*}
    \begin{tikzcd}
      \V_S(\sG) \ar{r}\ar{d}
      & \V_S(\sF) \ar{d}
      \\
      S \ar{r}{0}
      & \V_S(\sE),
    \end{tikzcd}
  \end{equation*}
  where the zero section $0 : S \to \V_S(\sE)$ corresponds to the zero morphism $0 : \sE \to \sO_S$ under \eqref{eq:besnivel}.
\end{rem}

It is well-known that $\V_S(\sE)$ is $n$-Artin if $\sE$ is a $(-n)$-connective perfect complex.
We show that perfectness can be relaxed to pseudo-coherence, or even $0$-pseudo-coherence.

\begin{thm}\label{thm:V}
  Let $\sE \in \QCoh(S)$ be a quasi-coherent complex on a derived stack $S$.
  \begin{thmlist}
    \item
    If $\sE$ is connective, then the projection $\V_S(\sE) \to S$ is affine.
    \item
    If $\sE$ is $0$-pseudo-coherent and $(-n)$-connective for some $n>0$, then the projection $\V_S(\sE) \to S$ is $n$-representable.
  \end{thmlist}
\end{thm}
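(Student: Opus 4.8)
The plan is to treat the two statements separately, deriving the second from the first together with the known perfect case recalled just above.

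For the first statement, I would argue directly: if $\sE$ is connective, then \eqref{eq:benzalhydrazine} identifies $\V_S(\sE)$ with the relative spectrum $\uSpec_S(\Sym^*_{\sO_S}(\sE))$ of a \emph{connective} quasi-coherent $\sO_S$-algebra, and such relative spectra are affine over $S$. No reduction to the affine case is needed here.

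For the second statement, I would first reduce to the case where $S = \Spec(R)$ is affine. Indeed $n$-representability of $\pi : \V_S(\sE) \to S$ may be checked after base change along an arbitrary map $t : T \to S$ with $T$ affine, and the universal property \eqref{eq:besnivel} gives $\V_S(\sE) \fibprod_S T \simeq \V_T(t^*\sE)$; since $*$-pullback preserves both connectivity and $0$-pseudo-coherence (it carries a perfect approximant with connective fibre to another such), the hypotheses persist. Over $\Spec(R)$ the module $M := \Gamma(S,\sE)$ is $(-n)$-connective and $0$-pseudo-coherent, so by definition there is a perfect complex $P$ and a morphism $P \to M$ whose fibre $F := \Fib(P \to M)$ is connective. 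From the triangle $F \to P \to M$ one checks that $P$ is again $(-n)$-connective, its homotopy agreeing with that of $M$ in degrees $<0$, where $M$ vanishes below $-n$.

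The crux is then to feed this into the fact, recalled in the remark preceding the statement, that $\V_S(-)$ sends cofibre sequences to cartesian squares. Applied to $F \to P \to M$ this yields a cartesian square exhibiting
\[
  \V_S(\sE) \simeq \V_S(P) \fibprod_{\V_S(F)} S,
\]
where $S \to \V_S(F)$ is the zero section. Now $F$ is connective, so by the first statement $\V_S(F) \to S$ is affine; hence its zero section $S \to \V_S(F)$, being the closed immersion cut out by the augmentation $\Sym^*_{\sO_S}(F) \twoheadrightarrow \sO_S$, is affine. Base-changing along $\V_S(P) \to \V_S(F)$, the projection $\V_S(\sE) \to \V_S(P)$ is therefore affine. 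Finally $\V_S(P) \to S$ is $n$-representable by the well-known perfect case (recalled above), and the composite of an affine morphism with an $n$-representable one is $n$-representable; this gives the claim.

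The main obstacle — and the whole reason the hypothesis is phrased with $0$-pseudo-coherence rather than mere connectivity of some cofibre — is securing the affineness of $\V_S(\sE) \to \V_S(P)$. The point is that the complementary complex in the triangle must be \emph{connective}: presenting the approximation as a fibre sequence $F \to P \to M$ with $F = \Fib(P \to M)$ connective is exactly what makes $\V_S(F)$ affine and lets me realize $\V_S(\sE) \to \V_S(P)$ as the base change of an affine closed immersion. This sidesteps any delicate analysis of the fibres of $\V_S(P) \to \V_S(F)$ over non-zero points, or of torsors under $\V_S(F)$, which is where a more naive attempt would get stuck. Once this reduction to the perfect case is in hand, the only remaining inputs are the cited perfect case and the elementary stability of $n$-representability under composition with affine morphisms.
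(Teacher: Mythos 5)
Your proof is correct, and its core — choosing a perfect approximation $P \to \sE$ with connective fibre $F$, noting that $P$ inherits $(-n)$-connectivity, and realizing $\V_S(\sE) \to \V_S(P)$ as the base change of the zero section of the affine stack $\V_S(F)$ via the cartesian square attached to the triangle $F \to P \to \sE$ — is exactly the reduction the paper performs (the paper additionally arranges $P$ to have Tor-amplitude $\le 0$, which your route does not need). The only divergence is at the final step: you invoke the $(-n)$-connective perfect case as well known, which the sentence immediately preceding the theorem in the paper licenses, but the paper does not in fact leave that case as a citation. It proves it by a further dévissage: after reducing to $\sE$ perfect of Tor-amplitude $\le 0$, it inducts on $n$, splitting off the bottom homotopy group via a map $\sE_{-n}[-n] \to \sE$ with $\sE_{-n}$ of amplitude $[0,0]$ and fibre of Tor-amplitude $\le -1$, and in the residual case (Tor-amplitude $\le -1$) checks directly that every point factors through the zero section, that the diagonal is $(n-1)$-representable by the inductive hypothesis, and that the zero section is a smooth, étale-locally split atlas. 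So your argument has no gap in the reduction itself, but to make it self-contained you would need to either supply this induction or point to a precise external reference for the perfect case.
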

\begin{proof}
  The first claim is immediate from \eqref{eq:benzalhydrazine}.
  For the second, we may assume that $S$ is affine and show that $\V_S(\sE)$ is $n$-Artin.
  Since $\sE$ is $0$-pseudo-coherent, there exists a perfect complex $\sE' \in \Perf(S)$ of Tor-amplitude $\le 0$ and a morphism $\sE' \to \sE$ with $0$-connective fibre $\sF$.
  Since $\sE$ is $(-n)$-connective and $\pi_{-i}(\sE') \simeq \pi_{-i}(\sE)$ for $i>0$, $\sE'$ is also $(-n)$-connective.
  Since $\sF$ is connective, $\V_S(\sF)$ is affine by the first claim.
  Thus the cartesian square
  \begin{equation*}
    \begin{tikzcd}
      \V_S(\sE) \ar{r}\ar{d}
      & \V_S(\sE') \ar{d}
      \\
      S \ar{r}{0}
      & \V_S(\sF)
    \end{tikzcd}
  \end{equation*}
  shows that if $\V_S(\sE')$ is $n$-Artin, then so is $\V_S(\sE)$.
  
  We replace $\sE$ by $\sE'$ and thereby assume $\sE$ is $(-n)$-connective and perfect of Tor-amplitude $\le 0$.
  By induction on $n$, we assume the claim is proven for $(-n+1)$-connective perfect complexes of Tor-amplitude $\le 0$.
  Since $S$ is affine and $\pi_{-n}(\sE)$ is of finite presentation as a $\pi_0(\sO_S)$-module (see \cite[Cor.~7.2.4.5]{LurieHA}), we may choose a morphism $\sE_{-n}[-n] \to \sE$ where $\sE_{-n}$ is of amplitude $[0,0]$ and whose fibre $\sG$ is $(-n)$-connective and of Tor-amplitude $\le -1$.
  We have a cartesian square
  \begin{equation*}
    \begin{tikzcd}
      \V_S(\sE) \ar{r}\ar{d}
      & \V_S(\sE_{-n}[-n]) \ar{d}
      \\
      S \ar{r}{0}
      & \V_S(\sG).
    \end{tikzcd}
  \end{equation*}
  It will suffice to show that $\V_S(\sE_{-n}[-n])$ and $\V_S(\sG)$ are $n$-Artin.
  Replacing $\sE$ by $\sG$ or $\sE_{-n}[-n]$, we may thus assume that $\sE$ is $(-n)$-connective where $n>0$, and of Tor-amplitude $\le -1$.
  In this case, every $S$-morphism $T \to \V_S(\sE)$ factors up to homotopy through the zero section.
  In particular, $T \fibprod_{\V_S(\sE)} T' \simeq \V_{T \fibprod_S T'}(\sE[1]|_{T \fibprod_S T'})$ for any two morphisms $T \to \V_S(\sE)$ and $T' \to \V_S(\sE)$ with $T,T'$ affine.
  By the induction hypothesis, $\V_{T \fibprod_S T'}(\sE[1]|_{T \fibprod_S T'}) \simeq \V_T(\sE[1]|_T) \fibprod_S T'$ is $(n-1)$-Artin.
  It follows that $\V_S(\sE)$ has $(n-1)$-representable diagonal.
  Moreover, the zero section $0 : S \to \V_S(\sE)$ is smooth in this case by \propref{prop:adance}.
  Putting everything together, we have proven that:
  \begin{enumerate}
    \item $\V_S(\sE)$ has $(n-1)$-representable diagonal.
    \item The zero section $0 : S \to \V_S(\sE)$ is a smooth morphism whose base change along any morphism $T \to \V_S(\sE)$ admits a section over $T$.
    \emph{A fortiori}, $0 : S \to \V_S(\sE)$ admits étale-local sections.
  \end{enumerate}
  It follows that $\V_S(\sE)$ is $n$-Artin.
\end{proof}



\bibliographystyle{halphanum}

\setlength{\parindent}{0em}

\end{document}